\DeclareMathOperator{\End}{End}
\DeclareMathOperator{\ad}{ad}
\begin{document}
\newtheorem{thm}{Theorem}[section]
\newtheorem{prop}[thm]{Proposition}
\newtheorem{coro}[thm]{Corollary}
\newtheorem{conj}[thm]{Conjecture}
\newtheorem{example}[thm]{Example}
\newtheorem{lem}[thm]{Lemma}
\newtheorem{rem}[thm]{Remark}
\newtheorem{convention}[thm]{Convention}
\newtheorem{hy}[thm]{Hypothesis}
\newtheorem*{acks}{Acknowledgements}
\theoremstyle{definition}
\newtheorem{de}[thm]{Definition}
\newtheorem{bfproof}[thm]{{\bf Proof}}

\newcommand{\C}{{\mathbb{C}}}
\newcommand{\Z}{{\mathbb{Z}}}
\newcommand{\N}{{\mathbb{N}}}
\newcommand{\Q}{{\mathbb{Q}}}
\newcommand{\te}[1]{\textnormal{{#1}}}
\newcommand{\set}[2]{{
    \left.\left\{
        {#1}
    \,\right|\,
        {#2}
    \right\}
}}
\newcommand{\sett}[2]{{
    \left\{
        {#1}
    \,\left|\,
        {#2}
    \right\}\right.
}}
\def \<{{\langle}}
\def \>{{\rangle}}

\def \({ \left( }
\def \){ \right) }
\def \:{\mathopen{\overset{\circ}{
    \mathsmaller{\mathsmaller{\circ}}}
    }}
\def \;{\mathclose{\overset{\circ}{\mathsmaller{\mathsmaller{\circ}}}}}


\newcommand{\g}{{\mathfrak g}}
\newcommand{\fg}{\g}
\newcommand{\fk}{{\frak k}}
\newcommand{\gc}{{\bar{\g}}}
\newcommand{\h}{{\frak h}}
\newcommand{\n}{{\frak n}}
\newcommand{\borel}{{\frak b}}
\newcommand{\cent}{{\frak c}}
\newcommand{\notc}{{\not c}}
\newcommand{\Loop}{{\cal L}}
\newcommand{\G}{{\cal G}}
\newcommand{\al}{\alpha}
\newcommand{\alck}{\al^\vee}
\newcommand{\be}{\beta}
\newcommand{\beck}{\be^\vee}
\newcommand{\rk}{{\mathrm{k}}}
\newcommand{\rd}{{\mathrm{d}}}
\newcommand{\ft}{{\mathfrak{t}}}
\newcommand{\rc}{{\mathrm{c}}}
\newcommand{\CH}{{\mathcal{H}}}
\newcommand{\ul}{\mathcal U_\hbar^l(\hat\g_\mu)}
\newcommand{\uf}{\mathcal U_\hbar^f(\hat\g_\mu)}
\newcommand{\us}{\mathcal U_\hbar^s(\hat\g_\mu)}
\newcommand{\hf}{\mathcal U_\hbar^f(\hat\h_\mu)}
\newcommand{\hl}{\mathcal U_\hbar^l(\hat\h_\mu)}

\newcommand{\chg}{\check{\mathfrak g}}
\newcommand{\chh}{\check{\mathfrak h}}
\newcommand{\cha}{\check{a}}
\newcommand{\chb}{\check{b}}
\newcommand{\ka}{\mathfrak{a}}
\newcommand{\chka}{\check{\mathfrak{a}}}
\newcommand{\CL}{\mathcal L}

\newcommand{\rtu}{{\xi}}
\newcommand{\period}{{N}}
\newcommand{\half}{{\frac{1}{2}}}
\newcommand{\quar}{{\frac{1}{4}}}
\newcommand{\oct}{{\frac{1}{8}}}
\newcommand{\hex}{{\frac{1}{16}}}
\newcommand{\reciprocal}[1]{{\frac{1}{#1}}}
\newcommand{\inverse}{^{-1}}
\newcommand{\inv}{\inverse}
\newcommand{\SumInZm}[2]{\sum\limits_{{#1}\in\Z_{#2}}}
\newcommand{\uce}{{\mathfrak{uce}}}
\newcommand{\choice}[2]{{
\left[
\begin{array}{c}
{#1}\\{#2}
\end{array}
\right]
}}

\newcommand{\wh}[1]{\widehat{#1}}

\newlength{\dhatheight}
\newcommand{\dwidehat}[1]{%
    \settoheight{\dhatheight}{\ensuremath{\widehat{#1}}}%
    \addtolength{\dhatheight}{-0.35ex}%
    \widehat{\vphantom{\rule{1pt}{\dhatheight}}%
    \smash{\widehat{#1}}}}
\newcommand{\dhat}[1]{%
    \settoheight{\dhatheight}{\ensuremath{\hat{#1}}}%
    \addtolength{\dhatheight}{-0.35ex}%
    \hat{\vphantom{\rule{1pt}{\dhatheight}}%
    \smash{\hat{#1}}}}

\newcommand{\dwh}[1]{\dwidehat{#1}}

\newcommand{\dis}{\displaystyle}

\newcommand{\ot}{\otimes}

\newcommand{\overit}[2]{{
    \mathop{{#1}}\limits^{{#2}}
}}
\newcommand{\belowit}[2]{{
    \mathop{{#1}}\limits_{{#2}}
}}
\xymatrixcolsep{5pc}

\newcommand{\orb}[1]{|\mathcal{O}({#1})|}
\newcommand{\up}{_{(p)}}
\newcommand{\uq}{_{(q)}}
\newcommand{\upq}{_{(p+q)}}
\newcommand{\uz}{_{(0)}}
\newcommand{\uk}{_{(k)}}
\newcommand{\nsum}{\SumInZm{n}{\period}}
\newcommand{\ksum}{\SumInZm{k}{\period}}
\newcommand{\overN}{\reciprocal{\period}}
\newcommand{\df}{\delta\left( \frac{\xi^{k}w}{z} \right)}
\newcommand{\dfl}{\delta\left( \frac{\xi^{\ell}w}{z} \right)}
\newcommand{\ddf}{\left(D\delta\right)\left( \frac{\xi^{k}w}{z} \right)}

\newcommand{\ldfn}[1]{{\left( \frac{1+\xi^{#1}w/z}{1-{\xi^{#1}w}/{z}} \right)}}
\newcommand{\rdfn}[1]{{\left( \frac{{\xi^{#1}w}/{z}+1}{{\xi^{#1}w}/{z}-1} \right)}}
\newcommand{\ldf}{{\ldfn{k}}}
\newcommand{\rdf}{{\rdfn{k}}}
\newcommand{\ldfl}{{\ldfn{\ell}}}
\newcommand{\rdfl}{{\rdfn{\ell}}}

\newcommand{\kprod}{{\prod\limits_{k\in\Z_N}}}
\newcommand{\lprod}{{\prod\limits_{\ell\in\Z_N}}}
\newcommand{\E}{{\mathcal{E}}}
\newcommand{\F}{{\mathcal{F}}}
\newcommand{\R}{{\mathcal{R}}}
\newcommand{\fsl}{{\mathfrak{sl}}}
\newcommand{\fgl}{{\mathfrak{gl}}}

\newcommand{\wt}[1]{\widetilde{#1}}

\newcommand{\tar}{{\uce\left(\Loop\left(\gc,\mu\right)\right)}}
\newcommand{\U}{{\mathcal{U}}}
\newcommand{\qtar}{\U_\hbar\left(\hat{\g}_\mu\right)}
\newcommand{\qtartemp}[1]{\U_{q^{#1}}\left(\hat{\g}_\mu\right)}
\newcommand{\qptar}{\wt{\U}_\hbar\left(\hat{\g}_\mu\right)}
\newcommand{\qhei}{{\U_\hbar( \hat{\h}_\mu )}}
\newcommand{\htart}[1]{\mathcal{#1_\hbar}(\hat\fg_\mu)}
\newcommand{\hheit}[1]{\mathcal{#1_\hbar}(\hat\h_\mu)}
\newcommand{\qptart}[1]{\wt{\mathcal{#1}}_\hbar(\wh\fg_\mu)}
\newcommand{\qphei}{{\wh\U_\hbar(\wh\h_\mu)}}
\newcommand{\qpheit}[1]{\wh{\mathcal{#1}}_\hbar(\hat\h_\mu)}
\newcommand{\symalg}{{S\left( \wh{\h}^-_\mu \right)}}

\newcommand{\comp}{{\mathfrak{comp}}}

\newcommand{\ctimes}{{\widehat{\otimes}}}

\newcommand{\bigctimes}{{\ \widehat{\bigotimes}\ }}

\newcommand{\hctvs}[1]{Hausdorff complete linear topological vector space}

\newcommand{\act}{\triangleright}

\newcommand{\T}{\mathcal{T}}

\newcommand{\varprodright}{\mathop{\underrightarrow{\prod}}\limits}


\numberwithin{equation}{section}

\title[Twisted quantum affinization]{Twisted quantum affinizations
and quantization of extended affine Lie algebras}

\author{Fulin Chen$^1$}
\address{School of Mathematical Sciences, Xiamen University,
 Xiamen, China 361005} \email{chenf@xmu.edu.cn}\thanks{$^1$Partially supported by the NSF of China (Nos.11971397, 12161141001) and the Fundamental
Research Funds for the Central Universities (No.20720200067)}

\author{Naihuan Jing$^2$}
\address{Department of Mathematics, North Carolina State University, Raleigh, NC 27695,
USA}
\email{jing@math.ncsu.edu}
\thanks{$^2$Partially supported by the NSF of China (No.12171303) and the Simons Foundation (No.523868).}

\author{Fei Kong$^3$}
\address{Key Laboratory of Computing and Stochastic Mathematics (Ministry of Education), School of Mathematics and Statistics, Hunan Normal University, Changsha, China 410081} \email{kongmath@hunnu.edu.cn}

 \author{Shaobin Tan$^4$}
 \address{School of Mathematical Sciences, Xiamen University,
 Xiamen, China 361005} \email{tans@xmu.edu.cn}
 \thanks{$^4$Partially supported by the NSF of China  (No.12131018).}

\subjclass[2010]{17B37, 17B67} \keywords{twisted quantum affinization, extended affine Lie algebra, quantum Kac-Moody algebra, triangular decomposition, Hopf algebra}

\begin{abstract}
In this paper, for an arbitrary Kac-Moody Lie algebra $\mathfrak g$ and a diagram automorphism $\mu$ of $\mathfrak g$ satisfying certain natural linking conditions, we introduce and study a $\mu$-twisted quantum affinization algebra $\mathcal U_\hbar\left(\hat{\mathfrak g}_\mu\right)$ of $\mathfrak g$.
 When $\mathfrak g$ is of finite type, $\mathcal U_\hbar\left(\hat{\mathfrak g}_\mu\right)$ is Drinfeld's current algebra realization of the twisted
 quantum affine algebra.
When $\mu=\mathrm{id}$ and $\mathfrak g$ in affine type, $\mathcal U_\hbar\left(\hat{\mathfrak g}_\mu\right)$ is the quantum toroidal
 algebra introduced by Ginzburg, Kapranov and Vasserot.
As the main results of this paper,  we first prove a triangular decomposition for $\mathcal U_\hbar\left(\hat{\mathfrak g}_\mu\right)$.
 Second, we  give a simple characterization of the affine quantum Serre relations on restricted  $\mathcal U_\hbar\left(\hat{\mathfrak g}_\mu\right)$-modules in terms of ``normal order products''.
Third, we  prove that the category of restricted $\mathcal U_\hbar\left(\hat{\mathfrak g}_\mu\right)$-modules is a monoidal category and
hence obtain a topological Hopf algebra structure on the ``restricted completion'' of $\mathcal U_\hbar\left(\hat{\mathfrak g}_\mu\right)$.
Last, we  study the classical limit of $\mathcal U_\hbar\left(\hat{\mathfrak g}_\mu\right)$ and abridge it to the quantization
theory of extended affine Lie algebras. In particular, based on a classification result of Allison-Berman-Pianzola, we obtain the $\hbar$-deformation  of all nullity $2$ extended affine Lie algebras.
\end{abstract}
\maketitle

\section{Introduction}
In this paper, we denote by $\mathcal U_\hbar(\g)$  the quantum enveloping algebra associated with a symmetrizable Kac-Moody Lie algebra $\g$
 over the ring $\C[[\hbar]]$ of complex formal series in the variable $\hbar$.

\subsection{Twisted quantum affinizations}
The  quantum affine algebras $\U_\hbar(\hat\g)$
($\hat\g$ being the affine Lie algebra of the finite dimensional Lie algebra $\g$)
are  one of the most important subclasses of the quantum enveloping algebras
introduced independently by Drinfeld \cite{Dr-hopf-alg} and Jimbo \cite{jim-start}.
Drinfeld's current algebra realization 
for $\U_\hbar(\hat\g)$ \cite{Dr-new,beck} is of fundamental importance in this area as it provides
an explicit construction of $\U_\hbar(\hat\g)$ as  quantum affinization of
$\U_\hbar(\g)$.
It is remarkable that the Drinfeld  quantum affinization process
can be extended to any symmetrizable quantum Kac-Moody
algebras $\mathcal U_\hbar(\g)$. One obtains in this way a new class of
$\C[[\hbar]]$-algebras $\mathcal U_\hbar(\hat{\g})$, called the quantum affinization algebras.
The algebras  $\mathcal U_\hbar(\hat{\g})$ ($\g$ not of finite type)
 were first introduced by Ginzburg-Kapranov-Vasserot \cite{GKV} for the untwisted affine  case,
and then in  \cite{J-KM,Naka-quiver,He-representation-coprod-proof} for the general case.
(In the case that $\g$ is symmetric but not simply-laced, the quantum affinization algebra $\U_\hbar(\hat\g)$ defined in \cite{J-KM,Naka-quiver}
is slightly different from that 
in \cite{He-representation-coprod-proof}, and in this paper we shall
exploit the latter definition).
When $\g$ is of (untwisted) affine type,
 $\mathcal U_\hbar(\hat\g)$ is often referred as the quantum toroidal algebra.
The case $\g=\hat\fsl_{\ell+1}$ is particularly interesting:
 an additional parameter $p$ can be added in the quantum affinization
process \cite{GKV} so 
one gets a two parameter quantum toroidal algebra $\mathcal U_{\hbar,p}(\hat\g)$.
One notices that such a two-parameter quantum toroidal algebra is yet to be defined for
other affine types.

The theory of general quantum affinizations has been intensively studied.
In particular, the representation theory of quantum toroidal algebras is very rich:
the Schur-Weyl duality  with the elliptic Cherednik algebras \cite{VV-schur}; the action on $q$-Fock spaces
\cite{VV-double-loop,STU,TU-level-zero}; 
the vertex representation constructions \cite{Sa,GJ};
the connection with the McKay correspondence \cite{FJW}; the quantum fusion tensor category constructions \cite{He-representation-coprod-proof,He-drinfeld-coproduct};
the geometric  representation constructions
\cite{Naka-quiver} as well as other applications \cite{miki-1999-1}-\cite{miki-2006}, \cite{FJM1}-\cite{FJMM2}, \cite{Ts}.
For a survey on the representation theory of quantum toroidal algebras, one may consult \cite{He-total}.
One of the main features of the
quantum toroidal algebra is that it contains two remarkable subalgebras
both isomorphic
to a quotient of untwisted quantum affine algebras (one comes from the  Drinfeld-Jimbo's presentation and another
from Drinfeld's new presentation).

When $\g$ is of finite $ADE$ type, Drinfeld \cite{Dr-new} also formulated a twisted quantum affinization
of $\U_\hbar(\g)$ associated with a diagram automorphism $\mu$ of $\g$.
It was announced in \cite{Dr-new} and proved in \cite{Da1,Da2} (see also \cite{ZJ, JZ1}) that
Drinfeld's twisted quantum affinization algebras gave
 new realization of the
twisted quantum affine algebras $\U_\hbar(\hat\g_\mu)$ ($\hat\g_\mu$ being the $\mu$-twisted affine Lie algebra of $\g$).
 Similar to the untwisted case,  Drinfeld's new realization theorem is of
 fundamental importance in the  theory of  twisted quantum affine algebras
\cite{CP3-twisted-quantum,J-inv,H-tw}.
It has been a question  whether the twisted quantum affinization
 process introduced by Drinfeld can be generalized to diagram automorphisms of any symmetrizable
Kac-Moody Lie algebra in a similar way to that of the untwisted  case.
In \cite{CJKT-twisted-quantum-aff-vr}, by using vertex operator calculations, we generalized the twisted quantum affinization to
a class of diagram automorphisms on simply-laced Kac-Moody Lie algebras.
One notices that
in general there exist nontrivial diagram automorphisms on non-simply-laced Kac-Moody Lie algebras that are not from finite types.
For example, when $\g$ is of affine type $B_\ell^{(1)}$ or
$C_\ell^{(1)}$, there exists such an order $2$ diagram automorphism of $\g$.
The open question is particularly interesting for such general Kac-Moody Lie algebras as one expects that
this could give rise to new type of quantized algebras.

In this paper, we give a general affirmative answer to this question by defining a {\it twisted
quantum affinization algebra} $\qtar$  for any
symmetrizable Kac-Moody
algebra $\g$ and any diagram automorphism $\mu$ of $\g$
with
two natural
linking conditions (LC1) and (LC2).
The condition (LC1), first recognized
in \cite{FSS}, ensures that the $\mu$-folded Cartan matrix of $\g$ is still a generalized
Cartan matrix. The condition (LC2) appear
naturally for establishing the Drinfeld-type {\it affine quantum Serre relations}.
One notices that, except for the transitive diagram automorphisms on $\hat\fsl_{\ell+1}$,
 all the diagram automorphisms of finite and affine Kac-Moody Lie algebras satisfy these two linking conditions.
 In the further study of 
the Drinfeld-type affine quantum Serre relations,
we also require an additional linking condition (LC3) on the diagram automorphism $\mu$. 
Every  diagram automorphism  of finite or affine Kac-Moody Lie algebra  satisfies this additional linking condition, 
except the case that it is an 
order $k+1$ diagram automorphism of $\hat\fsl_{2k+1}$.

When $\g$ is of finite type, the twisted
quantum affinization algebra $\qtar$ is nothing but Drinfeld's current algebra realization for the twisted
 quantum affine algebra.
 When $\mu=\mathrm{id}$, it coincides with the quantum affinization algebra $\U_\hbar(\hat{\g})$.
When $\g$ is of simply-laced type, $\qtar$ has been realized in \cite{CJKT-twisted-quantum-aff-vr} by vertex operators.
Especially,  in the case when $\g$ is of untwisted affine type $X_\ell^{(1)}$ and $\mu$ fixes an additional node of $\g$ \cite{Kac-book},
$\U_\hbar(\hat\g_\mu)$
 contains two subalgebras which are both isomorphic to quotients of the twisted quantum affine algebra
 of type $X_\ell^{(N)}$, where $N$ is 
 the order of $\mu$.

The theory of quantum Kac-Moody algebras
 has been a tremendously successful story.
 Certainly, the most important aspect of the structures of
 $\U_\hbar(\g)$  is that it is
 a Hopf algebra over $\C[[\hbar]]$, which specializes to the universal enveloping algebra $\U(\g)$ of $\g$ and admits a
 canonical triangular decomposition.
The next natural question is whether these fundamental algebraic properties  of
the quantum Kac-Moody algebras (triangular decomposition, Hopf algebra structure,
the specialization and etc) can be extended to general twisted quantum affinization algebras.

In Section \ref{sec:tri}, we  prove that $\qtar$ has a triangular decomposition.
When $\mu=\mathrm{id}$, this decomposition was known 
in \cite{beck} for the finite type
and in \cite{He-representation-coprod-proof} for the general case.
When $\g$ is of finite type, this decomposition was given
in \cite{Da2}.
In analogy to the untwisted case \cite{He-representation-coprod-proof,He-drinfeld-coproduct},
the triangular decomposition of $\qtar$ is essential in studying of
representation theory of $\qtar$. For example,
one can now define a notion of $l$-highest weight module for $\qtar$ and then study the properties of
(simple) integrable $l$-highest weight modules.

Unlike quantum Kac-Moody algebras, no Hopf algebra structure is known
for the general quantum affinizations.
According to an unpublished note of Drinfeld, certain completion of
the quantum affine algebra has a Hopf algebra structure and this (infinite) coproduct is conjugate to
the usual Drinfeld-Jimbo coproduct under a twist via the universal R-matrix.
For the untwisted quantum affine algebras, a  proof was given in \cite{DI-generalization-qaff} (the simply-laced  case)
and \cite{E-coprod,Gr-qshuff-qaff} (the other  cases).
One notices that Drinfeld's new coproduct involves infinite sums, so cannot be defined directly on $\qtar$.
However, it makes sense on the so-called restricted (topologically free) $\qtar$-modules.
When $\g$ is of finite type, the notion of  restricted $\qtar$-modules
coincides with the usual one   \cite{Luztig-quantum-book} and it is known that all  irreducible highest weight
$\qtar$-modules are restricted \cite{Lusztig-qdeform-simple-mod, EK-VI}.
When $\g$ is of simply-laced type, the vertex representations for $\qtar$ constructed in \cite{CJKT-twisted-quantum-aff-vr} are also restricted.
In Section 6  we prove that, if $\mu$ also satisfies  the linking condition (LC3), then 
 for any restricted $\qtar$-modules $U$ and $V$,
 Drinfeld's new coproduct affords a (restricted) $\qtar$-module structure on the $\hbar$-adically completed
 tensor product space $U\wh\ot V$.
In particular, this implies that the category $\R$ of  restricted $\qtar$-modules is a monoidal category.
Let $\mathcal F$ be the forgetful functor from $\R$ to the category of topologically free $\C[[\hbar]]$-modules.
Then   the closure  $\qptar$ of $\qtar$ in the algebra of
endomorphisms of $\mathcal F$, called the {\it restricted completion of $\qtar$},
 is naturally a topological Hopf algebra.
As usual, the key step
 is to check the compatibility between Drinfeld's new coproduct and the affine quantum Serre relations.
 For this purpose and for its own right, we present a simple characterization of affine quantum Serre relations on restricted
 $\qtar$-modules in terms of ``{\it normal order products}'', which enables us to overcome the difficulty.

More precisely, let $A=(a_{ij})_{i,j\in I}$ be the generalized Cartan matrix associated to $\g$
and  let $x_i^\pm(z), i\in I$ be the defining currents  of $\qtar$.
Then the untwisted affine quantum Serre relations in  $\U_\hbar(\hat\g)$ take the form ($i,j\in I$ with $a_{ij}<0$):
\begin{align*}
\sum_{\sigma\in S_{m}}
   \sum_{r=0}^{m}
    (-1)^r \binom{m}{r}_{q_i}x_i^\pm(z_{\sigma(1)})\cdots x_i^\pm(z_{\sigma(r)})
  x_j^\pm(w)
       x_i^\pm(z_{\sigma(r+1)})\cdots x_i^\pm(z_{\sigma(m)})=0,
\end{align*}
where $m=1-a_{ij}$ and $q_i$ an invertible element in $\C[[\hbar]]$.
Comparing with the untwisted case, one of the main features in our $\qtar$ is that
the affine quantum Serre relations can be written in a simple form in terms of  certain (Drinfeld)
polynomials
(see Definition \ref{de:tqaffine}):
\begin{align*}
&\sum_{\sigma\in S_{m}}\sum_{r=0}^{m}
    (-1)^r \binom{m}{r}_{q_i^{d_{ij}}}p_{ij,r}^\pm(z_{\sigma(1)},\dots,z_{\sigma(m)},w)
    x_i^\pm(z_{\sigma(1)})\cdots x_i^\pm(z_{\sigma(r)})\\
&\quad\qquad   \cdot x_j^\pm(w)
       x_i^\pm(z_{\sigma(r+1)})\cdots x_i^\pm(z_{\sigma(m)})\ =0,
\end{align*}
where $p_{ij,r}^\pm(z_1,\dots,z_m,w)$ are some polynomials  and $d_{ij}$ a positive integer.
This distinguishes
the theory of twisted quantum affinizations (in particular, the theory of twisted quantum affine algebras)
 from that of the untwisted one.
In Section 5, we introduce a notion of normal order product ``$\:\quad\;$'' for the currents $x_i^\pm(z)$ on restricted
$\qtar$-modules $W$. When $W$ is the vertex representation constructed in \cite{CJKT-twisted-quantum-aff-vr} so $x_i^\pm(z)$ are realized as certain
vertex operators, $\:x_i^\pm(z)x_j^\pm(z)\;$ is the usual normal order product of vertex operators defined by
moving the annihilation operators to the right.
Based on tedious OPE calculation, we
show in Theorem \ref{thm:Dr-equiv-normal-ordering} that (both twisted and untwisted) affine quantum Serre relations are equivalent to
the following simple and unified normal order product:
\begin{align}\label{newserre}
\:x_i^\pm(q_i^{a_{ij}}z)x_i^\pm(q_i^{a_{ij}+2}z)\cdots  x_i^\pm(q_i^{-a_{ij}-2}z)
  x_i^\pm(q_i^{-a_{ij}}z)x_j^\pm(z)\;=0.
\end{align}
In fact a more general and stronger version 
(see  Theorem \ref{prop:Dr-to-normal-ordering}) will be given,
and Theorem \ref{thm:Dr-equiv-normal-ordering} is a special situation. 
As another application of Theorem \ref{prop:Dr-to-normal-ordering}, we also prove that
for any finite or affine type Lie algebra $\g$,  
the following relations hold on restricted $\qtar$-modules:
\begin{align*}
\sum_{\sigma\in S_{\check{m}}}
   \sum_{r=0}^{\check{m}}
    (-1)^r \binom{\check{m}}{r}_{\check{q}_i}x_i^\pm(z_{\sigma(1)})\cdots x_i^\pm(z_{\sigma(r)})
  x_j^\pm(w)
       x_i^\pm(z_{\sigma(r+1)})\cdots x_i^\pm(z_{\sigma(\check{m})})=0,
\end{align*}
where $\check{m}=1-\check{a}_{ij}$, $\check{q}_i\in \C[[\hbar]]$ and $(\check{a}_{ij})$ is the $\mu$-folded matrix of $A$.
When $\g$ is of finite type, the above relations are proved in \cite{Da1},
which plays a key role in understanding the isomorphism between Drinfeld-Jimbo and Drinfeld's realizations
for twisted quantum affine algebras.

When $\g$ is of finite type, by taking classical limit, there is a natural vertex algebraic interpretation of the relations \eqref{newserre}:
let $W$ be a restricted module for the twisted affine Lie algebra $\hat\g_\mu$ and let
$V_\g$ be the universal affine vertex algebra associated to $\g$.
It is known \cite{Li-new-construction} that the currents $x_i^\pm(z)$ on $W$ generate a vertex algebra $V_W$
in the space $\mathrm{Hom}(W,W((z)))$ with $W$ as a quasi module.
Furthermore, there is a surjective  homomorphism from $V_\g$ to $V_W$ sending 
$x_i^\pm\mapsto x_i^\pm(z)$,
where the Chevalley generators $x_i^\pm$ of $\g$  are  viewed as elements of $V_\g$ in the usual way.
One can check directly that
$(x_i^\pm(z))_0\cdots (x_i^\pm(z))_0x_j^\pm(z)=d_{ij}^{a_{ij}-1}z^{(1-a_{ij})(1-d_{ij})}\:x_i^\pm(z)\cdots x_i^\pm(z)x_j^\pm(z)\;$ in $V_W$ and hence
\begin{align*}
(\ad x_i^\pm)^{1-a_{ij}}x_j^\pm=\((x_{i}^\pm)_{0}\)^{1-a_{ij}} x_{j}^\pm\mapsto
d_{ij}^{a_{ij}-1}z^{(1-a_{ij})(1-d_{ij})}\:x_i^\pm(z)\cdots x_i^\pm(z)x_j^\pm(z)\;.
\end{align*}
Then the relations \eqref{newserre} follows from the Serre relations on $\g$.

Finally, in Section 7 we study the classical limit of $\qtar$.
Let $\hat\g_\mu$ be the Lie algebra whose universal enveloping algebra is isomorphic to $\qtar|_{\hbar\mapsto 0}$.
We prove that there is a surjective homomorphism $\psi_{\g,\mu}$ from the Lie algebra $\hat\g_\mu$
to the twisted current Kac-Moody algebra of $\g$ associated with $\mu$.
When $\g$ is of finite type, it was proved in \cite{Da2} (see also \cite{CJKT-drin-pre}) that
$\psi_{\g,\mu}$ is an isomorphism.
In general, $\psi_{\g,\mu}$ is {\it not} an isomorphism and for the case that $\g$ is of indefinite type, the
kernel of $\psi_{\g,\mu}$ is unknown even when $\mu=\mathrm{id}$.
Based on a recent result 
\cite{CJKT-drin-pre}, for the case that $\g$ is of affine type, we
determine explicitly the kernel of $\psi_{\g,\mu}$ and establish its
ultimate connection with
quantization of nullity $2$ extended affine Lie algebras,
which leads to our second main topic explained below.

\subsection{Quantization of extended affine Lie algebras}
One of our main motivations for introducing and studying twisted quantum affinizations stems from a
fundamental problem
in the  theory of extended affine Lie algebras (EALAs for short): their quantizations.
The notion of EALAs was first introduced by 
physicists in \cite{H-KT} with applications to quantum gauge theory,
and since then it has been intensively studied
(see \cite{AABGP,BGK,N2}
and the references therein).
An EALA is   a complex Lie algebra $\E$, together with a ``Cartan subalgebra'' $\CH$ and an invariant form $(\ \mid\ )$,
that satisfies a list of conditions.
The form $(\ \mid\ )$ induces a semi-positive form on the $\mathbb R$-span of the root system $\Phi$
 of $\E$ (relative to $\CH$) and so $\Phi$  divides into a disjoint union of the sets of isotropic and nonisotropic roots.
Roughly speaking, the structure of $\E$ is determined by its core $\E_c$,
the subalgebra  generated by nonisotropic root vectors.
Following \cite{BGK}, we say that an EALA $\E$ is maximal  if $\E_c$ is centrally closed.
Set  $\bar{\E}=\E_c+\CH$, which we
 call the extended core of $\E$.
One of the conditions for $\E$ requires  that the
 group generated by isotropic roots is of finite rank and this rank is called its nullity. Nullity $0$
EALAs are exactly  finite dimensional simple Lie algebras, while
nullity $1$ EALAs   are precisely
 affine Kac-Moody algebras  \cite{ABGP}.
Meanwhile,  nullity $2$ EALAs are of particular interest: they are closely related
to the Lie algebras studied by Saito and Slodowy in the work of elliptic
singularities \cite{Sa}.
As an important achievement in the theory of EALAs, the centerless cores of nullity $2$ EALAs have been completely classified by Allison-Berman-Pianzola in \cite{ABP} (see also \cite{GP-torsors}).

We remark that quantum finite, affine and (one or two parameters) toroidal algebras can be related to EALAs in a uniform way.
More precisely,
 let $\U_\hbar$ be an arbitrary  quantum finite (resp.\,affine; resp.\,toroidal) algebra.
 Then  there is an EALA $\E$ of nullity $0$ (resp.\,$1$; resp.\,$2$)
such that the classical limit of $\U_\hbar$ is
 isomorphic to
the universal enveloping algebra  of $\bar{\E}$.
Furthermore, except for the quantum toroidal algebra of type $A_1^{(1)}$, $\E$ are always maximal.
(Note that $A_1^{(1)}$ is the unique symmetric but non-simply-laced affine GCM
and so in this case the quantum toroidal algebra defined in \cite{J-KM,Naka-quiver} is slightly different from that in
 \cite{He-representation-coprod-proof}.
In the former case, the correspond EALA  is also maximal.)
Therefore,  an eminent problem in the theory of EALAs is to establish and explore natural connections of quantum algebras with all extended cores of (maximal)  EALAs.
The theory of quantum toroidal algebras suggests that the nullity $2$ case is of particular importance. 

As an application of our 
 general twisted quantum affinization theory,
 we solve this problem for the nullity $2$ case.
More precisely, denote by $\widehat{E}_2$
the class of Lie algebras which are isomorphic to the extended core of a maximal EALA  with nullity $2$.
Let $\mu$ be a diagram automorphism of an affine Kac-Moody algebra $\g$ and
 let $\ft(\g,\mu)$ be the twisted toroidal Lie algebra associated to $(\g,\mu)$, that is, the universal central extension of the $\mu$-twisted
 loop algebra of $[\g,\g]$.
By adding two canonical derivations to $\ft(\g,\mu)$, one obtains a semi-product Lie algebra $\hat{\ft}(\g,\mu)$.
Then we have $\hat\ft(\g,\mu)\in \widehat{E}_2$ if and only if $\mu$ satisfies the linking conditions (LC1) and (LC2) \cite{ABP}.
On the other hand, let $\C_p$ be the quantum $2$-torus associated to a nonzero complex number $p$ and
 let $\tilde\fsl_{\ell+1}(\C_p)$ be the universal central extension of the special linear Lie algebra over
$\C_p$ \cite{BGK}.
Again by adding two derivations to $\tilde\fsl_{\ell+1}(\C_p)$, we have a Lie algebra $\hat\fsl_{\ell+1}(\C_p)\in \wh{E}_2$.
According to a result of  Allison-Berman-Pianzola \cite{ABP},  any algebra in $\widehat{E}_2$ is either isomorphic to  $\hat\ft(\g,\mu)$ with $\mu$ nontransitive,
or isomorphic to $\hat\fsl_{\ell+1}(\C_p)$ with  $p$ generic.
It was known that the quantization of $\hat\fsl_{\ell+1}(\C_p)$ is  $\mathcal U_{\hbar,p}({\dwh{\frak sl}}_{\ell+1})$ \cite{VV-double-loop},
while for the case $\g=\hat{\fsl}_2$, we define in \cite{CJKT-quantum-A111} a new quantum toroidal algebra $\U_\hbar^{new}$
whose classical limit is $\U(\hat\ft(\hat\fsl_2,\mathrm{id}))$.
Just like the  algebra $\U_\hbar(\dwh{\fsl}_2)$,
$\U_\hbar^{new}$  has a canonical triangular decomposition and a Hopf algebra structure \cite{CJKT-quantum-A111}.
Thus, it remains to treat the
algebras $\hat\ft(\g,\mu)$ with $\g$  not of type
$A_1^{(1)}$ and $\mu$ nontransitive.
In this case, by using the Drinfeld type presentations of $\hat\ft(\g,\mu)$ established in \cite{CJKT-drin-pre},
we obtain that the classical limit of the twisted quantum affinization algebra $\qtar$ is isomorphic to $\U(\hat\ft(\g,\mu))$.

The root system of EALAs
have been axiomatized under the name of extended affine root systems (EARSs for short) and characterized  in \cite{AABGP}.
As in the classical Lie theory,
for any EARS $\Phi$ of nullity $2$,
there exist algebras in $\wh{E}_2$ with $\Phi$ as its root system \cite{ABP}.
Such an algebra is unique (up to isomorphism) except  $\Phi$ is of type $A_\ell^{(1,1)}$, in which case
there are infinitely many nonisomorphic algebras  in $\wh{E}_2$
that are parameterized by generic numbers (i.e., the algebras $\hat\fsl_{\ell+1}(\C_p)$).
By taking classical limit, we think this gives a natural
 explanation  why two-parameter quantum toroidal algebras should only exist in type $A$ case.
 On the other hand, this shows that it is reasonable to view quantum toroidal algebras as
 $\hbar$-deformation of  algebras in $\wh{E}_2$.

The layout of the paper is as follows. In Section \ref{sec:auto}, we introduce a class of diagram automorphisms on any
symmetrizable Kac-Moody algebra $\g$ which
satisfies two linking conditions. Starting with such a diagram automorphism $\mu$, in Section \ref{sec:tqaff} we define a $\mu$-twisted
quantum affinization algebra $\qtar$ of $\U_\hbar(\g)$, and in Section \ref{sec:tri} we prove a triangular decomposition of $\qtar$.
In Section \ref{sec:qptar}, under an additional linking condition on $\mu$, we give a simple characterization of the affine quantum Serre relations on restricted $\qtar$-modules.
As an application,  in Section \ref{sec:Hopf} we prove that there is a topological Hopf algebra structure on the restrict completion of $\qtar$.
In Section \ref{sec:specialization}, we study the classical limit of $\qtar$ and also link the algebra $\qtar$ with the quantization theory of nullity $2$ EALAs.
Finally,  Section \ref{sec:pf-prop-aff-q-serre} is devoted to a proof of  Theorem \ref{prop:Dr-to-normal-ordering} on general affine quantum Serre relations.

Throughout this paper, we denote the group of non-zero complex numbers, the set of non-zero integers, the set of positive integers and the set of non-negative integers  by $\C^\times$, $\Z^\times$, $\Z_+$ and $\N$, respectively.
For any $m\in \Z_+$, we set $\xi_m=\te{e}^{2\pi\sqrt{-1}/m}$ and $\Z_m=\Z/m\Z$.

\section{Automorphisms of generalized Cartan matrices}\label{sec:auto}
In this section, we introduce a class of automorphisms on
generalized Cartan matrices (GCMs)  satisfying two linking conditions.

\subsection{Automorphisms of generalized Cartan matrices}
Here we give some general backgrounds about  automorphisms on GCMs.
One may see \cite{Kac-book,KW} for details.

Throughout this paper, let $I$ be a finite subset of $\Z$ and let
 $A=(a_{ij})_{i,j\in I}$ be  a   symmetrizable GCM. Namely, $A$ is a square matrix such that
\[a_{ij}\in \Z,\ a_{ii}=2,\ i\neq j\Rightarrow a_{ij}\le 0\ \te{and}\ a_{ij}=0\Leftrightarrow a_{ji}=0\]
for $i,j\in I$, and that  there exists an invertible diagonal matrix
\begin{align}\label{ri}
D=\te{diag}\{r_i\}_{i\in I}\ (r_i\in \Z_+)\end{align}
such that $DA$ is symmetric.
Let $(\h,\Pi,\Pi^\vee)$ be a realization of the GCM $A$ (\cite{Kac-book}).
 Explicitly,  $\h$ is a $(2 |I|-\ell)$-dimensional  $\C$-space,
$\Pi=\{\al_i\}_{i\in I}$ is a set of linearly independent elements in the
dual space $\h^\ast$ of  $\h$, $\Pi^\vee=\{\al_i^\vee\}_{i\in I}$ is
a set of linearly independent elements in $\h$ and $\al_j(\al_i^\vee)=a_{ij}$ for $i,j\in I$, where $\ell$ is the rank of $A$.
Denote by $\g=\g(A)$ the Kac-Moody Lie algebra associated with the quadruple $(A,\h,\Pi,\Pi^\vee)$, and set $\g'=[\g,\g]$.
By definition, $\fg$ is the Lie algebra generated by the elements
$h\in \h, e_i^\pm, i\in I$, and subject to
the relations
\begin{align*}
[h,h']=0,\quad [e_i^+,e_i^-]=\al_i^\vee,\quad [h,e_i^\pm]=\pm\al_i(h)e_i^\pm,\quad \mathrm{ad}(e_i^\pm)^{1-a_{ij}}(e_j^\pm)=0,
\end{align*}
where $h,h'\in \h$ and $i,j\in I$ with $i\ne j$.

Let $\te{Aut}(A)$ be the group of  automorphisms on the GCM $A$. That is,  it is the group of permutations $\mu$ on the index set $I$ such that
 $a_{ij}=a_{\mu(i)\mu(j)}$ for $i,j\in I$.
Note that $\te{Aut}(A)$ acts naturally on the  root lattice $Q=\oplus_{i\in I}\Z\al_i$ of $\g$ so that
$\mu(\al_i)=\al_{\mu(i)}$ for $\mu\in \te{Aut}(A)$ and $i\in I$. Set
\[\h'=\h\cap \g'=\oplus_{i\in I}\C\al_i^\vee\quad\te{and}\quad \mathfrak c=\{h\in \h\mid \al_i(h)=0,i\in I\}\subset\h'.\]
Then we may view $\h/\mathfrak c$ as the dual space of $\C\ot_\Z Q$. Hence,  by duality, there is
an $\te{Aut}(A)$-action on it.
Moreover, as $\te{Aut}(A)$ is a finite group,
 there exists a subspace $\h''$ of $\h$  such that (\cite[Section 4.19]{KW})
\begin{align}\label{h''}\h=\h'\oplus \h''\quad \te{and}\quad \(\h''+\mathfrak c\)/\mathfrak c\ \te{is Aut}(A)\te{-stable}.\end{align}
From now on let us fix such a choice of the complementary space $\h''$ (which is not unique in general).
Then there is a unique action of $\te{Aut}(A)$ on $\h$ such that
\[\mu(\al_i^\vee)=\al_{\mu(i)}^\vee,\quad \mu(\h'')=\h'',\quad\te{and}\quad
\al_{\mu(i)}(\mu(h''))=\al_i(h'')\]
for $\mu\in \te{Aut}(A)$, $i\in I$ and $h''\in \h''$.
Furthermore, the action of $\te{Aut}(A)$ on $\h$
extends uniquely to an action of $\te{Aut}(A)$ on $\g$ by automorphisms such that
\[\mu(e^\pm_i)=e_{\mu(i)}^\pm\quad\te{for $\mu\in \te{Aut}(A)$ and $i\in I$.}\]
As in \cite{KW}, we regard  the elements of $\te{Aut}(A)$ as automorphisms on $\g$ in this way, and  call them diagram automorphisms.
It is straightforward to see that the complementary space $\h''$ for $\h'$ in $\h$ induces
a  non-degenerate symmetric bilinear form $\<\cdot|\cdot\>$ on $\h$ such that
\[r_i\<\al_i^\vee|h\>=\al_i(h)\quad \te{and}\quad \<\h''|\h''\>=0\quad \te{for $i\in I$ and $h\in \h$.}\]
Moreover,  the bilinear form $\<\cdot|\cdot\>$ on
$\h$ induces a unique invariant non-degenerate symmetric bilinear form, still
denoted as $\<\cdot|\cdot\>$, on $\g$ \cite{Kac-book}.
It is a simple fact   \cite{KW} that  the diagram automorphisms of $\g$ preserve the bilinear form $\<\cdot|\cdot\>$.

\subsection{Linking conditions}\label{subsec:link-cond}
In this subsection, we introduce a  class of automorphisms on the GCM $A$ which satisfy certain linking conditions.

Let $\mu\in \te{Aut}(A)$ be a given diagram automorphism  of order $N$. For each $i\in I$,
we write
\[\mathcal O(i)=\{\mu^k(i)\mid k=0,\dots,N-1\}\subset I\]
 for the orbit containing $i$, and 
$N_i$ the cardinality of $\mathcal{O}(i)$.
 Following \cite{FSS}, we  define 
 the first linking condition on $\mu$:
\vspace{1.5mm}\\
\te{(LC1)}\qquad $\sum_{p\in \mathcal O(i)}a_{pi}>0\quad\te{for all}\ i\in I.$
\vspace{1mm}

The condition (LC1) can be reformulated as follows: 

\begin{lem}\label{lem:linking}\cite[Sect.\,2.2]{FSS} The automorphism $\mu$ on $A$ satisfies the condition (LC1) if and only if for every $i\in I$,
the Dynkin subdiagram of  $\mathcal O(i)$ is either

\te{(i)} a direct sum of type $A_1$, or

\te{(ii)} a direct sum of type $A_2$ with $a_{\mu^{N_i/2}(i),i}=-1$.
\end{lem}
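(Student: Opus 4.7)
The plan is to reduce the combinatorics of the full orbit $\mathcal O(i)$ to a one-variable problem by exploiting $\mu$-invariance. Since $a_{\mu(p)\mu(q)}=a_{pq}$, the entry $a_{\mu^j(i),\mu^k(i)}$ depends only on $j-k\bmod N_i$. So setting $c_j:=a_{\mu^j(i),i}$ for $j=0,1,\dots,N_i-1$ captures everything, with $c_0=2$ and $c_j\le 0$ for $j\ne 0$. The whole Dynkin subdiagram on $\mathcal O(i)$ is then governed by the sequence $(c_0,\dots,c_{N_i-1})$, and (LC1) reads $\sum_j c_j>0$, equivalently $\sum_{j\ne 0}c_j\ge -1$.

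Next I would use the symmetrizability axiom $a_{pq}=0\Leftrightarrow a_{qp}=0$, which in terms of the $c_j$ says $c_j=0$ if and only if $c_{N_i-j}=0$. Thus, apart from the possibly self-paired index $j=N_i/2$ (which can occur only when $N_i$ is even), the nonzero $c_j$ (for $j\ne 0$) occur in pairs $\{c_j,c_{N_i-j}\}$, each entry being a negative integer. Any such unordered pair contributes at most $-2$ to $\sum_{j\ne 0}c_j$, which would violate $\sum_{j\ne 0}c_j\ge -1$. Hence no such paired contribution can occur, and one is left with exactly two possibilities: either (a) $c_j=0$ for all $j\ne 0$, or (b) $N_i$ is even, $c_{N_i/2}=-1$, and $c_j=0$ for all other $j\ne 0$.

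I would then read off the Dynkin subdiagram in each case. In case (a) the only nonzero entries on $\mathcal O(i)\times\mathcal O(i)$ are the diagonal $2$'s, so the subdiagram is a disjoint union of $N_i$ copies of $A_1$. In case (b), the nonzero off-diagonal entries are $a_{\mu^j(i),\mu^l(i)}=-1$ precisely when $j-l\equiv N_i/2\pmod{N_i}$; this pairs $\mu^l(i)$ with $\mu^{l+N_i/2}(i)$ for $l=0,\dots,N_i/2-1$ and gives a disjoint union of $N_i/2$ copies of $A_2$, with in particular $a_{\mu^{N_i/2}(i),i}=-1$. This establishes the forward direction.

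For the converse, I would simply compute: in configuration (i) the sum $\sum_{p\in\mathcal O(i)}a_{pi}=2>0$, while in configuration (ii) the sum is $2-1=1>0$. The whole argument is elementary; the only subtle point, and the place I would be most careful, is the parity/self-pairing at $j=N_i/2$, since overlooking it would either miss the $A_2$ case or erroneously rule it out.
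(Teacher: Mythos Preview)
Your argument is correct and complete: the circulant structure of the submatrix on $\mathcal O(i)$ reduces (LC1) to the inequality $\sum_{j\ne 0}c_j\ge -1$, and the pairing $c_j\leftrightarrow c_{N_i-j}$ together with the self-paired slot at $j=N_i/2$ yields exactly the two configurations (i) and (ii). The paper does not supply its own proof of this lemma; it is quoted directly from \cite{FSS}, so there is nothing in the paper to compare your approach against.
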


For $i,j\in I$,  set
\begin{align}\label{caij}\check{a}_{ij}=s_i\sum_{p\in \mathcal O(i)}a_{pj}\in s_i\Z,\end{align}
where
\begin{equation}\label{si} s_i=\begin{cases} 1,\ \te{ if (i) holds in Lemma \ref{lem:linking}};\\
2,\ \te{ if (ii) holds in Lemma \ref{lem:linking}.}\end{cases}
\end{equation}
For convenience, we  fix a set of representatives for the orbits of $\mu$:
 \[\check{I}=\{ i\in I\,|\,\mu^k(i)\geq i\,\text{ for }k=0,\dots,N-1 \}.\]
It was known
\cite[Sect. 2.2]{FSS} that the folded matrix
\begin{align}
\check{A}=(\check{a}_{ij})_{i,j\in \check{I}}
\end{align}
of $A$ associated with $\mu$ is  a symmetrizable GCM. Moreover,
 $\check{A}$ is of finite (resp. affine;
 resp. indefinite) type
if and only if $A$ is of finite (resp. affine;
 resp. indefinite) type.

Denote by $\check{\h}$  the subspace of $\h$  fixed by the isometry $\mu$.
For  $i\in I$, set
\begin{align}
\check{\al}_i=\frac1{N_i}\sum_{p\in \mathcal O(i)}\al_{p}\quad \te{and}\quad \check{\al}_i^\vee=s_i \sum_{p\in \mathcal O(i)}\al_{p}^\vee.\end{align}
Then we have
$\h=\chh\oplus (1-\mu)\h,$ $\check{\al}_i|_{\chh}=\al_i|_{\chh}$ and $\check{\al}_i|_{(1-\mu)\h}=0$
 for $i\in I$.
This gives that $\check{\al}_i$'s can be canonically identified with $\al_i|_{\chh}$.
Thus,   the triple
\[(\check{\h},\ \check{\Pi}=\{\check{\al}_i\}_{i\in \check{I}},\ \check{\Pi}^\vee=\{\check{\al}_i^\vee\}_{i\in \check{I}})\] is
 a realization of the folded GCM $\check{A}$.
 Moreover, set
 \begin{align}\label{cri}
 \check{r}_i=\frac{N}{s_iN_i}r_i,\ i\in I\quad\te{and}\quad \check{D}=\te{diag}\{\check{r}_i\}_{i\in \check{I}}.
 \end{align}
Then the matrix
$\check{D}\check{A}$  is symmetric. Denote by $\check{\g}=\g(\check{A})$
the Kac-Moody Lie algebra associated with the quadruple
$(\check{A},\check{\h},\check{\Pi},\check{\Pi}^\vee)$, which is called the orbit Lie algebra of $\g$ associated with $\mu$ (\cite{FSS}).

Note that the condition (LC1) on $\mu$  controls 
the edges joining the vertices in a same  orbit
so that the correspond folded matrix is again a GCM.
We now introduce two other linking conditions on $\mu$ which control the edges joining the vertices in different orbits.
Set
\begin{align}
\mathbb I=\{(i,j)\in I\times I\mid i\notin \mathcal O(j)\ \te{and}\ a_{ij}<0\}.
\end{align}
and for $i,j\in I$, set
\begin{align}\label{gammaij}
\Gamma_{ij}=\{k\in \Z_N\mid a_{i\mu^k(j)}\ne 0\}\quad\te{and}\quad
\Gamma_{ij}^*=\{k\in \Z_N\mid a_{i\mu^k(j)}=a_{ij}\}.\end{align}
The other linking conditions on $\mu$ used in this paper are as follows:

\vspace{1.5mm}
 \te{(LC2)}\quad  $\Gamma_{ij}=\Gamma_{ij}^*$\quad for all $(i,j)\in \mathbb I$.
\vspace{1mm}

\vspace{1.5mm}
\te{(LC3)}\quad $\Gamma_{ij}$ is a subgroup of $\Z_N$\ for all $(i,j)\in \mathbb I$.
\vspace{1mm}

We say that an automorphism of $A$ is  transitive if it acts transitively on the set $I$.
Note that when $A$ is of affine type,  an automorphism of $A$ is transitive if and only if
$A$ is of type $A_\ell^{(1)}$
and it has order $\ell+1$ by rotating the Dynkin diagram.
Here and henceforth, when $A$ is of finite type or affine type,
 we will label  $A$ (or $\g$)   using the Tables  Fin and Aff 1-3 of \cite[Chap 4]{Kac-book}.
For a list of  automorphisms on affine GCMs, one may see \cite[Tables 2, 3]{ABP}. 
We have:
\begin{lem}\label{lem:LC1aff} Assume that the GCM $A$ is of finite type or affine type. Then
\begin{enumerate}
\item an automorphism $\mu$ of $A$  does not satisfy the condition (LC1) if and only if $A$ is of affine type $A_\ell^{(1)}$ and
$\mu$ is transitive;
\item  all the automorphisms of $A$ satisfy the condition (LC2);
\item an automorphism $\mu$ of $A$ does not satisfy the condition (LC3) if and only if $A$ is of affine type $A_{\ell}^{(1)}$ with $\ell=2k+1$ for some $k\ge 2$
and $\mu$ is of order $k+1$.
\end{enumerate}
\end{lem}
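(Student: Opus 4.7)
The plan is a case-by-case verification based on the classification of diagram automorphisms of finite- and affine-type GCMs (for instance \cite[Tables Fin and Aff 1--3]{Kac-book} together with \cite[Tables 2, 3]{ABP}). All three parts boil down to small combinatorial checks on the (extended) Dynkin diagram of $A$ and the action of $\mu$ on it, and most of the work concentrates on the $A_\ell^{(1)}$ family, whose automorphism group is the dihedral group of order $2(\ell+1)$.

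For part (1), I invoke Lemma \ref{lem:linking}: (LC1) is equivalent to each orbit inducing a disjoint union of $A_1$'s, or of $A_2$'s with $a_{\mu^{N_i/2}(i),i}=-1$. If the Dynkin diagram is a tree (i.e.\ for all finite types and for every affine type other than $A_\ell^{(1)}$), the orbit of any graph automorphism consists of pairwise non-adjacent vertices, so (LC1) holds. For $A_\ell^{(1)}$ and a rotation $\mu$ of order $d\mid\ell+1$ with step $m=(\ell+1)/d$, the orbit of $i$ is $\{i,i+m,\ldots,i+(d-1)m\}$, and these $d$ equally spaced points on the $(\ell+1)$-cycle are pairwise non-adjacent precisely when $m\geq 2$; for $m=1$ (the transitive case) the orbit is all of $I$ and $\sum_{p\in\mathcal{O}(i)}a_{pi}=\sum_{p\in I}a_{pi}=0$, so (LC1) fails. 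For reflections of $A_\ell^{(1)}$ the orbits have size $\leq 2$, and the only transitive instance among them is the swap on $A_1^{(1)}$, where $a_{01}=-2$ rules out both clauses of Lemma \ref{lem:linking} and (LC1) fails again.

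For part (2), if $A$ is simply laced (covering the finite types $A_\ell,D_\ell,E_6,E_7,E_8$ and their untwisted affinizations) every nonzero off-diagonal entry of $A$ equals $-1$, so $\Gamma_{ij}^*=\Gamma_{ij}$ is automatic. For the non-simply-laced finite types $B_\ell,C_\ell,F_4,G_2$ there is no nontrivial diagram automorphism. For the remaining affine diagrams ($B^{(1)},C^{(1)},F_4^{(1)},G_2^{(1)}$ and the twisted series), a direct inspection of \cite[Tables 2,3]{ABP} shows that every nontrivial $\mu$ is an involution that swaps only pairs of nodes joined to a common neighbor by edges of equal weight, so whenever $j$ and $\mu^k(j)$ are both adjacent to $i$ one has $a_{i\mu^k(j)}=a_{ij}$.

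For part (3), outside the $A_\ell^{(1)}$-rotations of order $\geq 3$ and the $D_4^{(1)}$-triality every $\mu$ has order $N=2$; since $0\in\Gamma_{ij}$, every $\Gamma_{ij}$ is automatically a subgroup of $\Z_2$. For the $D_4^{(1)}$-triality one checks directly that $\Gamma_{ij}\in\{\{0\},\Z_3\}$ for every $(i,j)\in\mathbb{I}$. The core computation is for a rotation $\mu$ of $A_\ell^{(1)}$ of order $d$ with step $m=(\ell+1)/d$: fixing $j=i+1$, one has $\mu^k(j)=i+1+km$, and $a_{i,\mu^k(j)}\neq 0$ amounts to $km\equiv 0$ or $-2\pmod{\ell+1}$ (the case $km\equiv-1$ being excluded by $m\geq 2$). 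Since $m\mid \ell+1$, $km\equiv -2$ has a solution iff $m\mid 2$, giving $\Gamma_{i,i+1}=\{0\}$ when $m\geq 3$ and $\Gamma_{i,i+1}=\{0,-1\}$ when $m=2$ (equivalently $\ell=2d-1$). The latter set is a subgroup of $\Z_d$ iff $-2\in\{0,-1\}$ in $\Z_d$, i.e.\ iff $d\leq 2$. Writing $d=k+1$, this failure occurs exactly for $A_{2k+1}^{(1)}$ with $\mu$ of order $k+1$ and $k\geq 2$, as claimed. The only real obstacle in the whole argument is this last cyclic-group computation; the remaining cases are routine bookkeeping once the relevant tables are on the table.
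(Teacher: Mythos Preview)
Your approach is the same as the paper's --- a case-by-case sweep through the tables of diagram automorphisms, with the real content concentrated in the $A_\ell^{(1)}$ rotations for part (3). Your core $A_\ell^{(1)}$ computation in (3) matches the paper's (your step $m$ is the paper's $b=\gcd(\ell+1,a)$) and is correct. However, there are two genuine gaps.

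\textbf{Part (1).} Your claim that in a tree ``the orbit of any graph automorphism consists of pairwise non-adjacent vertices'' is false: the nontrivial automorphism of finite $A_{2n}$ swaps the two middle vertices $n$ and $n+1$, which are adjacent. The repair is immediate: in a simply-laced tree an orbit of size two on adjacent vertices has $a_{\mu(i),i}=-1$, so clause (ii) of Lemma \ref{lem:linking} applies; and an orbit of size $\ge 3$ in a tree cannot contain adjacent vertices (else one produces a cycle). But as written the step is wrong.

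\textbf{Part (3).} Your assertion that ``outside the $A_\ell^{(1)}$-rotations of order $\ge 3$ and the $D_4^{(1)}$-triality every $\mu$ has order $N=2$'' is also false. You omit: the triality on finite $D_4$; the order-$4$ automorphisms of $D_\ell^{(1)}$ for $\ell\ge 4$ (for $D_4^{(1)}$ a $4$-cycle on the four leaves, and for $\ell\ge 5$ the composite of the global reflection with a swap at one fork); and the order-$3$ automorphisms of $E_6^{(1)}$ (three arms of length two from a central vertex, permuted by $S_3$). In each of these cases one checks directly that every $\Gamma_{ij}$ is a subgroup of $\Z_N$ (typically $\{0\}$, all of $\Z_N$, or the index-$2$ subgroup), so (LC3) holds; but the cases must be listed and dispatched. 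The paper's proof handles the $D_4$, $D_4^{(1)}$ and $D_\ell^{(1)}$ cases explicitly (it too appears to overlook $E_6^{(1)}$, though that case is equally routine).

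A minor point: in (3) you tacitly take $\mu$ to be rotation by exactly $m=(\ell+1)/d$ steps, whereas an arbitrary rotation of order $d$ rotates by $a$ steps with $\gcd(a,\ell+1)=m$. This is harmless --- replacing $\mu$ by $\mu^s$ with $\gcd(s,N)=1$ sends $\Gamma_{ij}$ to $s^{-1}\Gamma_{ij}$ and preserves the subgroup property --- but it deserves a sentence.
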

\begin{proof}
The first assertion was proved in \cite{FSS}. For the second one, it suffices to
check those nontrivial automorphisms on non-simply-laced affine GCMs. Namely,
$A$ is of type $B_\ell^{(1)} (\ell\ge 3), C_\ell^{(1)} (\ell\ge 2), A_{2\ell-1}^{(2)} (\ell\ge 3)$ or $D_{\ell+1}^{(2)} (\ell\ge 2)$ and $\mu$ has order $2$.
One can check directly that in each case $\mu$ satisfies the condition (LC2).

Now we turn to prove the third assertion.
Note that if $\mu$ has order $1$ or $2$, then it automatically  satisfies the condition (LC3).
So we only need to consider the following cases:
\begin{enumerate}
\item $A$ has type $D_4$ or $D_4^{(1)}$ and $\mu$ has order $3$;
\item $A$ has type $D_\ell^{(1)}$ $(\ell\ge 4)$ and $\mu$ has order $4$;
\item $A$ is of type $A_\ell^{(1)}$ $(\ell\ge 2)$ and $\mu$ is a rotation of Dynkin diagram by $a$ nodes for some $1\le a\le \lfloor \frac{\ell+1}{2}\rfloor$.
\end{enumerate}
For the first two cases, it is obvious that $\mu$ satisfies the condition (LC3).
Assume now that $\mu$ is as in Case (3). Set $I=\{0,1,\dots,\ell\}$ and
$b=\mathrm{gcd}(\ell+1,a)$.
Then $\mu=(0,1,\dots,\ell)^a$ has order $\frac{\ell+1}{b}$, and there are exactly $b$ $\mu$-orbits
\[\{i+bm\mid 0\le m\le \frac{\ell+1}{b}-1\},\quad i=0,1,\dots,b-1\]
in $I$.
If $\mu$ does not satisfy (LC3), then $b\ge 2$, $\frac{\ell+1}{b}\ge 3$, and
there exists $i\in I$ such that the vertices adjacent to $i$ lie in a common $\mu$-orbit.
This implies that $b=2$ and $\ell=2k+1$ for some $k\ge 2$. Conversely, if $b=2$ and $\ell=2k+1$ for some $k\ge 2$,
then one verifies that $\mu$ does not satisfy (LC3), as required.
\end{proof}

\section{Twisted quantum affinizations}\label{sec:tqaff}
In the rest of the paper, we will always assume that $\mu$ is an automorphism of
the symmetrizable GCM $A$, which has order $N$ and
satisfies the linking conditions (LC1) and (LC2).
In Sections 5, 6 and 8, we will further assume that the automorphism $\mu$ satisfies the linking condition (LC3).

In this section,  we introduce a notion of $\mu$-twisted quantum affinization algebras.

\subsection{Twisted quantum affinizations}\label{subsec:q-KM-alg}
We start with some conventions. In this paper, by a $\C[[\hbar]]$-algebra ,
we mean a topological algebra  over $\C[[\hbar]]$, equipped
with its canonical $\hbar$-adic topology.
For two $\C[[\hbar]]$-modules $V$ and $W$, we denote by $V\wh{\ot} W$ the $\hbar$-adically
completed tensor product of $V$ and $W$.
For any invertible element $v$  in $\C[[\hbar]]$ and  $n, k, s\in\Z$ with $0\leq k\leq s$,
we define the usual quantum numbers as follows
\begin{eqnarray*}
[n]_v=\frac{v^n-v^{-n}}{v-v\inverse},\quad
[s]_v!=[s]_v[s-1]_v\cdots [1]_v,\quad\te{and}\quad
\binom{s}{k}_v=\frac{[s]_v!}{[s-k]_v![k]_v!}.
\end{eqnarray*}
Throughout this paper, we set (see \eqref{ri} and \eqref{cri})
\begin{align}\label{eq:defqi}
q=e^{\hbar},\quad q_i=q^{r_i},\quad d_i=N/N_i,\quad\te{and}\quad \check{q}_i=q^{\check{r}_i}
=q_i^{d_i/s_i}\quad \te{for}\quad i\in I.
\end{align}

The following notion was introduced independently by Drinfeld  and Jimbo (cf. \cite{Kassel-topologically-free}).
\begin{de}
The quantum Kac-Moody algebra  $\U_\hbar(\g)$    is
the  $\C[[\hbar]]$-algebra topologically generated by the elements
$h\in \h, e_i^\pm, i\in I$, and subject to the relations $(h,h'\in \h, i,j\in I)$
\begin{eqnarray}
&&[h,h']=0,\quad [h,e_i^\pm]=\pm\al_i(h) e_i^\pm,\quad [e_i^+,e_j^-]=\delta_{i,j}
\frac{q_i^{\alck_i}-q_i^{-\alck_i}}{q_i-q_i^{-1}},\\ \label{quan-S-r}
&&\sum_{r=0}^{1-a_{ij}}(-1)^r\binom{1-a_{ij}}{r}_{q_i}(e_i^\pm)^r e_j^\pm (e_i^\pm)^{1-a_{ij}-r}=0,\quad \te{if}\ i\ne j.
\end{eqnarray}
\end{de}

For  $i,j\in I$ with $\check{a}_{ij}<0$,
denote by $\<\Gamma_{ij}^{\mathrm{d}}\>$ the subgroup of $\Z_N$ generated by the set
 \begin{align}\label{eq:gammaij-}
 \Gamma_{ij}^{\mathrm{d}}=\{k-l\mid k,l\in \Gamma_{ij}\}.\end{align}
Let us introduce the integers
\begin{align}\label{dij}d_{ij}=\mathrm{Card}\ \<\Gamma_{ij}^{\mathrm{d}}\>,\quad  \bar{a}_{ij}=\min\{a_{i\mu^k(j)}\mid k\in \Z_N\}, \quad\te{and}\quad m_{ij}=1-\bar{a}_{ij},\end{align}
and the sets
 \[\Gamma_{ij}^{\mathrm{e}}=\{k+l\mid k\in \<\Gamma_{ij}^{\mathrm{d}}\>, l\in \Gamma_{ij}\}\quad\te{and}\quad \overline{\Gamma}_{ij}=\Gamma_{ij}^{\mathrm{e}}\setminus \Gamma_{ij}.\]
Note that both $d_i$ and $d_j$ divide $d_{ij}$, and $\overline{\Gamma}_{ij}=\emptyset$ if $\mu$ satisfies
the condition  (LC3).

For $i,j\in I$, we introduce the ($\C[[\hbar]]$-valued)  polynomials:
\begin{align} \label{e:F}
&F^\pm_{ij}(z,w)=\prod_{k\in \Gamma_{ij}}\left( z-\xi^{k}q_i^{\pm a_{i\mu^k(j)}}w \right),\\ \label{e:G}
&G^\pm_{ij}(z,w)=\prod_{k\in \Gamma_{ij}}\left( q_i^{\pm a_{i\mu^k(j)}}z-\xi^{k}w \right),\\
\label{e:barF}
&\bar{F}^\pm_{ij}(z,w)=\prod_{k\in \overline{\Gamma}_{ij}}\left( z-\xi^{k}q_i^{\pm \bar{a}_{ij}}w \right),\quad\te{if}\ \check{a}_{ij}<0,\\ \label{e:barG}
&\bar{G}^\pm_{ij}(z,w)=\prod_{k\in \overline{\Gamma}_{ij}}\left( q_i^{\pm \bar{a}_{ij}}z-\xi^{k}w \right),\quad\te{if}\ \check{a}_{ij}<0,\\\label{pi}
&p_i^\pm(z_1,z_2,z_3)=q_i^{\mp \frac{3}{2}d_{i}}z_1^{d_{i}}
        -(q_i^{\frac{d_{i}}{2}}+q_i^{-\frac{d_{i}}{2}})
            z_2^{d_{i}}
        +q_i^{\pm \frac{3}{2}d_{i}}
            z_3^{d_{i}},\quad\te{if}\ s_i=2,\\
            \label{pij}
&p_{ij}^\pm(z,w)=
    \left(
        z^{d_i}+q_i^{\mp d_i}w^{d_i}
    \right)^{s_i-1}
    \frac{
        q_i^{\pm 2d_{ij}}z^{d_{ij}}-w^{d_{ij}}
    }{
        q_i^{\pm 2d_i}z^{d_{i}}-w^{d_i}
    },\quad\te{if}\ \check{a}_{ij}<0,
\end{align}
and the formal series
\begin{align}
\label{e:g}
g_{ij}(z)
=\prod_{k\in \Z_N}\frac{ q_i^{a_{i\mu^k(j)}}-\xi^k z}{1-\xi^kq_i^{a_{i\mu^k(j)}}z},
\end{align}
which is expanded for $|z|<1$,
where
\begin{align}
\xi=\xi_N=e^{2\pi\sqrt{-1}/N}.\end{align}

For $i,j\in I$ with $\check{a}_{ij}<0$ and $r=0,1,\dots,m_{ij}$, we also introduce the polynomial
\begin{align}
\label{pij-total} p_{ij,r}^\pm(z_1,\dots,z_{m_{ij}},w)=\prod_{1\le a<b\le m_{ij}}p_{ij}^\pm(z_a,z_b)
    \prod_{1\le a\le r}\bar{G}_{ij}^\pm(z_a,w)
    \prod_{r<b\le m_{ij}}\bar{F}_{ij}^\pm(z_b,w).
\end{align}
Note that if $(i,j)\in \mathbb{I}$, then
\begin{align}
\bar{F}_{ij}^\pm(z,w)=\frac{z^{d_{ij}}-q_i^{\pm d_{ij} a_{ij}}w^{d_{ij}}}{F_{ij}^{\pm}(z,w)}
\quad\te{and}\quad \bar{G}_{ij}^\pm(z,w)=\frac{q_i^{\pm d_{ij} a_{ij}}z^{d_{ij}}-w^{d_{ij}}}{G_{ij}^{\pm}(z,w)}.
\end{align}


 Now we introduce the quantum algebras concerned about in this paper.

\begin{de}\label{de:tqaffine}
The $\mu$-twisted quantum affinization $\qtar$ of  $\U_\hbar(\g)$
is   the $\C[[\hbar]]$-algebra topologically
generated by the set
\begin{eqnarray}\label{eq:tqagenerators}
\set{h,\ h_{i,m},\  x^\pm_{i,n},\ c,\ d}
{
   h\in \check{\h}, i\in I, m\in\Z^\times,n\in\Z
},
\end{eqnarray}
and subject to the relations in  terms of the generating functions
 \begin{eqnarray*}
 \phi_i^\pm(z)=q^{\pm h_{i,0}}\, \te{exp}
    \left(
        \pm (q-q\inverse)\sum\limits_{\pm m> 0}h_{i,m}z^{-m}
    \right),\quad x^\pm_i(z)=\sum\limits_{m\in \Z} x^\pm_{i,m}z^{-m},
\end{eqnarray*}
where $h_{i,0}=r_i\sum_{k\in \Z_N}\al_{\mu^k(i)}^\vee\in \check{\h}$.
The relations are ($i,j\in I$, $h,h'\in \chh$):
\begin{align*}
&\te{(Q0)  }&& \phi^\pm_{\mu(i)}(z)=\phi^\pm_i(\xi\inverse z),\quad
x^\pm_{\mu(i)}(z)=x^\pm_i(\xi\inverse z),\\
    &\te{(Q1)  }&&[d,h]=0=[d,c],\ q^d \phi_i^\pm(z) q^{-d}=\phi_i^\pm(q^{-1}z),\\
&\te{(Q2)  }&&[h,h']=0=[c,h]=[c,\phi_i^\pm(z)]=[\phi_i^\pm(z),\phi_j^\pm(w)]=[h,\phi^\pm_i(z)],\\
&\te{(Q3) }&& \phi^+_i(z)\phi^-_j(w)=\phi^-_j(w)\phi^+_i(z)
    g_{ij}(q^c w/z)\inverse g_{ij}(q^{-c}w/z),\\
    &\te{(Q4)  }&&[h, x_i^\pm(z)]=\pm \al_{i}(h)x_i^\pm(z),\
    q^d x^\pm_i(z) q^{-d}=x^\pm_{i}(q^{-1}z),\ [c,x_i^\pm(z)]=0,\\
&\te{(Q5) }&&\phi^+_i(z)x^\pm_j(w)=x^\pm_j(w)\phi^+_i(z)
    g_{ij}(q^{\mp \half c}w/z)^{\pm 1},\\
&\te{(Q6) }&& \phi^-_i(z)x^\pm_j(w)=x^\pm_j(w)\phi^-_i(z)
    g_{ji}(q^{\mp \half c}z/w)^{\mp 1},\\
&\te{(Q7)  }&&[x_i^+(z),x_j^-(w)]
=\frac{1}{q_i-q_i\inverse}
    \ksum\delta_{i,\mu^k(j)}\\
 &&&  \quad\times \Bigg(
        \phi_i^+(z q^{-\half c})\delta\left(
            \frac{\xi^kw q^c}{z}
        \right)
        -
        \phi_i^-(z q^{\half c})\delta\left(
            \frac{\xi^kw q^{-c} }{z}
        \right)
    \Bigg),\\
&\te{(Q8)  }&&F^\pm_{ij}(z,w)x^\pm_i(z)x^\pm_j(w)=
    G^\pm_{ ij}(z,w)x^\pm_j(w)x^\pm_i(z),\\
    &\te{(Q9) }&&\sum_{\sigma\in S_{3}}
    p_i^\pm(z_{\sigma(1)},z_{\sigma(2)},z_{\sigma(3)})\,
                x_i^\pm(z_{\sigma(1)})x_i^\pm(z_{\sigma(2)})x_i^\pm(z_{\sigma(3)})=0,
\quad
    \te{if}\ \ s_i=2,\\
&\te{(Q10)  }&&\sum_{\sigma\in S_{{m}_{ij}}}\sum_{r=0}^{{m}_{ij}}
    (-1)^r \binom{{m}_{ij}}{r}_{q_i^{d_{ij}}}
    p_{ij,r}^\pm(z_{\sigma(1)},\dots,z_{\sigma(m_{ij})},w)
    x_i^\pm(z_{\sigma(1)})\cdots x_i^\pm(z_{\sigma(r)})\\
 &\qquad&&\quad\qquad   \cdot x_j^\pm(w)
       x_i^\pm(z_{\sigma(r+1)})\cdots x_i^\pm(z_{\sigma({m}_{ij})})\ =0,\quad
    \te{if}\ \ \check{a}_{ij}<0,
\end{align*}
where
$\delta(z)=\sum_{n\in\Z}z^n$ is the usual delta function.
\end{de}

It is straightforward to see that the relations  (Q1)-(Q10) are compatible with
(Q0) and so the $\C[[\hbar]]$-algebra $\qtar$  is well-defined.
When $\g$ is of finite type, the algebra $\qtar$ was first introduced by Drinfeld \cite{Dr-new} for the purpose of giving
a current presentation of quantum affine algebras:

\begin{thm}\label{thm:tqaa}[\cite{Dr-new,beck,Da1,Da2,JZ1}] Assume that $\g$ is of finite type  $X_\ell$.
Then $\qtar$ is isomorphic to
the quantum affine algebra of type $X_\ell^{(N)}$
\end{thm}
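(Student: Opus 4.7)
The plan is to exhibit mutually inverse $\C[[\hbar]]$-algebra homomorphisms between $\qtar$ and the Drinfeld--Jimbo quantum affine algebra $\U_\hbar(X_\ell^{(N)})$, built from the orbit Lie algebra data of $(\g,\mu)$. Since $\g$ is of finite type $X_\ell$, the folded matrix $\check{A}$ of Section~\ref{sec:auto} is the affine GCM of type $X_\ell^{(N)}$, and the realization $(\check{\h},\check{\Pi},\check{\Pi}^\vee)$ already supplies the ``horizontal'' Cartan data required for the Drinfeld--Jimbo presentation. The first step is to fix, once and for all, a choice of additional imaginary root $\al_0$ (the minimal root of the twisted affinization in the sense of \cite{Kac-book}) together with its coroot $\al_0^\vee$ and to recall the standard DJ presentation of $\U_\hbar(X_\ell^{(N)})$ on generators $E_i,F_i,K_i^{\pm 1}$ ($i\in\check I\cup\{0\}$) together with the degree derivation $d$ and central element $c$.

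Next I would construct the forward map $\Phi:\U_\hbar(X_\ell^{(N)})\to\qtar$. For $i\in\check I$, $\Phi$ sends the finite-type generators $E_i,F_i,K_i$ to $x^+_{i,0},x^-_{i,0},q^{h_{i,0}}$, which by (Q0)--(Q10) restricted to loop degree zero satisfy the quantum Serre relations of the folded Cartan matrix (using that, under (LC1)--(LC2), the degree-zero parts of (Q8)--(Q10) specialize to the $\check{A}$-Serre relations, compare \cite{JZ1}). The affine generator $E_0$ (resp.~$F_0$) is mapped, following Beck's recipe adapted to the twisted case by Damiani and Jing--Zhang, to an explicit linear combination of products $x^-_{i,n_i}\cdots x^-_{j,n_j}$ (resp.~the plus-analogue) whose coefficients are determined by the highest short/long root of $X_\ell^{(N)}$; the key point is that $K_0$ is forced to be $q^c\cdot(\text{product of }K_i^{-c_i})$ where $c_i$ are the marks of $X_\ell^{(N)}$.

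For the reverse direction I would invoke the affine braid group $\mathcal B_{X_\ell^{(N)}}$ acting on $\U_\hbar(X_\ell^{(N)})$ by Lusztig's automorphisms $T_i$. Fixing a reduced expression for the translation elements $\tau_i$ in the extended affine Weyl group indexed by $\check I$, define, for each $i\in\check I$ and $n\in\Z$,
\[
\Psi(x^+_{i,n})\ =\ \pm\,T_{\tau_i}^{\,n}(E_i),\qquad \Psi(x^-_{i,n})\ =\ \pm\,T_{\tau_i}^{\,-n}(F_i),
\]
with the signs and normalizations chosen as in \cite{beck,Da1,Da2}, and define $\Psi(\phi^\pm_i(z))$ through the Cartan--like elements produced by the same braid group action. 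Then extend by (Q0): $\Psi(x^\pm_{\mu(i),n})=\xi^n \Psi(x^\pm_{i,n})$, which is consistent because the braid group action intertwines the automorphism $\mu$. The bulk of the verification is to check that these images satisfy (Q0)--(Q10). For (Q0)--(Q7) the arguments are parallel to Beck's in the untwisted case, with the $\mu$-twisted delta sum in (Q7) appearing naturally from the folding.

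The main obstacle, and the technical heart of the proof, is verifying the Drinfeld-type Serre relations (Q8)--(Q10) for the images $\Psi(x^\pm_i(z))$. The rank-$2$ identities (Q8) reduce, after tracking the $\Gamma_{ij}$-indexing, to the quantum Serre relations of type $X_\ell^{(N)}$ between the pairs $(E_i,E_j)$ and their braid translates; this is essentially the content of Damiani's papers \cite{Da1,Da2} where the case-by-case analysis is carried out (the $A_{2\ell}^{(2)}$ case with $s_i=2$ is where (Q9) enters, and (Q10) collects the remaining higher Serre relations with the Drinfeld polynomials $p_{ij,r}^\pm$ arising from $\bar F_{ij}^\pm,\bar G_{ij}^\pm$). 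I would organize this by a rank-$2$ reduction: for each pair $(i,j)\in\mathbb I$, the subalgebra of $\U_\hbar(X_\ell^{(N)})$ generated by the appropriate root vectors is a rank-$2$ quantum affine algebra of known type, and the identities become finite checks verified in \cite{Da1,Da2}.

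Finally, once $\Phi$ and $\Psi$ are both shown to be algebra homomorphisms, I would prove $\Phi\circ\Psi=\mathrm{id}$ and $\Psi\circ\Phi=\mathrm{id}$ by comparing their action on generators and using the triangular decomposition of $\qtar$ established in Section~\ref{sec:tri} (together with the well-known triangular decomposition of $\U_\hbar(X_\ell^{(N)})$) to rule out a nontrivial kernel; surjectivity of each map is immediate from the chosen generators. This closes the proof.
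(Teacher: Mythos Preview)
The paper does not supply its own proof of this theorem: it is stated with attribution to \cite{Dr-new,beck,Da1,Da2,JZ1} and immediately followed by the next paragraph, so there is nothing to compare against beyond the cited literature. Your sketch is broadly faithful to that literature---the forward map on degree-zero generators, Beck's braid-group construction of the Drinfeld generators via translations $T_{\tau_i}$, Damiani's case-by-case verification of the twisted Drinfeld--Serre relations (with the $A_{2\ell}^{(2)}$ anomaly handled by (Q9)), and the injectivity argument via triangular decomposition as in \cite{Da2}---so as an outline it is correct and matches the approach actually taken in those references.

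One correction: your opening sentence mis-identifies $\check{A}$. When $\g$ is of finite type $X_\ell$, the folded matrix $\check{A}$ of Section~\ref{sec:auto} is again of \emph{finite} type (this is stated explicitly in Section~\ref{subsec:link-cond}), not the affine GCM $X_\ell^{(N)}$. The realization $(\check{\h},\check{\Pi},\check{\Pi}^\vee)$ gives only the finite Cartan data of the orbit Lie algebra $\check{\g}$; the affine node $0$, the central element $c$, and the derivation $d$ come from the affinization, not from folding. You seem to understand this later (you add the node $0$ by hand and write $K_0$ in terms of $q^c$), so this is a slip in the exposition rather than in the argument, but it should be fixed.
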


When $\mu=\mathrm{id}$, $\U_\hbar(\hat\g)=\qtar$ was called the   quantum affinization of
$\U_\hbar(\g)$ (\cite{J-KM,Naka-quiver,He-total}).
And when $\g$ is of simply-laced type,
$\qtar$ was realized in \cite{CJKT-twisted-quantum-aff-vr} in terms of  vertex operators.

\begin{rem}{\em When $\g$ is of non-simply-laced type and $\mu\ne \mathrm{id}$, the affine quantum Serre relations (Q10) are new.
According to the structure theory of $\qtar$ developed later, it seems that our generalization is natural.}\end{rem}

Alternatively we can write the defining relations of $\qtar$
in terms of the generators given in  \eqref{eq:tqagenerators}.
For the defining relations (Q0)-(Q7), we have
($i,j\in I$, $h,h'\in \check\h$, $m, n\in \Z$):
\begin{eqnarray*}
&\te{(Q0$'$)} &h_{\mu(i),m}=\xi^{m}h_{i,m},\ x^\pm_{\mu(i),n}=\xi^{n}x_{i,n}^\pm,\\
&\te{(Q1$'$)} &[d,h]=0=[d,c],\ [d,h_{i,m}]=m h_{i,m},\ \\
&\te{(Q2$'$)} &[h,h']=0=[c,h]=[c,h_{i,\pm m}]=[h,h_{i,\pm m}]=[h_{i,\pm m}, h_{j,\pm n}],\ m,n\ge 0,\\
&\te{(Q3$'$)} &[h, x_{i,n}^\pm]=\pm\al_{i}(h) x_{i,n}^\pm,\ [d,x^\pm_{i,n}]=n x_{i,n}^\pm,\
[c,x^\pm_{i,n}]=0,\\
&\te{(Q4$'$)} &[h_{i,m},h_{j,-n}]=\delta_{m,n}\frac{1}{m}
    \ksum \xi^{mk}[m r_i a_{i\mu^k(j)}]_{q}\frac{q^{mc}-q^{-mc}}{q-q^{-1}},\  m,n>0,\\
&\te{(Q5$'$)} &[h_{i,m},x_{j,n}^\pm]
=\pm \frac{1}{m}\ksum \xi^{mk}[m r_i a_{i\mu^k(j)}]_{q} q^{\mp\half mc} x_{j,m+n}^\pm,\ m>0,\\
&\te{(Q6$'$)}&[h_{i,m},x_{j,n}^\pm]
=\pm \frac{1}{m}\ksum \xi^{mk}[m r_i a_{i\mu^k(j)}]_{q} q^{\pm\half mc} x_{j,m+n}^\pm,\ m<0,\\
&\te{(Q7$'$)}&[x_{i,m}^+, x_{j,n}^-]= \sum_{k\in \Z_N}\delta_{i,\mu^k(j)} \xi^{-nk}
\frac{\phi_{j,m+n}^+ q^{\frac{m-n}{2}c}-\phi_{j,m+n}^- q^{\frac{n-m}{2}c}}{q_i-q_i^{-1}},
\end{eqnarray*}
where the elements $\phi_{j,m}^+$ and $\phi_{j,-m}^-$ ($m\ge 0$) are defined respectively by
\[\phi^+_j(z)=\sum_{m\ge 0}\phi_{j,m}^+z^{-m}\quad\te{and}\quad \phi^-_j(z)=\sum_{m\ge 0}\phi_{j,-m}^-z^{m}.\]

For the relations (Q8), note that for $i\in I$ we have
\begin{eqnarray}\label{Fiiep}
F_{ii}^\pm(z,w)=(z^{d_i}+q_i^{\mp d_i}w^{d_i})^{s_i-1}(z^{d_i}-q_i^{\pm 2 d_i}w^{d_i}),\\
G_{ii}^\pm(z,w)=(q_i^{\mp d_i}z^{d_i}+w^{d_i})^{s_i-1}(q_i^{\pm 2d_i}z^{d_i}-w^{d_i}).\label{Giiep}
\end{eqnarray}
Then for the case that $i=j$,
  the relations (Q8) are equivalent to the following relations:
\begin{align}
&\sum_{\sigma\in S_2}[x_{i,m_{\sigma(1)}+d_i}^\pm,x^\pm_{i,m_{\sigma(2)}}]_{q_i^{\pm 2d_i}}=0, \quad  \te{if}\ s_i=1,\\
&\sum_{\sigma\in S_2}
\{[x_{i,m_{\sigma(1)}+2d_i},x_{i,m_{\sigma(2)}}^\pm]_{q_i^{\pm d_i}}-q_i^{\pm 2d_i}[x_{i,m_{\sigma(1)}+d_i}^\pm, x_{i,m_{\sigma(2)}+d_i}^\pm]_{q_i^{\mp 3d_i}}\}
\\\notag&\qquad\qquad
=0,\quad
\te{if}\ s_i=2,
\end{align}
where
$[a,b]_v=ab-vba$  for   $0\ne v\in \C[[\hbar]]$ and $a,b\in \qtar$.
Similarly, the relations (Q9) are equivalent to  (cf. \cite{Da1})
\begin{eqnarray*}
\te{(Q9$'$) }\qquad \sum_{\sigma\in S_3} [[x^\pm_{i,m_{\sigma(1)}\pm d_i},x_{i,m_{\sigma(2)}}^\pm]_{q_i^{d_i}}, x^\pm_{i,m_{\sigma(3)}}]_{q_i^{2d_i}}=0.
\end{eqnarray*}
Finally, for the case that $i,j\in I$ with $\check{a}_{ij}<0$,
since the  relations (Q8) and (Q10) depend on  the
polynomials $F_{ij}^{\pm}(z,w), G_{ij}^{\pm}(z,w)$ and $p^\pm_{ij,r}(z_1,\dots,z_{m_{ij}},w)$, one needs a case by case argument.
For some special cases, one may see \cite{Da1}.

\begin{rem}\label{rem:q11}{\em One of the main features in the definition of $\qtar$ is the existence of the Drinfeld polynomials
$p_{ij}^\pm(z,w)$ in the affine quantum
Serre relations (Q10).
When $\g$ is of finite type, it was proved in \cite{Da1} that the relations (Q10) in $\qtar$ are equivalent to
the following affine quantum Serre relations (without polynomials):
\begin{align*}
  \te{(Q11) }\quad&\sum_{\sigma\in S_{\check{m}_{ij}}}
   \sum_{r=0}^{\check{m}_{ij}}
    (-1)^r \binom{\check{m}_{ij}}{r}_{\check{q}_i}x_i^\pm(z_{\sigma(1)})\cdots x_i^\pm(z_{\sigma(r)})\\
 &\quad   \cdot x_j^\pm(w)
       x_i^\pm(z_{\sigma(r+1)})\cdots x_i^\pm(z_{\sigma(\check{m}_{ij})})
\,=0,\quad\te{if }\ \check a_{ij}<0,
\end{align*}
where $\check{m}_{ij}=1-\check{a}_{ij}$.
Thus, for any $\mu$-invariant subset $J$ of $I$ such that the GCM $(a_{ij})_{i,j\in J}$ is a direct sum  of
GCMs of
finite type, the relations (Q11) hold in $\qtar$ for $i,j\in J$.
We conjecture that the relations (Q11) hold in $\qtar$ for the general $\g$ and $\mu$.
}
 \end{rem}

\subsection{Twisted quantum toroidal algebras}

As in the untwisted case, we define the horizontal subalgebra $\U^h$ of $\qtar$ to be the closed subalgebra
 generated by $h, x_{i,0}^\pm$ for $h\in \h$ and $i\in I$.
When $\g$ is of affine type and $\mu$ fixes
the additional node of $\g$ (\cite{Kac-book}),
we further define the vertical subalgebra
$\U^v$ of $\qtar$ to be the closed subalgebra generated by
$h_{i,n}, x_{i,n}^\pm, c, d$ for $n\in \Z$  and $i\in I$ not equal to the additional node of $\g$.

It was known that \cite{FSS} the orbit Lie algebra $\chg$ can be realized as the subalgebra of  $\g$ generated by the elements
$h, \check{e}^\pm=\sum_{p\in \Z_N}e_{\mu^p(i)}^\pm,$ for $h\in \check{\h}$ and  $i\in \check{I}$.
However, in the quantum case,  $\U_\hbar(\chg)$ is not simply a subalgebra of $\U_\hbar(\g)$.
One may see \cite[Section 2.6]{H-tw} for details.
We expect that there is an algebra morphism from $\U_\hbar(\chg)$ to
$\qtar$.
Explicitly, noting that $\phi_{i,0}^\pm = q^{\pm h_{i,0}}$ and so from (Q7$'$) we have
    \begin{align}\label{commx0x0}
    [x_{i,0}^+, x_{j,0}^-]=\sum_{k\in \Z} \delta_{i,\mu^k(j)} \frac{q^{h_{i,0}}-q^{-h_{i,0}}}{q_i-q_i^{-1}}
    =\ksum \delta_{i,\mu^k(j)} \frac{\check{q}_i^{\check{\al}_i^\vee}-\check{q}_i^{-\check{\al}_i^\vee}}{q_i-q_i^{-1}}.
    \end{align}
   Assume now that the relations (Q11) hold on in $\qtar$. Then we have
\[\sum_{r=0}^{1-\check{a}_{ij}}(-1)^r\binom{1-\check{a}_{ij}}{r}_{\check{q}_i}(x_{i,0}^\pm)^r x_{j,0}^\pm (x_{i,0}^\pm)^{1-\check{a}_{ij}-r}=0,
\quad \te{for}\ i\ne j\in \check{I}.\] This together with (Q3$'$) and \eqref{commx0x0} gives that
there is a surjective algebra morphism
$\U_\hbar(\check{\g})\rightarrow \U^h$ defined by
\begin{align}\label{checkgmor}
h\mapsto h,\quad e_i^\pm\mapsto x_{i,0}^\pm/\sqrt{ d_i\left[\frac{d_i}{s_i} \right]_{q_i}}\quad\te{for}\quad h\in\check\h,\ i\in \check I.
\end{align} In particular, we have (see Remark \ref{rem:q11} and  Theorem \ref{thm:tqaa}):

\begin{prop} Assume that $\g$ is of untwisted affine type $X_\ell^{(1)}$ and $\mu$ fixes
the additional node of $\g$. Then $\check{A}$ is of affine type $X_\ell^{(N)}$ and
 both $\U^h$ and $\U^v$ are isomorphic to a quotient of the twisted quantum affine algebra $\U_\hbar(\check{\g})$.
\end{prop}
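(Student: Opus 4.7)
The statement splits naturally into three tasks: identifying $\check{A}$, constructing a surjection $\U_\hbar(\check{\g})\twoheadrightarrow\U^v$, and constructing a surjection $\U_\hbar(\check{\g})\twoheadrightarrow\U^h$. The first is a classical folding computation for affine Kac-Moody diagrams: because $\mu$ fixes the additional node, $\mu|_{I\setminus\{0\}}$ is a diagram automorphism of the finite-type subdiagram $X_\ell$, and folding $X_\ell^{(1)}$ is equivalent to folding $X_\ell$ and reattaching the affine node. Plugging the numerical data \eqref{caij}-\eqref{cri} into the Aff 2 and Aff 3 tables of \cite[Chap.\,4]{Kac-book} and running through every admissible pair $(X_\ell^{(1)},\mu)$ then confirms $\check{A}=X_\ell^{(N)}$.

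For the vertical subalgebra, set $I^\circ:=I\setminus\{0\}$; this is $\mu$-invariant because $\mu(0)=0$, the restriction $A|_{I^\circ}$ is of finite type $X_\ell$, and $\mu|_{I^\circ}$ still satisfies (LC1)-(LC2). Inspecting Definition \ref{de:tqaffine} reveals that the subset of relations (Q0)-(Q10) involving only indices in $I^\circ$ is precisely the defining relation set of the twisted quantum affinization of $X_\ell$ by $\mu|_{I^\circ}$; thus the corresponding generators of this twisted affinization map consistently into $\qtar$, landing by construction in $\U^v$. By Theorem \ref{thm:tqaa} the source algebra is isomorphic to the twisted quantum affine algebra of type $X_\ell^{(N)}$, i.e., to $\U_\hbar(\check{\g})$ in view of the first part. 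This yields the surjection $\U_\hbar(\check{\g})\twoheadrightarrow\U^v$.

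For the horizontal subalgebra, \eqref{checkgmor} supplies the candidate map, and the discussion preceding the proposition (via (Q2$'$), (Q3$'$), and \eqref{commx0x0}) has already verified all Drinfeld-Jimbo relations of $\U_\hbar(\check{\g})$ other than the quantum Serre relations
\[\sum_{r=0}^{1-\check{a}_{ij}}(-1)^r\binom{1-\check{a}_{ij}}{r}_{\check{q}_i}(x_{i,0}^\pm)^r x_{j,0}^\pm (x_{i,0}^\pm)^{1-\check{a}_{ij}-r}=0,\qquad i\ne j\in\check{I},\ \check a_{ij}<0.\]
These follow by specialising $z_1=\cdots=z_{\check m_{ij}}=w$ in the (Q11)-type relations, and Remark \ref{rem:q11} grants (Q11) in $\qtar$ whenever the relevant indices lie in a $\mu$-invariant $J\subseteq I$ whose GCM is of finite type. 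For pairs with $i,j\in\check I\setminus\{0\}$ take $J=I^\circ$. For pairs with $j=0$ take $J=\mathcal O(i)\cup\{0\}$; combining Lemma \ref{lem:linking} (which restricts $\mathcal O(i)$ to $A_1^{N_i}$ or $A_2^{N_i/2}$) with the observation that the unique neighbour of $0$ in every non-$A$-type untwisted affine diagram is forced to be $\mu$-fixed (since $\mu$ fixes $0$ and preserves adjacency), a short case inspection identifies $J$ as $A_2$, $A_3$, or $D_4$ in every case \emph{except} $\g=A_2^{(1)}$ with $\mu$ of order $2$, where $\mathcal O(1)\cup\{0\}$ coincides with the full affine diagram.

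The single exceptional configuration $(\g,\mu)=(A_2^{(1)},\te{order }2)$, whose fold is $\check{A}=A_2^{(2)}$, is the principal technical obstacle: the required Drinfeld-Jimbo Serre relation of degree $1-\check a_{10}=5$ between $x_{1,0}^\pm$ and $x_{0,0}^\pm$ is \emph{not} a direct specialisation of any finite-type (Q11). My plan to settle this case is to derive the degree-$5$ Serre relation by hand inside $\qtar$, using the degree-$2$ (Q10) relation for the pair $(1,0)$ iteratively, combined with the identifications $x_2^\pm(z)=x_1^\pm(\xi\inverse z)$ provided by (Q0) and the (Q8) commutation relations, along the lines of the corresponding derivation in \cite{Da1,Da2} for the finite-type $A_2$ quantum affinization. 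With this last step in hand, \eqref{checkgmor} descends to a well-defined surjective homomorphism $\U_\hbar(\check{\g})\twoheadrightarrow\U^h$, completing the proof.
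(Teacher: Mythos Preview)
Your approach is exactly the paper's: the paper supplies no separate proof but simply writes ``In particular, we have (see Remark~\ref{rem:q11} and Theorem~\ref{thm:tqaa})'' immediately before the proposition. Your treatment of $\U^v$ via Theorem~\ref{thm:tqaa} and of $\U^h$ via \eqref{checkgmor} together with Remark~\ref{rem:q11} is precisely what the paper intends, and your case analysis showing that for every pair $i,j\in\check I$ one can choose a $\mu$-invariant finite-type $J\subset I$ (either $J=I\setminus\{0\}$ or $J=\{0\}\cup\mathcal O(i)$) is a careful unpacking of that remark.

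You have in fact been more careful than the paper: your observation that $(\g,\mu)=(A_2^{(1)},\text{order }2)$ is the one configuration where no such $J$ exists, because $\mathcal O(1)\cup\{0\}=I$ is the full affine diagram, is correct and the paper does not address it. The paper's citation of Remark~\ref{rem:q11} is therefore strictly speaking insufficient for this case; the authors either overlooked it or regard the degree-$5$ Serre relation there as covered by the general conjecture at the end of Remark~\ref{rem:q11}. Your plan to derive this relation directly from (Q8), (Q9), (Q10) in the spirit of \cite{Da1,Da2} is reasonable, but note that those references treat only finite-type base, so the analogous argument for the affine base $A_2^{(1)}$ is not in the literature and would require real work; the paper's later Corollary~\ref{prop:normal-ordering-to-DJ} does establish (Q11) for all affine $\g$, but only on restricted modules, which is weaker than what you need here. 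So your proof is complete to the same extent the paper's is, with the $A_2^{(1)}$ edge case flagged rather than resolved.
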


\section{Triangular decomposition}\label{sec:tri}
In this section,  we prove a triangular decomposition of $\qtar$.
\subsection{Triangular decomposition of $\qtar$}

\begin{de}\label{de:trian-top} Let $A$ be a completed and separated   $\C[[\hbar]]$-algebra. By a triangular decomposition of $A$, we
mean a data of three closed $\C[[\hbar]]$-subalgebras $(A^-,H,A^+)$ of $A$
such that the multiplication $x^-\otimes h\otimes x^+\mapsto x^-hx^+$ induces an isomorphism from the $\hbar$-adically completed
tensor product $\C[[\hbar]]$-module
$A^-\wh\otimes H\wh\otimes A^+$ to $A$.
\end{de}

Let $\qtar^+$ (resp. $\qtar^-$; resp. $\qhei$) be the closed subalgebra of $\qtar$ generated by  $x_{i,n}^+$
(resp. $x_{i,n}^-$; resp.
$h,h_{i,m},c,d$).
The following is the main result of this section.

\begin{thm}\label{thm:tri-decomp}
$(\qtar^-,\qhei,\qtar^+)$ is a triangular decomposition of $\qtar$.
Moreover, $\qtar^+$ (resp. $\qtar^-$; resp. $\qhei$) is isomorphic to the   $\C[[\hbar]]$-algebra topologically
generated by $x_{i,n}^+$
(resp. $x_{i,n}^-$, resp.
$h,h_{i,m},c,d$), and subject to the relations
\te{(Q0), (Q8)-(Q10)} with ``$+$'' (resp. \te{(Q0), (Q8)-(Q10)} with ``$-$''; resp. \te{(Q0)-(Q2)}).
\end{thm}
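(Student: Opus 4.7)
The plan is to prove the theorem by the now-standard two-step strategy of constructing a candidate triangular model $T$ and showing it is isomorphic to $\qtar$. Let $\wt{U}^\pm$ and $\wt{U}^0$ denote the abstract topological $\C[[\hbar]]$-algebras presented by the generators and relations asserted in the theorem (that is, (Q0) together with (Q8)--(Q10) of the appropriate sign for $\wt{U}^\pm$, and (Q0)--(Q2) for $\wt{U}^0$). There are obvious continuous $\C[[\hbar]]$-algebra maps $\wt{U}^\pm\to\qtar^\pm$ and $\wt{U}^0\to\qhei$; it suffices to show that each of these is an isomorphism and that the multiplication map
\[
m\colon \wt{U}^-\wh\otimes \wt{U}^0\wh\otimes \wt{U}^+\;\longrightarrow\;\qtar,\qquad u^-\otimes u^0\otimes u^+\longmapsto u^-u^0u^+,
\]
is an isomorphism of topological $\C[[\hbar]]$-modules.

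For the surjectivity direction I would argue by a straightening procedure: the Cartan-type relations (Q1$'$)--(Q6$'$) let one commute Heisenberg generators past $x^\pm$-generators (introducing only Heisenberg and $x^\pm$ factors), while (Q7$'$) rewrites each $x_{i,m}^+\,x_{j,n}^-$ as $x_{j,n}^-\,x_{i,m}^+$ plus an element of the Heisenberg subalgebra. A double induction on total degree and on the number of inversions shows that every element of $\qtar$ is a topologically convergent sum of ordered monomials $u^-\cdot u^0\cdot u^+$, yielding surjectivity of $m$ and, a fortiori, of the canonical maps $\wt{U}^\pm\to\qtar^\pm$ and $\wt{U}^0\to\qhei$.

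For the injectivity direction the strategy is to endow $T:=\wt{U}^-\wh\otimes \wt{U}^0\wh\otimes \wt{U}^+$ with an associative algebra structure (a smash/cross-product) in which the tautological multiplications on each tensor factor are extended via explicit straightening rules: crossing a $\wt{U}^+$-generator over a $\wt{U}^-$-generator is defined to match (Q7), and crossing a Heisenberg generator over a $\pm$-generator matches (Q3)--(Q6). Once $T$ is verified to be an associative $\C[[\hbar]]$-algebra, the images of the generators $h,h_{i,m},x_{i,n}^\pm,c,d$ satisfy the defining relations (Q0)--(Q10) of $\qtar$ by construction, so the universal property yields a continuous algebra map $\qtar\to T$ which is a two-sided inverse to $m$. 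Topological freeness of $T$ (needed to identify it with $\qtar$ as a $\C[[\hbar]]$-module) can be reduced via Nakayama's lemma for topologically free $\C[[\hbar]]$-modules to the classical PBW theorem for the Lie algebra $\hat\g_\mu$ appearing as the classical limit of $\qtar$ in Section~\ref{sec:specialization}.

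The main obstacle is verifying associativity in the inverse construction --- specifically, the compatibility of the braiding rules (Q3)--(Q7) with the quadratic relations (Q8) and the affine quantum Serre relations (Q9), (Q10). Concretely, one must check that applying the straightening in two different orders to mixed products such as $x_{i,n_1}^- x_{j,m}^+ x_{k,n_2}^+$, or to the length-$(m_{ij}+1)$ products entering (Q10), yields the same element of $T$. The delicate case is (Q10), where one has to track how the Drinfeld polynomials $F_{ij}^\pm,G_{ij}^\pm,\bar F_{ij}^\pm,\bar G_{ij}^\pm,p_{ij,r}^\pm$ interact with the commutator formula (Q7) through the function $g_{ij}(z)$; here the linking conditions (LC1) and (LC2) enter essentially, via $\check a_{ij}\in s_i\Z$ and $\Gamma_{ij}=\Gamma_{ij}^\ast$. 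Once this compatibility is established, both the triangular decomposition and the intrinsic presentations of $\qtar^\pm$ and $\qhei$ follow simultaneously.
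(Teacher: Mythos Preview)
Your plan is correct in its technical core --- the ``main obstacle'' you isolate (compatibility of the commutator rule (Q7) with the Serre-type relations (Q8)--(Q10)) is exactly where the work lies --- but the paper packages this differently and more economically. Rather than building the smash-product algebra $T$ and verifying associativity directly, the paper proceeds by successive quotients, invoking a general criterion from \cite[Lemma~3.5]{He-representation-coprod-proof}: if $(A^-,H,A^+)$ is already a triangular decomposition of $A$ and $B^\pm\subset A^\pm$ are closed two-sided ideals with $AB^+\subset B^+A$ and $B^-A\subset AB^-$, then $A/\langle B^+ + B^-\rangle$ inherits the triangular decomposition $(A^-/B^-,\,H,\,A^+/B^+)$. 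Starting from $\uf$ (only (Q0)--(Q7) imposed), whose triangular decomposition is immediate, the ideal conditions reduce to commutator vanishings: Proposition~\ref{prop:tri-re1} shows $[\,\text{(Q8)}\,,\,x_k^\mp(w_0)]=0$ in $\uf$, and Proposition~\ref{prop:tri-re2} shows $[\,\text{(Q9)}\,,\,x_k^\mp(w_0)]=[\,\text{(Q10)}\,,\,x_k^\mp(w_0)]=0$ in $\ul$, the latter via the technical Lemmas~\ref{lem:tri-t-1} and~\ref{lem:tri-t-2} (which in turn feed on the polynomial identities of \cite[Lemmas~5--7]{He-representation-coprod-proof}). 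These commutator computations are equivalent in content to your associativity check, but the quotient formalism does all the bookkeeping and delivers the intrinsic presentations of $\qtar^\pm$ and $\qhei$ for free.

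One correction to your outline: the appeal to Nakayama and the classical PBW theorem for $\hat\g_\mu$ is both unnecessary and, in the paper's logical order, circular --- Proposition~\ref{prop:lietri} is obtained \emph{by specializing} the proof of Theorem~\ref{thm:tri-decomp}, not the other way around. Once your smash-product map $\qtar\to T$ exists and agrees with the inverse of $m$ on generators, the two composites are the identity on generators and hence everywhere; no separate freeness argument is required.
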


The rest of this section is devoted to proving
Theorem \ref{thm:tri-decomp}.
When $\mu=\mathrm{id}$, Theorem \ref{thm:tri-decomp} was proved in \cite{beck} for the finite type and in \cite{He-representation-coprod-proof} for the general case.
In addition, when $\g$ is of finite type, Theorem \ref{thm:tri-decomp} was proved in \cite{Da2}.
We adopt a similar method of
\cite{He-representation-coprod-proof} to show the theorem.
To prove Theorem \ref{thm:tri-decomp} and for later use,  in the following definition we collect some other $\C[[\hbar]]$-algebras related to $\qtar$.

\begin{de}\label{de:subalgebras} We denote by $\uf$  the $\C[[\hbar]]$-algebra topologically generated by the elements in \eqref{eq:tqagenerators}
with relations (Q0)-(Q7), denote by
 $\ul$  the quotient algebra of $\uf$ modulo its
closed ideal generated by relations (Q8), and denote by
$\us$  the quotient algebra of $\ul$ modulo its
closed ideal generated by relations (Q9).
Let $\uf^+$ (resp. $\uf^-$;  resp. $\hf$) be
the closed subalgebra of $\uf$ generated by
$x_{i,n}^+$ (resp.  $x_{i,n}^-$, resp.  $h,h_{i,m},c,d$).
Similarly, we have the closed subalgebras $\ul^\pm$, $\hl$ and $\us^\pm$, $\U_\hbar^{s}(\hat\h_\mu)$ of
$\ul$ and $\us$, respectively.
\end{de}

Before proving Theorem \ref{thm:tri-decomp}, we mention one of its consequences. 
For  $i\in I$, define $\U_i$ to be the closed subalgebra of
$\qtar$ generated by $h_{i,n}$, $x_{i,n}^\pm$, $c, d$ for $n\in d_i\Z$.
Denote by $\ft$ the simple finite dimensional Lie algebra of type $A_{s_i}$ and denote by
$\theta$ the diagram automorphism of $\ft$ with order $s_i$.
Then there is a $\C$-algebra morphism from $\U_{\hbar}(\hat{\ft}_\theta)$ to $\U_i$ given by
\begin{align}\label{uiiso}
h_{1,0}\mapsto \frac{h_{i,0}}{r_id_i},\
h_{1,m}\mapsto \frac{h_{i,d_im}}{[r_id_i]_q},\ x_{1,n}^\pm\mapsto \frac{x_{i,d_in}^\pm}{\sqrt{d_i[d_i]_{q_i}}} ,\
  c\mapsto \frac{c}{r_i},\ d\mapsto \frac{d}{d_i},\ \hbar\rightarrow d_ir_i \hbar
\end{align}
where $0\ne m\in\Z,n\in\Z$.
Just as untwisted quantum affinization algebras are glued by copies of quantum affine algebras of type $A_1^{(1)}$ (see (\cite[Corollary 3.3]{He-representation-coprod-proof})),
from Theorem \ref{thm:tri-decomp} we have

\begin{coro}\label{a11a22}
For $i\in I$,  $\U_i$ is isomorphic to the quantum affine algebra of type $A_{s_i}^{(s_i)}$ as $\C$-algebras with the isomorphism given by \eqref{uiiso}.
\end{coro}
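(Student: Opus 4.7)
The plan is to construct a homomorphism $\phi\colon \U_\hbar(\hat{\ft}_\theta) \to \U_i$ via the formulas in \eqref{uiiso} and then to show that $\phi$ is bijective. Since $\ft$ is of finite type $A_{s_i}$ and $\theta$ has order $s_i$, Theorem \ref{thm:tqaa} identifies the source with the quantum affine algebra of type $A_{s_i}^{(s_i)}$, so the corollary will follow.

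First I would check well-definedness by substituting the images from \eqref{uiiso} into each defining relation of $\U_\hbar(\hat{\ft}_\theta)$ and verifying it inside $\qtar$. Setting $q':=q_i^{d_i}=e^{d_i r_i\hbar}$, the $\theta$-orbit set for $(\ft,\theta)$ is a singleton, so the source has no (Q10) relation, and only the index-$1$ instances of (Q0)--(Q9) need to be matched with the index-$i$ instances of $\qtar$. The relations (Q0$'$)--(Q3$'$) reduce to the target's analogues using $\xi_N^{d_i}=\xi_{N_i}$ and $\xi_{N_i}^{N_i/s_i}=\xi_{s_i}$. The main work is in (Q4$'$)--(Q7$'$), where the key tool is the orbit-collapse identity
\begin{align*}
\sum_{k \in \Z_N} \xi_N^{d_i m\, k}\,[d_i m\, r_i\, a_{i\mu^k(i)}]_q \;=\; d_i \sum_{\ell \in \Z_{N_i}} \xi_{N_i}^{m \ell}\,[d_i m\, r_i\, a_{i\mu^\ell(i)}]_q
\end{align*}
(from the $N_i$-periodicity of $k\mapsto a_{i\mu^k(i)}$); by Lemma \ref{lem:linking}, only $\ell=0$ and (when $s_i=2$) $\ell=N_i/2$ contribute, and $[d_i m r_i]_q=[m]_{q'}[d_i r_i]_q$ reduces the target sum to $d_i[d_i r_i]_q$ times the corresponding source sum at index $1$. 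The outer factor $d_i[d_i r_i]_q$ is absorbed precisely by the normalization $1/[d_i r_i]_q$ on $h_{1,m}$ together with the substitution $c\mapsto c/r_i$, while the normalization $\sqrt{d_i[d_i]_{q_i}}$ on $x^\pm$ is calibrated so that the coefficient in (Q7$'$) reproduces $1/(q'-(q')^{-1})$. Finally, the Serre relations (Q8)--(Q9) for $i=j$, written in the bracket form following \eqref{Fiiep}--\eqref{Giiep} and the (Q9$'$) identity, depend only on the parameters $(s_i,d_i,q_i)$, and coincide with the source relations via $(s,d,q)\mapsto(s_i,1,q')$.

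Surjectivity of $\phi$ is immediate from the definition of $\U_i$. For injectivity I would invoke Theorem \ref{thm:tri-decomp} for both algebras. Letting $\U_i^{\pm}$ and $\U_i^0$ denote the closed subalgebras of $\U_i$ generated respectively by $x_{i,d_i n}^{\pm}$ and by $h_{i,d_i m},c,d$, these sit inside $\qtar^\pm$ and $\qhei$, so the triangular decomposition of $\qtar$ restricts to $\U_i \cong \U_i^-\wh{\otimes}\U_i^0\wh{\otimes}\U_i^+$ and $\phi$ respects this factorization. The restriction to $\U_i^0$ is a rescaling of commuting Heisenberg-type generators, hence bijective. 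On each $\U_i^{\pm}$, the source and target are presented (via Theorem \ref{thm:tri-decomp}) by the same (Q8)--(Q9) relations with parameter $q'$, so $\phi|_{\pm}$ is a presentation-preserving bijection.

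The hard part will be the bookkeeping in the well-definedness step, namely checking that the normalization factors in \eqref{uiiso} exactly absorb the leftover coefficients produced by the orbit-collapse in (Q4$'$)--(Q7$'$). The $s_i=2$ case further requires seeing that the index-$i$ instance of (Q9) in $\qtar$ matches the $A_2^{(2)}$-type Serre (Q9$'$) on the source.
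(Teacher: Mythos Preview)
Your proposal is correct and follows essentially the same route as the paper: the existence of the map \eqref{uiiso} is asserted just before the corollary, and the paper then records the corollary as a direct consequence of the triangular decomposition Theorem~\ref{thm:tri-decomp} (citing \cite[Corollary~3.3]{He-representation-coprod-proof} for the analogous untwisted argument). Your write-up supplies the bookkeeping the paper leaves implicit, in particular the orbit-collapse identity and the matching of the $(s_i,d_i,q_i)$-parameters, which is exactly what is needed.
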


\subsection{Technical  lemmas}\label{subsec:tri-tech-lemms}
In this subsection we  prove two
lemmas.

\begin{lem}\label{lem:tri-t-1}
Let $i\in I$ with $s_i=2$, $l=1,2$ or $3$ and $\eta=+$ or $-$. Then the following hold in  $\ul$:
\begin{align}
  \sum_{\sigma\in S_3} p_i^\pm(z_{\sigma(1)},z_{\sigma(2)},z_{\sigma(3)})
  \xi^{\pm,\eta}_{i,l}(z_{\sigma(1)})\xi^{\pm,\eta}_{i,l}(z_{\sigma(2)})\xi^{\pm,\eta}_{i,l}(z_{\sigma(3)})=0,\label{eq:tri-decomp-tech-3}
\end{align}
where for $k=1,2,3$,
\begin{equation*}
\xi_{i,l}^{\pm,\eta}(z_k)=\begin{cases}x_i^\pm(z_k)\ &\te{if}\ k\ne l;\\
\phi_i^\eta( q^{\mp\eta\frac{1}{2}c}z_k)\ &\te{if}\ k=l.
\end{cases}
\end{equation*}
\end{lem}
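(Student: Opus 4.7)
The plan is to prove the identity by moving the sole $\phi$-factor in every term to the far left using the mixing relations (Q5), (Q6), reducing the problem to a rational-function identity multiplying a canonical ordering of the two $x_i^\pm$'s, and then closing it using only the quadratic relation (Q8) which is already available in $\ul$ (note that (Q9) is \emph{not} imposed in $\ul$, so the identity must truly come from (Q5)--(Q8)).

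First I would fix the position $l$ and the sign $\eta$; by the obvious mirror symmetry of the $S_3$-action together with the duality between (Q5) and (Q6), it suffices to treat one representative case, say $l=1$ and $\eta=+$. The six terms in the $S_3$-sum split into three pairs according as $\phi_i^+(q^{\mp c/2}z_1)$ appears in the left, middle, or right slot of the product $\xi\cdot\xi\cdot\xi$. Using (Q5), I move the $\phi$-factor to the far left in every term; each crossing of a factor $x_i^\pm(z_k)$ contributes a scalar $g_{ii}(q^{\mp c}z_k/z_1)^{\mp 1}$. After this extraction the identity becomes
\begin{align*}
\phi_i^+(q^{\mp c/2}z_1)\cdot \Xi(z_1,z_2,z_3)\ =\ 0,
\end{align*}
where $\Xi$ is a linear combination of $x_i^\pm(z_2)x_i^\pm(z_3)$ and $x_i^\pm(z_3)x_i^\pm(z_2)$ with rational-function coefficients assembled from $p_i^\pm$ and the accumulated $g_{ii}$-factors.

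Second, I would invoke (Q8) for $i=j$, which in view of \eqref{Fiiep}--\eqref{Giiep} reads $F_{ii}^\pm(z_2,z_3)\,x_i^\pm(z_2)x_i^\pm(z_3)=G_{ii}^\pm(z_2,z_3)\,x_i^\pm(z_3)x_i^\pm(z_2)$, to convert $\Xi$ into a single scalar multiple of the ordered monomial $x_i^\pm(z_2)x_i^\pm(z_3)$. The claim then reduces to the vanishing of an explicit rational function in $z_1,z_2,z_3$ built from $p_i^\pm$, $F_{ii}^\pm$, $G_{ii}^\pm$, and the products of $g_{ii}$-factors picked up in the first step. Since $\phi_i^+$ is invertible (its constant term is $q^{h_{i,0}}$), this vanishing is equivalent to the original identity. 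Because each of $p_i^\pm$, $F_{ii}^\pm$, $G_{ii}^\pm$ and (in the relevant combination) the accumulated $g_{ii}$-product is a polynomial in the $d_i$-th powers $z_k^{d_i}$, the target identity collapses to a short polynomial identity in three variables, arranged precisely so that the three-term cancellation encoded in $p_i^\pm(z_1,z_2,z_3)=q_i^{\mp 3d_i/2}z_1^{d_i}-(q_i^{d_i/2}+q_i^{-d_i/2})z_2^{d_i}+q_i^{\pm 3d_i/2}z_3^{d_i}$ kicks in.

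The main obstacle is the bookkeeping in the last step: the $g_{ii}$-factors accumulated when commuting $\phi_i^\eta$ past the $x_i^\pm$'s must match exactly with the ratios $G_{ii}^\pm/F_{ii}^\pm$ produced by (Q8), and the signs and exponents in $p_i^\pm$ are what make this matching work. The polynomial identity itself is a finite computation, but it must be done with care for both signs $\eta=\pm$ and both choices of the outer $\pm$, and invoking the left-right symmetry and (Q6) in place of (Q5) to dispatch the cases $l=2,3$ and $\eta=-$ without repeating the calculation.
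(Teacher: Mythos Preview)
Your overall strategy---commute the single $\phi$-factor to one end using (Q5)/(Q6), then close the remaining two-$x$ expression with (Q8)---is exactly the paper's approach, and your symmetry reduction to a single $(l,\eta)$ is also what the paper does (they pick $l=3$ and move $\phi$ to the right, you pick $l=1$ and move it to the left; these are equivalent).

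There is, however, a genuine gap in your second step. The relation (Q8) is the \emph{polynomial} identity
\[
F_{ii}^\pm(z_a,z_b)\,x_i^\pm(z_a)x_i^\pm(z_b)=G_{ii}^\pm(z_a,z_b)\,x_i^\pm(z_b)x_i^\pm(z_a);
\]
it does \emph{not} let you solve for one ordering in terms of the other, because $G_{ii}^\pm$ is not invertible in the algebra. So you cannot ``convert $\Xi$ into a single scalar multiple of $x_i^\pm(z_2)x_i^\pm(z_3)$'' and then check that this scalar vanishes. In fact the scalar you would get by formally dividing is \emph{not} zero.

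What actually has to be shown (and what the paper does) is that after moving $\phi$ and clearing the $g_{ii}$-denominators, the expression is literally a rational function times the (Q8) relator $F_{ii}^\pm(z_a,z_b)\,x_i^\pm(z_a)x_i^\pm(z_b)-G_{ii}^\pm(z_a,z_b)\,x_i^\pm(z_b)x_i^\pm(z_a)$. Concretely, writing $\Xi$ over a common denominator as $A_i^\pm(z_1,z_2,z_3)\,x_i^\pm(z_1)x_i^\pm(z_2)+A_i^\pm(z_2,z_1,z_3)\,x_i^\pm(z_2)x_i^\pm(z_1)$ (the paper's notation), the required polynomial identity is not that something vanishes but rather the factorization
\[
A_i^\pm(z_1,z_2,z_3)=F_{ii}^\pm(z_1,z_2)\,B_i^\pm(z_1,z_2,z_3)\quad\text{with}\quad B_i^\pm(z_1,z_2,z_3)=B_i^\pm(z_2,z_1,z_3).
\]
Combined with $F_{ii}^\pm(z_2,z_1)=-G_{ii}^\pm(z_1,z_2)$, this exhibits $\Xi$ as $B_i^\pm$ times (Q8), hence zero. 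The polynomial $B_i^\pm$ is explicit and nonzero, so the vanishing really comes from (Q8), not from a scalar cancellation. Your ``short polynomial identity in three variables'' should therefore be reformulated as this factorization-plus-symmetry statement; the paper verifies it by direct expansion (they mention Maple).
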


\begin{proof} We denote  by $T_{i,l}^{\pm,\eta}(z_1,z_2,z_3)$ the LHS of \eqref{eq:tri-decomp-tech-3}.
It is clear that
\begin{align*}
T_{i,3}^{\pm,\eta}(z_1,z_2,z_3)=T_{i,2}^{\pm,\eta}(z_1,z_3,z_2)=T_{i,1}^{\pm,\eta}(z_3,z_2,z_1).
\end{align*}
Thus it suffices to check the case  $l=3$.
Assume first that $\eta=+$.
By using the relations (Q5), we have
\begin{align}\label{tri-lem-1}
T_{i,3}^{\pm,+}(z_1,z_2,z_3)=\sum_{\sigma\in S_2}\frac{A^\pm_i(z_{\sigma(1)},z_{\sigma(2)},z_3)}
{F_{ii}^\pm(z_3,z_1)F_{ii}^\pm(z_3,z_2)}
x_i^\pm(z_{\sigma(1)})x_i^\pm(z_{\sigma(2)})\phi_i^+(q^{\mp\frac{1}{2}c}z_3),
\end{align}
where
\begin{equation}\begin{split}\label{defai123}
&A^\pm_i(z_1,z_2,z_3)=F_{ii}^\pm(z_3,z_1)F_{ii}^\pm(z_3,z_2) p_i^\pm(z_1,z_2,z_3)\\
&+F_{ii}^\pm(z_3,z_1)G_{ii}^\pm(z_3,z_2) p_i^\pm(z_1,z_3,z_2)
+G_{ii}^\pm(z_3,z_1)G_{ii}^\pm(z_3,z_2) p_i^\pm(z_3,z_1,z_2).
\end{split}\end{equation}

Using \eqref{ri}, \eqref{Fiiep} and \eqref{Giiep}, one gets that
\begin{align*}
&A_i^\pm(z_1,z_2,z_3)\\
=&(z_3^{d_i}+q_i^{\mp d_i}z_1^{d_i})(z_3^{d_i}-q_i^{\pm 2d_i}z_1^{d_i})
 (z_3^{d_i}+q_i^{\mp d_i}z_2^{d_i})(z_3^{d_i}-q_i^{\pm 2d_i}z_2^{d_i})\\
&\cdot (q_i^{\mp \frac{3}{2}d_{i}}z_1^{d_{i}}
        -(q_i^{\frac{d_{i}}{2}}+q_i^{-\frac{d_{i}}{2}})
            z_2^{d_{i}}
        +q_i^{\pm \frac{3}{2}d_{i}}
            z_3^{d_{i}})\\
&+(z_3^{d_i}+q_i^{\mp d_i}z_1^{d_i})(z_3^{d_i}-q_i^{\pm 2d_i}z_1^{d_i})
(q_i^{\mp d_i}z_3^{d_i}+z_2^{d_i})(q_i^{\pm 2d_i}z_3^{d_i}-z_2^{d_i})\\
&\cdot (q_i^{\mp \frac{3}{2}d_{i}}z_1^{d_{i}}
        -(q_i^{\frac{d_{i}}{2}}+q_i^{-\frac{d_{i}}{2}})
            z_3^{d_{i}}
        +q_i^{\pm \frac{3}{2}d_{i}}
            z_2^{d_{i}})\\
            &+(q_i^{\mp d_i}z_3^{d_i}+z_1^{d_i})(q_i^{\pm 2d_i}z_3^{d_i}-z_1^{d_i})
(q_i^{\mp d_i}z_3^{d_i}+z_2^{d_i})(q_i^{\pm 2d_i}z_3^{d_i}-z_2^{d_i})\\
&\cdot
(q_i^{\mp\frac{3}{2}d_{i}}z_3^{d_{i}}
        -(q_i^{\frac{d_{i}}{2}}+q_i^{-\frac{d_{i}}{2}})
            z_1^{d_{i}}
        +q_i^{\pm\frac{3}{2}d_{i}}
            z_2^{d_{i}}).
\end{align*}
We can verify that (by using Maple for example)
\begin{align}\label{tri-lem-2}
A_i^\pm(z_1,z_2,z_3)=F_{ii}^{\pm}(z_1,z_2)B_i^\pm(z_1,z_2,z_3),
\end{align}
where
\begin{align*}
B_i^\pm(z_1,z_2,z_3)=&q_i^{\mp\frac{5}{2}d_i}(q_i^{\pm 2d_i}-1)z_3^{d_i}\big((q_i^{\pm 3d_i}+q_i^{\pm 2d_i}+q_i^{\pm d_i})z_1^{d_i}z_2^{d_i}\\
&-(q_i^{\pm 4d_i}+q_i^{\pm 3d_i}+q_i^{\pm 2d_i}+q_i^{\pm d_i}+1)z_3^{2d_i}+q_i^{\pm 2d_i}(z_1^{d_i}+z_2^{d_i})z_3^{d_i}\big).
\end{align*}

Note that $B_i^\pm(z_1,z_2,z_3)=B_i^\pm(z_2,z_1,z_3)$ and $F_{ii}^\pm(z_2,z_1)=-G_{ii}^\pm(z_1,z_2)$.
Then one can conclude from \eqref{tri-lem-1}, \eqref{tri-lem-1} and the relations (Q8) that
\begin{align*}
&T_{i,3}^{\pm,+}(z_1,z_2,z_3)\\
=&
\frac{B_i^\pm(z_{1},z_{2},z_3)}{F_{ii}^\pm(z_3,z_1)F_{ii}^\pm(z_3,z_1)}
\(\sum_{\sigma\in S_2}F_{ii}^\pm(z_{\sigma(1)},z_{\sigma(2)})
x_i^\pm(z_{\sigma(1)})x_i^\pm(z_{\sigma(2)})\phi_i^+(q^{\mp \frac{1}{2}c}z_3)\)\\
=&
\frac{B_i^\pm(z_{1},z_{2},z_3)}{F_{ii}^\pm(z_3,z_1)F_{ii}^\pm(z_3,z_1)}
\(F_{ii}^\pm(z_1,z_2)x_i^\pm(z_1)x_i^\pm(z_2)-G_{ii}^\pm(z_1,z_2)x_i^\pm(z_2)
x_i^\pm(z_1)\)\phi_i^+(q^{\mp \frac{1}{2}c}z_3)\\
=&0,
\end{align*}
as desired. The proof of the case $\eta=-$ is similar and omitted.
\end{proof}

The following lemma can be viewed as  a twisted analogue of \cite[Lemma 9]{He-representation-coprod-proof}.

\begin{lem}\label{lem:tri-t-2}
Let $(i,j)\in \mathbb I$.
Then in $\ul$ we have:
\begin{equation}\begin{split}\label{eq:tri-decomp-tech-1}
&\sum_{\sigma\in S_{m}}\sum_{r=0}^m
(-1)^r\binom{m}{r}_{q_i^{d_{ij}}} p_{ij,r}^\pm(z_{\sigma(1)},\dots,z_{\sigma(m)},w)
x_i^\pm(z_{\sigma(1)})\cdots x_i^\pm(z_{\sigma(r)})\\
&\quad\times\phi^\eta_j( q^{\mp \eta\frac{1}{2}c}w)
x_i^\pm(z_{\sigma(r+1)})\cdots x_i^\pm(z_{\sigma(m)})=0,
\end{split}
\end{equation}
\begin{equation}\begin{split}\label{eq:tri-decomp-tech-2}
&\sum_{\sigma\in S_m}\sum_{r=0}^m
(-1)^r\binom{m}{r}_{q_i^{d_{ij}}}p_{ij,r}^\pm(z_{\sigma(1)},\dots,z_{\sigma(m)},w)
\xi_i(z_{\sigma(1)})\cdots \xi_i(z_{\sigma(r)})\\
&\quad\times x_j^\pm(w)
\xi_i(z_{\sigma(r+1)})\cdots \xi_i(z_{\sigma(m)})=0,
\end{split}\end{equation}
where $\eta=\pm$, $m=m_{ij}$, $\xi_i(z_p)=x_i^\pm(z_p)$ if $p\ne 1$
and $\xi(z_1)=\phi_i^\eta(q^{\mp \eta \frac{1}{2}c}z_1)$.
\end{lem}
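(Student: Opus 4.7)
The plan is to adapt the strategy of Lemma \ref{lem:tri-t-1} (itself modeled on \cite[Lemma 9]{He-representation-coprod-proof}) to the asymmetric setting $(i,j)\in\mathbb I$. I would first move the Cartan-type current to the rightmost slot in every term of the sum. Using (Q5) when $\eta=+$ and (Q6) when $\eta=-$, each passage of $\phi_j^\eta(q^{\mp\eta\frac{1}{2}c}w)$ through an $x_i^\pm(z_{\sigma(p)})$ produces a factor $g_{ji}(z_{\sigma(p)}/w)^{\pm 1}$ or $g_{ij}(w/z_{\sigma(p)})^{\mp 1}$, respectively; the shift by $q^{\mp\eta\frac{1}{2}c}$ is precisely calibrated so that the central-charge term cancels in the argument of the resulting $g$-function. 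The analogous computation for \eqref{eq:tri-decomp-tech-2} transports $\phi_i^\eta$ rightward, picking up $g_{ii}$-type factors in $z_k/z_1$.

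The core of the argument is a polynomial identity, derivable from the definitions \eqref{e:F}--\eqref{e:barG}, \eqref{e:g} combined with (LC2), of the form
\[\bar F_{ij}^\pm(z,w)\,g_{ji}(z/w)^{\pm 1}=u_{ij}^\pm\cdot\bar G_{ij}^\pm(z,w)\]
(and an analogue for $\eta=-$), where $u_{ij}^\pm$ is a nonzero unit in $\C[[\hbar]]$ that I expect to equal $1$. Since $p_{ij,r}^\pm$ differs from $p_{ij,m}^\pm$ only in that $\bar G_{ij}^\pm(z_{\sigma(p)},w)$ is replaced by $\bar F_{ij}^\pm(z_{\sigma(p)},w)$ for $p>r$, applying this identity once per position $p>r$ converts every $p_{ij,r}^\pm$ into $p_{ij,m}^\pm$ up to a common power of $u_{ij}^\pm$. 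The $r$-sum then reduces to an alternating $q$-binomial sum of the form $\sum_{r=0}^{m}(-1)^r\binom{m}{r}_{q_i^{d_{ij}}}u^{m-r}$, which equals $(u-1)^m=0$ once $u=1$; the residual monomials in $x_i^\pm$, reorganized via (Q8), then yield a single well-defined expression in $\ul$ that vanishes.

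The main obstacle will be confirming that the unit $u_{ij}^\pm=1$ in the polynomial identity above, since this requires tracking the divisor $d_{ij}$, the Cartan scaling integers $r_i,r_j$, the twist roots $\xi^k$ over $\Z_N$, and the interplay between $\Gamma_{ij}$ and $\Gamma_{ji}$ via the symmetrizability relation $r_i a_{ij}=r_j a_{ji}$. For \eqref{eq:tri-decomp-tech-2}, a secondary difficulty is that $\phi_i^\eta$ and $x_i^\pm$ share the index $i$, so the $(i,i)$-exchange relation (Q8) must be invoked in tandem with the $g_{ii}$-commutation factors; the $s_i=2$ case will likely demand a polynomial calculation mirroring the factorization $A_i^\pm=F_{ii}^\pm\cdot B_i^\pm$ of Lemma \ref{lem:tri-t-1}, now performed on $p_{ij,r}^\pm$ in the presence of $g_{ii}(z_1/z_k)^{\pm 1}$.
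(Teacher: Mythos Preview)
Your proposal contains a genuine gap at its core. The polynomial identity you posit,
\[
\bar F_{ij}^\pm(z,w)\,g_{ji}(z/w)^{\pm 1}=u_{ij}^\pm\cdot\bar G_{ij}^\pm(z,w),
\]
is false for any unit $u_{ij}^\pm$. From the relations used in the paper one has $g_{ji}(z/w)^{\pm 1}=F_{ij}^\pm(z,w)/G_{ij}^\pm(z,w)$, and since $\bar F_{ij}^\pm F_{ij}^\pm=z^{d_{ij}}-q_i^{\pm d_{ij}a_{ij}}w^{d_{ij}}$ while $\bar G_{ij}^\pm G_{ij}^\pm=q_i^{\pm d_{ij}a_{ij}}z^{d_{ij}}-w^{d_{ij}}$, the two sides differ by the ratio of two \emph{distinct} linear polynomials in $z^{d_{ij}},w^{d_{ij}}$, not by a scalar. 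So moving $\phi_j^\eta$ rightward does not convert $p_{ij,r}^\pm$ into $p_{ij,m}^\pm$; it produces genuinely different factors for each $r$.

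Even granting your reduction, the conclusion would still fail: the alternating $q$-binomial sum $\sum_{r=0}^m(-1)^r\binom{m}{r}_{q_i^{d_{ij}}}$ is \emph{not} $(1-1)^m$. By the $q$-binomial theorem it equals $\prod_{k=0}^{m-1}\bigl(1-q_i^{(m-1-2k)d_{ij}}\bigr)$, which is nonzero whenever $m$ is even (and $m=m_{ij}=2$ occurs already for $a_{ij}=-1$). The polynomial $B^\pm$ that remains after pushing $\phi_j^\eta$ to the right is therefore nonzero as a function of $z_1,\dots,z_m,w$; you cannot kill it by the $r$-sum alone.

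The mechanism in the paper is different and uses the $S_m$-symmetrization and the relation (Q8) in an essential way. After moving $\phi_j^\eta$ to the right one obtains $T^\pm=\sum_{\sigma}B^\pm(z_{\sigma(1)},\dots,z_{\sigma(m)},w)\,x_i^\pm(z_{\sigma(1)})\cdots x_i^\pm(z_{\sigma(m)})\,\phi_j^\eta(\cdots)$, where the inner $r$-sum in $B^\pm$ is the polynomial $\sum_r(-1)^r\binom{m}{r}_{q_i^{d_{ij}}}\prod_{a\le r}(q_i^{\pm a_{ij}d_{ij}}z_a^{d_{ij}}-w^{d_{ij}})\prod_{a>r}(z_a^{d_{ij}}-q_i^{\pm a_{ij}d_{ij}}w^{d_{ij}})$. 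A lemma of Hernandez (\cite[Lemma~5]{He-representation-coprod-proof}) shows this polynomial is a sum of terms each divisible by an adjacent quantum difference $(z_k^{d_{ij}}-q_i^{\pm 2d_{ij}}z_{k+1}^{d_{ij}})$; pairing $\sigma$ with $\sigma\cdot(k,k{+}1)$ and applying the consequence of (Q8) that $p_{ij}^\pm(z,w)(z^{d_{ij}}-q_i^{\pm 2d_{ij}}w^{d_{ij}})x_i^\pm(z)x_i^\pm(w)$ is antisymmetric under $z\leftrightarrow w$ then gives termwise cancellation. For \eqref{eq:tri-decomp-tech-2} the analogous argument requires the finer decompositions of \cite[Lemmas~6,~7]{He-representation-coprod-proof}. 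Your plan misses this polynomial decomposition step entirely, and that step is where the actual vanishing comes from.
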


\begin{proof}
We prove \eqref{eq:tri-decomp-tech-1} and \eqref{eq:tri-decomp-tech-2} for the case  $\eta=+$, as the proof of $-$ case is similar.
Denote by $T^\pm$ the LHS of \eqref{eq:tri-decomp-tech-1} (with $\eta=+$).
Recall from definition that
\begin{align}\label{gijwz}
g_{ij}(w/z)^{\pm 1}=\frac{G_{ij}^\pm(z,w)}{F_{ij}^\pm(z,w)}.
\end{align}
Using this, it follows from the relations (Q5) that
\begin{align}\label{Tep1}
T^\pm=\sum_{\sigma\in S_m}
B^\pm(z_{\sigma(1)},\dots,z_{\sigma(m)},w)
x_i^\pm(z_{\sigma(1)})\cdots x_i^\pm(z_{\sigma(m)})\phi_j^+(q^{-\frac{1}{2}c}w),
\end{align}
where
\begin{align*}
&B^\pm(z_1,\dots,z_m,w)=\prod_{s=1}^{m}
\frac{1}{G_{ij}^\pm(z_s,w)}\sum_{r=0}^m
\binom{m}{r}_{q_i^{d_{ij}}}(-1)^r\\
&\quad\times p_{ij,r}^\pm(z_1,\dots,z_m,w)
\prod_{1\le a\le r}
    G_{ij}^\pm(z_a,w)
\prod_{r<a\le m}
    F_{ij}^\pm(z_a,w)\\
&=\prod_{1\le a<b\le m}p_{ij}^\pm(z_a,z_b)
    \prod_{s=1}^{m}\frac{1}{G_{ij}^\pm(z_s,w)}
    \sum_{r=0}^m \binom{m}{r}_{q_i^{d_{ij}}}(-1)^r\\
&\quad\times
\prod_{1\le a\le r}(q_i^{\pm a_{ij}d_{ij}}z_a^{d_{ij}}-w^{d_{ij}})
\prod_{r<a\le m}(z_a^{d_{ij}}-q_i^{\pm a_{ij}d_{ij}}w^{d_{ij}}).
\end{align*}
It was proved in \cite[Lemma 5]{He-representation-coprod-proof} that
there exist  polynomials $f_{\pm,1},f_{\pm,2},\dots,f_{\pm,m-1}$ in $m-1$ variables such that
\begin{equation}\begin{split}\label{Tep2}
B^\pm(z_1,&\dots,z_m,w)=\prod_{1\le a<b\le m}p_{ij}^\pm(z_a,z_b)
    \prod_{s=1}^{m}\frac{1}{G_{ij}^\pm(z_s,w)}\\
&\sum_{1\le r\le m-1}
(z_{r}^{d_{ij}}-q_i^{\pm 2d_{ij}}z_{r+1}^{d_{ij}})
f_{\pm,r}(z_1,\dots,z_{r-1},z_{r+2},\dots,z_m,w).
\end{split}\end{equation}

For $r=1,\dots,m-1$, define an equivalent relation in $S_m$ by $\sigma\sim_r \sigma'$ if
and only if  $\sigma(r)=\sigma'(r+1)$,
$\sigma(r+1)=\sigma'(r)$,
and $\sigma(r'')=\sigma'(r'')$ for all $r''\ne r,r+1$.
Let $S_m^{(r)}$ be a complete set of equivalence class representatives.
Then from \eqref{Tep1} and \eqref{Tep2} we have
\begin{align*}
T^\pm=&\sum_{r=1}^{m-1}\sum_{\sigma\in S_m^{(r)}}\prod_{\substack{1\le a<b\le m,\\ (a,b)\ne (r,r+1)}}
p_{ij}^\pm(z_{\sigma(a)},z_{\sigma(b)})
    \prod_{s=1}^{m}\frac{1}{G_{ij}^\pm(z_s,w)}\\
&\times f_{\pm,r}(z_{\sigma(1)},\dots,z_{\sigma(r-1)},z_{\sigma(r+1)},\dots,z_{\sigma(m)},w)x_i^\pm(z_{\sigma(1)})\cdots x_i^\pm(z_{\sigma(r-1)})\\
&\times A^\pm(z_{\sigma(r)},z_{\sigma(r+1)})
x_i^\pm(z_{\sigma(r+2)})\cdots x_i^\pm(z_{\sigma(m)})\phi_j^+(q^{-\frac{1}{2}c}w),
\end{align*}
where
\begin{align*}
&A^\pm(z_{\sigma(r)},z_{\sigma(r+1)})=
p_{ij}^\pm(z_{\sigma(r)},z_{\sigma(r+1)})(z_{\sigma(r)}^{d_{ij}}-q_i^{\pm 2d_{ij}}z_{\sigma(r+1)}^{d_{ij}})
x_i^\pm(z_{\sigma(r)})x_i^\pm(z_{\sigma(r+1)})\\
&  +p_{ij}^\pm(z_{\sigma(r+1)},z_{\sigma(r)})(z_{\sigma(r+1)}^{d_{ij}}-q_i^{\pm 2d_{ij}}z_{\sigma(r)}^{d_{ij}})
x_i^\pm(z_{\sigma(r+1)})x_i^\pm(z_{\sigma(r)}).
\end{align*}
On the other hand, by using (Q8) we have
\begin{align*}
 & p_{ij}^\pm(z,w)(z^{d_{ij}}-q_i^{\pm 2d_{ij}}w^{d_{ij}})x_i^\pm(z)x_i^\pm(w)\\
 =&\frac{q_i^{\pm 2d_{ij}}z^{d_{ij}}-w^{d_{ij}}}{q_i^{\pm 2d_i}z^{d_i}-w^{d_i}}
 \frac{z^{d_{ij}}-q_i^{\pm 2d_{ij}}w^{d_{ij}}}{z^{d_{i}}-q_i^{\pm 2d_{i}}w^{d_i}}
 F_{ii}^\pm(z,w)x_i^\pm(z)x_i^\pm(w)\\
 =& (q_i^{\pm 2d_{ij}}z^{d_{ij}}-w^{d_{ij}})
 \(\frac{q_i^{\pm 2d_{ij}}w^{d_{ij}}-z^{d_{ij}}}{q_i^{\pm 2d_{i}}w^{d_i}-z^{d_{i}}}
 \frac{G_{ii}^\pm(z,w)}{q_i^{\pm 2d_i}z^{d_i}-w^{d_i}}\)x_i^\pm(w)x_i^\pm(z)\\
  =&-(w^{d_{ij}}-q_i^{\pm 2d_{ij}}z^{d_{ij}})p_{ij}^\pm(w,z)x_i^\pm(w)x_i^\pm(z).
\end{align*}
This gives that $A^\pm(z_{\sigma(r)},z_{\sigma(r+1)})=0$ for all $r$ and hence $T^\pm=0$, as required.

Now we turn to prove \eqref{eq:tri-decomp-tech-2}.
Denote by $R^\pm$ the LHS of \eqref{eq:tri-decomp-tech-2}.
Recall that
\begin{align}\label{giiwz}
g_{ii}(w/z)^{\pm 1}=\frac{G_{ii}^\pm(z,w)}{F_{ii}^\pm(z,w)}
=\frac{(q_i^{\mp d_i}z^{d_i}+w^{d_i})^{s_i-1}(q_i^{\pm 2d_i}z^{d_i}-w^{d_i})}
{(z^{d_i}+q_i^{\mp d_i}w^{d_i})^{s_i-1}(z^{d_i}-q_i^{\pm 2 d_i}w^{d_i})}.
\end{align}
Then it follows from \eqref{gijwz}, \eqref{giiwz} and the relation (Q5) that
\begin{align*}
R^\pm=\sum_{\pi\in S_{m-1}}
\sum_{r=1}^{m}&
    C_{ij,r}^\pm(z_{1},z_{\pi(2)},\dots,z_{\pi(m-1)},z_{\pi(m)},w)
    x_i^\pm(z_{\pi(2)})\cdots x_i^\pm(z_{\pi(r)})x_j^\pm(w)\\
    &\times x_i^\pm(z_{\pi(r+1)})\cdots x_i^\pm(z_{\pi(m)})
    \phi_i^+(q^{\mp\frac{1}{2}c}z_{1}),
\end{align*}
where   $S_{m-1}$ acts on the set $\{2,\dots,m\}$ and for $1\le r\le m$,
\begin{align*}
C_{ij,r}^{\pm}(z_1,z_2,\dots,z_m,w)&=D_{ij,r}^\pm(z_1,\dots,z_m,w)
P_{ij}^{(r)}(z_1^{d_{ij}},z_2^{d_{ij}},\dots,z_m^{d_{ij}},w^{d_{ij}},q_i^{\pm d_{ij}}),\\
D_{ij,r}^\pm(z_1,z_2,\dots,z_m,w)&=\frac{1}{F_{ij}^\pm(z_1,w)}
    \prod_{2\le a\le m}
    \frac{\(q_i^{\mp d_i}z_1^{d_i}+z_a^{d_i}\)^{s_i-1}}{z_1^{d_i}-q_i^{\pm 2 d_i}z_a^{d_i}}\\
&\times
    \prod_{2\le a<b\le m}\frac{\(z_a^{d_i}+q_i^{\mp d_i}z_b^{d_i}\)^{s_i-1}\(q_i^{\pm 2d_{ij}}z_a^{d_{ij}}-z_b^{d_{ij}}\)}{q_i^{\pm 2d_i}z_a^{d_i}-z_b^{d_i}}\\
&\times
    \prod_{2\le a\le r}\frac{q_i^{\pm a_{ij}d_{ij}}z_a^{d_{ij}}-w^{d_{ij}}}{G_{ij}^\pm(z_a,w)}
    \prod_{r<a\le m}\frac{z_a^{d_{ij}}-q_i^{\pm a_{ij}d_{ij}}w^{d_{ij}}}{F_{ij}^\pm(z_a,w)}
\end{align*}
and
\begin{align*}
&P^{(r)}_{ij}(z_1,z_2,\dots,z_m,w,q)\\
=&\binom{m}{r}_q(-1)^r
    \sum_{p=1}^r \prod_{2\le a\le p}\(z_1-q^2z_a\)
    \prod_{p<a\le m}\(q^{2}z_1-z_a\)
    \( q^{a_{ij}}z_1-w\)\\
    &\,+\binom{m}{r-1}_q(-1)^{r-1}
    \sum_{p=r}^{m} \prod_{2\le a\le p}\(z_1-q^2z_a\)
    \prod_{p<a\le m}\(q^2z_1-z_a\)
    \( z_1-q^{a_{ij}}w\).
\end{align*}

It was proved in \cite[Lemma 6]{He-representation-coprod-proof} that
\begin{align}
P_{ij}^{(1)}(z_1,&\dots,z_m,w,q)=(z_2-q^{-a_{ij}}w)
f^{(1)}_{m}(z_1,z_3,\dots,z_m,w,q)\label{eq:tri-decomp-temp-P1}\\
&+\sum_{2\le a\le m-1}(z_{a+1}-q^{-2}z_{a})
f^{(1)}_a(z_1,\dots,z_{a-1},z_{a+2},\dots,z_m,w,q),\nonumber\\
P_{ij}^{(r)}(z_1,&\dots,z_m,w,q)=(w-q^{-a_{ij}}z_r)
f^{(r)}_{r}(z_1,\dots,z_{r-1},z_{r+1},\dots,z_m,w,q)
\label{eq:tri-decomp-temp-P2}\\
&+(z_{r+1}-q^{-a_{ij}}w)
f^{(r)}_{m}(z_1,\dots,z_r,z_{r+2},\dots,z_m,w,q)\nonumber\\
&+\sum_{\substack{2\le a\le m-1\\ a\ne r}}
(z_{a+1}-q^{-2}z_{a})
f^{(r)}_{a}(z_1,\dots,z_{a-1},z_{a+2},\dots,z_m,w,q),\nonumber\\
P_{ij}^{(m)}(z_1,&\dots,z_m,w,q)=(w-q^{-a_{ij}}z_m)
f^{(m)}_{m}(z_1,\dots,z_{m-1},w,q)\label{eq:tri-decomp-temp-P3}\\
&+\sum_{2\le a\le m-1}
(z_{a+1}-q^{-2}z_{a})
f^{(m)}_a(z_1,\dots,z_{a-1},z_{a+2},\dots,z_m,w,q),\nonumber
\end{align}
where
$f_a^{(r)},a=2,\dots,m$ are some polynomials of $m-1$ variables,
of degree at most $1$ in each variable.

For $a=2,\dots,m-1$ and $\pi,\pi'\in S_{m-1}$, we define the equivalent relation $\pi\sim_a\pi'$ and
the equivalence class $S_{m-1}^{(a)}$ as above.
Then for any $\pi,\pi'\in S_{m-1}$ with $\pi\sim_a\pi'$, we have from (Q8) that
\begin{align*}
  &(z_{\pi(a)}^{d_{i}}+q_i^{\mp d_i}z_{\pi(a+1)}^{d_{i}})^{s_i-1}
  \frac{q_i^{\pm2d_{ij}}z_{\pi(a)}^{d_{ij}}-z_{\pi(a+1)}^{d_{ij}}}
  { q_i^{\pm d_i}z_{\pi(a)}^{d_i}-z_{\pi(a+1)}^{d_i} }
  (z_{\pi(a+1)}^{d_{ij}}-q_i^{\mp 2d_{ij}}z_{\pi(a)}^{d_{ij}})
  x_i^\pm(z_{\pi(a)})x_i^\pm(z_{\pi(a+1)})\\
  =&(z_{\pi'(a)}^{d_{i}}+q_i^{\mp d_i}z_{\pi'(a+1)}^{d_{i}})^{s_i-1}
  \frac{q_i^{\pm2d_{ij}}z_{\pi'(a)}^{d_{ij}}-z_{\pi'(a+1)}^{d_{ij}}}
  { q_i^{\pm d_i}z_{\pi'(a)}^{d_i}-z_{\pi'(a+1)}^{d_i} }
  (z_{\pi'(a+1)}^{d_{ij}}-q_i^{\mp 2d_{ij}}z_{\pi'(a)}^{d_{ij}})
  x_i^\pm(z_{\pi'(a)})x_i^\pm(z_{\pi'(a+1)}).
\end{align*}
This gives ($1\le r\le m$)
\begin{equation}\begin{split}\label{eq:tri-decomp-temp-ii-com}
  &D_{ij,r}^\pm(z_1,z_{\pi(2)},\dots,z_{\pi(m)},w)
  (z_{\pi(a+1)}^{d_{ij}}-q_i^{\mp 2d_{ij}}z_{\pi(a)}^{d_{ij}})
  x_i^\pm(z_{\pi(a)})x_i^\pm(z_{\pi(a+1)})\\
  =\,&D_{ij,r}^\pm(z_1,z_{\pi'(2)},\dots,z_{\pi'(m)},w)
  (z_{\pi'(a+1)}^{d_{ij}}-q_i^{\mp 2d_{ij}}z_{\pi'(a)}^{d_{ij}})
  x_i^\pm(z_{\pi'(a)})x_i^\pm(z_{\pi'(a+1)}).
\end{split}\end{equation}

In view of \eqref{eq:tri-decomp-temp-P1}, \eqref{eq:tri-decomp-temp-P2}, \eqref{eq:tri-decomp-temp-P3} and
\eqref{eq:tri-decomp-temp-ii-com}, for $r=1,\dots,m$ and $a=2,\dots,m-1$ with $a\ne r$, we have
\begin{align*}
\sum_{\pi\in S_{m-1}^{(a)}} D_{ij,r}^\pm(z_1,z_{\pi(2)},\dots,z_{\pi(m)},w)
P^{(r)}_{ij}(z_1^{d_{ij}},z_{\pi(2)}^{d_{ij}},\dots,z_{\pi(m)}^{d_{ij}},w^{d_{ij}},q_i^{\pm d_{ij}})\\
\cdot x_i^\pm(z_{\pi(2)})\cdots x_i^\pm(z_{\pi(r)})x_j^\pm(w)
    x_i^\pm(z_{\pi(r+1)})\cdots x_i^\pm(z_{\pi(m)})
    \phi_i^+(q^{\mp\frac{1}{2}c}z_{1})=0.
\end{align*}
This implies that all the terms in $R^\pm$ which contain the polynomials $f^{(r)}_a$ with $a\ne r,m$ can be erased.
Furthermore, from (Q8), it follows that
\begin{align*}
  &D_{ij,r}^\pm(z_1,z_{\pi(2)},\dots,z_{\pi(m)},w)(z_{\pi(r)}^{d_{ij}}-q_i^{\pm a_{ij}d_{ij}}w^{d_{ij}})x_i^\pm(z_{\pi(r)})x_j^\pm(w)\\
  =& D_{ij,r-1}^\pm(z_1,z_{\pi(2)},\dots,z_{\pi(m)},w)(q_i^{\pm a_{ij}d_{ij}}z_{\pi(r)}^{d_{ij}}-w^{d_{ij}})x_j^\pm(w)x_i^\pm(z_{\pi(r)}).
\end{align*}
Thus, we  obtain
\begin{align*}
R^\pm=&\sum_{\pi\in S_{m-1}}
    \sum_{r=2}^{m}D_{ij,r}^\pm(z_1,z_{\pi(2)},\dots,z_{\pi(m)},w)
(q_i^{\mp d_{ij}a_{ij}}w^{d_{ij}}-z_{\pi(r)}^{d_{ij}})
\\
\times&
    \(f^{(r)}_{r}-f^{(r-1)}_{m}\)(z_{1}^{d_{ij}},z_{\pi(2)}^{d_{ij}},\dots,
    z_{\pi(r-1)}^{d_{ij}},z_{\pi(r+1)}^{d_{ij}},\dots,
    z_{\pi(m)}^{d_{ij}},w^{d_{ij}},q_i^{\pm d_{ij}})\\
&\quad\times x_i^\pm(z_{\pi(2)})\cdots x_i^\pm(z_{\pi(r-1)})x_j^\pm(w)x_i^\pm(z_{\pi(r)})\cdots x_i^\pm(z_{\pi(m)})\phi_i^+(q^{\mp\frac{c}{2}}z_{1}).
\end{align*}
Recall from \cite[Lemma 7]{He-representation-coprod-proof} that
\begin{align*}
  f^{(r)}_{r}-f^{(r-1)}_{m}=\sum_{a=1}^{m-2}(z_{r}-q^{2}z_{r+1})g^{(r)}_a(z_1,\dots,z_{r-1},z_{r+2},\dots,z_m,w,q),
\end{align*}
where $g_a^{(r)}$ are some polynomials.
This together with  \eqref{eq:tri-decomp-temp-ii-com} gives  $R^\pm=0$.
\end{proof}

\subsection{Proof of Theorem \ref{thm:tri-decomp}}

We start with two propositions which are about the compatibility with affine quantum Serre relations (Q8), (Q9), and (Q10).

\begin{prop}\label{prop:tri-re1} For $i,j\in I$, the following hold in  $\uf$:
\begin{equation}
[F_{ij}^\pm(z,w)x_i^\pm(z)x_j^\pm(w)-G_{ij}^\pm(z,w)x_j^\pm(z)x_i^\pm(w), x_k^\mp(w_0)]=0.
\end{equation}
\end{prop}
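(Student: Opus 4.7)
The plan is to apply the inner derivation $[-,x_k^\mp(w_0)]$ to each of the two summands and verify that the resulting contributions cancel. Since the polynomials $F_{ij}^\pm(z,w)$ and $G_{ij}^\pm(z,w)$ have scalar coefficients, they pull through the bracket, so it suffices to compute
\begin{align*}
F_{ij}^\pm(z,w)[x_i^\pm(z)x_j^\pm(w),x_k^\mp(w_0)]-G_{ij}^\pm(z,w)[x_j^\pm(w)x_i^\pm(z),x_k^\mp(w_0)]
\end{align*}
by the Leibniz rule and use (Q7) to evaluate the four inner brackets of the form $[x_i^\pm(z),x_k^\mp(w_0)]$ and $[x_j^\pm(w),x_k^\mp(w_0)]$. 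Each such bracket is, by (Q7), a finite sum over $l\in\Z_N$ (with $i=\mu^l(k)$, respectively $j=\mu^l(k)$) of delta-function terms weighted by $\phi_i^\pm$ or $\phi_j^\pm$ currents.

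I would split the argument into three cases according to the relative position of $k$: (A) $k\notin\mathcal O(i)\cup\mathcal O(j)$, where both inner brackets vanish and the claim is trivial; (B) exactly one of $\mathcal O(i),\mathcal O(j)$ contains $k$, where only one of the two inner brackets survives; (C) $\mathcal O(i)=\mathcal O(j)$, where both survive and must be collated. In each surviving term, the leftover $x$-current must be moved past the $\phi$-current produced by (Q7); by (Q5)--(Q6) this normal-ordering produces a factor $g_{ij}(\cdot)^{\pm 1}$ or $g_{ji}(\cdot)^{\mp 1}$. The crucial identity \eqref{gijwz},
\begin{align*}
g_{ij}(w/z)^{\pm 1}=\frac{G_{ij}^\pm(z,w)}{F_{ij}^\pm(z,w)},
\end{align*}
then converts this ratio into precisely the polynomial prefactor we need. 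After substituting via the delta function (e.g.\ $z=\xi^{-l}w_0 q^{\mp c}$), the term originating from $F_{ij}^\pm\,x_i^\pm(z)x_j^\pm(w)$ should pair with a term from $G_{ij}^\pm\,x_j^\pm(w)x_i^\pm(z)$ with opposite sign and identical $\phi$-coefficient, yielding the claimed cancellation.

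The bookkeeping in Case (B) is essentially a one-sided check: after substitution only one of the currents $x_i^\pm$ or $x_j^\pm$ remains, so one only needs the polynomial $F_{ij}^\pm(\xi^{-l}w_0q^{\mp c},w)$ to match $G_{ij}^\pm(\xi^{-l}w_0q^{\mp c},w)$ multiplied by the appropriate value of $g_{ij}$, which is immediate from \eqref{gijwz} and the fact that the factor $(z-\xi^l q_i^{\pm a_{i\mu^l(j)}}w)$ in $F_{ij}^\pm$ does not vanish under this substitution when $l\notin\Gamma_{ij}$ (this is where condition (LC2) is convenient, guaranteeing uniform exponents across the orbit).

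The main obstacle will be Case (C), when $i$ and $j$ lie in a common $\mu$-orbit: then four streams of terms appear (two from each inner bracket, each split into $\phi^+$ and $\phi^-$ contributions), and I must check their pairwise cancellation carefully. Here the description of the orbit furnished by Lemma \ref{lem:linking} (a direct sum of $A_1$'s or of $A_2$'s with $a_{\mu^{N_i/2}(i),i}=-1$) restricts the shape of $\Gamma_{ij}$ and the relevant exponents in $F_{ij}^\pm, G_{ij}^\pm$ to a very short list, which should make the combinatorial matching manageable and reduce the identity to a finite verification parallel to the untwisted argument of \cite{He-representation-coprod-proof}.
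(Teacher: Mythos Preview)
Your overall strategy---Leibniz rule, then (Q7), then (Q5)--(Q6) to commute the resulting $\phi$ past the surviving $x$, then the identity $g_{ij}(w/z)^{\pm 1}=G_{ij}^\pm(z,w)/F_{ij}^\pm(z,w)$---is exactly what the paper does. However, your execution plan is considerably more complicated than necessary, and the complications you anticipate (the case split, the delta-substitutions, the ``main obstacle'' in Case (C), the appeal to (LC2)) are all avoidable.

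The paper carries out the computation \emph{uniformly}, with no case analysis on the orbit of $k$: the Kronecker deltas $\delta_{i,\mu^p(k)}$ and $\delta_{j,\mu^p(k)}$ coming out of (Q7) already encode the vanishing in your Case (A), so there is nothing to gain by treating it separately. More importantly, the paper never substitutes into the delta functions. Instead it leaves each $\delta(\xi^p w_0 q^{\pm c}/z)$ in place and uses (Q5)--(Q6) to move $\phi_i^\eta(q^{\mp\eta\half c}z)$ past $x_j^\pm(w)$ \emph{as a formal series identity in $z,w$}. The factor produced is precisely $g_{ij}(w/z)^{\pm 1}$ (with argument $w/z$, not a specialization involving $w_0$), and the two polynomial identities
\[
G_{ij}^\pm(z,w)=F_{ij}^\pm(z,w)\,g_{ij}(w/z)^{\pm 1},\qquad
F_{ij}^\pm(z,w)=G_{ij}^\pm(z,w)\,g_{ji}(z/w)^{\pm 1}
\]
convert the $F$-prefactor into the $G$-prefactor (and vice versa for the second summand) in one stroke. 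With this, the four contributions from $[x_i^\pm(z),x_k^\mp(w_0)]$ and $[x_j^\pm(w),x_k^\mp(w_0)]$ on the $F$-side match termwise the four contributions on the $G$-side, regardless of whether $i,j$ share an orbit. Your Case (C) is therefore no harder than Case (B); no combinatorial matching via Lemma \ref{lem:linking} is needed, and (LC2) plays no role in this proposition.
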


\begin{proof}
From the relations (Q5)-(Q7), it follows that
\begin{align*}
 &\qquad \pm(q_k-q_k^{-1})[F_{ij}^\pm(z,w) x_i^\pm(z)x_j^\pm(w),x_k^\mp(w_0)]\\
&=   F_{ij}^\pm(z,w)
\sum_{p\in\Z}\delta_{i,\mu^p(k)}
    \(\phi_i^+(q^{\mp\half c}z)\delta\(\frac{q^{\pm c}\xi^pw_0}{z}\)
    -\phi_i^-(q^{\pm\half c}z)\delta\(\frac{q^{\mp c}\xi^p w_0}{z}\)\)x_j^\pm(w)\\
&+F_{ij}^\pm(z,w)x_i^\pm(z)
\sum_{p\in\Z}\delta_{j,\mu^p(k)}
\(\phi_j^+(q^{\mp\half c}w)\delta\(\frac{q^{\pm c}\xi^pw_0}{w}\)
    -\phi_j^-(q^{\pm\half c}w)\delta\(\frac{q^{\mp c}\xi^p w_0}{w}\)\)\\
&=G_{ij}^\pm(z,w)x_j^\pm(w)
 \sum_{p\in\Z}\delta_{i,\mu^p(k)}
    \(\phi_i^+(q^{\mp\half c}z)\delta\(\frac{q^{\pm c}\xi^pw_0}{z}\)
    -\phi_i^-(q^{\pm\half c}z)\delta\(\frac{q^{\mp c}\xi^p w_0}{z}\)\)\\
&+ G_{ij}^\pm(z,w)
\sum_{p\in\Z}\delta_{j,\mu^p(k)}
\(\phi_j^+(q^{\mp\half c}w)\delta\(\frac{q^{\pm c}\xi^pw_0}{w}\)
    -\phi_j^-(q^{\pm\half c}w)\delta\(\frac{q^{\mp c}\xi^p w_0}{w}\)\)x_i^\pm(z)\\
&=\pm(q_k-q_k^{-1})[G_{ij}^\pm(z,w)x_j^\pm(w)x_i^\pm(z),x_k^\mp(w_0)],
\end{align*}
where we have used the facts:
\begin{align*}
G_{ij}^\pm(z,w)=F_{ij}^{\pm}(z,w) g_{ij}(w/z)^{\pm 1}\quad\te{and}\quad
F_{ij}^{\pm}(z,w)=G_{ij}^{\pm}(z,w) g_{ji}(z/w)^{\pm 1}.
\end{align*}
\end{proof}

\begin{prop}\label{prop:tri-re2} Let $i,j,k\in I$. Then in  $\ul$ we have:
\begin{align}\label{tri-q9}
[\sum_{\sigma\in S_{3}}
    p_i^\pm(z_{\sigma(1)},z_{\sigma(2)},z_{\sigma(3)})\,
                x_i^\pm(z_{\sigma(1)})x_i^\pm(z_{\sigma(2)})x_i^\pm(z_{\sigma(3)}),x_j^{\mp}(w)]
               =0,\ \te{if}\ s_i=2,
\end{align}
\begin{equation}\begin{split}\label{tri-q10}
&\Bigg[\sum_{\sigma\in S_{{m}_{ij}}}\sum_{r=0}^{{m}_{ij}}
    (-1)^r \binom{{m}_{ij}}{r}_{q_i^{d_{ij}}}
    p_{ij,r}^\pm(z_{\sigma(1)},\dots,z_{\sigma(m)},w)
    x_i^\pm(z_{\sigma(1)})\\
&\cdots x_i^\pm(z_{\sigma(r)})  x_j^\pm(w)
       x_i^\pm(z_{\sigma(r+1)})\cdots x_i^\pm(z_{\sigma({m}_{ij})})\ , x_k^\mp(w_0)\Bigg]=0,\ \te{if}\ \check{a}_{ij}<0.
\end{split}\end{equation}
\end{prop}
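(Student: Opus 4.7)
The plan is to expand the commutator $[-, x_k^\mp(w_0)]$ via the Leibniz rule, use relation (Q7) to replace each elementary bracket $[x_\alpha^\pm(z), x_k^\mp(w_0)]$ (with $\alpha \in \{i, j\}$) by a sum of $\phi_\alpha^\eta$-insertions multiplied by delta functions, and then match the output against Lemmas \ref{lem:tri-t-1} and \ref{lem:tri-t-2}. Since $[x_\alpha^\pm(z), x_k^\mp(w_0)]$ vanishes identically unless $k \in \mathcal{O}(\alpha)$, only finitely many types of terms can arise, and the entire manipulation stays inside $\uf$ and $\ul$, where the relations (Q7) and (Q8) needed to prove Lemmas \ref{lem:tri-t-1} and \ref{lem:tri-t-2} are available.

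For \eqref{tri-q9}: if $j \notin \mathcal{O}(i)$ the Leibniz expansion is identically zero, so assume $j = \mu^p(i)$ for some $p$. For each position $l \in \{1, 2, 3\}$ and each sign $\eta \in \{+, -\}$, applying (Q7) to the bracket $[x_i^\pm(z_{\sigma(l)}), x_j^\mp(w)]$ produces precisely the $\phi_i^\eta$-insertion $\xi_{i, l}^{\pm, \eta}(z_{\sigma(l)})$ of Lemma \ref{lem:tri-t-1} together with a delta function $\delta(\xi^p w q^{\pm \eta c}/z_{\sigma(l)})$ that depends only on $z_{\sigma(l)}$. After extracting the delta factor, the sum over $\sigma \in S_3$ collapses to $T_{i, l}^{\pm, \eta}(z_1, z_2, z_3) = 0$ from Lemma \ref{lem:tri-t-1}, and summing over $l$ and $\eta$ yields \eqref{tri-q9}.

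For \eqref{tri-q10}: the hypothesis $(i, j) \in \mathbb{I}$ forces $\mathcal{O}(i) \cap \mathcal{O}(j) = \emptyset$, giving three subcases. If $k$ is in neither orbit, the expansion vanishes term by term. If $k \in \mathcal{O}(j)$, only the bracket with $x_j^\pm(w)$ contributes: it replaces $x_j^\pm(w)$ by $\phi_j^\eta(q^{\mp\eta c/2} w)$ times a delta, and the result summed over $\sigma$, $r$, and $\eta$ is precisely the left-hand side of \eqref{eq:tri-decomp-tech-1}, which vanishes by Lemma \ref{lem:tri-t-2}. If $k \in \mathcal{O}(i)$, each of the $m_{ij}$ factors $x_i^\pm(z_{\sigma(a)})$ contributes a $\phi_i^\eta$-insertion at position $a$ of the product; grouping the resulting double sum over $(\sigma, a)$ by the value $b = \sigma(a)$ of the argument of $\phi_i^\eta$ and relabeling $z_b \leftrightarrow z_1$ converts each block into the sum over $S_{m_{ij}}$ with distinguished variable $z_1$ appearing in \eqref{eq:tri-decomp-tech-2}, which again vanishes by Lemma \ref{lem:tri-t-2}.

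The main technical obstacle is the last subcase of \eqref{tri-q10}: one must check that the polynomial prefactor $p_{ij, r}^\pm(z_{\sigma(1)}, \dots, z_{\sigma(m_{ij})}, w)$ and the $q_i^{d_{ij}}$-binomial coefficient $\binom{m_{ij}}{r}_{q_i^{d_{ij}}}$ transform consistently under the $z_b \leftrightarrow z_1$ relabeling, so that contributions from the $m_{ij}$ distinct insertion positions really collapse into the single sum of \eqref{eq:tri-decomp-tech-2}. The product structure in \eqref{pij-total}---symmetric in $\{z_1, \dots, z_r\}$ and in $\{z_{r+1}, \dots, z_{m_{ij}}\}$ separately, combined with the binomial summation over $r$---is what makes this relabeling possible, and verifying the matching carefully is the most laborious part of the argument.
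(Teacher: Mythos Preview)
Your proposal is correct and follows exactly the same route as the paper: expand the commutator via the Leibniz rule, invoke (Q7), group terms by the variable carried by the resulting delta function, and reduce to the vanishing statements of Lemmas~\ref{lem:tri-t-1} and~\ref{lem:tri-t-2}; the paper writes this out for~\eqref{tri-q9} and simply cites \cite[Lemma~10]{He-representation-coprod-proof} for~\eqref{tri-q10}. One remark: the ``technical obstacle'' you flag in the $k\in\mathcal O(i)$ subcase is milder than you suggest---the relabeling $z_b\leftrightarrow z_1$ is just the bijection $\sigma\mapsto(1\,b)\sigma$ of $S_{m_{ij}}$, under which both the prefactor $p_{ij,r}^\pm(z_{\sigma(1)},\dots,z_{\sigma(m_{ij})},w)$ and the position $\sigma^{-1}(b)$ of the $\phi$-insertion transform tautologically to their counterparts in~\eqref{eq:tri-decomp-tech-2}, so no separate symmetry of $p_{ij,r}^\pm$ is needed.
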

\begin{proof}
We first prove \eqref{tri-q9}.
Note that it suffices to treat the case  $i=j$.
In this case, it follows from  (Q7) that  the LHS of \eqref{tri-q9}  equals to
\begin{align*}
\frac{\pm1}{q_i-q_i^{-1}}\sum_{k\in \Z_N}\delta_{i,\mu^k(i)}\sum_{\sigma\in S_{3}}
    p_i^\pm(z_{\sigma(1)},z_{\sigma(2)},z_{\sigma(3)})C_{i,k}^\pm(z_1,z_2,z_3),
    \end{align*}
    where $C_{i,k}^\pm(z_1,z_2,z_3)$ stands for the following formal series:
    \begin{align*}
             & x_i^\pm(z_{\sigma(1)})x_i^\pm(z_{\sigma(2)})
        \Big(\phi_i^+(z_{\sigma(3)} q^{\mp\half c})\delta\left(
            \frac{\xi^kw q^{\pm c}}{z_{\sigma(3)}}
        \right)
        -
        \phi_i^-(z_{\sigma(3)} q^{\pm\half c})\delta\left(
            \frac{\xi^kw q^{\mp c} }{z_{\sigma(3)}}
        \right)
    \Big)\\
   & +x_i^\pm(z_{\sigma(1)})
        \Big(\phi_i^+(z_{\sigma(2)}q^{\mp\half c})\delta\left(
            \frac{\xi^kw q^{\pm c}}{z_{\sigma(2)}}
        \right)
        -
        \phi_i^-(z_{\sigma(2)}q^{\pm\half c})\delta\left(
            \frac{\xi^kw q^{\mp c} }{z_{\sigma(2)}}
        \right)
    \Big)x_i^\pm(z_{\sigma(3)})\\
   & +\Big(\phi_i^+(z_{\sigma(1)}q^{\mp\half c})\delta\left(
            \frac{\xi^kw q^{\pm c}}{z_{\sigma(1)}}
        \right)
        -
        \phi_i^-(z_{\sigma(1)}q^{\pm\half c})\delta\left(
            \frac{\xi^kw q^{\mp c} }{z_{\sigma(1)}}
        \right)
    \Big) x_i^\pm(z_{\sigma(2)})x_i^\pm(z_{\sigma(3)}).
    \end{align*}
    It is straightforward to see that
    \begin{align*}
    &\sum_{\sigma\in S_{3}}p_i^\pm(z_{\sigma(1)},z_{\sigma(2)},z_{\sigma(3)})C_{i,k}^\pm(z_1,z_2,z_3)
    \\=&
    \sum_{l=1}^3T_{i,l}^{\pm,+}(z_1,z_2,z_3)\delta\left(
            \frac{\xi^kw q^{\pm c}}{z_l}
        \right)-T_{i,l}^{\pm,-}(z_1,z_2,z_3)\delta\left(
            \frac{\xi^kw q^{\mp c} }{z_l}
        \right),
        \end{align*}
      recalling  that $T_{i,l}^{\pm,\eta}(z_1,z_2,z_3)$ stands for the LHS of \eqref{eq:tri-decomp-tech-3}.
        Thus the relation  \eqref{tri-q9} follows from Lemma \ref{lem:tri-t-1}.
Next, by a same argument as that in the proof of \cite[Lemma 10]{He-representation-coprod-proof},
\eqref{tri-q10} follows from Lemma \ref{lem:tri-t-2}.
\end{proof}

\textbf{Proof of Theorem \ref{thm:tri-decomp}:}
We would use a general proof of triangular decompositions (cf. \cite[Lemma 3.5]{He-representation-coprod-proof}).
Let $A$ be a completed and separated   $\C[[\hbar]]$-algebra and
$(A^-,H,A^+)$ a triangular decomposition of $A$. Let $B^+$ and $B^-$ be respectively a closed two-sided ideal of
$A^+$ and $A^-$, and let $B$ be the closed ideal of $A$ generated by $B^++B^-$.
Set $C=A/B$ and denote by $C^\pm$ the image of
$B^\pm$ in $C$.
Assume that $A B^+\subset B^+ A$ and $B^- A\subset A B^-$. Then  $(C^+,H,C^-)$ is a triangular decomposition
of $C$ and $C^\pm$ are isomorphic to $A^\pm/B^\pm$.

Note that
$(\uf^-,\hf,\uf^+)$
is a triangular decomposition of $\uf$.
Moreover, $\uf^+$ (resp. $\uf^-$)  is isomorphic to the $\C[[\hbar]]$-algebras
topologically freely generated by  $x_{i,n}^+$ (resp.  $x_{i,n}^-$),
and $\hf$ is isomorphic to the $\C[[\hbar]]$-algebra topologically
 generated by  $h, h_{i,m}, c,d$
with relations (Q0)-(Q2).
In view of the above criteria, it follows from  Proposition \ref{prop:tri-re1} that
$\ul$ admits a triangular decomposition
$(\ul^-,\hl,\ul^+)$ induced from the triangular decomposition $(\uf^-,\hf,\uf^+)$ of $\uf$.
Furthermore, due to Proposition \ref{prop:tri-re2},  the triangular decomposition
$(\ul^-,\hl,\ul^+)$ of
$\ul$ induces a triangular decomposition of $\qtar$ as stated in
Theorem \ref{thm:tri-decomp}.
This completes the proof of Theorem \ref{thm:tri-decomp}.

\section{Affine quantum Serre relations}\label{sec:qptar}
In this section, we introduce a notion of ``normal ordered products'' of currents on restricted modules for $\ul^\pm$ and
then reformulate the affine quantum Serre relations
 (Q9) and (Q10) in terms of normal ordered products.
Throughout this section, we assume that the automorphism $\mu$ satisfies the condition (LC3).
We remark that in this case the polynomials appearing in the quantum affine Serre relations (Q8) and (Q9) can be simplified as follows: ($(i,j)\in\mathbb I$ and $r=0,1\dots,m_{ij}$)
\begin{align}
  &F_{ij}^\pm(z,w)=z^{d_{ij}}-q_i^{\pm d_{ij} a_{ij}}w^{d_{ij}},\\
  &G_{ij}^\pm(z,w)=q_i^{\pm d_{ij} a_{ij}}z^{d_{ij}}-w^{d_{ij}},\\
  &p_{ij,r}^\pm(z_1,\dots,z_{m_{ij}},w)=\prod_{1\le a<b\le m_{ij}}p_{ij}^\pm(z_a,z_b).
\end{align}

\subsection{Normal ordered products}

We start with some notations.
Recall that a $\C[[\hbar]]$-module $W$ is called \emph{topologically free} if $W=W^0[[\hbar]]$ for
some vector space $W^0$ over $\C$.
We denote by $\mathcal M_f$ the category of topologically free $\C[[\hbar]]$-modules.
For $W\in \mathcal M_f$ and $m,n\in \Z_+$, there is a $\C[[\hbar]]$-module map
\begin{align*}
\tilde{\pi}_n^{(m)}: \mathrm{End}_{\C[[\hbar]]}(W)[[z_1^{\pm 1},\dots,z_m^{\pm 1}]]
\rightarrow \mathrm{End}_{\C[[\hbar]]}(W/\hbar^nW)[[z_1^{\pm 1},\dots,z_m^{\pm 1}]]
\end{align*}
induced by the canonical $\C[[\hbar]]$-map $\mathrm{End}_{\C[[\hbar]]}(W)\rightarrow \mathrm{End}_{\C[[\hbar]]}(W/\hbar^nW)$.
As usual, for a $\C[[\hbar]]$-module $W_0$,
we denote by $W_0((z_1,\dots,z_m))$ the space of lower truncated (infinite) integral power series in
the  variables $z_1,\dots,z_m$ with coefficients in $W_0$. Set
\[\E^{(m)}(W_0)=\mathrm{Hom}_{\C[[\hbar]]}(W_0,W_0((z_1,\dots,z_m))).\]
The following notion is an $\hbar$-analogue of $\E^{(m)}(W_0)$ introduced in \cite{Li-h-adic}.

\begin{de}\label{de:emhw}
Let $W$ be a topologically free $\C[[\hbar]]$-module and  $m$  a positive integer.
Define $\E^{(m)}_\hbar(W)$ to be the $\C[[\hbar]]$-submodule of
$\mathrm{End}_{\C[[\hbar]]}(W)[[z_1^{\pm 1},\dots,z_m^{\pm 1}]]$, consisting of each formal series
$\psi(z_1,\dots,z_m)$
such that for every $n\in \Z_+$,
\begin{align*}\label{defehr}
\tilde{\pi}_n^{(m)}(\psi(z_1,\dots,z_m))\in \E^{(m)}(W/\hbar^n W),
\end{align*}
or equivalently, for every $v\in W$ and $n\in \Z_+$,
\begin{align*}
\psi(z_1,\dots,z_m)v\in W((z_1,\dots,z_m))+\hbar^n W[[z_1^{\pm 1},\dots,z_m^{\pm 1}]].
\end{align*}
\end{de}

Let $W=W^0[[\hbar]]\in \mathcal M_f$.
For convenience, we will also write $\E_\hbar(W)=\E_\hbar^{(1)}(W)$.
One notices that for any $m\in \Z_+$,  $\E_\hbar^{(m)}(W)
=\E^{(m)}(W^0)[[\hbar]]$ is also a topologically free $\C[[\hbar]]$-module.
Recall  from \cite[Remark 4.7]{Li-h-adic} that for every $n\in \Z_+$,
 there is a  surjective
  $\C[[\hbar]]$-map $\pi_n^{(m)}:\E_\hbar^{(m)}(W)\rightarrow
\E^{(m)}(W/\hbar^n W)$ induced by $\tilde{\pi}_n^{(m)}$ with kernel $\hbar^n\E_\hbar^{(m)}(W)$.
Furthermore, there is an inverse system
\begin{eqnarray}\label{indefem}
0\longleftarrow \E^{(m)}(W/\hbar W)\stackrel{\theta_1^{(m)}}{\longleftarrow} \E^{(m)}(W/\hbar^2 W)\stackrel{\theta_2^{(m)}}{\longleftarrow} \E^{(m)}(W/\hbar^3 W)
\stackrel{\theta_3^{(m)}}{\longleftarrow}\cdots
\end{eqnarray}
with $\E_\hbar^{(m)}(W)$ equipped with $\C[[\hbar]]$-maps $\pi_n^{(m)}$ as an inverse limit, where for $n\in \Z_+$,
\[\theta_n^{(m)}:\E^{(m)}(W/\hbar^{n+1} W)\rightarrow \E^{(m)}(W/\hbar^n W)\]
is the canonical $\C[[\hbar]]$-map.

\begin{rem}
{\em Recall that for any $\C[[\hbar]]$-module $W_0$ and invertible elements $c_1,c_2,\dots,c_m\in \C[[\hbar]]$, there is a (well-defined) specialization map (cf.\,\cite{LL})
\begin{align*}
\prod_{1\le r\le m}\lim_{z_r\mapsto c_r z}:\quad\E^{(m)}(W_0)&\rightarrow \E(W_0),\\\quad \sum_{n_1,\dots,n_m\in \Z} v_{n_1,\dots,n_m}z_1^{n_1}\cdots z_m^{n_m}
&\mapsto\sum_{n_1,\dots,n_m\in \Z} v_{n_1,\dots,n_m}(c_1z)^{n_1}\cdots (c_mz)^{n_m}.\end{align*}
Let $A(z_1,\dots,z_m)\in \E_\hbar^{(m)}(W)$ with $W$ a topologically free $\C[[\hbar]]$-module.
For $n\in\Z_+$, set
\begin{align*}
A_n(c_1z,\dots,c_mz):=\prod_{1\le r\le m}\lim_{z_r\mapsto c_r z}\pi_n^{(m)}(A(z_1,\dots,z_m))\in \E(W/\hbar^nW).
\end{align*}
It is clear that for each $n$,
\[\prod_{1\le r\le m}\lim_{z_r\mapsto c_r z}\circ\, \theta_n^{(m)}=\theta_n^{(1)}\circ \prod_{1\le r\le m}\lim_{z_r\mapsto c_r z}
\quad \text{and}\quad \pi_n^{(m)}=\theta_n^{(m)}\circ \pi_{n+1}^{(m)}.\]
This implies that $\theta_n^{(1)}(A_{n+1}(c_1z,\dots,c_mz))=A_{n}(c_1z,\dots,c_mz)$.
Since $\E_\hbar(W)$ is the inverse limit of the system \eqref{indefem} (with $m=1$), there is an inverse limit
\[A(c_1z,\dots,c_mz):= \varprojlim_{n>0} A_n(c_1z,\dots,c_mz)\in \E_\hbar(W),\]
called the specialization of $A(z_1,\dots,z_m)$ at $z_r=c_rz$ for $1\le r\le m$.
}
\end{rem}

\begin{de}\label{de:restrictedulmod}
Let $\U$ be one of the algebras $\ul^\pm, \us^\pm, \qtar^\pm$, or $\qtar$.
We say that a (left) $\U$-module $W$ is \emph{restricted}
if $W$ is topologically free as a $\C[[\hbar]]$-module,
and for each $i\in I,w\in W$, $x_{i,n}^\pm w\rightarrow 0$ as $n\rightarrow +\infty$,
that is, for every $m\in\Z_+$,
there exists  $m'\in\Z$, such that
\begin{align*}
  x_{i,n}^\pm w\in\hbar^mW\quad\te{for }n\ge m'.
\end{align*}
We denote by $\R^l_\pm$, $\R^s_\pm$, $\R_\pm$, and $\R$ the categories of restricted modules for
$\ul^\pm$, $\us^\pm$, $\qtar^\pm$, and $\qtar$, respectively.
\end{de}

Note that a  $\ul^\pm$-module $W$ is restricted if and only if $W\in \mathcal M_f$ and
$x_i^\pm(z)\in \E_\hbar(W)$ for $i\in I$.
The following argument is standard in the vertex algebra theory.

\begin{lem}\label{lem:normal-ordering-pre-desc}
For $W\in \R_\pm^{l}$ and  $i,j\in I$, we have
\begin{align*}
F_{ij}^\pm(z_1,z_2)x_i^\pm(z_1)x_j^\pm(z_2)\in\E_\hbar^{(2)}(W).
\end{align*}
\end{lem}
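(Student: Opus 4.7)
The plan is to leverage the defining relation (Q8) of $\ul^\pm$,
\[
F_{ij}^\pm(z_1,z_2)\,x_i^\pm(z_1)x_j^\pm(z_2)=G_{ij}^\pm(z_1,z_2)\,x_j^\pm(z_2)x_i^\pm(z_1),
\]
which holds as an identity in $\mathrm{End}_{\C[[\hbar]]}(W)[[z_1^{\pm1},z_2^{\pm1}]]$. Each of the two orderings of currents produces only a \emph{one-sided} lower-truncation estimate when applied to a vector $v\in W$ modulo $\hbar^kW$; the equality (Q8) will let us superimpose the two estimates and thereby deduce the simultaneous lower truncation in $z_1,z_2$ needed for membership in $\E_\hbar^{(2)}(W)$.

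Fix $v\in W$ and $k\in\Z_+$. Since $W$ is restricted, $x_j^\pm(z_2)\in\E_\hbar(W)$, so there exists an integer $N_2=N_2(v,k)$ with $x_{j,m_2}^\pm v\in\hbar^kW$ for all $m_2\ge N_2$. For each $m_2<N_2$ set $w_{m_2}:=x_{j,m_2}^\pm v\in W$; restrictedness again yields an integer $M(m_2)$ with $x_{i,m_1}^\pm w_{m_2}\in\hbar^kW$ for all $m_1\ge M(m_2)$. Hence the support (in the exponents $(m_1,m_2)$ of $z_1^{-m_1}z_2^{-m_2}$) of the coefficients of $x_i^\pm(z_1)x_j^\pm(z_2)v$ modulo $\hbar^kW$ lies in the staircase region $\{(m_1,m_2):m_2<N_2,\ m_1<M(m_2)\}$; in particular, $m_2$ is \emph{uniformly} bounded above. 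Since $F_{ij}^\pm(z_1,z_2)$ is a polynomial of degree $|\Gamma_{ij}|$ in each variable, multiplication by it shifts this support by a bounded amount and preserves the uniform upper bound on $m_2$. The symmetric argument applied to $G_{ij}^\pm(z_1,z_2)x_j^\pm(z_2)x_i^\pm(z_1)v$ shows that this latter series has a uniform upper bound on $m_1$ in its support modulo $\hbar^kW$.

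Invoking (Q8), both uniform bounds apply to one and the same series: there exist constants $M^\ast,N^\ast$ (depending on $v$ and $k$) such that the $z_1^{-m_1}z_2^{-m_2}$-coefficient of $F_{ij}^\pm(z_1,z_2)x_i^\pm(z_1)x_j^\pm(z_2)v$ lies in $\hbar^kW$ whenever $m_1>M^\ast$ or $m_2>N^\ast$. Equivalently, $F_{ij}^\pm(z_1,z_2)x_i^\pm(z_1)x_j^\pm(z_2)v\in W((z_1,z_2))+\hbar^kW[[z_1^{\pm1},z_2^{\pm1}]]$, and since $v$ and $k$ are arbitrary, the claim follows from Definition~\ref{de:emhw}. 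The only genuine subtlety will be the conceptual one: neither ordering of the currents by itself yields simultaneous lower truncation in both variables, and (Q8) is precisely the tool that allows the two one-sided truncations to reinforce each other; everything else reduces to routine bookkeeping with the $\hbar$-adic filtration on $W$.
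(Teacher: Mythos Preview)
Your argument is correct and follows essentially the same route as the paper's proof: use restrictedness to see that $F_{ij}^\pm(z_1,z_2)x_i^\pm(z_1)x_j^\pm(z_2)v$ lies in $W((z_1))((z_2))+\hbar^k W[[z_1^{\pm1},z_2^{\pm1}]]$, then invoke (Q8) to rewrite it as $G_{ij}^\pm(z_1,z_2)x_j^\pm(z_2)x_i^\pm(z_1)v\in W((z_2))((z_1))+\hbar^k W[[z_1^{\pm1},z_2^{\pm1}]]$, and conclude that the series lies in $W((z_1,z_2))+\hbar^k W[[z_1^{\pm1},z_2^{\pm1}]]$. The paper states this more tersely, but your explicit bookkeeping with the supports is a faithful unpacking of the same idea.
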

\begin{proof}
Let $v\in W$ and $n\in \Z_+$. Then by definition we have
$F_{ij}^\pm(z_1,z_2)x_i^\pm(z_1)x_j^\pm(z_2)v\in W((z_1))((z_2))+\hbar^n W[[z_1^{\pm 1},z_2^{\pm 1}]]$.
On the other hand, in view of (Q8), we have
\[F_{ij}^\pm(z_1,z_2)x_i^\pm(z_1)x_j^\pm(z_2)v=G_{ij}^\pm(z_1,z_2)x_j^\pm(z_2)x_i^\pm(z_1)v\in W((z_2))((z_1))+\hbar^n W[[z_1^{\pm 1},z_2^{\pm 1}]].\]
This forces that $F_{ij}^\pm(z_1,z_2)x_i^\pm(z_1)x_j^\pm(z_2)\in \E_\hbar^{(2)}(W)$, as required.
\end{proof}

Let $\C_*[[z_1,\dots,z_m,\hbar]]$ denote the algebra extension of $\C[[z_1,\dots,z_m,\hbar]]$
by inverting $z_a$, $z_a-cz_b$ with $1\le a\ne b\le m$ and $c$ invertible in $\C[[\hbar]]$.
Denote by
\[\iota_{z_1,\dots,z_m}:\C_*[[z_1,\dots,z_m,\hbar]]\rightarrow \C[[\hbar]]((z_1))\cdots((z_m)),\]
the canonical algebra embedding  that preserves each elements of $\C[[z_1,\dots,z_m,\hbar]]$.

\begin{rem}\label{rem:product}
{\em
Here we make a convention on the products of $\C[[\hbar]]$-valued formal series for later use.
Let $W$ be a topologically free $\C[[\hbar]]$-module,
 $f(z_1,\dots,z_m)\in \C[[\hbar]][[z_1^{\pm 1},\dots, z_m^{\pm 1}]]$ and $x(z_1,\dots,z_m)\in \mathrm{End}_{\C[[\hbar]]}(W)[[z_1^{\pm 1},\dots, z_m^{\pm 1}]]$.
Assume that for any $n\in \Z_+$, the product
\begin{align}\label{eq:productcondition}
f(z_1,\dots,z_m)\cdot \tilde{\pi}^{(m)}_n(x(z_1,\dots,z_m))
\end{align}
exists in $\mathrm{End}_{\C[[\hbar]]}(W/\hbar^n W)[[z_1^{\pm 1},\dots, z_m^{\pm 1}]]$.
Note that
\[\tilde\theta_n^{(m)}(f(z_1,\dots,z_m)\cdot \tilde{\pi}^{(m)}_{n+1}(x(z_1,\dots,z_m)))
=f(z_1,\dots,z_m)\cdot \tilde{\pi}^{(m)}_{n}(x(z_1,\dots,z_m)),\]
where $\tilde\theta_n^{(m)}$ denotes the canonical $\C[[\hbar]]$-map from $\mathrm{End}_{\C[[\hbar]]}(W/\hbar^{n+1} W)[[z_1^{\pm 1},\dots,z_m^{\pm 1}]]$
to $ \mathrm{End}_{\C[[\hbar]]}(W/\hbar^n W)[[z_1^{\pm 1},\dots,z_m^{\pm 1}]]$.
 Then we define the product of $f(z_1,\dots,z_m)$ and $x(z_1,\dots,z_m)$ to be
\[f(z_1,\dots,z_m)x(z_1,\dots,z_m)=\varprojlim_{n>0}f(z_1,\dots,z_m)\cdot \tilde{\pi}^{(m)}_n(x(z_1,\dots,z_m)),\]
which is viewed as an element of $\mathrm{End}_{\C[[\hbar]]}[[z_1^{\pm 1},\dots,z_m^{\pm 1}]]$ via the canonical identification
\[\mathrm{End}_{\C[[\hbar]]}(W)[[z_1^{\pm 1},\dots,z_m^{\pm 1}]]=\varprojlim_{n>0} \mathrm{End}_{\C[[\hbar]]}(W/\hbar^n W)[[z_1^{\pm 1},\dots,z_m^{\pm 1}]].\]
For example, suppose that $W\in \R_\pm^{l}$, $f(z_1,\dots,z_m)\in \C[[\hbar]]((z_1))\cdots((z_m))$ and $x(z_1,\dots,z_m)=x_{i_1}^\pm(z_1)\cdots
x_{i_m}^\pm(z_m)$ for some $i_1,\dots,i_m\in I$.
Then by definition we have
\begin{align*}
  x(z_1,\dots,z_m)v\in W((z_1))\cdots((z_m))+\hbar^nW[[z_1^{\pm 1},\dots,z_m^{\pm 1}]]
\end{align*}
for any $v\in W$ and $n\in\Z_+$.
It then follows that
\[\tilde{\pi}^{(m)}_n(x(z_1,\dots,z_m))\in \mathrm{End}_{\C[[\hbar]]}(W/\hbar^n W)((z_1))\cdots((z_m)).\]
This implies that the product \eqref{eq:productcondition} exists, by noting that $\mathrm{End}_{\C[[\hbar]]}(W/\hbar^n W)((z_1))\cdots((z_m))$ is naturally a $\C[[\hbar]]((z_1))\cdots((z_m))$-module.

}
\end{rem}

\begin{lem}\label{lem:normal-ordering-pre-desc-ii}
Assume that $i,j\in I$ satisfying  $j=\mu^k(i)$ for some $k\in \Z_N$. Then for any $W\in \R_\pm^{l}$ we have
\begin{equation}\begin{split}\label{eq::normal-ordering-pre-desc}
&\iota_{z_1,z_2}(z_1^{d_i}-\xi^{-kd_i}z_2^{d_i})^{-1} F_{ij}^\pm(z_1,z_2)x_i^\pm(z_1)x_j^\pm(z_2)\\
=\ &\iota_{z_2,z_1}(z_1^{d_i}-\xi^{-kd_i}z_2^{d_i})^{-1} F_{ij}^\pm(z_1,z_2)x_i^\pm(z_1)x_j^\pm(z_2).
\end{split}\end{equation}
In particular, we have $\iota_{z_1,z_2}(z_1^{d_i}-\xi^{-kd_i}z_2^{d_i})^{-1} F_{ij}^\pm(z_1,z_2)x_i^\pm(z_1)x_j^\pm(z_2)\in \E_\hbar^{(2)}(W)$.
\end{lem}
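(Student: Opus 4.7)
The plan is to establish the stronger statement that $A(z_1, z_2) := F_{ij}^\pm(z_1,z_2) x_i^\pm(z_1) x_j^\pm(z_2)$, which lies in $\E_\hbar^{(2)}(W)$ by the preceding lemma, is divisible in $\E_\hbar^{(2)}(W)$ by the polynomial $z_1^{d_i} - \xi^{-kd_i}z_2^{d_i}$. Both assertions of the lemma then follow at once: the quotient lies in $\E_\hbar^{(2)}(W)$ (giving the second assertion), and a single element of $\E_\hbar^{(2)}(W)$ admits a single unambiguous image in $\mathrm{End}_{\C[[\hbar]]}(W)[[z_1^{\pm 1}, z_2^{\pm 1}]]$ irrespective of which ordering is used to expand $(z_1^{d_i}-\xi^{-kd_i}z_2^{d_i})^{-1}$ (giving the first). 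To set up the reduction, I exploit the orbit hypothesis $j = \mu^k(i)$ by using (Q0) in the form $x_j^\pm(z_2) = x_i^\pm(\xi^{-k}z_2)$ together with the elementary identity $F_{ij}^\pm(z_1,z_2) = F_{ii}^\pm(z_1, \xi^{-k}z_2)$, and change variables to $u_1 = z_1$, $u_2 = \xi^{-k}z_2$. The question then becomes: is $B(u_1, u_2) := F_{ii}^\pm(u_1, u_2) x_i^\pm(u_1) x_i^\pm(u_2)$ divisible by $u_1^{d_i} - u_2^{d_i} = \prod_{a=0}^{d_i - 1}(u_1 - \zeta^a u_2)$ in $\E_\hbar^{(2)}(W)$, where $\zeta = \xi^{N_i}$ is a primitive $d_i$-th root of unity? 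Equivalently, does $B(\zeta^a u_2, u_2)$ vanish in $\E_\hbar(W)$ for every $a \in \{0, 1, \ldots, d_i - 1\}$?

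The crucial observation is that iterating (Q0) gives $x_{i, n}^\pm = \xi^{N_i n} x_{i, n}^\pm$ inside $\qtar$, so the $\hbar$-torsion freeness of $\mathrm{End}_{\C[[\hbar]]}(W)$ forces $x_{i, n}^\pm = 0$ on $W$ unless $d_i \mid n$; consequently $x_i^\pm(\zeta^a u_2) = x_i^\pm(u_2)$ as elements of $\mathrm{End}_{\C[[\hbar]]}(W)[[u_2^{\pm 1}]]$. Combined with the restrictedness of $W$, which ensures $\hbar$-adic convergence of the Cauchy-product coefficients $\sum_{m + n = M} x_{i, m}^\pm x_{i, n}^\pm v$ from both ends (directly by restrictedness on $v$ as $m \to +\infty$, and indirectly via $n = M - m \to +\infty$ as $m \to -\infty$), both $x_i^\pm(\zeta^a u_2) x_i^\pm(u_2)$ and $x_i^\pm(u_2) x_i^\pm(\zeta^a u_2)$ are well-defined in $\mathrm{End}_{\C[[\hbar]]}(W)[[u_2^{\pm 1}]]$ and both coincide with the formal square $x_i^\pm(u_2)^2$.

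With these identifications in hand I would specialize both sides of (Q8), $F_{ii}^\pm(u_1, u_2) x_i^\pm(u_1) x_i^\pm(u_2) = G_{ii}^\pm(u_1, u_2) x_i^\pm(u_2) x_i^\pm(u_1)$, at $u_1 = \zeta^a u_2$. The two sides become $F_{ii}^\pm|_{u_1 = \zeta^a u_2}\cdot x_i^\pm(u_2)^2$ and $G_{ii}^\pm|_{u_1 = \zeta^a u_2}\cdot x_i^\pm(u_2)^2$ respectively, so $(F_{ii}^\pm - G_{ii}^\pm)|_{u_1 = \zeta^a u_2}\cdot x_i^\pm(u_2)^2 = 0$. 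Using $\zeta^{a d_i} = 1$, a direct computation gives
$(F_{ii}^\pm - G_{ii}^\pm)|_{u_1 = \zeta^a u_2} = 2(1 - q_i^{\pm 2d_i})(1 + q_i^{\mp d_i})^{s_i - 1} u_2^{d_i s_i}$,
which is a nonzero element of $\C[[\hbar]][u_2^{\pm 1}]$ (the factor $1 + q_i^{\mp d_i}$ is invertible and $1 - q_i^{\pm 2d_i}$ is $\hbar$-divisible but nonzero). The $\hbar$-torsion freeness of $\mathrm{End}_{\C[[\hbar]]}(W)$ then forces $x_i^\pm(u_2)^2 = 0$, whence $B(\zeta^a u_2, u_2) = F_{ii}^\pm|_{u_1 = \zeta^a u_2} \cdot x_i^\pm(u_2)^2 = 0$, giving the required divisibility.

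The main technical hurdle will be the careful compatibility between two different notions of specialization: the intrinsic one on $B \in \E_\hbar^{(2)}(W)$ (which is what yields divisibility in $\E_\hbar^{(2)}(W)$ after one has checked vanishing at each $u_1 = \zeta^a u_2$) and the factored one that writes it as $F_{ii}^\pm|_{u_1 = \zeta^a u_2}$ times the $\hbar$-adically convergent product $x_i^\pm(\zeta^a u_2)x_i^\pm(u_2)$ (which is what lets me exploit (Q8)). Reconciling these using the topological structure of $\E_\hbar^{(2)}(W)$ and the restrictedness hypothesis is where the argument must be made delicately.
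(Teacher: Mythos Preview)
Your reduction to the case $i=j$ via (Q0) and the substitution $u_2=\xi^{-k}z_2$ matches the paper, and the overall plan of checking $B(\zeta^a u_2,u_2)=0$ for all $a$ is also the right criterion. The genuine gap is the claim that $x_i^\pm(u_2)^2$ is well-defined on a restricted module. The restricted condition says that for each \emph{fixed} $w\in W$ and each $k$, there is $m'(w,k)$ with $x_{i,n}^\pm w\in\hbar^k W$ for $n\ge m'(w,k)$. In the Cauchy sum $\sum_{m+n=M}x_{i,m}^\pm x_{i,n}^\pm v$, the tail $n\to+\infty$ is indeed controlled (the inner operator kills $v$ $\hbar$-adically), but the tail $m\to+\infty$ is not: there $x_{i,m}^\pm$ acts on the moving vector $x_{i,M-m}^\pm v$, and nothing bounds $m'(x_{i,M-m}^\pm v,k)$ uniformly in $m$. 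Already at each finite level $W/\hbar^n W$ the sum is infinite with no reason to truncate. So neither the specialization $[x_i^\pm(z_1)x_i^\pm(z_2)]|_{z_1=\zeta^a z_2}$ nor your factored formula $B(\zeta^a u_2,u_2)=F_{ii}^\pm|_{u_1=\zeta^a u_2}\cdot x_i^\pm(u_2)^2$ is available, and the subtraction $(F_{ii}^\pm-G_{ii}^\pm)|_{u_1=\zeta^a u_2}\cdot x_i^\pm(u_2)^2=0$ cannot be formed. (Your final paragraph flags exactly this reconciliation as the hurdle; the point is that it cannot be carried out, not merely that it is delicate.)

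The paper sidesteps this entirely by never separating the polynomial from the operator product. One works with $x_{ii}^\pm(z_1,z_2)=F_{ii}^\pm(z_1,z_2)x_i^\pm(z_1)x_i^\pm(z_2)\in\E_\hbar^{(2)}(W)$ as a whole, where the intrinsic specialization $z_1\to z_2$ is defined. The key identity is the elementary antisymmetry $F_{ii}^\pm(z_1,z_2)=-G_{ii}^\pm(z_2,z_1)$, which together with (Q8) gives $x_{ii}^\pm(z_1,z_2)=-x_{ii}^\pm(z_2,z_1)$ in $\E_\hbar^{(2)}(W)$; specializing $z_1=z_2$ there yields $x_{ii}^\pm(z,z)=0$, and then (Q0)-periodicity gives $x_{ii}^\pm(\zeta^a z,z)=0$ for all $a$. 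If you replace your factorization step by this antisymmetry argument, the rest of your outline (divisibility in $\E_\hbar^{(2)}(W)$, hence independence of expansion order) goes through.
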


\begin{proof}
Set $x_{ij}^\pm(z_1,z_2)=F_{ij}^\pm(z_1,z_2)x_i^\pm(z_1)x_j^\pm(z_2)$.
Recall from  Lemma \ref{lem:normal-ordering-pre-desc} that $x_{ij}^\pm(z_1,z_2)\in  \E_\hbar^{(2)}(W)$.
This together with  Remark \ref{rem:product} implies that the products in \eqref{eq::normal-ordering-pre-desc} are well-defined.

Assume now that $i=j$. Recall from \eqref{Fiiep} and \eqref{Giiep} that
\begin{align*}
  F_{ii}^\pm(z_1,z_2)=\(z_1^{d_i}-q_i^{2d_i}z_2^{d_i}\)\(z_1^{d_i}+q_i^{-d_i}z_2^{d_i}\)^{s_i-1}
  =-G_{ii}^\pm(z_2,z_1).
\end{align*}
This together with (Q8) gives
\begin{align*}
  x_{ii}^\pm(z_1,z_2)=G_{ii}^\pm(z_1,z_2)x_i^\pm(z_2)x_i^\pm(z_1)=-F_{ii}^\pm(z_2,z_1)x_i^\pm(z_2)x_i^\pm(z_1)=-x_{ii}^\pm(z_2,z_1).
\end{align*}
So we have $x_{ii}^\pm(z_1,z_1)=0$ on $W$.
 Note that for $k\in \Z_{d_i}$, we have
 $F_{ii}^\pm(z_1,\xi_{d_i}^kz_2)=F_{ii}^\pm(z_1,z_2)$ and   $x_i^\pm(\xi_{d_i}^kz_1)=x_i^\pm(z_1)$ (see (Q0)).
This implies  that for every $k\in \Z_{d_i}$,
\begin{align}\label{eq:xiidi}
x_{ii}^\pm(z_1,\xi_{d_i}^kz_1)=x_{ii}^\pm(z_1,z_1)=0\quad \te{on}\ W.
\end{align}
In view of this, for any $v\in W$, we have
\begin{align*}
(\iota_{z_1,z_2}(z_1^{d_i}-z_2^{d_i})^{-1}-\iota_{z_2,z_1}(z_1^{d_i}-z_2^{d_i})^{-1})x_{ii}^\pm(z_1,z_2)v
=z_1^{-d_i}\delta(z_1^{d_i}/z_2^{d_i})x_{ii}^\pm(z_1,z_2)v=0.
\end{align*}
This proves the lemma for the case that  $j=i$.

For the general case, noting that
\begin{align*}
  F_{ij}^\pm(z,w)=&\prod_{p\in\Z_N}(z-\xi^pq_i^{\pm a_{i\mu^p(j)}}w)
  =\prod_{p\in\Z_N}(z-\xi^pq_i^{\pm a_{i\mu^{p+k}(i)}}w)
  =F_{ii}^\pm(z,\xi^{-k}w).
\end{align*}
Combing this with (Q0) we have that
\begin{align*}
  &\iota_{z_1,z_2}(z_1^{d_i}-\xi^{-kd_i}z_2^{d_i})\inv F_{ij}^\pm(z_1,z_2)x_i^\pm(z_1)x_j^\pm(z_2)\\
  =&\iota_{z_1,z_2}(z_1^{d_i}-\xi^{-kd_i}z_2^{d_i})\inv F_{ii}^\pm(z_1,\xi^{-k}z_2)x_i^\pm(z_1)x_i^\pm(\xi^{-k}z_2)\\
  =&\iota_{z_2,z_1}(z_1^{d_i}-\xi^{-kd_i}z_2^{d_i})\inv F_{ii}^\pm(z_1,\xi^{-k}z_2)x_i^\pm(z_1)x_i^\pm(\xi^{-k}z_2)\\
  =&\iota_{z_2,z_1}(z_1^{d_i}-\xi^{-kd_i}z_2^{d_i})\inv F_{ij}^\pm(z_1,z_2)x_i^\pm(z_1)x_j^\pm(z_2).
\end{align*}
This completes the proof of the lemma.
\end{proof}

For $i,j\in I$, set
\begin{align}
f_{ij}^\pm(z,w)=\prod_{k\in\Z_N,a_{i\mu^k(j)}>0}\(z-\xi^k w\)^{-1}\cdot F_{ij}^\pm(z,w)\in \C_*[[z,w,\hbar]].
\end{align}
Now we introduce a notion of ``normal ordered products'' of currents on $\ul^\pm$.

\begin{de}\label{de:normal-ordering}
For $W\in \R_\pm^l$ and $i_1,\dots,i_m\in I$, we define a normal ordered product
\begin{align*}
\:x_{i_1}^\pm(z_1)x_{i_2}^\pm(z_2)\cdots x_{i_m}^\pm(z_m)\;\in \mathrm{End}_{\C[[\hbar]]}(W)[[z_1^{\pm 1},z_2^{\pm 1},\dots,z_m^{\pm 1}]]
\end{align*}
 of the currents $x_{i_1}^\pm(z_1),\dots,x_{i_m}^\pm(z_m)$
 to be
\begin{align*}
\prod_{1\le r<s\le m}\iota_{z_r,z_s}\(f_{i_ri_s}^\pm(z_r,z_s)\)
 x_{i_1}^\pm(z_1)x_{i_2}^\pm(z_2)\cdots x_{i_m}^\pm(z_m).
\end{align*}
\end{de}

 Remark \ref{rem:product} implies that the above product is well-defined.
For $i,j\in I$, set
\begin{align}\label{eq:def-C-ij}
  C_{ij}=\prod_{k\in \Z_N,a_{i\mu^k(j)}< 0}(-\xi^k).
\end{align}
We have the following properties of the normal order products.

\begin{prop}\label{prop:normal-ordering-rational}
Let $W\in \R_\pm^l$ and $i_1,i_2,\dots,i_m\in I$.  Then we have
\begin{align}
\:x_{i_1}^\pm(z_1)x_{i_2}^\pm(z_2)\cdots x_{i_m}^\pm(z_m)\;\in\E_\hbar^{(m)}(W),
\end{align}
and for $k_1,\dots,k_m\in \Z_N$,
\begin{align}\label{normalordermu}
\:x_{\mu^{k_1}(i_1)}^\pm(z_1)\cdots x_{\mu^{k_m}(i_m)}^\pm(z_m)\;
=\:x_{i_1}^\pm(\xi^{-k_1}z_1)\cdots x_{i_m}^\pm(\xi^{-k_m}z_m)\;.
\end{align}
Furthermore, for $\sigma\in S_m$, we have \begin{align}\label{normalordersigma}
  \:x_{i_1}^\pm(z_1)\cdots x_{i_m}^\pm(z_m)\;=
  \(\prod_{\substack{ 1\le s<t\le m\\ \sigma(s)>\sigma(t)}}
  C_{i_si_t}\)
  \:x_{i_{\sigma(1)}}^\pm(z_{\sigma(1)})\cdots
  x_{i_{\sigma(m)}}^\pm(z_{\sigma(m)})\;.
\end{align}
\end{prop}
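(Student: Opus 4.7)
The plan is to establish the three assertions in sequence. The base case $m=2$ of assertion (1) follows directly from Lemma~\ref{lem:normal-ordering-pre-desc-ii}: since $f_{ij}^\pm(z,w)$ is obtained from the polynomial $F_{ij}^\pm(z,w)$ by inverting exactly the same-orbit factors $(z-\xi^k w)$ with $i=\mu^k(j)$, and this is precisely what that lemma allows one to do while preserving membership in $\E_\hbar^{(2)}(W)$. For general $m$, I would induct in two stages. First, show that
\[
X(z_1,\ldots,z_m) := \prod_{1\le r<s\le m}F_{i_ri_s}^\pm(z_r,z_s)\cdot x_{i_1}^\pm(z_1)\cdots x_{i_m}^\pm(z_m)\in \E_\hbar^{(m)}(W)
\]
by iteratively invoking relation (Q8), which implies that $X$ equals, up to explicit constants, the analogous product with the currents in any prescribed order; a judicious ordering then yields the lower-truncation estimate needed. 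Second, the normal ordered product differs from $X$ only by the inverse factors $(z_r-\xi^k z_s)^{-1}$ for same-orbit pairs $i_r=\mu^k(i_s)$, and the argument of Lemma~\ref{lem:normal-ordering-pre-desc-ii} extends verbatim to show that the $\delta$-function discrepancy between the $\iota_{z_r,z_s}$- and $\iota_{z_s,z_r}$-expansions vanishes when multiplied against the regularized operator product, so the quotient remains in $\E_\hbar^{(m)}(W)$.

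For assertion (2), the operator parts of both sides agree by (Q0). For the coefficient parts, one uses the identities $a_{\mu^k(i),\mu^k(j)}=a_{ij}$, $q_{\mu^k(i)}=q_i$, and the reindexing $\Gamma_{\mu^{k_r}(i_r),\mu^{k_s}(i_s)}=\Gamma_{i_ri_s}+(k_r-k_s)$ in $\Z_N$ to verify that $f_{\mu^{k_r}(i_r),\mu^{k_s}(i_s)}^\pm(z_r,z_s)$ and $f_{i_ri_s}^\pm(\xi^{-k_r}z_r,\xi^{-k_s}z_s)$ agree as rational functions; the $\iota$-expansions then match since the variables differ only by roots of unity.

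For assertion (3), it suffices to treat an adjacent transposition $\sigma=(s,s+1)$, as these generate $S_m$. A bookkeeping check shows that the factors $f_{i_ri_t}^\pm$ with $\{r,t\}\ne\{s,s+1\}$ cancel pairwise between the two sides, reducing the verification to the two-current identity $\:x_i^\pm(z)x_j^\pm(w)\;=C_{ij}\:x_j^\pm(w)x_i^\pm(z)\;$. Here I would use (Q8) to commute the operator product, producing the rational-function ratio $G_{ij}^\pm(z,w)/F_{ji}^\pm(w,z)$; using the symmetrization relation $r_i a_{i\mu^k(j)}=r_j a_{j\mu^{-k}(i)}$ this simplifies to $\prod_{k\in\Gamma_{ij}}(-\xi^k)$. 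Combined with the same-orbit denominators of $f_{ij}^\pm$ and $f_{ji}^\pm$, this collapses to precisely $C_{ij}$, while the change of $\iota$-expansion convention is again justified by Lemma~\ref{lem:normal-ordering-pre-desc-ii}.

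The hard part will be the sign bookkeeping in assertion (3). The constant produced by (Q8) is $\prod_{k\in\Gamma_{ij}}(-\xi^k)$, which includes indices with $a_{i\mu^k(j)}>0$, whereas the target $C_{ij}$ excludes them; the discrepancy must be absorbed exactly by the sign contributions coming from the inverse same-orbit denominator factors, which in turn depends on the $A_1$/$A_2$ case analysis of (LC1) from Lemma~\ref{lem:linking}. In parallel, although the switch of $\iota$-expansion conventions is morally handled by Lemma~\ref{lem:normal-ordering-pre-desc-ii}, invoking it cleanly requires that after the (Q8)-manipulation the residual factor have exactly the form $F_{ij}^\pm(z,w)$ anticipated by the lemma. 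These two issues combine to make the 2-variable verification the technical heart of the proof.
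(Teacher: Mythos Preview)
Your proposal is correct and follows essentially the same approach as the paper. The paper also does the $m=2$ case first and then inducts; its inductive step uses the two-current commutation identity you derive to move $x_{i_m}^\pm(z_m)$ to the far left, which places the product in $W((z_m))((z_1,\dots,z_{m-1}))+\hbar^nW[[\,\cdot\,]]$, while the induction hypothesis gives $W((z_1,\dots,z_{m-1}))((z_m))+\hbar^nW[[\,\cdot\,]]$; intersecting yields $\E_\hbar^{(m)}(W)$. This is logically equivalent to your two-stage plan (first regularize with all $F_{i_ri_s}^\pm$, then strip out same-orbit denominators), just packaged so that assertion~(3) feeds directly into the inductive proof of assertion~(1).

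Your worry about the sign bookkeeping in the $m=2$ case is unfounded: no (LC1) case analysis is needed. The clean route is the identity
\[
G_{ij}^\pm(z,w)=\prod_{k\in\Gamma_{ij}}(-\xi^k)\,F_{ji}^\pm(w,z)
=\Bigl(\prod_{a_{i\mu^k(j)}>0}(-\xi^k)\Bigr)\,C_{ij}\,F_{ji}^\pm(w,z),
\]
after which the leftover factor $\prod_{a_{i\mu^k(j)}>0}(-\xi^k)\,(z-\xi^k w)^{-1}$ rewrites as $\prod_{a_{j\mu^l(i)}>0}(w-\xi^l z)^{-1}$ under $l=-k$, which is exactly the same-orbit denominator of $f_{ji}^\pm(w,z)$. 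So the constant $C_{ij}$ falls out immediately, and the switch of $\iota$-expansion is handled in a single invocation of Lemma~\ref{lem:normal-ordering-pre-desc-ii} before applying (Q8).
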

\begin{proof}
We first treat the case that $m=2$.
From Lemma \ref{lem:normal-ordering-pre-desc} and Lemma \ref{lem:normal-ordering-pre-desc-ii}, it follows
that $\:x_{i_1}^\pm(z_1)x_{i_2}^\pm(z_2)\;\in \E_\hbar^{(2)}(W)$.
Note that the identity \eqref{normalordermu} follows directly from definition and the relations (Q0).
For the third assertion, for $i,j\in I$ we have
\begin{align}
 G_{ij}^\pm(z_1,z_2)=\prod_{k\in \Gamma_{ij}}(-\xi^k)\cdot F_{ji}^\pm(z_2,z_1)
 =\prod_{k\in \Z_N, a_{i\mu^{k}(j)}>0}(-\xi^k)\cdot C_{ij}\cdot F_{ji}^\pm(z_2,z_1).\label{eq:f_ij-property1}
\end{align}
This together with Lemma \ref{lem:normal-ordering-pre-desc-ii} and the relations (Q8) gives that
\begin{align*}
&\iota_{z_1,z_2}\(f_{ij}^\pm(z_1,z_2)\) x_i^{\pm}(z_1)x_j^{\pm}(z_2)\\
=\ &\iota_{z_1,z_2}\(\prod_{k\in\Z_N,a_{i\mu^k(j)}>0}\(z_1-\xi^kz_2\)^{-1}\)F_{ij}^\pm(z_1,z_2)x_i^{\pm}(z_1)x_j^{\pm}(z_2)\\
=\ &\iota_{z_2,z_1}\(\prod_{k\in\Z_N,a_{i\mu^k(j)}>0}\(z_1-\xi^kz_2\)^{-1}\)F_{ij}^\pm(z_1,z_2)x_i^{\pm}(z_1)x_j^{\pm}(z_2)\\
=\ &\iota_{z_2,z_1}\(\prod_{k\in\Z_N,a_{i\mu^k(j)}>0}\(z_1-\xi^kz_2\)^{-1}\)G_{ij}^\pm(z_1,z_2)x_j^{\pm}(z_2)x_i^{\pm}(z_1)\\
=\ &C_{ij}\, \iota_{z_2,z_1}\(\prod_{k\in\Z_N,a_{j\mu^k(i)}>0}\(z_2-\xi^kz_1\)^{-1}\) F_{ji}^\pm(z_2,z_1)x_j^\pm(z_2)x_i^\pm(z_1)\\
=\ &C_{ij}\,\iota_{z_2,z_1}\(f_{ji}^\pm(z_2,z_1)\) x_j^{\pm}(z_2)x_i^{\pm}(z_1).
\end{align*}
Thus we have $\:x_{i}^\pm(z_1)x_{j}^\pm(z_2)\;=C_{ij}\:x_{j}^\pm(z_2)x_{i}^\pm(z_1)\;$, as required.

For the general case, we prove it by induction on $m$.
Indeed, for any $v\in W$, $n\in \Z_+$, by induction assumption we have
\[\:x_{i_1}^\pm(z_1)x_{i_2}^\pm(z_2)\cdots x_{i_m}^\pm(z_m)\;v\in W((z_m))((z_1,\dots,z_{m-1}))+\hbar^nW[[z_1^{\pm 1},\dots,z_m^{\pm 1}]].\]
On the other hand, by using the fact ($k=1,\dots,m-1$)
\begin{align*}
\iota_{z_k,z_m}\(f_{i_ki_m}^\pm(z_k,z_m)\) x_{i_k}^{\pm}(z_k)x_{i_m}^{\pm}(z_m)
=C_{i_ki_m}\,\iota_{z_m,z_k}\(f_{i_mi_k}^\pm(z_m,z_k)\) x_{i_m}^{\pm}(z_m)x_{i_k}^{\pm}(z_k),
\end{align*} we can move the term $x_{i_m}^{\pm}(z_m)$ in
$\:x_{i_1}^\pm(z_1)\cdots x_{i_m}^\pm(z_m)\;$ to the left so that
\[\:x_{i_1}^\pm(z_1)x_{i_2}^\pm(z_2)\cdots x_{i_m}^\pm(z_m)\;v\in W((z_m))((z_1,\dots,z_{m-1}))+\hbar^nW[[z_1^{\pm 1},\dots,z_m^{\pm 1}]].\]
This gives that $\:x_{i_1}^\pm(z_1)\cdots x_{i_m}^\pm(z_m)\;\in \E_\hbar^{(m)}(W)$.
Similarly,
the remaining two assertions in the proposition follow by 
an induction argument.
\end{proof}

\begin{rem}\label{rem:norord}{\em From the proof of Proposition \ref{prop:normal-ordering-rational}, we see that in the definition of the
normal order product
$\:x_{i_1}^\pm(z_1)\cdots x_{i_m}^\pm(z_m)\;$, the iota-maps  $\iota_{i_r,i_s}$ can be replaced with
$\iota_{i_s,i_r}$ for  $1\le r<s\le m$. That is,
\[\:x_{i_1}^\pm(z_1)\cdots x_{i_m}^\pm(z_m)\;=\prod_{1\le r<s\le m}f_{i_ri_s}^\pm(z_r,z_s)
 x_{i_1}^\pm(z_1)\cdots x_{i_m}^\pm(z_m),\]
 which is independent from the expansions of $f_{i_ri_s}^\pm(z_r,z_s)$.
}\end{rem}

\subsection{On the relations (Q9)}\label{subsec:qptar}
 This subsection is devoted  to prove the following result:

\begin{prop}\label{prop:iii-normal-ordering}
Let $W\in \R_\pm^l$ and $i\in I$ with $s_i=2$. Then as operators on $W$,
\begin{align}\label{eq:req9}\sum_{\sigma\in S_{3}}
    p_i^\pm(z_{\sigma(1)},z_{\sigma(2)},z_{\sigma(3)})\,
                x_i^\pm(z_{\sigma(1)})x_i^\pm(z_{\sigma(2)})x_i^\pm(z_{\sigma(3)})=0
\end{align} if and only if
\begin{align}
&\:x_i^\pm(z)x_i^\pm(q_i^{2}z)
    x_i^\pm(\xi_{2d_{i}}q_iz)\;=0.\label{eq:construct-of-no-tttt3}
\end{align}
\end{prop}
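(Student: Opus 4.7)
The plan is to reduce both (Q9) and \eqref{eq:construct-of-no-tttt3} to a single polynomial identity of the form $Q^\pm(z_1,z_2,z_3)\cdot Y(z_1,z_2,z_3)=0$ in $\E_\hbar^{(3)}(W)$, where
\[
Y(z_1,z_2,z_3):=\:x_i^\pm(z_1)x_i^\pm(z_2)x_i^\pm(z_3)\;
\]
is the universal three-current normal-ordered product, and $Q^\pm$ is an explicit Laurent polynomial whose zero locus picks out exactly the specialization point $(z,q_i^2z,\xi_{2d_i}q_iz)$. The two structural properties of $Y$ that drive the argument are the $\xi_{d_i}$-invariance in each argument (coming from (Q0) through \eqref{normalordermu}), and the twisted $S_3$-symmetry $\:x_i^\pm(z_{\sigma(1)})x_i^\pm(z_{\sigma(2)})x_i^\pm(z_{\sigma(3)})\;=C_{ii}^{-\mathrm{inv}(\sigma)}Y(z_1,z_2,z_3)$ from Proposition \ref{prop:normal-ordering-rational}.

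First I would use the definition of $\:\cdot\;$ together with the $S_3$-symmetry to rewrite, for each $\sigma\in S_3$,
\[
x_i^\pm(z_{\sigma(1)})x_i^\pm(z_{\sigma(2)})x_i^\pm(z_{\sigma(3)})=C_{ii}^{-\mathrm{inv}(\sigma)}\,\iota_\sigma\!\Bigl(\prod_{r<s}\frac{z_{\sigma(r)}^{d_i}-z_{\sigma(s)}^{d_i}}{F_{ii}^\pm(z_{\sigma(r)},z_{\sigma(s)})}\Bigr)Y(z_1,z_2,z_3),
\]
where $\iota_\sigma$ denotes the expansion for $|z_{\sigma(1)}|>|z_{\sigma(2)}|>|z_{\sigma(3)}|$. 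Substituting into (Q9) and multiplying through by the symmetric factor $\prod_{r\ne s}F_{ii}^\pm(z_r,z_s)$ clears all denominators; iterating Lemma \ref{lem:normal-ordering-pre-desc-ii} over the three pairs of variables then identifies the various iota-expansions and yields the desired polynomial identity $Q^\pm(z_1,z_2,z_3)\,Y(z_1,z_2,z_3)=0$.

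The hard part will be the tedious OPE-style calculation identifying $Q^\pm$ explicitly and checking that, up to a unit in $\C[[\hbar]]$, it factors so that the equation $Q^\pm\,Y=0$ is equivalent to $Y$ vanishing at the single specialization $(z,q_i^2z,\xi_{2d_i}q_iz)$. Using $F_{ii}^\pm(z,w)=(z^{d_i}-q_i^{\pm 2d_i}w^{d_i})(z^{d_i}+q_i^{\mp d_i}w^{d_i})$, the Vandermonde-type identity $\prod_{r<s}(z_{\sigma(r)}^{d_i}-z_{\sigma(s)}^{d_i})=\mathrm{sgn}(\sigma)\prod_{r<s}(z_r^{d_i}-z_s^{d_i})$, and the explicit form of $p_i^\pm$ in \eqref{pi}, one expects $Q^\pm$ to be an $S_3$-symmetrization of
\[
\prod_{k\in\Z_{d_i}}(z_2-q_i^2\xi_{d_i}^k z_1)\cdot \prod_{k\in\Z_{d_i}}(z_3-\xi_{2d_i}\xi_{d_i}^k q_i z_1),
\]
which is precisely the $\xi_{d_i}$-orbit of the desired specialization. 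This mirrors the pattern of the factorization $A_i^\pm(z_1,z_2,z_3)=F_{ii}^\pm(z_1,z_2)\,B_i^\pm(z_1,z_2,z_3)$ already carried out in the proof of Lemma \ref{lem:tri-t-1}, which analyzes the same three-current sum in a closely related setting. Once the factorization of $Q^\pm$ is in hand, the $\xi_{d_i}$- and twisted $S_3$-invariances of $Y$ reduce $Q^\pm Y=0$ to the single vanishing $Y(z,q_i^2z,\xi_{2d_i}q_iz)=0$, giving the equivalence in both directions.
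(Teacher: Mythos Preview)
Your plan has a genuine gap at its core: the reduction to a \emph{polynomial} identity $Q^\pm\,Y=0$ cannot carry an ``if and only if'' statement.

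First, multiplying the relation \eqref{eq:req9} by $\prod_{r\ne s}F_{ii}^\pm(z_r,z_s)$ is not an equivalence. The left-hand side of \eqref{eq:req9} is a genuine formal distribution: as the paper shows (via the $g$-commutator expansion of Lemma \ref{lem:giii} and equation \eqref{eq:R_iii-rewrite2}), it is a finite $\E_\hbar^{(1)}(W)$-linear combination of products of $\delta$-functions such as $z_1^{-d_i}\delta(c\,z_3^{d_i}/z_1^{d_i})\,z_2^{-d_i}\delta(c'\,z_3^{d_i}/z_2^{d_i})$. Multiplying by a polynomial that vanishes on the support of these $\delta$-functions kills them, so you only obtain a consequence of \eqref{eq:req9}, and the reverse implication is lost.

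Second, and more fatally, your claimed equivalence ``$Q^\pm Y=0\Longleftrightarrow Y(z,q_i^2z,\xi_{2d_i}q_iz)=0$'' is false on its face. The normal-ordered product $Y=\:x_i^\pm(z_1)x_i^\pm(z_2)x_i^\pm(z_3)\;$ lies in $\E_\hbar^{(3)}(W)$ by Proposition \ref{prop:normal-ordering-rational}; modulo each $\hbar^n$ it is an honest element of $\Hom(W/\hbar^nW,(W/\hbar^nW)((z_1,z_2,z_3)))$. In such a space, multiplication by a \emph{nonzero} Laurent polynomial $Q^\pm$ is injective, so $Q^\pm Y=0$ would force $Y\equiv 0$, not merely its vanishing at a single specialization. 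Conversely, vanishing of $Y$ at one point in no way implies a polynomial times $Y$ is identically zero. The zero locus of $Q^\pm$ simply does not encode pointwise vanishing of $Y$ in the way you suggest.

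What the paper does instead is exactly what is needed to get an equivalence: it rewrites $R_{iii}^\pm$ not as (polynomial)$\cdot Y$ but as a linear combination of $\delta$-functions with coefficients that are \emph{specializations} of $Y$ (equation \eqref{eq:R_iii-rewrite2}), obtained by first reducing to the triple $g$-commutator via \eqref{eq:R_iii-rewrite1} and then computing that commutator in Lemma \ref{lem:giii}. One then invokes the linear independence of $\delta$-functions (Lemma \ref{lem:delta-function-independent}) to conclude that $R_{iii}^\pm=0$ if and only if each of the three specialized normal-ordered products vanishes; a final change of variables shows all three are the same as \eqref{eq:construct-of-no-tttt3}. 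The $\delta$-function expansion, not a polynomial factorization, is the mechanism that makes the biconditional work.
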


To prove Proposition \ref{prop:iii-normal-ordering} and for later use, we need the following notion.

\begin{de}\label{de:quan-comm}
For $W\in \R_\pm^l$ and $i_1,\dots,i_m\in I$, we define the $g$-commutator
\[[x_{i_1}^\pm(z_1),\dots,x_{i_m}^\pm(z_m)]_g\in \mathrm{End}_{\C[[\hbar]]}(W)[[z_1^{\pm 1},\dots,z_m^{\pm 1}]]\] inductively such that
$[x_{i_m}^\pm(z_m)]_g=x_{i_m}^\pm(z_m)$ and for $r=m-1,\dots,1$,
\begin{eqnarray}\label{eq:def-g-commutator}
\begin{split}
&  [x_{i_r}^\pm(z_r),\dots,x_{i_m}^\pm(z_m)]_g=x_{i_r}^\pm(z_r)[x_{i_{r+1}}^\pm(z_{r+1}),\dots,x_{i_m}^\pm(z_m)]_g\\
&  \quad-\(\prod_{r+1\le a\le m}g_{i_ai_r}(z_r/z_a)^{\mp 1}\)
    [x_{i_{r+1}}^\pm(z_{r+1}),\dots,x_{i_m}^\pm(z_m)]_gx_{i_r}^\pm(z_r).
\end{split}
\end{eqnarray}
\end{de}

\begin{rem}{\em Here we show that the $g$-commutator is well-defined by using induction  on $m$.
From the definition of $g_{ij}(z)$ (see \eqref{e:g}), it follows that
\begin{align*}
  g_{ij}(w/z)^{\pm 1}\in\C[[\hbar,w/z]]\subset \C[[\hbar]][z,z\inv][[w]].
\end{align*}
This  implies that
\begin{align*}
  \prod_{2\le a\le m}g_{i_ai_r}(z_1/z_a)^{\mp 1}\in \C[[\hbar]][z_{2}^{\pm 1},\dots,z_m^{\pm 1}][[z_1]].
\end{align*}
By induction hypothesis we have a well-defined formal series $[x_{i_2}^\pm(z_2),\dots,x_{i_m}^\pm(z_m)]_g$. And,
for any $v\in W$ and $n\in \Z$, we have
\[[x_{i_2}^\pm(z_2),\dots,x_{i_m}^\pm(z_m)]_g x_{i_1}^\pm(z_1)v\in W[[z_{2}^{\pm 1},\dots,z_m^{\pm 1}]]((z_1))+\hbar^nW[[z_{1}^{\pm 1},\dots,z_m^{\pm 1}]].\]
This means that
\[\tilde\pi_n^{(m)}([x_{i_2}^\pm(z_2),\dots,x_{i_m}^\pm(z_m)]_g x_{i_1}^\pm(z_1))\in \mathrm{End}_{\C[[\hbar]]}(W/\hbar^n W)[[z_{2}^{\pm 1},\dots,z_m^{\pm 1}]]((z_1)).\]
Thus for any $n\in \Z_+$ the product
\[\prod_{2\le a\le m}g_{i_ai_r}(z_1/z_a)^{\mp 1}\cdot \tilde\pi_n^{(m)}([x_{i_2}^\pm(z_2),\dots,x_{i_m}^\pm(z_m)]_g x_{i_1}^\pm(z_1))\]
exists. Then  Remark  \ref{rem:product} implies that the $g$-commutator is well-defined. }
\end{rem}

Recall from \eqref{e:g} that for $i,j\in I$,
\begin{align}\label{gijex}
g_{ji}(z/w)^{\mp 1}=\iota_{w,z}\(\frac{G^\pm_{ij}(z,w)}{F^\pm_{ij}(z,w)}\)
=C_{ij}\,\iota_{w,z}\(\frac{f_{ji}^\pm(w,z)}{f_{ij}^\pm(z,w)}\).
\end{align}
This  implies that
\begin{align}\label{gcomm1}
[x_i^\pm(z),x_j^\pm(w)]_g=
\:x_i^\pm(z)x_j^\pm(w)\;\(\iota_{z,w}(f_{ij}^\pm(z,w)^{-1})-\iota_{w,z}(f_{ij}^\pm(z,w)^{-1})\),
\end{align}
which is an $\E_\hbar^{(2)}(W)$-linear combination of $\delta$-functions.
In general, the $g$-commutator
$[x_{i_1}^\pm(z_1),\dots,x_{i_m}^\pm(z_m)]_g$
is also an $\E_\hbar^{(m)}(W)$-linear combination of  products of $\delta$-functions
(along with their partial differentials).

The following
two elementary lemmas will be used  later on.

\begin{lem}\cite{CTW} Let $c_1,\dots,c_n,d_1,\dots,d_m$ be distinct invertible elements in $\C[[\hbar]]$. Then
\begin{align}\label{deltadec}
&(\iota_{z,w}-\iota_{w,z})\(\prod_{i=1}^n(z-c_iw)^{-1} \prod_{j=1}^m(z-d_iw)^{-2}\)\nonumber\\
=&\sum_{i=1}^n \lim_{z\to c_iw}\(\prod_{a\ne i}(z-c_aw)^{-1} \prod_{j=1}^m(z-d_iw)^{-2}\) z\inv\delta\(\frac{c_iw}{z}\)\\
+&\sum_{j=1}^m \lim_{z\to d_jw}\(\prod_{i=1}^n(z-c_iw)^{-1} \prod_{b\ne j}(z-d_bw)^{-2}\)
\frac{1}{d_j}\frac{\partial}{\partial w}z\inv\delta\(\frac{d_jw}{z}\)
\nonumber\\
-&\sum_{j=1}^m \lim_{z\to d_jw}
\frac{\partial}{\partial w}
\(\prod_{i=1}^n(z-c_iw)^{-1} \prod_{b\ne j}(z-d_bw)^{-2}\)
\frac{1}{d_j}z\inv\delta\(\frac{d_jw}{z}\).
\nonumber
\end{align}
\end{lem}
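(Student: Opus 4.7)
The plan is to apply partial fraction decomposition in the variable $z$ (treating $w$ as a parameter) to the rational function $f(z,w)=\prod_{i=1}^n(z-c_iw)^{-1}\prod_{j=1}^m(z-d_jw)^{-2}$ and then reduce to the two elementary delta function identities $(\iota_{z,w}-\iota_{w,z})(z-cw)^{-1}=z^{-1}\delta(cw/z)$ and $(\iota_{z,w}-\iota_{w,z})(z-cw)^{-2}=\tfrac{1}{c}\tfrac{\partial}{\partial w}z^{-1}\delta(cw/z)$ (the second identity follows from the first by $w$-differentiation, using that $\partial_w$ commutes with $\iota_{z,w}-\iota_{w,z}$ on these rational expressions). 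Since the $c_i$ are distinct, the $d_j$ are distinct, and the two sets are disjoint, the decomposition takes the form
\[
f(z,w)=\sum_i\frac{A_i(w)}{z-c_iw}+\sum_j\Bigl[\frac{B_j(w)}{z-d_jw}+\frac{C_j(w)}{(z-d_jw)^2}\Bigr],
\]
where $A_i(w)$ and $C_j(w)$ are read off as the usual residue limits and already match precisely the ``$\lim_{z\to\cdot}(\ldots)$'' factors appearing in the first two sums on the right-hand side of the asserted identity.

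Writing $g_j(z,w)=\prod_{i=1}^n(z-c_iw)^{-1}\prod_{b\ne j}(z-d_bw)^{-2}$ so that $f(z,w)=(z-d_jw)^{-2}g_j(z,w)$, a Taylor expansion of $g_j$ in $z$ at $z=d_jw$ yields $C_j(w)=g_j(d_jw,w)$ and $B_j(w)=\partial_zg_j(d_jw,w)$. Applying the two delta identities to each partial fraction summand then produces $A_i(w)\,z^{-1}\delta(c_iw/z)$ from each simple pole at $z=c_iw$ (which is the first sum), together with $\tfrac{C_j(w)}{d_j}\tfrac{\partial}{\partial w}z^{-1}\delta(d_jw/z)+B_j(w)\,z^{-1}\delta(d_jw/z)$ from each double pole at $z=d_jw$.

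To recognize the $B_j$-term in the form appearing in the third sum of the statement, the key step is the chain-rule identity $\tfrac{d}{dw}[g_j(d_jw,w)]=d_j\,\partial_zg_j(d_jw,w)+\partial_wg_j(d_jw,w)$, which rewrites $B_j(w)=\tfrac{1}{d_j}\bigl[C_j'(w)-\partial_wg_j(d_jw,w)\bigr]$. Combined with the Leibniz rule $C_j'(w)\delta+C_j(w)\partial_w\delta=\partial_w(C_j(w)\delta)$ applied with $\delta=z^{-1}\delta(d_jw/z)$, the $C_j'(w)$ piece merges with the $\tfrac{C_j(w)}{d_j}\partial_w\delta$ piece to reconstitute the second sum of the asserted formula, while the remaining $-\tfrac{1}{d_j}\partial_wg_j(d_jw,w)\cdot z^{-1}\delta(d_jw/z)$ is exactly the third sum. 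The main obstacle is the bookkeeping of how the formal derivative $\tfrac{\partial}{\partial w}$ acts in each term of the claimed identity; once the above chain-rule and Leibniz reorganization is carried out, the remainder is a routine exercise in formal calculus.
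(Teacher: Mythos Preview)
The paper does not supply its own proof of this lemma; it simply cites \cite{CTW}. Your partial-fraction approach is the natural one and the computation is carried out correctly up to the final bookkeeping.

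There is, however, a genuine discrepancy you have glossed over. After the Leibniz/chain-rule reorganization you obtain, for each double pole,
\[
\frac{1}{d_j}\,\frac{\partial}{\partial w}\Bigl(C_j(w)\,z^{-1}\delta\bigl(\tfrac{d_jw}{z}\bigr)\Bigr)
\;-\;\frac{1}{d_j}\,[\partial_w g_j](d_jw,w)\,z^{-1}\delta\bigl(\tfrac{d_jw}{z}\bigr),
\]
whereas the second sum in the displayed identity, as literally typeset, is $C_j(w)\cdot\frac{1}{d_j}\partial_w\bigl(z^{-1}\delta(d_jw/z)\bigr)$: the derivative acts only on the delta, not on the product. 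The two differ by $\frac{1}{d_j}C_j'(w)\,z^{-1}\delta(d_jw/z)$, which does not vanish in general. A direct check with $n=m=1$ confirms that the identity as printed fails by exactly this term, while your version holds. So your argument is correct and in fact yields the right formula; the issue is a typo in the paper's transcription of the lemma from \cite{CTW}. You should state this explicitly rather than assert that your expression ``reconstitutes the second sum of the asserted formula'' verbatim.
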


\begin{lem}\cite{LL}\label{lem:delta-function-independent}
Let $m,s\in\Z_+$ and let $((c_{1i})_{i=1}^s,(n_{1i})_{i=1}^s),\dots,
((c_{mi})_{i=1}^s,(n_{mi})_{i=1}^s)$ be
elements in $(\C[[\hbar]]\setminus\{0\})^s\times\N^s$ satisfying
\[((c_{ji})_{i=1}^s,(n_{ji})_{i=1}^s)\ne( (c_{ki})_{i=1}^s,(n_{ki})_{i=1}^s)\]
for all $1\le j\ne k\le m$.
Then for any $f_1(z),f_2(z),\dots,f_m(z)\in\E_\hbar(W)$, we have
\begin{align*}
  \sum_{i=1}^m f_i(z)\prod_{j=1}^s\(\frac{\partial}{\partial z}\)^{n_{ij}}
  z_j\inv \delta\(\frac{c_{ij}z}{z_j}\)=0
\end{align*}
if and only if $f_i(z)=0$ for all $i=1,2,\dots,m$, where $z,z_1,\dots,z_s$ are  mutually commuting independent formal variables.
\end{lem}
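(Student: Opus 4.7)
The plan is to reduce the statement to a scalar linear-independence claim by extracting coefficients in the auxiliary variables $z_1,\dots,z_s$, and then to verify that claim by a standard character-theoretic argument. The backward implication is immediate, so I would focus on the forward direction.

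First, I would expand each delta factor as $z_j^{-1}\delta(c_{ij}z/z_j)=\sum_{n\in\Z}c_{ij}^n z^n z_j^{-n-1}$ and differentiate term-by-term to obtain
\begin{align*}
\left(\tfrac{\partial}{\partial z}\right)^{n_{ij}}z_j^{-1}\delta\!\left(\tfrac{c_{ij}z}{z_j}\right)=\sum_{n\in\Z}c_{ij}^n\, P_{n_{ij}}(n)\,z^{n-n_{ij}}z_j^{-n-1},
\end{align*}
where $P_r(n):=n(n-1)\cdots(n-r+1)$ denotes the falling factorial. Taking products over $j$, weighting by $f_i(z)$, summing over $i$, and extracting the coefficient of $\prod_{j=1}^s z_j^{-k_j-1}$ for an arbitrary multi-index $\vec k=(k_1,\dots,k_s)\in\Z^s$, the vanishing hypothesis becomes the family of identities
\begin{align*}
\sum_{i=1}^m f_i(z)\,z^{\sum_j(k_j-n_{ij})}\prod_{j=1}^s c_{ij}^{k_j}P_{n_{ij}}(k_j)=0\quad\text{in }\E_\hbar(W),\qquad\vec k\in\Z^s.
\end{align*}
Factoring the $\vec k$-dependent Laurent monomial $z^{\sum_j k_j}$ out and absorbing $z^{-\sum_j n_{ij}}$ into new functions $g_i(z):=z^{-\sum_j n_{ij}}f_i(z)\in\E_\hbar(W)$ (so that $g_i=0\Leftrightarrow f_i=0$), the problem reduces to showing that $\sum_{i=1}^m g_i(z)\Phi_i(\vec k)=0$ for all $\vec k\in\Z^s$ forces $g_i(z)=0$ for every $i$, where $\Phi_i(\vec k):=\prod_{j=1}^s c_{ij}^{k_j}P_{n_{ij}}(k_j)$.

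The heart of the argument is therefore to establish that the scalar sequences $\Phi_1,\dots,\Phi_m\colon\Z^s\to\C[[\hbar]]$ are $\C[[\hbar]]$-linearly independent under the distinctness hypothesis on the data tuples $((c_{ij}),(n_{ij}))$. I would argue this in two stages. First, partition the indices $i$ into classes sharing a common $c$-tuple $\vec c_i=(c_{i1},\dots,c_{is})$; inside any such class the $(n_{ij})$-tuples are pairwise distinct by hypothesis, so the falling-factorial polynomials $\prod_j P_{n_{ij}}(k_j)$ have pairwise distinct multi-degrees and are linearly independent in $\C[\vec k]$. A putative $\C[[\hbar]]$-linear dependence among the $\Phi_i$'s then collapses to one of the form $\sum_{\vec c}Q_{\vec c}(\vec k)\prod_j c_j^{k_j}=0$ for all $\vec k$, with polynomial coefficients $Q_{\vec c}\in\C[[\hbar]][\vec k]$ indexed by the (pairwise distinct) characters $\vec c$ of $\Z^s$. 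Applying the classical linear independence of characters then forces each $Q_{\vec c}=0$, and unwinding the grouping yields $g_i=0$, hence $f_i=0$, for every $i$.

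The main technical obstacle is this last step: since $\C[[\hbar]]$ is not a field, the classical Vandermonde/Artin arguments do not apply verbatim—pairwise differences $c_{ij}-c_{kj}$ may fail to be invertible. The clean workaround I have in mind is to base-change to the fraction field $\C((\hbar))$ (in which distinct nonzero elements of $\C[[\hbar]]$ remain distinct and automatically invertible), apply Artin's theorem there to conclude each $Q_{\vec c}=0$ in $\C((\hbar))[\vec k]$, and descend to $\C[[\hbar]][\vec k]$ via $\C[[\hbar]]\subset\C((\hbar))$. The remaining ingredients—the delta-function expansions, the coefficient extractions, and the multi-degree bookkeeping—are routine.
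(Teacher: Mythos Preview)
The paper does not supply its own proof of this lemma; it simply cites \cite{LL}, where the analogous statement over $\C$ is established by essentially the method you outline: expand the delta functions, extract coefficients in the auxiliary variables $z_1,\dots,z_s$, and reduce to the linear independence of the functions $\vec k\mapsto \prod_j c_j^{k_j}P_{n_j}(k_j)$ for distinct data $((c_j),(n_j))$. Your sketch follows this standard route correctly, and your observation that one must pass to the fraction field $\C((\hbar))$ to run the character/Vandermonde argument is exactly the adaptation needed in the $\hbar$-adic setting.

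There is one step you should make explicit. You reduce to showing that $\sum_i g_i(z)\Phi_i(\vec k)=0$ for all $\vec k$ forces each $g_i=0$, and then assert that it suffices to prove the $\Phi_i$ are $\C[[\hbar]]$-linearly independent. But the coefficients $g_i$ lie in $\E_\hbar(W)$, not in $\C[[\hbar]]$, and linear independence over a ring does not in general propagate to arbitrary modules. What actually closes the argument is that your fraction-field reasoning yields $\C((\hbar))$-linear independence of the $\Phi_i$: hence there exist $\vec k^{(1)},\dots,\vec k^{(m)}\in\Z^s$ with $d:=\det\bigl(\Phi_i(\vec k^{(l)})\bigr)_{i,l}\neq 0$ in $\C[[\hbar]]$, Cramer's rule applied to the $m$ equations $\sum_i g_i\,\Phi_i(\vec k^{(l)})=0$ gives $d\cdot g_i=0$, and since $\E_\hbar(W)=\E(W^0)[[\hbar]]$ is torsion-free over $\C[[\hbar]]$ you conclude $g_i=0$. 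With this descent step spelled out, your proof is complete.
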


Now we calculate the $g$-commutators that are related to  (Q9).

\begin{lem}\label{lem:giii} For $W\in \R_\pm^l$ and $i\in I$ with $s_i=2$, we have
\begin{align*}
&[x_i^\pm(z_1),x_i^\pm(z_2),x_i^\pm(z_3)]_g=\:x_i^\pm(z_1)x_i^\pm(z_2) x_i^\pm(z_3)\;\\
&\cdot\Bigg(\frac{1+q_i^{\mp d_i}}{(1+q_i^{\pm3d_i})(1+q_i^{\mp 3d_i})}z_1^{-2d_i}\delta\(\frac{-q_i^{\pm d_i}z_3^{d_i}}{z_1^{d_i}}\)z_2^{-d_i}\delta\(\frac{q_i^{\pm 2d_i}z_3^{d_i}}{z_2^{d_i}}\)\\
&+\frac{(1-q_i^{\mp 2d_i})(1-q_i^{\mp 4d_i})}{(1+q_i^{\mp 3d_i})(1+q_i^{\mp3d_i})(1+q_i^{\mp5d_i})}
 z_1^{-2d_i}\delta\(\frac{q^{\pm 4d_i}z_3^{d_i}}{z_1^{d_i}}\)z_2^{-d_i}\delta\(\frac{q_i^{\pm 2d_i}z_3^{d_i}}{z_2^{d_i}}\)\\
&+\frac{-q_i^{\mp 2d_i}(1+q_i^{\pm d_i})}{(1+q_i^{\mp 3d_i})(1+q^{\pm5d_i})}z_1^{-2d_i}\delta\(\frac{-q_i^{\mp d_i}z_3^{d_i}}{z_1^{d_i}}\)z_2^{-d_i}\delta\(\frac{q_i^{\pm 2d_i}z_3^{d_i}}{z_2^{d_i}}\)\\
&+\frac{1+q_i^{\mp d_i}}{(1+q_i^{\mp3d_i})(1+q_i^{\pm 3d_i})}z_1^{-2d_i}\delta\(\frac{-q_i^{\pm d_i}z_3^{d_i}}{z_1^{d_i}}\)z_2^{-d_i}\delta\(\frac{-q_i^{\mp d_i}z_3^{d_i}}{z_2^{d_i}}\)\\
&+\frac{(1+q_i^{\pm d_i})(1-q_i^{\pm 2d_i})}{(1+q_i^{\pm 3d_i})(1+q_i^{\pm 3d_i})(1-q_i^{\pm 4d_i})}
 z_1^{-2d_i}\delta\(\frac{q_i^{\mp2 d_i}z_3^{d_i}}{z_1^{d_i}}\)z_2^{-d_i}\delta\(\frac{-q_i^{\mp d_i}z_3^{d_i}}{z_2^{d_i}}\)\\
&+\frac{1+q_i^{\pm d_i}}{(1+q_i^{\pm 3d_i})(1+q_i^{\mp d_i})(1+q_i^{\mp 2d_i})}
 z_1^{-2d_i}\delta\(\frac{q_i^{\pm2 d_i}z_3^{d_i}}{z_1^{d_i}}\)z_2^{-d_i}\delta\(\frac{-q_i^{\mp d_i}z_3^{d_i}}{z_2^{d_i}}\)\Bigg).
\end{align*}
\end{lem}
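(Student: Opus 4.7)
The plan is to unwind the two layers of the inductive definition \eqref{eq:def-g-commutator}, converting each step into a sum of delta functions via \eqref{gcomm1} and \eqref{deltadec}, and then to repackage the result using the three-current normal ordered product.

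First I would compute the inner two-variable commutator $B(z_2,z_3):=[x_i^\pm(z_2),x_i^\pm(z_3)]_g$. By \eqref{gcomm1},
\begin{align*}
  B(z_2,z_3) = \:x_i^\pm(z_2)x_i^\pm(z_3)\;\cdot (\iota_{z_2,z_3}-\iota_{z_3,z_2})\bigl(f_{ii}^\pm(z_2,z_3)^{-1}\bigr).
\end{align*}
When $s_i=2$, \eqref{Fiiep} gives $f_{ii}^\pm(z_2,z_3)^{-1}=(z_2^{d_i}-z_3^{d_i})/F_{ii}^\pm(z_2,z_3)$, a rational function in $z_2^{d_i}/z_3^{d_i}$ with simple poles at $z_2^{d_i}=q_i^{\pm 2d_i}z_3^{d_i}$ and $z_2^{d_i}=-q_i^{\mp d_i}z_3^{d_i}$. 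Applying \eqref{deltadec} then expresses the iota-difference as two explicit delta-function residues, presenting $B(z_2,z_3)$ as a sum of two terms of the shape $(\text{scalar})\cdot\:x_i^\pm(z_2)x_i^\pm(z_3)\;\cdot z_3^{-d_i}\delta(\star\, z_3^{d_i}/z_2^{d_i})$.

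Next I would substitute this into the outer layer
\begin{align*}
  [x_i^\pm(z_1),x_i^\pm(z_2),x_i^\pm(z_3)]_g = x_i^\pm(z_1)B(z_2,z_3) - g_{ii}(z_1/z_2)^{\mp 1}g_{ii}(z_1/z_3)^{\mp 1}B(z_2,z_3)x_i^\pm(z_1).
\end{align*}
By Remark \ref{rem:norord} one rewrites $x_i^\pm(z_1)\,\:x_i^\pm(z_2)x_i^\pm(z_3)\;$ and $\:x_i^\pm(z_2)x_i^\pm(z_3)\;\,x_i^\pm(z_1)$ as $(f_{ii}^\pm(z_1,z_2)f_{ii}^\pm(z_1,z_3))^{-1}\,\:x_i^\pm(z_1)x_i^\pm(z_2)x_i^\pm(z_3)\;$ expanded in the two opposite iota-orders; converting the $g_{ii}(z_1/z_s)^{\mp 1}$ factors via \eqref{gijex} and pulling the three-current normal ordered product out front, the bracket reduces to an iota-difference in $z_1^{d_i}$ of a rational function whose poles lie on the zeros of $F_{ii}^\pm(z_1,z_2)F_{ii}^\pm(z_1,z_3)$. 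With $z_2^{d_i}/z_3^{d_i}$ frozen at one of the two inner loci, a second application of \eqref{deltadec} (now in $z_1^{d_i}$) produces delta functions $z_1^{-2d_i}\delta(\star\, z_3^{d_i}/z_1^{d_i})$. Outer loci that coincide with $z_2$ or differ only by a $d_i$-th root of unity contribute zero, by the three-variable analogue of \eqref{eq:xiidi}; pairing the surviving outer loci with the two inner loci yields exactly the six terms in the lemma.

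The main obstacle is the residue bookkeeping: for each of the six surviving pairs of loci one must evaluate a double residue of a rational function in $z_1^{d_i},z_2^{d_i},z_3^{d_i}$, and then simplify combinations such as $q_i^{\pm 2d_i}+q_i^{\mp d_i}=q_i^{\mp d_i}(1+q_i^{\pm 3d_i})$ to match the stated coefficient. Since Lemma \ref{lem:delta-function-independent} guarantees that the six products of delta functions are linearly independent over $\E_\hbar^{(3)}(W)$, this case-by-case residue matching suffices to conclude.
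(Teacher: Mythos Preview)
Your approach is essentially the one the paper takes: compute the inner commutator $[x_i^\pm(z_2),x_i^\pm(z_3)]_g$ via \eqref{gcomm1} and \eqref{deltadec}, then feed the two resulting delta-function terms into the outer layer, rewrite using \eqref{gijex} and the three-current normal ordered product, and apply \eqref{deltadec} once more in $z_1^{d_i}$ after specializing $z_2^{d_i}$ at each inner locus. The paper carries out exactly these steps and displays the two specialized rational functions in $z_1^{d_i}$, each with three simple poles, giving the six terms.

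One small clarification: the reduction from four outer poles to three is purely algebraic. After specializing $z_2^{d_i}$ at an inner locus, the pole of $f_{ii}^\pm(z_1,z_3)^{-1}$ that lands on $z_1^{d_i}=z_2^{d_i}$ is cancelled by the zero of $f_{ii}^\pm(z_1,z_2)^{-1}$ at $z_1^{d_i}=z_2^{d_i}$ (the factor $z_1^{d_i}-z_2^{d_i}$ in its numerator). So you never actually produce a delta function there, and no appeal to a three-variable analogue of \eqref{eq:xiidi} is needed. Your invocation of it is harmless but superfluous.
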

\begin{proof}Firstly, it follows from \eqref{gcomm1} and \eqref{deltadec} that
\begin{align*}
 &[x_i^\pm(z_2),x_i^\pm(z_3)]_g\\
 =&\:x_i^\pm(z_2) x_i^\pm(z_3)\;(\iota_{z_2,z_3}-\iota_{z_3,z_2})
 \(\frac{z_2^{d_i}-z_3^{d_i}}{(z_2^{d_i}+q_i^{\mp d_i}z_3^{d_i})(z_2^{d_i}-q_i^{\pm 2d_i}z_3^{d_i})}\)\\
 =&\:x_i^\pm(z_2) x_i^\pm(z_3)\;\(\frac{1-q_i^{\mp 2d_i}}{1+q_i^{\mp 3d_i}}z_2^{-d_i}\delta\(\frac{q_i^{\pm 2d_i}z_3^{d_i}}{z_2^{d_i}}\)
 +\frac{1+q_i^{\pm d_i}}{1+q_i^{\pm 3d_i}}z_2^{-d_i}\delta\(\frac{-q_i^{\mp d_i}z_3^{d_i}}{z_2^{d_i}}\)\).
 \end{align*}
 Then by definition we have
 \begin{align*}
 &[x_i^\pm(z_1),x_i^\pm(z_2),x_i^\pm(z_3)]_g
 =(\iota_{z_3,z_2,z_1}-\iota_{z_1,z_2,z_3})
 \(f_{ii}^\pm(z_1,z_2)^{-1}f_{ii}^\pm(z_1,z_3)^{-1}\)\\
 &\:x_i^\pm(z_1)x_i^\pm(z_2) x_i^\pm(z_3)\;\(\frac{1-q_i^{\mp 2d_i}}{1+q_i^{\mp 3d_i}}z_2^{-d_i}\delta\(\frac{q_i^{\pm 2d_i}z_3^{d_i}}{z_2^{d_i}}\)
 +\frac{1+q_i^{\pm d_i}}{1+q_i^{\pm 3d_i}}z_2^{-d_i}\delta\(\frac{-q_i^{\mp d_i}z_3^{d_i}}{z_2^{d_i}}\)\).
 \end{align*}

Now the assertion is implied by the following two facts, which can be proved directly by using \eqref{deltadec}:
 \begin{align*}
 &(\iota_{z_1,z_3}-\iota_{z_3,z_1})\(\lim_{z_2^{d_i}\rightarrow q_i^{\pm 2d_i}z_3^{d_i}}
 \frac{1}{f_{ii}^{\pm}(z_1,z_2)f_{ii}^{\pm}(z_1,z_3)}\)\\
 =&(\iota_{z_1,z_3}-\iota_{z_3,z_1})
 \(\frac{z_1^{d_i}-z_3^{d_i}}
 {(z_1^{d_i}+q_i^{\pm d_i}z_3^{d_i})(z_1^{d_i}-q^{\pm 4d_i}z_3^{d_i})(z_1^{d_i}+q_i^{\mp d_i}z_3^{d_i})}\)\\
 =&\frac{1+q_i^{\mp d_i}}{(1+q_i^{\pm3d_i})(1-q_i^{\mp2d_i})}z_1^{-2d_i}\delta\(\frac{-q_i^{\pm d_i}z_3^{d_i}}{z_1^{d_i}}\)
 +\frac{1-q_i^{\mp 4d_i}}{(1+q_i^{\mp3d_i})(1+q_i^{\mp5d_i})}\\
 &z_1^{-2d_i}\delta\(\frac{q^{\pm 4d_i}z_3^{d_i}}{z_1^{d_i}}\)
 +\frac{1+q_i^{\pm d_i}}{(1-q^{\pm2d_i})(1+q^{\pm5d_i})}z_1^{-2d_i}\delta\(\frac{-q_i^{\mp d_i}z_3^{d_i}}{z_1^{d_i}}\).
 \end{align*}
 and
 \begin{align*}
 &(\iota_{z_1,z_3}-\iota_{z_3,z_1})(\lim_{z_2^{d_i}\rightarrow -q_i^{\mp d_i}z_3^{d_i}}
 f_{ii}^{\pm}(z_1,z_2)^{-1}f_{ii}^{\pm}(z_1,z_3)^{-1})\\
 =&(\iota_{z_1,z_3}-\iota_{z_3,z_1})
 \(\frac{z_1^{d_i}-z_3^{d_i}}
 {(z_1^{d_i}+q_i^{\pm d_i}z_3^{d_i})(z_1^{d_i}-q_i^{\mp 2d_i}z_3^{d_i})(z_1^{d_i}-q_i^{\pm 2d_i}z_3^{d_i})}\)\\
 =&\frac{1+q_i^{\mp d_i}}{(1+q_i^{\mp3d_i})(1+q_i^{\pm d_i})}z_1^{-2d_i}\delta\(\frac{-q_i^{\pm d_i}z_3^{d_i}}{z_1^{d_i}}\)
  +\frac{1-q_i^{\pm 2d_i}}{(1+q_i^{\pm 3d_i})(1-q_i^{\pm 4d_i})}\\
 &z_1^{-2d_i}\delta\(\frac{q_i^{\mp2 d_i}z_3^{d_i}}{z_1^{d_i}}\)
 +\frac{1-q_i^{\mp 2d_i}}{(1+q_i^{\mp d_i})(1-q_i^{\mp 4d_i})}
 z_1^{-2d_i}\delta\(\frac{q_i^{\pm2 d_i}z_3^{d_i}}{z_1^{d_i}}\).
 \end{align*}
\end{proof}

\textbf{Proof of Proposition \ref{prop:iii-normal-ordering}:}
Denote by $R_{iii}^\pm$ the LHS of \eqref{eq:req9}.
Note that for  $\sigma\in S_3$, we can write $x_i^{\pm}(z_{\sigma(1)})x_i^{\pm} (z_{\sigma(2)})x_i^{\pm}(z_{\sigma(3)})\in \mathrm{End}(W)[[z_1,z_2,z_3]]$
as a summation of the $g$-commutators and the currents $x_i^\pm(z_3)x_i^\pm(z_2)x_i^\pm(z_1)$.
 For example,
 \begin{align*}
 &\quad x_i^\pm(z_1)x_i^\pm(z_2)x_i^\pm(z_3)\\
 &=[x_i^\pm(z_1),x_i^\pm(z_2),x_i^\pm(z_3)]_g+g_{ii}(z_1/z_3)^{\mp 1}g_{ii}(z_1/z_2)^{\mp 1}
 [x_i^\pm(z_2),x_i^\pm(z_3)]_g x_i^\pm(z_1)\\
 &+g_{ii}(z_2/z_3)^{\mp 1}[x_i^\pm(z_1),x_i^\pm(z_3)]_gx_i^\pm(z_2)
 +g_{ii}(z_2/z_3)^{\mp 1} g_{ii}(z_1/z_3)^{\mp 1}x_i^\pm(z_3)\\
 &\cdot[x_i^\pm(z_1),x_i^\pm(z_2)]_g
 +g_{ii}(z_2/z_3)^{\mp 1} g_{ii}(z_1/z_3)^{\mp 1} g_{ii}(z_1/z_2)^{\mp 1} x_i^\pm(z_3)x_i^\pm(z_2)x_i^\pm(z_1).
 \end{align*}
 Then it is straightforward to see that $R_{iii}^\pm$ can be rewritten as (recalling \eqref{gijex})
\begin{align*}
&p_i^\pm(z_1,z_2,z_3)[x_i^\pm(z_1),x_i^\pm(z_2),x_i^\pm(z_3)]_g
  +p_i^\pm(z_2,z_1,z_3)[x_i^\pm(z_2),x_i^\pm(z_1),x_i^\pm(z_3)]_g\\
  +&\iota_{z_3,z_2,z_1}\(\frac{A_i^\pm(z_1,z_2,z_3)}{F_{ii}^{\pm}(z_1,z_3)F_{ii}^\pm(z_2,z_3)}\)x_i^\pm(z_3)[x_i^\pm(z_1),x_i^\pm(z_2)]_g \\
  +&\iota_{z_3,z_1,z_2}\(\frac{A_i^\pm(z_1,z_3,z_2)}{F_{ii}^{\pm}(z_2,z_3)G_{ii}^\pm(z_2,z_1)}\)[x_i^\pm(z_1),x_i^\pm(z_3)]_gx_i^\pm(z_2)\\
  +&\iota_{z_3,z_2,z_1}\(\frac{A_i^\pm(z_2,z_3,z_1)}{F_{ii}^{\pm}(z_1,z_3)F_{ii}^\pm(z_1,z_2)}\)[x_i^\pm(z_2),x_i^\pm(z_3)]_gx_i^\pm(z_1)\\
  +&\iota_{z_3,z_2,z_1}\(
  \frac{G_{ii}^{\pm}(z_1,z_2)A_i^\pm(z_1,z_2,z_3)+F_{ii}^{\pm}(z_1,z_2)A_i^\pm(z_2,z_1,z_3)}{
  F_{ii}^{\pm}(z_1,z_2)F_{ii}^{\pm}(z_1,z_3)F_{ii}^\pm(z_2,z_3)}\)
 x_i^\pm(z_3)x_i^\pm(z_2)x_i^\pm(z_1),
\end{align*}
where
\begin{align*}
&A^\pm_i(z_1,z_2,z_3)=G_{ii}^\pm(z_1,z_3)G_{ii}^\pm(z_2,z_3) p_i^\pm(z_1,z_2,z_3)\\
&+G_{ii}^\pm(z_1,z_3)F_{ii}^\pm(z_2,z_3) p_i^\pm(z_1,z_3,z_2)
+F_{ii}^\pm(z_1,z_3)F_{ii}^\pm(z_2,z_3) p_i^\pm(z_3,z_1,z_2).
\end{align*}
are as in \eqref{defai123} (noting that $F_{ii}^{\pm}(z,w)=-G_{ii}^\pm(w,z)$).

Recall from \eqref{tri-lem-2} that $A^\pm_i(z_1,z_2,z_3)=F_{ii}^\pm(z_1,z_2)B_i^\pm(z_1,z_2,z_3)$, where $B_i^\pm(z_1,z_2,z_3)$
is a polynomial satisfying the symmetry:
$B_i^\pm(z_1,z_2,z_3)=B_i^\pm(z_2,z_1,z_3)$.
This implies that $G_{ii}^{\pm}(z_1,z_2)A_i^\pm(z_1,z_2,z_3)+F_{ii}^{\pm}(z_1,z_2)A_i^\pm(z_2,z_1,z_3)=0$.
Furthermore, since $F_{ii}^\pm(z_1,z_2)$ $[x_i^\pm(z_1),x_i^\pm(z_2)]_g=0$, we obtain
\begin{equation}\begin{split}\label{eq:R_iii-rewrite1}
 R_{iii}^\pm=\sum_{\sigma\in S_2}p_i^\pm(z_{\sigma(1)},z_{\sigma(2)},z_3)[x_i^\pm(z_{\sigma(1)}),x_i^\pm(z_{\sigma(2)}),x_i^\pm(z_3)]_g.
 \end{split}
 \end{equation}
It is straightforward to check that
\begin{align}\label{eq:q9eq1}
\lim_{z_1^{d_i}\rightarrow q^{\pm 4d_i}z_3^{d_i}}
\lim_{z_2^{d_i}\rightarrow q_i^{\pm 2d_i}z_3^{d_i}} p_i^\pm(z_1,z_2,z_3)=0,
\end{align}
\begin{equation}\begin{split}\label{eq:q9eq2}
&\lim_{z_1^{d_i}\rightarrow q_i^{\mp d_i}z_3^{d_i}}
\lim_{z_2^{d_i}\rightarrow q_i^{\pm 2d_i}z_3^{d_i}}
\frac{q_i^{\mp 2d_i}p_i^\pm(z_1,z_2,z_3)(1+q_i^{\pm d_i})}{(1+q_i^{\mp 3d_i})(1+q^{\pm5d_i})}z_1^{-d_i}\\
=&q_i^{\mp \frac{7}{2}d_i}\frac{1+q_i^{\pm d_i}}{1+q_i^{\mp 3d_i}}\\
=&\lim_{z_1^{d_i}\rightarrow q_i^{\pm2 d_i}z_3^{d_i}}
\lim_{z_2^{d_i}\rightarrow -q_i^{\mp d_i}z_3^{d_i}}
\frac{p_i^\pm(z_1,z_2,z_3)(1+q_i^{\pm d_i})}{(1+q_i^{\pm 3d_i})(1+q_i^{\mp d_i})(1+q_i^{\mp 2d_i})}
 z_1^{-d_i}.
\end{split}\end{equation}

Recalling Proposition \ref{prop:normal-ordering-rational}, we have
\begin{align}\label{eq:q9eq3}
\:x_i^{\pm}(z_1)x_i^{\pm}(z_2)x_i^{\pm}(z_3)\;=\:x_i^{\pm}(z_2)x_i^{\pm}(z_1)x_i^{\pm}(z_3)\;.
\end{align}

Now, it follows from Lemma \ref{lem:giii}, \eqref{eq:R_iii-rewrite1}, \eqref{eq:q9eq1}, \eqref{eq:q9eq2}
and \eqref{eq:q9eq3} that
 \begin{equation}\begin{split}\label{eq:R_iii-rewrite2}
 &R_{iii}^\pm=\sum_{\sigma\in S_2}p_i^\pm(z_{\sigma(1)},z_{\sigma(2)},z_3)\:x_i^\pm(z_{\sigma(1)})x_i^\pm(z_{\sigma(2)})x_i^\pm(z_3)\;\\
 &\cdot\Bigg(\frac{1+q_i^{\mp d_i}}{(1+q_i^{\pm3d_i})(1+q_i^{\mp 3d_i})}z_{\sigma(1)}^{-2d_i}\delta\(\frac{-q_i^{\pm d_i}z_3^{d_i}}{z_{\sigma(1)}^{d_i}}\)z_{\sigma(2)}^{-d_i}\delta\(\frac{q_i^{\pm 2d_i}z_3^{d_i}}{z_{\sigma(2)}^{d_i}}\)\\
&+\frac{1+q_i^{\mp d_i}}{(1+q_i^{\mp3d_i})(1+q_i^{\pm 3d_i})}z_{\sigma(1)}^{-2d_i}\delta\(\frac{-q_i^{\pm d_i}z_3^{d_i}}{z_{\sigma(1)}^{d_i}}\)z_{\sigma(2)}^{-d_i}\delta\(\frac{-q_i^{\mp d_i}z_3^{d_i}}{z_{\sigma(2)}^{d_i}}\)\\
&+\frac{(1+q_i^{\pm d_i})(1-q_i^{\pm 2d_i})}{(1+q_i^{\pm 3d_i})(1+q_i^{\pm 3d_i})(1-q_i^{\pm 4d_i})}
 z_{\sigma(1)}^{-2d_i}\delta\(\frac{q_i^{\mp2 d_i}z_3^{d_i}}{z_{\sigma(1)}^{d_i}}\)z_{\sigma(2)}^{-d_i}\delta\(\frac{-q_i^{\mp d_i}z_3^{d_i}}{z_{\sigma(2)}^{d_i}}\)\Bigg).
\end{split}\end{equation}
Thus, by using Lemma \ref{lem:delta-function-independent} and
\eqref{eq:R_iii-rewrite2}, we find that $R_{iii}^\pm=0$ on $W$
if and only if
\begin{align*}
&\:x_i^\pm(\xi_{2d_i}q_i^{\pm 1}z_3)x_i^\pm(q_i^{\pm 2}z_3)x_i^\pm(z_3)\;
=\:x_i^\pm(\xi_{2d_i}q_i^{\pm 1}z_3)x_i^\pm(\xi_{2d_i}q_i^{\mp 1}z_3)x_i^\pm(z_3)\;\\
&=\:x_i^\pm(q_i^{\mp2}z_3)x_i^\pm(\xi_{2d_i}q_i^{\mp 1}z_3)x_i^\pm(z_3)\;=0.
\end{align*}
Finally, the proposition follows from the following simple fact:
\begin{align*}
&\:x_i^\pm(z)x_i^\pm(q_i^{2}z)
    x_i^\pm(\xi_{2d_{i}}q_iz)\;
=\lim_{z_3\rightarrow q_i^{1\mp 1}z}\:x_i^\pm(\xi_{2d_i}q_i^{\pm 1}z_3)x_i^\pm(q_i^{\pm 2}z_3)x_i^\pm(z_3)\;\\
=&\lim_{z_3\rightarrow \xi_{2d_i}q_iz}\:x_i^\pm(\xi_{2d_i}q_i^{\pm 1}z_3)x_i^\pm(\xi_{2d_i}q_i^{\mp 1}z_3)x_i^\pm(z_3)\;\\
=&\lim_{z_3\rightarrow q_i^{1\pm 1}z}\:x_i^\pm(q_i^{\mp2}z_3)x_i^\pm(\xi_{2d_i}q_i^{\mp 1}z_3)x_i^\pm(z_3)\;.
\end{align*}

\subsection{On affine quantum Serre relations}\label{subsec:aff-qSerre-rel}
In this subsection we study the  affine quantum Serre relations on restricted $\us^\pm$-modules in a general setting.
As applications, we give a simple characterization of the relations (Q10) and also 
a partial answer of the conjecture  given in Remark \ref{rem:q11}.

Let $(i,j)\in \mathbb I$, $m\in \Z_+$, $f^\pm=f^\pm(z,w)\in \C[[\hbar]][z^{d_i},w^{d_i}]$ be homogenous and $B=(B_0,B_1,\dots,B_m)\in (\C[[\hbar]])^{m+1}$.
Associated to these data, we introduce the currents
 \begin{align*}
  D^\pm_{ij}(m,f^\pm,B)=\sum_{\sigma\in S_m}
  \Bigg\{&\prod_{1\le a<b\le m}f^\pm(z_{\sigma(a)},z_{\sigma(b)})\big(
  \sum_{r=0}^m (-1)^rB_{r} x_i^\pm(z_{\sigma(1)})\\
  &\cdots x_i^\pm(z_{\sigma(r)})x_j^\pm(w)
  x_i^\pm(z_{\sigma(r+1)})\cdots x_i^\pm(z_{\sigma(m)})\big)\Bigg\},
\end{align*}
and the polynomials
\begin{align}\notag
  P_{ij}^\pm(m,f^\pm,B)
  =&\sum_{\sigma\in S_m}(-1)^{|\sigma|} \sum_{r=0}^m (-1)^rB_{r}\prod_{1\le a<b\le m}f^\pm(z_{\sigma(a)},z_{\sigma(b)})G_{ii}^\pm(z_{\sigma(a)},z_{\sigma(b)})
  \\ \label{e:newiden}
 & \times\prod_{a=0}^r G_{ij}^\pm(z_{\sigma(a)},w)
  \prod_{b=r+1}^m F_{ij}^\pm(z_{\sigma(b)},w).
\end{align}
Note that by specializing
\begin{align}\label{mfb1}
m=m_{ij},\ f^\pm(z,w)=p_{ij}^\pm(z,w)\ \te{and}\ B_{r}=\binom{m_{ij}}{r}_{q_i^{d_{ij}}},\ r=0,\dots,m_{ij},
\end{align}
the relations $D^\pm_{ij}(m,f^\pm,B)=0$ are nothing but the relations (Q10).
On the other hand, by specializing
\begin{align}\label{mfb2}
m=\check{m}_{ij},\ f^\pm(z,w)=1\ \te{and}\ B_{r}=\binom{\check{m}_{ij}}{r}_{\check{q}_i},\ r=0,\dots,\check{m}_{ij},
\end{align}
the relations $D^\pm_{ij}(m,f^\pm,B)=0$ are precisely the relations (Q11).

The following result will be proved in Section \ref{sec:pf-prop-aff-q-serre}.

\begin{thm}\label{prop:Dr-to-normal-ordering}
Let  $W$ be a restricted $\us^\pm$-module and let $(i,j)\in\mathbb I$, $m\in \Z_+$, $f^\pm\in \C[[\hbar]][z^{d_i},w^{d_i}]$ be homogenous, $B\in (\C[[\hbar]])^{m+1}$ such that
the polynomial $P_{ij}^\pm(m,f^\pm,B)=0$. If
\begin{align}\label{reforq10}
  \:x_i^\pm(q_i^{a_{ij}}\xi_{d_{ij}}^l w)
    x_i^\pm(q_i^{a_{ij}+2}\xi_{d_{ij}}^l w)
    \cdots x_i^\pm(q_i^{-a_{ij}}\xi_{d_{ij}}^l w)x_j^\pm(w)\;=0\quad{\te{on}}\ W
\end{align}
for all $0\le l <d_{ij}/d_i$,
 then $D_{ij}^\pm(m,f^\pm,B)=0$ on $W$.

Conversely, if $m=m_{ij}$, $f^\pm(w,q_i^{\pm 2n}w)\ne 0$ for $n=1,\dots,-a_{ij}$,
  and $D_{ij}^\pm(m,f^\pm,B)=0$ on $W$, then  \eqref{reforq10} holds for all  $0\le l <d_{ij}/d_i$.
\end{thm}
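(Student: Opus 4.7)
The plan is to mimic and extend the strategy used in the proof of Proposition~\ref{prop:iii-normal-ordering}. The central observation is that, via the relations (Q8) and the formulation of the $g$-commutator in Definition~\ref{de:quan-comm}, every product $x_i^\pm(z_{\sigma(1)})\cdots x_i^\pm(z_{\sigma(r)})x_j^\pm(w)x_i^\pm(z_{\sigma(r+1)})\cdots x_i^\pm(z_{\sigma(m)})$ can be decomposed into a ``fully right-moved'' piece (with $x_j^\pm(w)$ sitting to the right of all reordered $x_i^\pm$'s) plus a sum of terms each carrying at least one nontrivial $g$-commutator. Since, by \eqref{gcomm1}, a $g$-commutator is an $\E_\hbar$-linear combination of delta functions, this yields an expression of $D_{ij}^\pm(m,f^\pm,B)$ as a ``bulk'' ordered term plus a collection of delta-function supported remainders.

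First I would perform the reordering, carefully tracking the rational coefficients. After summing over $\sigma\in S_m$ and $r=0,\ldots,m$, the coefficient of the fully ordered factor $\:x_i^\pm(z_1)\cdots x_i^\pm(z_m)x_j^\pm(w)\;$ collects, using the symmetries recorded in Proposition~\ref{prop:normal-ordering-rational}, to precisely $P_{ij}^\pm(m,f^\pm,B)$ up to a product of nonzero $f_{ii}^\pm$ and $f_{ij}^\pm$ factors; the hypothesis $P_{ij}^\pm(m,f^\pm,B)=0$ therefore kills it. The remaining delta-function supported terms, after iterated application of the partial fractions identity \eqref{deltadec}, reduce to combinations whose delta functions are supported on the locus $z_a=q_i^{\pm a_{ij}+2k}\xi_{d_{ij}}^l w$ for various $k$ and $0\le l<d_{ij}/d_i$. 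Here the linking condition (LC3) is crucial, because it reduces $f_{ij}^\pm(z,w)$ to the single factor $z^{d_{ij}}-q_i^{\pm a_{ij}d_{ij}}w^{d_{ij}}$, whose roots are exactly $z=q_i^{\pm a_{ij}}\xi_{d_{ij}}^l w$. After repeated specialization the coefficients of these delta functions become scalar multiples of the normal ordered products appearing in \eqref{reforq10}, which is forced by the rigidity of \eqref{normalordermu} and \eqref{normalordersigma}.

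For the forward direction, the vanishing hypothesis \eqref{reforq10} together with $P_{ij}^\pm(m,f^\pm,B)=0$ then immediately gives $D_{ij}^\pm(m,f^\pm,B)=0$ on $W$. For the converse, one sets $m=m_{ij}$ and appeals to the linear independence of delta functions recorded in Lemma~\ref{lem:delta-function-independent}: under the nondegeneracy hypothesis $f^\pm(w,q_i^{\pm 2n}w)\ne 0$ for $n=1,\ldots,-a_{ij}$, the scalar multiplying each normal ordered product from \eqref{reforq10} in the delta-function expansion is nonzero, so $D_{ij}^\pm(m,f^\pm,B)=0$ forces each such normal ordered product to vanish on $W$. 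The main obstacle will be the combinatorial bookkeeping at the first step: verifying that the polynomial coefficient of the fully ordered term is precisely $P_{ij}^\pm(m,f^\pm,B)$ (and not a different symmetrisation of it), and then collating the many delta-function contributions, repeatedly collapsed via \eqref{normalordermu}--\eqref{normalordersigma}, into the single clean chain $\:x_i^\pm(q_i^{a_{ij}}\xi_{d_{ij}}^l w)\cdots x_i^\pm(q_i^{-a_{ij}}\xi_{d_{ij}}^l w)x_j^\pm(w)\;$ appearing in the statement. The Maple-assisted identity used in Lemma~\ref{lem:tri-t-1} suggests that the analogous polynomial identities needed here will again require careful algebraic manipulation rather than structural reasoning.
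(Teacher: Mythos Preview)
Your overall architecture matches the paper's: expand $D_{ij}^\pm(m,f^\pm,B)$ via iterated $g$-commutators into delta-supported pieces and argue that each piece vanishes. The gap is in your dichotomy ``bulk ordered term (killed by $P^\pm=0$) versus delta remainders (killed by \eqref{reforq10})''. When you expand, the pieces are indexed by the number $r$ of variables $z_a$ that have been collapsed onto $w$ by delta functions, together with the configuration $M$ of collapsed values. For $r<m$ the piece still carries $m-r$ \emph{free} currents $x_i^\pm(z_{\sigma(r+1)})\cdots x_i^\pm(z_{\sigma(m)})$ multiplied by a length-$r$ specialized normal-ordered product; these shorter products are not the $(m_{ij}{+}1)$-fold chains in \eqref{reforq10}, so that hypothesis says nothing about them. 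Even at $r=m$, if the collapsed values spread across several root-of-unity sectors $\xi_{d_{ij}}^l$ so that no single sector contains the full string $q_i^{a_{ij}},q_i^{a_{ij}+2},\dots,q_i^{-a_{ij}}$, \eqref{reforq10} again does not apply.

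The technical heart of the paper's proof is precisely that \emph{all} of these ``partial'' pieces---those whose configuration $M$ does not contain a full chain---are killed by the hypothesis $P_{ij}^\pm(m,f^\pm,B)=0$, not just the $r=0$ bulk term. This is far from formal. One must (i) parametrise the admissible collapsed configurations $M$ and put a flag structure on them, recording the order in which the collapses occurred; (ii) prove a transformation law for the structure constants $\alpha^\pm(M,\theta,w)$ under permutation of the flag, which compensates exactly for the $F_{ii}^\pm/G_{ii}^\pm$ factors picked up when reordering the specialized currents; and (iii) use this to rewrite each partial piece $D_{r,M}^\pm$ so that its polynomial prefactor becomes a sum of $Q^\pm(\sigma^{-1}\underline z,w)$ over a full coset $S_{m,r}S_rS_{\bar m\setminus\bar r}=S_m$, hence proportional to $P_{ij}^\pm(m,f^\pm,B)$. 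Only then do the surviving pieces reduce to those whose $M$ contains a full chain $[l]$, and these are exactly what \eqref{reforq10} kills. Your converse argument is essentially correct, but note that it too relies on first eliminating the partial pieces via $P^\pm=0$; only after that can one isolate the coefficients $X_{ij,m_{ij}}^\pm([l],w)$ against independent delta functions and invoke Lemma~\ref{lem:delta-function-independent} together with $f^\pm(w,q_i^{\pm 2n}w)\ne 0$.
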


In view of Theorem \ref{prop:Dr-to-normal-ordering}, we can prove the following main result of this section.

\begin{thm}\label{thm:Dr-equiv-normal-ordering} Let $W$ be a restricted $\ul^\pm$-module. Then the relations (Q9) and (Q10) hold on $W$
if and only if for all $i,j\in I$ with $a_{ij}<0$,
\begin{align}\label{eq:Dr-equiv-normal-ordering-rel}
  \:x_i^\pm(q_i^{a_{ij}}w)x_i^\pm(q_i^{a_{ij}+2}w)\cdots  x_i^\pm(q_i^{-a_{ij}-2}w)
  x_i^\pm(q_i^{-a_{ij}}w)x_j^\pm(w)\;=0\quad \te{on $W$}.
\end{align}
\end{thm}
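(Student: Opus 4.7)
The strategy is to split pairs $(i,j)$ with $a_{ij}<0$ into two mutually exclusive cases and handle each via the appropriate preceding result: Proposition \ref{prop:iii-normal-ordering} for the same-orbit case and Theorem \ref{prop:Dr-to-normal-ordering} for the different-orbit case. By Lemma \ref{lem:linking}, if $a_{ij}<0$ and $i\in\mathcal O(j)$, then the orbit sub-diagram is of type $A_2$, forcing $s_i=2$, $a_{ij}=-1$, and $j=\mu^{N_i/2}(i)$; otherwise $(i,j)\in\mathbb{I}$. Thus the full collection of relations \eqref{eq:Dr-equiv-normal-ordering-rel} splits into a ``same-orbit part'' that must match (Q9) for each $i$ with $s_i=2$, and a ``different-orbit part'' that must match (Q10) for each $(i,j)\in\mathbb{I}$.

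For the same-orbit case with $a_{ij}=-1$, relation \eqref{eq:Dr-equiv-normal-ordering-rel} reads $\:x_i^\pm(q_i^{-1}w)x_i^\pm(q_iw)x_j^\pm(w)\;=0$. Using (Q0), $x_j^\pm(w)=x_i^\pm(\xi^{-N_i/2}w)$; iterating (Q0) also gives the invariance $x_i^\pm(\xi_{d_i}z)=x_i^\pm(z)$, and since $\xi_{d_i}\xi_{2d_i}^{-1}=\xi_{2d_i}$, one obtains $x_j^\pm(w)=x_i^\pm(\xi_{2d_i}w)$. The substitution $w\mapsto q_iz$ (valid because $\:\cdots\;\in\E_\hbar^{(3)}(W)$ by Proposition \ref{prop:normal-ordering-rational}) transforms the relation into $\:x_i^\pm(z)x_i^\pm(q_i^2z)x_i^\pm(\xi_{2d_i}q_iz)\;=0$, which is exactly \eqref{eq:construct-of-no-tttt3}. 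Hence, by Proposition \ref{prop:iii-normal-ordering}, the same-orbit normal-order relation is equivalent to (Q9) at index $i$.

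For the different-orbit case $(i,j)\in\mathbb{I}$, I invoke Theorem \ref{prop:Dr-to-normal-ordering} with the specialization \eqref{mfb1}, namely $m=m_{ij}$, $f^\pm=p_{ij}^\pm$, and $B_r=\binom{m_{ij}}{r}_{q_i^{d_{ij}}}$; for this choice $D_{ij}^\pm(m,f^\pm,B)=0$ is literally (Q10). For the forward direction one only needs \eqref{reforq10} for each $l$, and this follows from \eqref{eq:Dr-equiv-normal-ordering-rel} by the substitution $w\mapsto\xi_{d_{ij}}^lw$. For the converse, the hypotheses of Theorem \ref{prop:Dr-to-normal-ordering} must be verified: the nonvanishing $p_{ij}^\pm(w,q_i^{\pm 2n}w)\ne 0$ for $1\le n\le -a_{ij}$ is immediate from \eqref{pij} since each factor is an invertible element of $\C[[\hbar]]$, and the identity $P_{ij}^\pm(m_{ij},p_{ij}^\pm,B)=0$ is a polynomial identity that follows from the $q$-Pascal relation for $\binom{m_{ij}}{r}_{q_i^{d_{ij}}}$ together with the antisymmetry of the factor $\prod_{a<b}G_{ii}^\pm(z_{\sigma(a)},z_{\sigma(b)})$, which forces cancellations over $S_m$.

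The main obstacle is the verification of the polynomial identity $P_{ij}^\pm(m_{ij},p_{ij}^\pm,B)=0$; although standard in flavor, it requires careful bookkeeping of the $q$-binomial weights against the antisymmetric Vandermonde-type product to obtain the requisite sign-cancellations. Everything else reduces to mechanical applications of (Q0), Propositions \ref{prop:iii-normal-ordering} and \ref{prop:normal-ordering-rational}, and Theorem \ref{prop:Dr-to-normal-ordering}.
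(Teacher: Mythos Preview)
Your overall strategy matches the paper's: split into the same-orbit case (handled by Proposition~\ref{prop:iii-normal-ordering}) and the different-orbit case (handled by Theorem~\ref{prop:Dr-to-normal-ordering} with the specialization \eqref{mfb1}). The same-orbit reduction and the choice of $(m,f^\pm,B)$ are correct, and for the identity $P_{ij}^\pm(m_{ij},p_{ij}^\pm,B)=0$ the paper likewise does not give a self-contained proof but cites \cite{J-KM,He-representation-coprod-proof}; your $q$-Pascal/antisymmetry heuristic is not a proof, so you should cite those references.

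There is, however, a genuine gap in the different-orbit direction from \eqref{eq:Dr-equiv-normal-ordering-rel} to \eqref{reforq10}. You claim that \eqref{reforq10} for each $l$ follows from \eqref{eq:Dr-equiv-normal-ordering-rel} by the substitution $w\mapsto\xi_{d_{ij}}^lw$, but this substitution also shifts the last factor to $x_j^\pm(\xi_{d_{ij}}^lw)$, which by (Q0) equals $x_j^\pm(w)$ only when $(d_{ij}/d_j)\mid l$; since $d_j$ can be a proper divisor of $d_{ij}$, this fails in general. The paper repairs this by using not the single relation for $(i,j)$ but the full family of relations \eqref{eq:Dr-equiv-normal-ordering-rel} for all pairs $(\mu^{-k}(i),j)$ with $k\in\Gamma_{ij}$: by (LC2) one has $a_{\mu^{-k}(i),j}=a_{ij}$ and $q_{\mu^{-k}(i)}=q_i$, and by (Q0) one has $x_{\mu^{-k}(i)}^\pm(z)=x_i^\pm(\xi^k z)$, so each such relation becomes $\:x_i^\pm(q_i^{a_{ij}}\xi^k w)\cdots x_i^\pm(q_i^{-a_{ij}}\xi^k w)\,x_j^\pm(w)\;=0$ with the $x_j^\pm$-argument unshifted. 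Under (LC3), $\{\xi^k:k\in\Gamma_{ij}\}=\{\xi_{d_{ij}}^l:0\le l<d_{ij}\}$, which is exactly \eqref{reforq10}. Thus the quantifier ``for all $i,j$ with $a_{ij}<0$'' in the hypothesis is doing essential work; the pair $(i,j)$ alone is not enough.
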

\begin{proof}
Note that for $i,j\in I$ with $a_{ij}<0$ and $i\in \mathcal O(j)$,
it follows from \eqref{normalordermu} that
 \eqref{eq:construct-of-no-tttt3} is 
 equivalent
to the relations $\:x_i^\pm(q_i^{-1}w)x_i^\pm(q_iw)x_j^\pm(w)\;$ $=0$.
In view of Proposition \ref{prop:iii-normal-ordering}, it remains to prove that for $(i,j)\in \mathbb I$,  the relations (Q10) are equivalent to
\eqref{eq:Dr-equiv-normal-ordering-rel} on $W$.
Notice that now we  may view $W$ as a restricted $\us^\pm$-module.

Let $(i,j)\in \mathbb I$. Note that  for any $k\in \Gamma_{ij}$ (see \eqref{gammaij}), $(\mu^{-k}(i),j)$ also lies in $\mathbb I$.
By replacing $(i,j)$ with $(\mu^k(i),j)$ in \eqref{eq:Dr-equiv-normal-ordering-rel} we obtain
\begin{align*}
\:x_{\mu^{-k}(i)}^\pm(q_{i}^{a_{ij}}w)x_{\mu^{-k}(i)}^\pm(q_{i}^{a_{ij}+2}w)\cdots  x_{\mu^{-k}(i)}^\pm(q_{i}^{-a_{ij}-2}w)
  x_{\mu^{-k}(i)}^\pm(q_i^{-a_{ij}}w)x_j^\pm(w)\;=0\quad \te{on $W$},
  \end{align*}
 where we use the facts $ a_{\mu^{-k}(i)j}=a_{ij}$ and $ q_{\mu^{-k}(i)}=q_i$ (see (LC2) and \eqref{eq:defqi}).
This together with the fact $x_{\mu^{-k}(i)}(w)=x_i(\xi^k w)$ (see (Q0)) gives that
  \begin{align*}
  \:x_i^\pm(q_i^{a_{ij}}\xi^kw)x_i^\pm(q_i^{a_{ij}+2}\xi^kw)\cdots  x_i^\pm(q_i^{-a_{ij}-2}\xi^kw)
  x_i^\pm(q_i^{-a_{ij}}\xi^kw)x_j^\pm(w)\;=0\quad \te{on $W$}.
\end{align*}
Furthermore, it follows from (LC3) that
$\Gamma_{ij}=\{\frac{N}{d_{ij}}l\mid 0\le l<d_{ij}\}$
and hence
\[\{\xi^k\mid k\in \Gamma_{ij}\}=\{\xi^{\frac{N}{d_{ij}}l}=\xi_{d_{ij}}^l\mid 0\le l<d_{ij}\}.\]
Therefore,   the relations \eqref{eq:Dr-equiv-normal-ordering-rel} hold for all $(\mu^{-k}(i),j)$ with $k\in \Gamma_{ij}$ if and only if
 the relations \eqref{reforq10} hold for all $0\le l<d_{ij}$.

Now, take the triple $(m,f^\pm,B)$ as in \eqref{mfb1} so that
 the relations $D^\pm_{ij}(m,f^\pm,B)=0$ are exactly the relations (Q10). In this case,
it was proved in \cite{J-KM} (see also \cite{He-representation-coprod-proof}) that $P^\pm_{ij}(m,f^\pm,B)=0$.
Furthermore, one can easily check that $p_{ij}^\pm(w,q_i^{\pm 2n}w)\ne 0$ for $n=1,\dots,-a_{ij}$.
Then the assertion follows from  Theorem \ref{prop:Dr-to-normal-ordering}.
\end{proof}

Combining  Theorem \ref{prop:Dr-to-normal-ordering} with Theorem \ref{thm:Dr-equiv-normal-ordering}, we immediately get that
\begin{coro}\label{coro:affine-sr}
Let  $W\in \R_\pm$, $(i,j)\in\mathbb I$, $m\in \Z_+$, $f^\pm\in \C[[\hbar]][z^{d_i},w^{d_i}]$ be homogenous and $B\in (\C[[\hbar]])^{m+1}$.
If $P_{ij}^\pm(m,f^\pm,B)=0$, then $D_{ij}^\pm(m,f^\pm,B)=0$ on $W$.
\end{coro}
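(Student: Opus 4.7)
The plan is to chain Theorem \ref{thm:Dr-equiv-normal-ordering} with the direct implication of Theorem \ref{prop:Dr-to-normal-ordering}. Since $W\in\R_\pm$ is restricted over $\qtar^\pm$, and $\qtar^\pm$ is a quotient of both $\ul^\pm$ and $\us^\pm$ (by Theorem \ref{thm:tri-decomp}), we may pull $W$ back to a restricted module over either of these algebras, with the affine quantum Serre relations (Q9) and (Q10) acting as zero on $W$.

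First I would invoke Theorem \ref{thm:Dr-equiv-normal-ordering}: since (Q9) and (Q10) hold on $W$, that theorem yields
\[\:x_i^\pm(q_i^{a_{ij}}w)x_i^\pm(q_i^{a_{ij}+2}w)\cdots x_i^\pm(q_i^{-a_{ij}}w)x_j^\pm(w)\;=0\]
on $W$ for every $i,j\in I$ with $a_{ij}<0$, in particular for the pair in the statement.

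Next, for the given $(i,j)\in\mathbb I$, I would promote this single identity to the full family \eqref{reforq10} indexed by $0\le l<d_{ij}/d_i$. This is precisely the orbit-translation device already used inside the proof of Theorem \ref{thm:Dr-equiv-normal-ordering}: for each $k\in\Gamma_{ij}$, the pair $(\mu^{-k}(i),j)$ again lies in $\mathbb I$ with $a_{\mu^{-k}(i)j}=a_{ij}$ and $q_{\mu^{-k}(i)}=q_i$ by (LC2), and (Q0) in the form $x_{\mu^{-k}(i)}^\pm(w)=x_i^\pm(\xi^k w)$ transports the preceding identity to its $\xi^k$-shift. Since (LC3) forces $\Gamma_{ij}$ to be the subgroup $\{\tfrac{N}{d_{ij}}l:0\le l<d_{ij}\}$ of $\Z_N$, the collection $\{\xi^k:k\in\Gamma_{ij}\}$ is exactly $\{\xi_{d_{ij}}^l:0\le l<d_{ij}\}$; reducing modulo the $\xi_{d_i}$-invariance of $x_i^\pm$ (again a consequence of (Q0)) cuts this down to the required range $0\le l<d_{ij}/d_i$.

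Finally, with \eqref{reforq10} established on $W$ for all such $l$ and the hypothesis $P_{ij}^\pm(m,f^\pm,B)=0$ in hand, the direct implication of Theorem \ref{prop:Dr-to-normal-ordering} delivers $D_{ij}^\pm(m,f^\pm,B)=0$ on $W$ immediately. There is no substantial obstacle: the entire content of the corollary is the assembly of two theorems already at our disposal, and the only delicate point is the bookkeeping of roots of unity in the orbit-translation step, which is purely combinatorial and entirely governed by (LC3) together with the basic symmetry $x_i^\pm(\xi_{d_i}z)=x_i^\pm(z)$.
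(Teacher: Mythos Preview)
Your proposal is correct and follows exactly the approach the paper takes: the corollary is stated in the paper as an immediate consequence of combining Theorem \ref{prop:Dr-to-normal-ordering} with Theorem \ref{thm:Dr-equiv-normal-ordering}, and you have accurately reconstructed the bookkeeping that makes this combination work, including the orbit-translation step under (LC3) that produces \eqref{reforq10} for all required $l$.
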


In particular, we have

\begin{coro}\label{prop:normal-ordering-to-DJ}
Assume that $\fg$ is of finite or affine type. Then the relations (Q11) hold on any restricted $\qtar^\pm$-module.
\end{coro}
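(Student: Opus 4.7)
The plan is to reduce the statement to a scalar polynomial identity via Corollary \ref{coro:affine-sr}. Fix $(i,j)\in \mathbb{I}$ with $\check{a}_{ij}<0$, and specialize the triple $(m,f^\pm,B)$ as in \eqref{mfb2}: take $m=\check{m}_{ij}$, $f^\pm=1$, and $B_r=\binom{\check{m}_{ij}}{r}_{\check{q}_i}$ for $0\le r\le \check{m}_{ij}$. With this choice the currents $D_{ij}^\pm(m,f^\pm,B)$ are precisely the left-hand sides of the relations (Q11). By Corollary \ref{coro:affine-sr}, it therefore suffices to verify the scalar identity $P_{ij}^\pm(\check{m}_{ij},1,B)=0$ of \eqref{e:newiden}.

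For $\fg$ of finite type this is exactly Damiani's classical polynomial identity \cite{Da1}; in fact, by Remark \ref{rem:q11}, the stronger statement that (Q11) holds in $\qtar$ itself is already available, and a fortiori on every restricted module. For $\fg$ of affine type, I would split into two sub-cases. If $\mathcal{O}(i)\cup \mathcal{O}(j)\subsetneq I$, pick any $\mu$-orbit $\mathcal{O}(k)$ in the complement and set $J=I\setminus \mathcal{O}(k)$. Then $J$ is $\mu$-invariant and contains $\mathcal{O}(i)\cup \mathcal{O}(j)$, and because removing at least one node from any affine Dynkin diagram yields a disjoint union of finite-type diagrams, the sub-GCM $(a_{st})_{s,t\in J}$ is a direct sum of finite-type GCMs. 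Remark \ref{rem:q11} applied to $J$ then gives (Q11) for our $(i,j)$ already inside $\qtar$, hence on every restricted module. The remaining sub-case is $\mathcal{O}(i)\cup \mathcal{O}(j)=I$, which forces $|I|=N_i+N_j$ and, combined with $(i,j)\in \mathbb{I}$, (LC1)--(LC3), and the affine type of $\fg$, restricts us to a short explicit list of small configurations (for instance the toroidal $A_1^{(1)}$ with trivial $\mu$, the rank two case $A_2^{(2)}$, and the involution of $A_2^{(1)}$). For each such case the polynomial identity $P_{ij}^\pm=0$ is verified by a direct computation from the explicit expressions \eqref{e:F}--\eqref{e:G}, \eqref{Fiiep}, \eqref{Giiep} for $F_{ij}^\pm$, $G_{ij}^\pm$, $G_{ii}^\pm$, using that $\check{m}_{ij}$ is uniformly bounded.

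The main obstacle is the exhaustive sub-case $\mathcal{O}(i)\cup \mathcal{O}(j)=I$, where no reduction to Remark \ref{rem:q11} is available and the scalar identity must be checked by hand. Fortunately, since $\check{m}_{ij}$ is bounded in every such configuration and the permutation sum over $S_{\check{m}_{ij}}$ collapses dramatically once $f^\pm=1$, the check reduces to a finite, essentially combinatorial verification that closely parallels the finite type argument of \cite{Da1}.
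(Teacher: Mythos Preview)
Your reduction to the scalar identity $P_{ij}^\pm(\check{m}_{ij},1,B)=0$ via Corollary \ref{coro:affine-sr} is exactly the paper's first step, and your use of Remark \ref{rem:q11} to dispose of all pairs with $\mathcal{O}(i)\cup\mathcal{O}(j)\subsetneq I$ is correct: removing any nonempty $\mu$-orbit from an affine Dynkin diagram leaves a direct sum of finite-type diagrams, so Remark \ref{rem:q11} gives (Q11) already in $\qtar$, hence in $\qtar^\pm$ by Theorem \ref{thm:tri-decomp}, and therefore on every module.

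The paper, however, does not perform this case split. It observes instead that the polynomial $P_{ij}^\pm(\check{m}_{ij},1,B)$ depends only on the three integers $\check{m}_{ij}$, $d_{ij}/d_i$ and $s_i$, and that in finite and affine type these satisfy $\check{m}_{ij}\le 5$, $d_{ij}/d_i\le 4$, $s_i\le 2$; all resulting identities are then verified uniformly (by computer). Your route trades this uniform verification for a structural shortcut: most cases are absorbed by Remark \ref{rem:q11} and require no computation at all, and only the two-orbit configurations need the polynomial check. The cost is that your residual list is longer than your ``for instance'' suggests: besides $A_1^{(1)}$, $A_2^{(2)}$ and the involution of $A_2^{(1)}$, one must also treat, e.g., $A_3^{(1)}$ with the order-$2$ rotation and $C_\ell^{(1)}$ with its involution for $\ell=2,3$, among others. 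In each of these you are ultimately back to verifying $P_{ij}^\pm=0$ by hand, which is precisely the paper's computation; the paper's parametrization by $(\check{m}_{ij},d_{ij}/d_i,s_i)$ shows why this residual check stays finite and small without having to enumerate the two-orbit pairs explicitly.
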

\begin{proof} Take $(m,f^\pm,B)$ as in \eqref{mfb2} so that the relations (Q11) are just the relations $D_{ij}^\pm(m,f^\pm,B)=0$.
In view of Corollary \ref{coro:affine-sr}, it suffices to check that
$P_{ij}^\pm(\check{m}_{ij},1,B)=0$, which can be checked directly case by case  (by using Maple for example).
In fact, the polynomial $P_{ij}^\pm(\check{m}_{ij},1,B)$
 depends only on the positive integers $\check{m}_{ij}$, $d_{ij}/d_i$ and $s_i$.
Furthermore,  when $\fg$ is of finite type or affine type, one has $\check{m}_{ij}\le 5$, $d_{ij}/d_i\le 4$ and $s_i\le 2$.
\end{proof}

\begin{rem}{\em We conjecture that $P_{ij}^\pm(m,f^\pm,B)=0$ (see \eqref{e:newiden}) for any triple $(m,f^\pm,B)$ as in \eqref{mfb2}, which depends on the positive integers $\check{m}_{ij}$, $d_{ij}/d_i$ and $s_i$.
When $\mu=\mathrm{id}$ (and so $d_{ij}/d_i=1=s_i$), this combinatorial identity was discovered in \cite{J-KM}.
}\end{rem}

\section{Hopf algebra structure}\label{sec:Hopf}

In this section, we define a Hopf algebra structure on the restricted
completion   $\qptar$ of $\qtar$.
As in the last section, we assume that the linking condition (LC3) holds.

\subsection{Restricted $\qtar$-modules}

In this subsection, we show that the category $\R$ of restricted $\qtar$-modules is a monoidal category.

Let $U$ and $V$ be two  topologically free $\C[[\hbar]]$-modules.
Recall from \eqref{indefem} that  $\E_\hbar(U)\wh\ot \E_\hbar(V)$ is the limit of the inverse system
\begin{align*}
0\leftarrow \E(U/\hbar U)\ot \E(V/\hbar V)
\leftarrow \E(U/\hbar^2 U)\ot \E(V/\hbar^2 V)\leftarrow \cdots,
\end{align*}
noting that $\E(U/\hbar^n U)\ot \E(V/\hbar^n V)\cong \E(V)/\hbar^n\E(V)
\ot \E(U)/\hbar^n\E(U)$.
On the other hand, $\E_\hbar(U\wh\ot V)$ is the limit of the inverse system
\begin{align*}
0\leftarrow \E(U/\hbar U\ot V/\hbar V)
\leftarrow \E(U/\hbar^2 U\ot V/\hbar^2 V)\leftarrow \cdots,
\end{align*}
noting that $\E(U/\hbar^n U\ot V/\hbar^n V)\cong \E(U\ot V/\hbar^n (U\ot V))
\cong \E(U\wh\ot V/\hbar^n (U\wh\ot V))$.
In view of these two inverse limits,  we have
a $\C[[\hbar]]$-map
\begin{align}
\theta_{U,V}=\varprojlim\limits_{n\in \Z_+} \theta_{U/\hbar^n U, V/\hbar^n V}:\E_\hbar(U)\wh\ot \E_\hbar(V)\rightarrow \E_\hbar(U\wh\ot V),\end{align}
where for two vector $\C$-spaces $U^0$ and $V^0$,  $\theta_{U^0,V^0}$ denotes the $\C$-map from
$\E(U^0)\ot \E(V^0)$ to $\E(U^0\ot V^0)$ defined by
\begin{align*}
\sum_{m\in \Z} a_m z^{-m}\ot \sum_{n\in \Z}b_nz^{-n}\mapsto ((u\ot v)\mapsto
\sum_{m\in \Z} (\sum_{k\in \Z}a_k(u)\ot b_{m-k}(v)) z^{-m}).
\end{align*}

 Without confusion, 
 for $a(z)\in \E_\hbar(U)$ and $b(z)\in  \E_\hbar(V)$, we shall denote  $\theta_{U,V}(a(z)\wh\ot b(z))$ by $a(z)\ot b(z)$ for simplicity
 in this  section.

Note that for $W\in \R$ and $i\in I$, one has $x_i^\pm(z), \phi_i^\pm(z)\in \E_\hbar(W)$.
As an application of  Theorem \ref{thm:Dr-equiv-normal-ordering}, we have

\begin{prop}\label{prop:delta} Let $U$ and $V$ be two restricted $\qtar$-modules.
Then there is a restricted $\qtar$-module structure on the $\hbar$-adically
completed tensor product space $U\wh\ot V$  with the action 
$\Delta$ defined by 
 ($i\in I,h\in\check\h$):
\begin{eqnarray*}
&&\te{(Co1)}\quad\Delta \left(x_i^+(z)\right)
    =x_i^+(z)\otimes 1+\phi_i^-(zq^{\frac{c_1}{2}})\otimes x_i^+(z q^{c_1}),\\
&&\te{(Co2)}\quad\Delta\left(x_i^-(z)\right)
    =1\otimes x_i^-(z)+x_i^-(zq^{c_2})\otimes \phi_i^+(z q^{\frac{c_2}{2}}),\\
&&\te{(Co3)}\quad\Delta\left(\phi_i^\pm(z)\right)
    =\phi_i^\pm(zq^{\pm\frac{c_2}{2}})\otimes \phi_i^\pm(z q^{\mp\frac{c_1}{2}}),\\
&&\te{(Co4)}\quad\Delta(h)=h\ot 1+1\ot h,\quad
\Delta(c)=c_1+c_2,\quad \Delta(d)=d\ot 1+1\ot d,
\end{eqnarray*}
where $c_1=c\ot 1$ and $c_2=1\otimes c$.
\end{prop}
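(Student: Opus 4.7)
The argument naturally splits into four stages: well-definedness of the operators, verification of the ``linear'' relations (Q0)--(Q7), verification of the Serre relations (Q8)--(Q10), and finally the restrictedness of the resulting module.

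\emph{Well-definedness.} Because $U$ and $V$ are restricted, all of $x_i^\pm(z)$ and $\phi_i^\pm(z)$ lie in $\E_\hbar(U)$ (resp.\ $\E_\hbar(V)$), so the map $\theta_{U,V}\colon\E_\hbar(U)\wh\ot\E_\hbar(V)\to\E_\hbar(U\wh\ot V)$ of the previous subsection turns the right-hand sides of (Co1)--(Co3) into bona fide elements of $\E_\hbar(U\wh\ot V)$. In particular the coefficients $\Delta(x_{i,n}^\pm)$ are well defined as operators on $U\wh\ot V$, and the restrictedness clause of the statement is obtained as a byproduct: for any $w\in U\wh\ot V$ and $n\in\Z_+$, one has $\Delta(x_{i,m}^\pm)w\in\hbar^n(U\wh\ot V)$ as soon as $m$ is large enough, because the same holds on $U$ and on $V$.

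\emph{Relations (Q0)--(Q7).} Compatibility of $\Delta$ with (Q0), (Q1), (Q2) and (Q4) is immediate. For (Q3), (Q5), (Q6) one substitutes (Co1)--(Co3), factors out the $g_{ij}$-terms using the same relations on each tensor factor, and verifies the identities by direct $c$-shift tracking; the triangular decomposition (Theorem \ref{thm:tri-decomp}) guarantees that no ambiguity arises in the $\hbar$-adic topology. Relation (Q7) is the most delicate among these: expanding $[\Delta(x_i^+(z)),\Delta(x_j^-(w))]$ produces four terms, of which two diagonal terms collapse to the required $\delta$-function expression via (Q7) on $U$ and on $V$, and the two cross-terms cancel using (Q5), (Q6) and the $\phi$--$\phi$ commutativity in (Q3).

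\emph{Serre relations (the main obstacle).} Here Theorem \ref{thm:Dr-equiv-normal-ordering} is essential. Once (Q8) is in place, that theorem reduces (Q9) and (Q10) for $(U\wh\ot V,\Delta)$ to the vanishing of the normal ordered product
\begin{align*}
\:\Delta(x_i^\pm(q_i^{a_{ij}}w))\Delta(x_i^\pm(q_i^{a_{ij}+2}w))\cdots \Delta(x_i^\pm(q_i^{-a_{ij}}w))\Delta(x_j^\pm(w))\;
\end{align*}
on $U\wh\ot V$. Substituting (Co1) or (Co2) expands this into finitely many summands; the two ``pure'' summands (all factors on the $U$-side, or all on the $V$-side) vanish directly by Theorem \ref{thm:Dr-equiv-normal-ordering} applied to $U$ and $V$. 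The core of the argument is to show that every mixed summand vanishes as well. Each such summand contains on one of the tensor factors a normal ordered product of the form $\:\phi_i^\mp(q_i^{s_1}w)\cdots\phi_i^\mp(q_i^{s_p}w)x_i^\pm(q_i^{s_{p+1}}w)\cdots x_i^\pm(q_i^{s_r}w)x_j^\pm(w)\;$, and a direct computation parallel to the OPE calculations of Section \ref{sec:qptar}, based on (Q5)/(Q6) and Proposition \ref{prop:normal-ordering-rational}, shows that such interlaced normal ordered strings at the prescribed consecutive specializations are identically zero on any restricted module. Relation (Q8) itself is handled by the same mechanism in the simpler two-current setting.

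The hardest point is thus the combinatorial cancellation of mixed summands in the Serre stage; once that is established, the remaining verifications are bookkeeping guided by the triangular decomposition and the normal-ordering machinery already developed.
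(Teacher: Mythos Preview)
Your overall architecture matches the paper's: check (Q0)--(Q8) directly, invoke Theorem~\ref{thm:Dr-equiv-normal-ordering} to reduce (Q9)--(Q10) to the normal-ordered relation~\eqref{eq:Dr-equiv-normal-ordering-rel}, expand $\Delta$ of that relation via (Co1)/(Co2), and show only the two pure summands survive. The gap is in the mechanism you propose for killing the mixed summands. You assert that the ``interlaced normal ordered strings'' $\:\phi_i^\mp\cdots\phi_i^\mp\,x_i^\pm\cdots x_i^\pm\,x_j^\pm\;$ vanish at the prescribed specializations on any restricted module. But no normal order is defined for mixed $\phi/x$ products (Definition~\ref{de:normal-ordering} concerns only $x$'s), and more importantly such an operator expression is \emph{not} zero: e.g.\ when all $x_i^+$'s move to the second tensor slot while $x_j^+(w)$ stays, the first-slot operator is just $\phi_i^-(z_1q^{c_1/2})\cdots\phi_i^-(z_mq^{c_1/2})\,x_j^+(w)$, which is generically nonzero.

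What actually happens in the paper is a coefficient-vanishing argument. One computes $\Delta\big(\:x_i^+(z_1)\cdots x_i^+(z_m)x_j^+(w)\;\big)$ with \emph{generic} $z_a$, then uses (Q6) and Proposition~\ref{prop:normal-ordering-rational} to push every $\phi_i^-$ to the far left on the first tensor factor. This produces, for each $J\subset\overline m$, an explicit scalar prefactor built from $f_{ii}^+(z_{j_a},z_{j_b})$ (with $j_a\in J$, $j_b\notin J$) and $f_{ij}^+(z_{j_a},w)$ or $f_{ji}^+(w,z_{j_b})$. Only then does one specialize $z_a=q_i^{a_{ij}+2(a-1)}w$. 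At these points $f_{ii}^+(z_{k+1},z_k)=0$ whenever $k\notin J$ and $k{+}1\in J$, which forces $J$ to be an initial segment; the remaining nontrivial initial segments are then killed by $f_{ij}^+(z_1,w)=0$ (first sum) or $f_{ji}^+(w,z_m)=0$ (second sum). So the mixed terms die for polynomial reasons, not because of an operator identity. (A small side issue: (Q8) is among the relations verified directly, not via the normal-ordering reduction.)
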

\begin{proof}
We need to show that the action $\Delta$ is compatible with the relations (Q0)-(Q10).
As in the untwisted case (\cite[Theorem 2.1]{DI-generalization-qaff},\cite[Proposition 29]{He-representation-coprod-proof}), one
can easily check that $\Delta$ is compatible with the relations (Q0)-(Q8), and we omit the details.
Then $U\wh\ot V$ is naturally a restricted $\ul^\pm$-module.
Due to Theorem \ref{thm:Dr-equiv-normal-ordering}, it suffices to
 check
the compatibility between $\Delta$ and the relations \eqref{eq:Dr-equiv-normal-ordering-rel}.

Let $i,j\in I$ with $a_{ij}<0$. Set $m=1-a_{ij}$ and $\overline m=\{1,2,\dots,m\}$. Then we have
\begin{align*}
&\qquad \Delta(\:x_i^+(z_1)
\cdots
x_i^+(z_{m})x_j^+(w)\;)\\
&=\Delta\(\iota_{z_1,\dots,z_m,w}\prod_{1\le r<s\le m}f_{ii}^+(z_s,z_r)
\prod_{1\le r\le m}f_{ij}^+(z_r,w)
x_i^+(z_1)
\cdots
x_i^+(z_{m})x_j^+(w)\)
\\
&=\sum_{J\subset\overline{m}}
\iota_{z_1,\dots,z_m,w}\prod_{1\le r<s\le m}f_{ii}^+(z_s,z_r)
\prod_{1\le r\le m}f_{ij}^+(z_r,w)
\xi(z_1)\cdots\xi(z_m)x_j^+(w)\\
&\qquad\otimes x_i^+(z_{j_1}q^{c_1})\cdots
    x_i^+(z_{j_r}q^{c_1})\\
&+\sum_{J\subset\overline{m}} \iota_{z_1,\dots,z_m,w}\prod_{1\le r<s\le m}f_{ii}^+(z_s,z_r)
  \prod_{1\le r\le m}f_{ij}^+(z_r,w)
  \xi(z_1)\cdots\xi(z_m)\phi_j^-(wq^{\frac{c_1}{2}})\\
&\qquad\otimes x_i^+(z_{j_1}q^{c_1})\cdots
    x_i^+(z_{j_r}q^{c_1})x_j^+(w q^{c_1}),
\end{align*}
where $\xi(z_a)=\phi_i^-(z_{a}q^{\frac{c_1}{2}})$ if $a\in J$, $\xi(z_a)=x_i^+(z_a)$ if $a\notin J$ and
$J=\{j_1,\dots,j_r\}$ with $j_1<\cdots<j_r$.
Note that for $k,l\in I$, we have
 $\phi_k^-(zq^{\frac{c}{2}})x_l^+(w)\ot 1\in\E_\hbar^{(2)}(U\wh\ot V)$.
 This together with the relations (Q6) gives
\begin{align*}
\iota_{w,z}f_{kl}^+(z,w)\phi_k^-(zq^{\frac{c_1}{2}})x_l^+(w)\ot 1
=C_{kl}\iota_{w,z}f_{lk}^+(w,z)x_l^+(w)\phi_k^-(zq^{\frac{c_1}{2}})\ot 1.
\end{align*}
In view of this and Proposition \ref{prop:normal-ordering-rational}, we can move those $\phi_j^-(z_jq^{\frac{c_1}{2}})$ to the left so that
\begin{align*}
&\Delta\(\:x_i^+(z_1)
x_i^+(z_2)\cdots
x_i^+(z_{m})x_j^+(w)\;\)\\
=&\sum_{J\subset\overline{m}}
\iota_{z_1,\dots,z_m,w}\(\prod_{a\le r<b}f_{ii}^+(z_{j_a},z_{j_b})\prod_{1\le a\le r}f_{ij}^+(z_{j_a},w)\)q^{r(1-r)d_ic_1/2}\\
&\times
\phi_i^-(z_{j_1}q^{\frac{c_1}{2}})\cdots \phi_i^-(z_{j_r}q^{\frac{c_1}{2}})
\:x_i^+(z_{j_{r+1}})\cdots x_i^+(z_{j_{m}})x_j^+(w)\;\\
&\quad
\otimes \:x_i^+(z_{j_1} q^{c_1})\cdots
    x_i^+(z_{j_r} q^{c_1})\;\\
+&\sum_{J\subset\overline{m}}(-1)^{m-r}
\iota_{z_1,\dots,z_m,w}\(
\prod_{a\le r<b}f_{ii}^+(z_{j_a},z_{j_b})\prod_{b=r+1}^{m}f_{ji}^+(w,z_{j_b})\)
q^{r\(\frac{1-r}{2}d_i-d_{ij}\)c_1}
\\
&\times
\phi_i^-(z_{j_1}q^{\frac{c_1}{2}})\cdots \phi_i^-(z_{j_r}q^{\frac{c_1}{2}})
\phi_j^-(wq^{\frac{c_1}{2}})
\:x_i^+(z_{j_{r+1}})\cdots x_i^+(z_{j_{m}})\;\\
&\quad
\otimes \:x_i^+(z_{j_1} q^{c_1})\cdots
x_i^+(z_{j_r} q^{c_1})x_j^+(w q^{c_1})\;,
\end{align*}
noting that  $C_{ii}=1$ and $C_{ij}=C_{ji}=-1$,
where $r=|J|$ and $j_1,\dots,j_{m}$ are distinct numbers in $\overline{m}$ such that
\begin{align*}
J=\{j_1,\dots,j_r\},\quad j_1<\cdots,j_r\quad \te{and}\quad
j_{r+1}<\cdots<j_{m}.
\end{align*}

Now by taking $z_a=q_i^{a_{ij}+(a-1)2}w$ for $a=1,\dots,m$, we obtain
\begin{align*}
  &\Delta\(\:x_i^+(q_i^{a_{ij}} w)x_i^+(q_i^{a_{ij}+2}w)
  \cdots x_i^+(q_i^{-a_{ij}} w)x_j^+(w)\;\)\\
=&\:x_i^+(q_i^{a_{ij}} w)x_i^+(q_i^{a_{ij}+2} w)
  \cdots x_i^+(q_i^{-a_{ij}} w)x_j^+(w)\;\otimes 1\\
+&\phi_i^-(q_i^{a_{ij}} wq^{\frac{c_1}{2}})
\phi_i^-(q_i^{a_{ij}+2} wq^{\frac{c_1}{2}})
\cdots
\phi_i^-(q_i^{-a_{ij}} wq^{\frac{c_1}{2}})
\phi_j^-(wq^{\frac{c_1}{2}})
q^{m\(\frac{1-m}{2}d_i-d_{ij}\)c_1}
\\
&
\otimes \:x_i^+(q_i^{a_{ij}} w q^{c_1})
    x_i^+(q_i^{a_{ij}+2} w q^{c_1})
  \cdots x_i^+(q_i^{-a_{ij}} w q^{c_1})
  x_j^+(w q^{c_1})\;.
\end{align*}
This implies that $\Delta$ is compatible with the relation \eqref{eq:Dr-equiv-normal-ordering-rel}, as desired.
\end{proof}

It is obvious that the canonical $\C[[\hbar]]$-isomorphism
\[a:(U\wh\ot V)\wh\ot W\cong U\wh\ot(V\wh\ot W)\] is a $\qtar$-module isomorphism.
Furthermore, we endow $\C[[\hbar]]$ with a $\qtar$-module structure
given by ($i\in I, h\in \check{\h}$)
\begin{eqnarray*}
\te{(CoU) }\qquad  \epsilon\(x_i^\pm(z)\)=0=\epsilon(h)=\epsilon(c)=\epsilon(d),\quad
  \epsilon\(\phi_i^\pm(z)\)=1.
\end{eqnarray*}
Then $\C[[\hbar]]\in \R$ and for every $U\in \R$, the natural $\C[[\hbar]]$-isomorphisms
\begin{align*}
l: \C[[\hbar]]\wh\ot U\cong U\quad\te{and}\quad r: U\wh\ot\C[[\hbar]]\cong U
\end{align*}
are $\qtar$-module isomorphisms.
In view of Proposition \ref{prop:delta}, we have the following straightforward result.

\begin{thm}\label{thm:monoidal-cat}
The category $\R$ of restricted $\qtar$-modules, together with
the  completed tensor product $\wh{\ot}$, the trivial module $\C[[\hbar]]$,
the associativity constraint $a$, the left constraint $l$ and
the right constraint $r$  form a monoidal category.
\end{thm}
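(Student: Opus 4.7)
The plan is to assemble the monoidal structure from Proposition \ref{prop:delta} together with routine verifications of compatibility. Proposition \ref{prop:delta} already closes $\R$ under $\wh\ot$, so the first step is to check that the unit object $\C[[\hbar]]$ equipped with the action (CoU) indeed lies in $\R$; this amounts to a direct check that (CoU) is compatible with every defining relation (Q0)-(Q10), which is immediate since $x_i^\pm(z)$ act as $0$ and $\phi_i^\pm(z)$ act as $1$, and the restriction condition (Definition \ref{de:restrictedulmod}) holds trivially.

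Next I would verify that the canonical $\C[[\hbar]]$-isomorphisms $a,l,r$ are $\qtar$-module maps. For $l$ and $r$, this reduces to checking that (CoU) behaves as a counit for the formulas (Co1)-(Co4): applying $\epsilon$ to either tensor factor of $\Delta(x_i^\pm(z))$ and $\Delta(\phi_i^\pm(z))$ returns the original current, and for $\Delta(h), \Delta(c), \Delta(d)$ this is obvious. For the associativity constraint $a$, one must verify that $(\Delta\wh\ot\mathrm{id})\Delta = (\mathrm{id}\wh\ot\Delta)\Delta$ holds when applied to each generating current and acting on $U\wh\ot V\wh\ot W$ for $U,V,W\in\R$; this is a direct (if somewhat lengthy) calculation where one expands both sides using (Co1)-(Co4) and reads off that the two triple-tensor expressions agree term by term. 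Crucially, since Proposition \ref{prop:delta} guarantees that both $(U\wh\ot V)\wh\ot W$ and $U\wh\ot(V\wh\ot W)$ are already well-defined restricted modules, the $a$-intertwining property is a formal identity in the generating currents rather than an analytic issue about convergence.

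Finally I would verify the pentagon and triangle axioms. These concern only the underlying $\C[[\hbar]]$-module isomorphisms $a,l,r$, which are the standard associativity and unit isomorphisms of the symmetric monoidal category $(\mathcal M_f, \wh\ot, \C[[\hbar]])$ of topologically free $\C[[\hbar]]$-modules; that ambient category is well-known to be monoidal, so both axioms are inherited for free once we know $a,l,r$ are $\qtar$-equivariant.

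The main obstacle I expect is the coassociativity verification for $\Delta$ on the currents $x_i^\pm(z)$ and $\phi_i^\pm(z)$: because (Co1)-(Co2) involve shifts by $q^{\pm c/2}, q^{\pm c}$ on either side, iterating $\Delta$ produces terms involving $c_1,c_2,c_3$ in nontrivial combinations, and one must carefully track these central shifts (together with the $\phi_i^\mp$-factors appearing between currents and central elements) to match both iterated coproducts. Everything else is essentially formal, but this central-shift bookkeeping is the step where a careful case-by-case computation on each of the four generator types (Co1)-(Co4) cannot be avoided.
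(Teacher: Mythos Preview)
Your proposal is correct and matches the paper's approach: the paper treats the theorem as a straightforward consequence of Proposition~\ref{prop:delta}, asserting without further argument that $\C[[\hbar]]\in\R$, that $a$ is ``obviously'' a $\qtar$-module isomorphism, and that $l,r$ are module isomorphisms. Your outline simply fills in the routine verifications (counit compatibility, coassociativity of $\Delta$ on generators, and inheritance of the pentagon/triangle axioms from $\mathcal M_f$) that the paper leaves implicit.
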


\subsection{Hopf structure on $\qptar$}
In this subsection we define a topological Hopf algebra structure on a
completion of $\qtar$.

 For each $W\in\R$, we endow with $\End_{\C[[\hbar]]}(W)$ the weak topology given in
 \cite[\S 7.1 Remark]{EK-I}, noting that $\End_{\C[[\hbar]]}(W)$  is the space of continuous endomorphisms on $W$ (with
 the $\hbar$-adic topology).
Then  $\End_{\C[[\hbar]]}(W)$ is a topological algebra over $\C[[\hbar]]$
such that
\begin{align*}
  \set{(K:\hbar^nW)}{K\subset W,\, |K|<\infty,\,n\in\Z_+}
\end{align*}
form a local basis at  $0$, where
\begin{align*}
  (K:\hbar^nW)=\set{\varphi\in\End_{\C[[\hbar]]}(W)}{\varphi(K)\subset \hbar^nW}.
\end{align*}
 Since $W$ is separated and complete,
 $\End_{\C[[\hbar]]}(W)$ is separated and complete under the weak topology (\cite[\S 7]{EK-I}).

Let $\mathcal F:\R\to\mathcal M_f$ be the forgetful functor, and let
 $\mathcal A=\End_{\C[[\hbar]]}(\mathcal F)$ be  the algebra of endomorphisms of the functor $\mathcal F$.
For any $W\in \R$, there is a canonical $\C[[\hbar]]$-map $\pi_W$ from $\mathcal A$ to
$\End_{\C[[\hbar]]}(W)$ such that $\pi_W(a)=a_W$ for every natural transformation $a=(a_W:W\rightarrow W)_{W\in \R}\in \mathcal{A}$.
We equip $\mathcal A$ with the projective topology relative to the family $\{\pi_W\}_{W\in \R}$, namely, the coarsest
 topology for which each $\pi_W$ is continuous.
 Then $\mathcal{A}$ is a topological algebra over $\C[[\hbar]]$.

 \begin{lem}\label{lem:topoemd}
 The topological algebra $\mathcal A$ is separated, complete and topologically free.
 \end{lem}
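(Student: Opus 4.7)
The strategy is to verify the three listed properties in turn, reducing each to the corresponding property of the algebras $\End_{\C[[\hbar]]}(W)$ for $W\in\mathcal R$ noted in the paragraph preceding the statement. For \emph{separation}, given a nonzero $a\in\mathcal A$, I would pick $W\in\mathcal R$ with $a_W := \pi_W(a)\neq 0$; as $\End_{\C[[\hbar]]}(W)$ is Hausdorff in the weak topology there exist $v\in W$ and $n\in\Z_+$ with $a_W(v)\notin\hbar^n W$, so $\pi_W^{-1}\bigl((\{v\}:\hbar^n W)\bigr)$ is an open neighborhood of $0$ in $\mathcal A$ avoiding $a$.

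For \emph{completeness}, I would identify $\mathcal A$ with the subspace of the product $\prod_{W\in\mathcal R}\End_{\C[[\hbar]]}(W)$ (with the product of the weak topologies) cut out by the naturality relations $f\circ a_W = a_{W'}\circ f$, indexed by morphisms $f:W\to W'$ in $\mathcal R$. The projective topology on $\mathcal A$ coincides with the subspace topology, and each naturality relation is a closed condition because the two sides are continuous functions of $(a_W)_W$ whose difference takes values in the separated space $\End_{\C[[\hbar]]}(W')$. Since products and closed subspaces of complete separated spaces are complete, $\mathcal A$ is complete.

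For \emph{topological freeness}, the plan is to construct an isomorphism $\mathcal A\cong A_0[[\hbar]]$ of $\C[[\hbar]]$-modules for a suitable $\C$-vector space $A_0$. First, $\mathcal A$ is $\hbar$-torsion free: $\hbar a=0$ forces $\hbar a_W=0$ for every $W$, and the topological freeness of $W$ makes $\End_{\C[[\hbar]]}(W)$ $\hbar$-torsion free, whence $a_W=0$ for all $W$ and thus $a=0$. I would then fix a $\C$-linear section $s:\mathcal A/\hbar\mathcal A\to\mathcal A$ of the reduction modulo $\hbar$, set $A_0=s(\mathcal A/\hbar\mathcal A)$, and iterate the decomposition $a=s(\bar a)+\hbar a^{(1)}$ (division by $\hbar$ being unambiguous by torsion-freeness) to attach to each $a\in\mathcal A$ a formal expansion $\sum_{n\ge 0}\hbar^n v_n$ with $v_n\in A_0$. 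The main obstacle, which I expect to be the heart of the argument, is showing that this series actually converges to $a$ in the projective topology; the key observation is the inclusion $\hbar^m\End_{\C[[\hbar]]}(W)\subset(K:\hbar^m W)$, which implies that every basic open neighborhood of $0$ in $\mathcal A$ contains some $\hbar^m\mathcal A$, so the remainder $a-\sum_{n<m}\hbar^n v_n=\hbar^m a^{(m)}$ eventually lies in any prescribed neighborhood. Combined with the completeness established above, this produces a continuous $\C[[\hbar]]$-linear surjection $A_0[[\hbar]]\to\mathcal A$, whose injectivity follows from torsion-freeness by inspecting the lowest-order term, exhibiting $\mathcal A$ as topologically free.
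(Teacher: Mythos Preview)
Your proposal is correct and largely parallels the paper's proof. Separation and completeness are handled essentially identically: the paper also pushes a putative element (resp.\ a Cauchy net) down to each $\End_{\C[[\hbar]]}(W)$ via $\pi_W$, uses separation (resp.\ completeness) there, and checks naturality of the limit; your product-and-closed-subspace phrasing is just a repackaging of the same argument.

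The one genuine difference is in the topological freeness step. You construct the isomorphism $\mathcal A\cong A_0[[\hbar]]$ by hand via a section $s$, iterated division by $\hbar$, and convergence of the resulting formal series in the projective topology. The paper instead observes (using the same inclusion $\hbar^n\mathcal A\subset\pi_W^{-1}((K:\hbar^nW))$ that you isolate) that the projective topology is coarser than the $\hbar$-adic one, deduces that $\mathcal A$ is $\hbar$-adically separated and complete, and then invokes the standard criterion from \cite{Kassel-topologically-free} that a torsion-free, $\hbar$-adically separated and complete $\C[[\hbar]]$-module is topologically free. Your explicit construction is exactly what that criterion unpacks to, so the two routes are equivalent in content; the paper's is shorter because it outsources the iteration-and-convergence argument to Kassel, while yours is self-contained.
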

 \begin{proof} We first prove that $\mathcal{A}$ is separated. Let $a$ be an element in the closure of $0$, which is the intersection
 of all open left ideals of $\mathcal{A}$.
 Then  for any $W\in\mathcal R$, $a_W=\pi_W(a)$ lies in the intersection of all open left ideals of $\End_{\C[[\hbar]]}(W)$.
Since $\End_{\C[[\hbar]]}(W)$ is separated, we get $a_W=0$.
This forces that $a=0$, as required.

To prove the completeness, let $(a^{(\alpha)})_{\alpha\in J}$ be a Cauchy net in $\mathcal{A}$.
For $W\in\mathcal R$, $(\pi_W(a^{(\alpha)}))_{\alpha\in J}$ is a Cauchy net in $\End_{\C[[\hbar]]}(W)$,
 and hence converges to an element, say $a_W$, in $\End_{\C[[\hbar]]}(W)$.
Let $f:V\rightarrow W$ be a morphism of objects $V,W\in \R$.
Then for any $v\in V$ and $n\in \Z_+$, there exists $\alpha\in J$ such that
\[\pi_W(a-a^{(\alpha)})\in (f(v),\hbar^nW)\quad\te{and}\quad \pi_V(a-a^{(\alpha)})\in (v:\hbar^n V).\]
This implies that
\begin{align*}
&f(a_V(v))-a_W(f(v))=f(a_V(v)-a_W(f(v))+f(a_V^{(\alpha)}(v)-a_W^{(\alpha)}(f(v))\\
=&f((a_V-a^{(\alpha)}_V)(v))-(a_W-a^{(\alpha)}_W)(f(v))\in \hbar^n W.\end{align*}
As $W$ is separated, we have that $f(a_V)(v)=a_W(f(v))$.
This gives that $(a_W)_{W\in \R}\in \mathcal{A}$.
Moreover, it is obvious that $(a^{(\alpha)})_{\alpha\in J}$
converges to  $(a_W)_{W\in \R}$, which proves that $\mathcal{A}$ is complete.

Note that $\pi_W$ is $\C[[\hbar]]$-linear for $W\in\mathcal R$.
Then $\pi_W(\hbar^n\mathcal A)\subset (K:\hbar^nW)$ for $n\in\Z_+$, and $K\subset W$ with $|K|<\infty$.
Thus, $\pi_W$ is continuous with $\mathcal A$ equipped with the $\hbar$-adic topology.
This implies that the projective topology on $\mathcal A$ is coarser than the $\hbar$-adic topology.
Then we obtain that $\mathcal{A}$  is  $\hbar$-adically separated and $\hbar$-adically complete.
Note that for each $\C$-vector space $W^0$, there is a canonical identification
 $\End_{\C[[\hbar]]}(W^0[[\hbar]])=(\mathrm{End}(W^0))[[\hbar]]$.
Then each $\End_{\C[[\hbar]]}(W)$ is  torsion-free, and so is $\mathcal{A}$.
Hence, $\mathcal{A}$ is topologically free (\cite{Kassel-topologically-free}).

 \end{proof}



Recall from Theorem \ref{thm:monoidal-cat} that $\R$ is a monoidal category.
For each $U,V\in\R$, let $J_{U,V}:\mathcal F(U)\wh\ot\mathcal F(V)\to\mathcal F(U\wh\ot V)$ be the canonical $\C[[\hbar]]$-isomorphism.
Then $\set{J_{U,V}}{U,V\in\R}$ defines a  tensor structure on the functor $\mathcal F$ \cite[Section 2.4]{EK-I}.
Let $\mathcal F^2:\R\times\R\to\mathcal M_f$ be the bifunctor defined by
$\mathcal F^2(U,V)=\mathcal F(U)\wh\ot\mathcal F(V)$, and set
 $\mathcal A^2=\End_{\C[[\hbar]]}(\mathcal F^2)$.
Similar to $\mathcal A$, we endow  $\mathcal A^2$ a weakest topological structure so that it is a complete separated topological $\C[[\hbar]]$-algebra.
Following \cite[Section 9.1]{EK-I},  $\mathcal A$ admits a natural ``coproduct'' $\Delta:\mathcal A\to \mathcal A^2$ defined by $\Delta(a)_{(U,V)}(u\ot v)=J_{U,V}\inv a_{U\wh\ot V} J_{U,V}(u\ot v)$,
$a\in \mathcal A$, $u\in\mathcal F(U)$, $v\in\mathcal F(V)$,
where $a_U$ denotes the action of $a$ on $\mathcal F(U)$.
We also define the counit on $\mathcal A$ by $\epsilon(a)=a_{\C[[\hbar]]}(1)\in\C[[\hbar]]$, where $\C[[\hbar]]$ is the trivial $\qtar$-module in $\R$.

Notice that each $a\in\qtar$ naturally defines an endomorphism of $\mathcal F$, so we have
a canonical $\C[[\hbar]]$-algebra homomorphism $\psi$ from $\qtar$ to $\mathcal A$.
\begin{de}\label{de:rest-comp}
We define the {\em restricted completion}  $\qptar$ of $\qtar$ to be
the closure of $\psi(\qtar)$ in $\mathcal A$.
\end{de}

  It is straightforward to see that $\qptar$ is complete, separated and topologically free.
  Furthermore, the following standard result is clear.

\begin{prop}\label{prop:rest-comp}
The continuous  $\qptar$-modules in $\mathcal M_f$ are exactly restricted $\qtar$-modules.
\end{prop}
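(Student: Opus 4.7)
The plan is to prove the two directions separately, with the heart of the argument being a careful tracking of the three topologies involved: the $\hbar$-adic topology on each $W \in \mathcal M_f$, the weak topology on the endomorphism algebras $\End_{\C[[\hbar]]}(W)$, and the projective topology on $\mathcal A$ (which restricts to $\qptar$).

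For the forward direction, suppose $W$ is a restricted $\qtar$-module, i.e., $W \in \R$. By the very definition of the projective topology, the canonical $\C[[\hbar]]$-map $\pi_W : \mathcal A \to \End_{\C[[\hbar]]}(W)$ is continuous. Restricting $\pi_W$ to $\qptar \subset \mathcal A$ and composing with $\psi$ recovers the $\qtar$-action on $W$, so $W$ is automatically a continuous $\qptar$-module in $\mathcal M_f$. This direction is essentially by construction of the topology on $\qptar$.

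For the converse, the key observation is that $\psi(x_{i,n}^\pm) \to 0$ in $\qptar$ as $n \to +\infty$. To verify this, by the definition of the projective topology on $\mathcal A$ it suffices to check that for every test object $V \in \R$, finite subset $K \subset V$ and $m \in \Z_+$, there is an $N$ with $x_{i,n}^\pm(K) \subset \hbar^m V$ for all $n \ge N$. But this is exactly the restrictedness condition for $V$, applied to each $v \in K$ (and taking the maximum of the resulting thresholds). Hence $\psi(x_{i,n}^\pm) \to 0$ in the topology inherited by $\qptar \subset \mathcal A$.

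Now given a continuous $\qptar$-module $W \in \mathcal M_f$, the associated algebra map $\qptar \to \End_{\C[[\hbar]]}(W)$ is continuous with respect to the weak topology on the target. Continuity applied to the convergent sequence $\psi(x_{i,n}^\pm) \to 0$ shows that the image sequence converges to $0$ in the weak topology, which unravels to: for each $w \in W$ and $m \in \Z_+$, $x_{i,n}^\pm w \in \hbar^m W$ for all sufficiently large $n$. This is precisely the condition for $W$ to be a restricted $\qtar$-module. The only obstacle I anticipate is a purely bookkeeping one, namely ensuring that ``convergence to $0$'' at the level of $\mathcal A$ really is characterized pointwise on test modules in $\R$; once this reduction is made, everything follows tautologically from the definitions recalled in Definition \ref{de:restrictedulmod} and the paragraph preceding Definition \ref{de:rest-comp}.
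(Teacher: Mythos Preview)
Your argument is correct. Both directions are handled properly: the forward direction is indeed immediate from the definition of the projective topology on $\mathcal A$, and your key observation for the converse---that $\psi(x_{i,n}^\pm)\to 0$ in $\qptar$ because convergence in the projective topology is tested objectwise on $\R$, where restrictedness gives exactly what is needed---is the right idea and is executed cleanly. The bookkeeping concern you flag is not an issue: a net converges to $0$ in the projective topology if and only if its image under each $\pi_V$ converges to $0$, since basic neighborhoods of $0$ are finite intersections of preimages $\pi_V^{-1}(K:\hbar^m V)$.

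As for comparison with the paper: there is nothing to compare. The paper asserts the proposition as a ``standard result'' that ``is clear'' and gives no proof. Your write-up supplies precisely the verification the paper omits.
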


Denote by $\wt{\mathcal O}$ the set of open left ideals of $\qptar$.
Set $\wt{\mathcal O}_1=\wt{\mathcal O}$ and
\begin{align*}
&\wt{\mathcal O}_{m+1}=\set{L_1\wh\ot \qptar^{\wh\ot^m}+\qptar\wh\ot L_m}
{L_1\in\wt{\mathcal O},\,L_m\in\wt{\mathcal O}_m},\quad m\in\Z_+.
\end{align*}
It is easy to verify that $\qptar^{\wh\ot^m}$ is a topological $\C[[\hbar]]$-algebra
with $\wt{\mathcal O}_m$  a local basis at $0$.
For $m\in\Z_+$, set
\begin{align}
  \qptar^{\wt\ot^m}=\varprojlim_{L\in\wt{\mathcal O}_m}\qptar^{\wh\ot^m}/L.
\end{align}
One notices that $\qptar^{\wt\ot^2}$ is a closed subalgebra of $\mathcal A^2$.

For each element $a$ in \eqref{eq:tqagenerators}, we still denote its image in $\qptar$ by $a$.
From Theorem \ref{thm:monoidal-cat}, we have that $\Delta$ satisfies the conditions (Co1-Co4) and $\epsilon$ satisfies the condition (CoU).
Then we have that $\Delta(\qptar)\subset \qptar^{\wt\ot^2}$.
Following \cite[Proposition 9.1]{EK-I}, one immediately gets the following result.

\begin{prop}
$(\qptar,\Delta,\epsilon)$ is a topological bialgebra over $\C[[\hbar]]$.
\end{prop}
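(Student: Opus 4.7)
The plan is to deduce the proposition essentially as a formal consequence of the monoidal structure on $\R$ established in Theorem \ref{thm:monoidal-cat}, together with Proposition \ref{prop:rest-comp} which identifies continuous $\qptar$-modules with restricted $\qtar$-modules. The pattern is the classical one used by Etingof--Kazhdan in \cite[Prop.\,9.1]{EK-I}: the algebra of endomorphisms of a tensor functor is automatically a topological bialgebra, and we merely need to check that $\Delta$ and $\epsilon$ preserve the closed subalgebra $\qptar$.

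First I would check that $\Delta$ and $\epsilon$ land in the right places. The counit $\epsilon:\mathcal A\to \C[[\hbar]]$, defined by $\epsilon(a)=a_{\C[[\hbar]]}(1)$ using the trivial module from condition (CoU), is clearly a continuous $\C[[\hbar]]$-algebra homomorphism on $\mathcal A$, and it restricts to $\qptar$ because the trivial module is an object of $\R$ and $\psi(\qtar)$ acts on it via (CoU). For the coproduct, the remark in the paragraph preceding the proposition already states $\Delta(\qptar)\subset \qptar^{\wt\otimes^2}$: for generators this is exactly the content of conditions (Co1)--(Co4) together with the fact that $\Delta$ on $\qtar$ was engineered in Proposition \ref{prop:delta} so that the $\qtar$-action on $U\wh\ot V$ factors through these formulas; on the closure it follows because $\Delta$ is continuous with respect to the projective topologies on $\mathcal A$ and $\mathcal A^2$, which is immediate from the definitions of $J_{U,V}$ and the local bases $\wt{\mathcal O}_m$.

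Next I would verify the bialgebra axioms. Coassociativity $(\Delta\wh\ot\mathrm{id})\circ\Delta=(\mathrm{id}\wh\ot\Delta)\circ\Delta$ on $\mathcal A$ is a direct consequence of the coherence of the associativity constraint $a:(U\wh\ot V)\wh\ot W\cong U\wh\ot(V\wh\ot W)$ being a $\qtar$-module isomorphism (as noted just before Theorem \ref{thm:monoidal-cat}): both sides send $a\in\mathcal A$ to the natural transformation whose component at $(U,V,W)$ is the conjugate of $a_{U\wh\ot V\wh\ot W}$ by the canonical identification $\mathcal F(U)\wh\ot\mathcal F(V)\wh\ot\mathcal F(W)\cong \mathcal F(U\wh\ot V\wh\ot W)$. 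The counit axioms $(\epsilon\wh\ot\mathrm{id})\circ\Delta=\mathrm{id}=(\mathrm{id}\wh\ot\epsilon)\circ\Delta$ reduce to the analogous coherence for the left/right constraints $l$ and $r$ being $\qtar$-module maps. Multiplicativity of $\Delta$ and $\epsilon$ follows because the tensor structure maps $J_{U,V}$ are isomorphisms of $\C[[\hbar]]$-modules intertwining composition of endomorphisms with componentwise composition. Since these identities hold on all of $\mathcal A$, they hold on the closed subalgebra $\qptar$.

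The main conceptual work has already been done, so no step should present serious obstacles; the only mild subtlety is topological, namely ensuring that the compositions $(\Delta\wh\ot\mathrm{id})\circ\Delta$ and $(\mathrm{id}\wh\ot\Delta)\circ\Delta$ actually make sense and land in $\qptar^{\wt\otimes^3}$. The hard part, conceptually, is that $\Delta$ is only defined because one can extend it from $\qtar$ (where the formulas (Co1)--(Co4) involve infinite sums that are \emph{a priori} meaningless in $\qtar\wh\otimes \qtar$) to $\qptar$ (where, by Proposition \ref{prop:rest-comp} and the restrictedness of modules, the sums make sense on every pair $U\wh\otimes V$); this extension is exactly guaranteed by Theorem \ref{thm:monoidal-cat} via Theorem \ref{thm:Dr-equiv-normal-ordering}, which forced Drinfeld's coproduct to be compatible with the affine quantum Serre relations. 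Once this is granted, the bialgebra structure is an immediate application of the Etingof--Kazhdan formalism, which we invoke as \cite[Prop.\,9.1]{EK-I}.
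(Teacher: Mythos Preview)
Your proposal is correct and follows essentially the same approach as the paper: the paper simply notes that $\Delta(\qptar)\subset\qptar^{\wt\ot^2}$ follows from Theorem \ref{thm:monoidal-cat} via (Co1)--(Co4) and (CoU), and then cites \cite[Proposition 9.1]{EK-I} for the bialgebra structure. Your write-up unpacks the Etingof--Kazhdan argument in more detail than the paper does, but the strategy is identical.
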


By a similar argument 
of \cite[Theorem 2.1]{DI-generalization-qaff}, one can verify that
there is a continuous anti-homomorphism $S:\qptar\to\qptar$ determined by
$(i\in I,h\in\check\h)$
\begin{align*}
  S(c)&=-c,\quad S(d)=-d,\quad
  S(h)=-h,\quad
  S\(x_i^+(z)\)=-\phi_i^-(zq^{-\frac{c}{2}})\inverse x_i^+(zq^{-c}),\\
  &S\(x_i^-(z)\)=-x_i^-(zq^{-c})\phi_i^+(zq^{-\frac{c}{2}})^{-1},\quad  S\(\phi_i^\pm(z)\)=\phi_i^\pm(z)\inverse.
\end{align*}

In 
summary, we have obtained the following  main result of this section:

\begin{thm}\label{thm:hopf}
$(\qptar,\Delta,\epsilon,S)$ is a topological Hopf algebra over $\C[[\hbar]]$.
\end{thm}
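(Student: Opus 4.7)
The plan is to verify the antipode axiom
\[m \circ (S \otimes \mathrm{id}) \circ \Delta \;=\; \eta \circ \epsilon \;=\; m \circ (\mathrm{id} \otimes S) \circ \Delta\]
as continuous $\C[[\hbar]]$-linear endomorphisms of $\qptar$, where $m$ denotes multiplication and $\eta$ the unit. Combined with the preceding proposition (which asserts that $(\qptar, \Delta, \epsilon)$ is a topological bialgebra) and with the construction of $S$ as a continuous anti-homomorphism recorded just above the theorem, this upgrades $\qptar$ to a topological Hopf algebra.

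First I would use continuity to reduce to a check on generators. Since $m$, $\Delta$, $S$, $\epsilon$, $\eta$ are all continuous and $\qptar$ is, by Definition~\ref{de:rest-comp}, the closure in $\mathcal A$ of $\psi(\qtar)$, while $\qtar$ is topologically generated by the elements in \eqref{eq:tqagenerators}, it suffices to verify both antipode identities on the generating currents $\phi_i^\pm(z)$, $x_i^\pm(z)$ for $i \in I$, together with $c$, $d$, and the elements $h \in \check\h$.

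For the primitive generators $c$, $d$, and $h \in \check\h$ the verification is immediate: by (Co4) they are primitive, by (CoU) the counit annihilates them, and $S$ sends each to its negative, so $m(S \otimes \mathrm{id})\Delta(a) = S(a) + a = 0 = \eta\epsilon(a)$, and likewise on the other side. For $\phi_i^\pm(z)$ one substitutes (Co3) into $(S \otimes \mathrm{id})\circ \Delta$, uses the centrality of $c$ to move the half-shifts $q^{\pm c_1/2}, q^{\mp c_2/2}$ between the tensor factors, and applies $S(\phi_i^\pm(z)) = \phi_i^\pm(z)^{-1}$. After applying $m$, the factors $c_1$ and $c_2$ merge into a single central $c$ and the product collapses via $\phi_i^\pm(w)^{-1}\phi_i^\pm(w) = 1$ to $1 = \eta\epsilon(\phi_i^\pm(z))$.

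The main technical step, and where the bookkeeping is heaviest, is the verification on $x_i^\pm(z)$. Taking $x_i^+(z)$ as representative, (Co1) gives two summands in $\Delta(x_i^+(z))$. Applying $(S \otimes \mathrm{id})$ and using $S(\phi_i^-(zq^{c_1/2})) = \phi_i^-(zq^{-c_1/2})^{-1}$ (a consequence of $S$ being an anti-homomorphism together with $S(c) = -c$ and centrality of $c$), and then substituting $S(x_i^+(z)) = -\phi_i^-(zq^{-c/2})^{-1} x_i^+(zq^{-c})$, one finds after applying $m$ that the two resulting terms (in which $c_1$ has become the central $c$ of $\qptar$) agree mode by mode up to sign, and so cancel exactly, yielding $0 = \eta\epsilon(x_i^+(z))$. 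The case of $x_i^-(z)$ and the dual axiom $m \circ (\mathrm{id} \otimes S) \circ \Delta$ are strictly analogous. The only real obstacle is tracking the half-integer shifts $q^{\pm c/2}$ and the transfer of $c_1, c_2$ across the tensor product; no new ingredient beyond the argument of \cite[Theorem~2.1]{DI-generalization-qaff} for the untwisted case is needed, since the twist is entirely absorbed into the defining relations of $\qtar$ and does not alter the functional form of $\Delta$ or $S$ on the currents.
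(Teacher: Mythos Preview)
Your proposal is correct and follows exactly the route the paper takes: the paper itself gives no proof beyond ``In summary, we have obtained\ldots'' after recording that the bialgebra structure is in place (the preceding proposition) and that $S$ is a well-defined continuous anti-homomorphism ``by a similar argument of \cite[Theorem~2.1]{DI-generalization-qaff}''. Your explicit verification of the antipode identities on the generating currents is precisely the content of that deferred Ding--Iohara computation, and your tracking of the shifts (e.g.\ $S(\phi_i^-(zq^{c_1/2}))=\phi_i^-(zq^{-c_1/2})^{-1}$, leading to the exact cancellation of the two summands for $x_i^+(z)$ after applying $m$) is correct. One small remark: the passage from generators to all of $\qptar$ uses not only continuity but also the standard fact that the set of elements satisfying the antipode identity is a subalgebra (since $\Delta$ is a homomorphism and $S$ an anti-homomorphism); you use this implicitly, and it would do no harm to say so.
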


\section{Specialization and quantization of EALAs}\label{sec:specialization}
In this section we study the classical limit of $\qtar$ and establish its connection with 
the quantization 
of extended affine Lie algebras.
\subsection{Specialization}
By specializing $\hbar=0$ in the definition of $\qtar$, we have:
\begin{de}
Define  $\hat\g_\mu$ to be the Lie algebra over $\C$
generated by the set
 \[\set{L_h,\,H_{i,m},\, X_{i,m}^\pm,\, C,\, D}{h\in\check{\h},\, i\in I,\ m\in \Z}\]
 and subject to the following relations $(h,h'\in \check\h,i,j\in I,m,n\in\Z)$
\begin{eqnarray*}
&&\text{(L0) }H_{\mu (i),n}=\xi^n H_{i,n},\ \
   L_{r_i\sum_{k\in \Z_N}\al_{\mu^k(i)}^\vee}=H_{i,0}, \\
&&\text{(L1) }[D,L_h]=0,\quad [D,H_{i,m}]=mH_{i,m}^\pm,\quad [L_h,H_{i,m}]=0=[L_h,L_{h'}], \\
&&\text{(L2) }[H_{i,m},H_{j,n}]
=\sum_{k\in \Z_N} r_i a_{i\mu^k(j)}\delta_{m+n,0}m\xi^{km}C,\quad C\ \te{is central},\\
&&\text{(L3) } [D,X^\pm_{i,m}]=mX^\pm_{i,m},\quad \ [L_h,X_{j,n}^\pm]=\pm \al_{j}(h) X_{j,n}^\pm, \\
&&\text{(L4) }[H_{i,m},X^\pm_{j,n}]
=\pm\sum_{k\in \Z_N}r_i a_{i\mu^k(j)}X^\pm_{j,m+n}\xi^{km},\\
&&\text{(L5) }[X^+_{i,m},X^-_{j,n}]
=\sum_{k\in \Z_N}\delta_{i,\mu^k(j)}\(\frac{H_{j,m+n}}{r_j}+
\frac{m}{r_j}\delta_{m+n,0} C\)\xi^{km},\\
&&\text{(L6) }X^\pm_{\mu (i),n}=\xi^n X^\pm_{i,n},\ \ F_{ij}(z,w)[X_i^\pm(z),X_j^\pm(w)]=0,\\
&&\text{(L7) }  \sum_{\sigma\in S_2} p_i(z_{\sigma(1)},z_{\sigma(2)},z_3)
 [X_i^\pm(z_{\sigma(1)}),[X_i^\pm(z_{\sigma(2)}),X_i^\pm(z_3)]]=0,\ \text{if }s_i=2,
\\
 &&\text{(L8) }p_{ij,0}(z_1,\dots,z_{m_{ij}},w)
[X_i^\pm(z_{1}),\cdots,[X_i^\pm(z_{m_{ij}}), X_j^\pm(w)]] =0,\ \text{if }\ \check{a}_{ij}< 0,
\end{eqnarray*}
where  $X_{i}^\pm(z)=\sum\limits_{m\in\Z}X_{i,m}^\pm z^{-m}$  and
\begin{align*}
&F_{ij}(z,w)=F^\pm_{ij}(z,w)|_{\hbar\mapsto 0}=G^\pm_{i,j}(z,w)|_{\hbar\mapsto 0}=\prod_{k\in \Gamma_{ij}}(z-\xi^kw),\\
&p_{ij}(z,w)=p_{ij}^\pm(z,w)|_{\hbar\mapsto 0}=(z^{d_i}+w^{d_i})^{s_i-1}\frac{z^{d_{ij}}-w^{d_{ij}}}{z^{d_i}-w^{d_i}},\\
&p_{ij,0}(z_1,\dots,z_{m_{ij}},w)=p^\pm_{ij,0}(z_1,\dots,z_{m_{ij}},w)|_{\hbar\mapsto 0}\\
&\qquad=\prod_{1\le a<b\le m_{ij}}p_{ij}(z_a,z_b)
\prod_{1\le a\le m_{ij}}\prod_{k\in \bar\Gamma_{ij}}(z_a-\xi^kw),\\
&p_i(z_1,z_2,z_3)=p_i^\pm(z_1,z_2,z_3)|_{\hbar\mapsto 0}=z_1^{d_i}-2z_2^{d_i}+z_3^{d_i}.
\end{align*}
\end{de}

Then we have the following result.

\begin{prop}\label{prop:classical-limit}
The classical limit $\qtar/\hbar\qtar$ of $\qtar$ is isomorphic to
the universal enveloping algebra $\U(\hat\g_\mu)$ of $\hat\g_\mu$.
\end{prop}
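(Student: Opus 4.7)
The plan is to construct an algebra morphism $\Phi:\U(\hat\g_\mu)\to\qtar/\hbar\qtar$ sending $L_h, H_{i,m}, X_{i,m}^\pm, C, D$ to the images of $h, h_{i,m}, x_{i,m}^\pm, c, d$ respectively, and then to exploit the triangular decomposition of Theorem \ref{thm:tri-decomp} to conclude it is an isomorphism.

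\textbf{Well-definedness of $\Phi$.} I would verify each of (L0)--(L8) by specialising the corresponding relation of $\qtar$ at $\hbar=0$. Using the expansions
\[
q_i-q_i^{-1}\equiv 2r_i\hbar \pmod{\hbar^3},\qquad
[n]_q\equiv n\pmod{\hbar^2},\qquad
\frac{q^{mc}-q^{-mc}}{q-q^{-1}}\equiv mc\pmod{\hbar},
\]
together with $r_i=r_{\mu^k(i)}$, the relations (Q0$'$)--(Q6$'$) immediately give (L0)--(L4). For (L5) one expands
\[
\phi_i^\pm(z)=q^{\pm h_{i,0}}\exp\Big(\pm(q-q^{-1})\sum_{\pm m>0}h_{i,m}z^{-m}\Big)
\]
to first order in $\hbar$, obtaining $\phi_{i,0}^\pm\equiv 1$ and $\phi_{i,\pm m}^\pm\equiv \pm(q_i-q_i^{-1})h_{i,\pm m}/r_i$ for $m>0$; substituting into (Q7$'$) and dividing by $q_i-q_i^{-1}$ produces (L5). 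Since $F^\pm_{ij}(z,w)$ and $G^\pm_{ij}(z,w)$ both reduce to $F_{ij}(z,w)$ at $\hbar=0$, the relation (Q8) becomes the commutator form in (L6). Finally, $p_i^\pm|_{\hbar=0}=p_i$ and $p_{ij,r}^\pm|_{\hbar=0}=p_{ij,0}$ (independent of $r$, because $\bar F^\pm|_{\hbar=0}=\bar G^\pm|_{\hbar=0}$), while the quantum binomials $\binom{m}{r}_{q_i^{d_{ij}}}$ collapse to $\binom{m}{r}$; using the standard identity $\sum_{r=0}^m(-1)^r\binom{m}{r}Y_1\cdots Y_rZY_{r+1}\cdots Y_m$ for distinct but classically-commuting $Y_a$'s recasts (Q9)$|_{\hbar=0}$ and (Q10)$|_{\hbar=0}$ as (L7) and (L8).

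\textbf{Bijectivity via triangular decomposition.} Surjectivity is immediate. For injectivity, Theorem \ref{thm:tri-decomp} yields $\qtar\cong\qtar^-\wh\otimes\qhei\wh\otimes\qtar^+$ as topologically free $\C[[\hbar]]$-modules, and reduction mod $\hbar$ of an $\hbar$-adically completed tensor product of topologically free modules recovers the ordinary $\C$-tensor product, so
\[
\qtar/\hbar\qtar\cong(\qtar^-/\hbar\qtar^-)\otimes_{\C}(\qhei/\hbar\qhei)\otimes_{\C}(\qtar^+/\hbar\qtar^+).
\]
On the Lie side, $\hat\g_\mu$ decomposes as a direct sum of the subalgebras respectively generated by $\{X^-_{i,m}\}$, by $\{L_h,H_{i,m},C,D\}$, and by $\{X^+_{i,m}\}$, the middle one being of Heisenberg type and the outer two being ``nilpotent''; the PBW theorem then provides a parallel tensor decomposition of $\U(\hat\g_\mu)$. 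The homomorphism $\Phi$ respects these decompositions, so it suffices to check each of the three induced maps is an isomorphism. For the middle factor this is immediate: Theorem \ref{thm:tri-decomp} presents $\qhei$ by (Q0)--(Q2), whose $\hbar=0$ reduction is exactly the defining presentation of the Heisenberg part of $\hat\g_\mu$. For the outer factors, Theorem \ref{thm:tri-decomp} presents $\qtar^\pm$ by the $x_{i,n}^\pm$ modulo (Q0) and (Q8)--(Q10); the well-definedness analysis above identifies their $\hbar=0$ reductions with the presentations of the enveloping algebras of the two nilpotent parts of $\hat\g_\mu$ given by (L6)--(L8).

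\textbf{Main obstacle.} The most delicate point is matching (Q10)$|_{\hbar=0}$ with (L8): the symmetrised, polynomial-weighted sums in (Q10) must be rearranged---using the classical locality afforded by (Q8)$|_{\hbar=0}$ in the form $F_{ij}(z,w)[X_i^\pm(z),X_j^\pm(w)]=0$---into the iterated-commutator form of (L8), and one must confirm that no additional relations are introduced in the outer factors. The analogous but considerably simpler matching of (Q9)$|_{\hbar=0}$ with (L7) follows the same pattern.
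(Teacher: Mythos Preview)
Your approach is valid but takes a longer route than the paper's.  The paper does not invoke Theorem~\ref{thm:tri-decomp} at all: instead of proving injectivity of $\Phi$ separately, it simply constructs the inverse map $\psi:\qtar/\hbar\qtar\to\U(\hat\g_\mu)$ on generators and checks it is well-defined.  Since both algebras are quotients of the \emph{same} free $\C$-algebra on $h,h_{i,m},x_{i,m}^\pm,c,d$ (respectively by the $\hbar=0$ specialisation of (Q0)--(Q10) and by (L0)--(L8)), the bidirectional relation-matching you carry out in your ``well-definedness'' step already makes both $\Phi$ and $\psi$ well-defined surjections that are inverse to one another on generators, and the proof is finished right there.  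In other words, once your first paragraph is complete, the triangular-decomposition argument is redundant.

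Your detour also has a hidden cost.  To compare the factors you need not only the triangular decomposition of $\qtar$ (Theorem~\ref{thm:tri-decomp}) but also that of $\hat\g_\mu$ together with explicit presentations of $\hat\g_\mu^\pm$ and $\hat\h_\mu$; this is Proposition~\ref{prop:lietri}, and it is \emph{not} a consequence of PBW alone---PBW only gives the tensor factorisation of $\U(\hat\g_\mu)$ \emph{after} one knows $\hat\g_\mu=\hat\g_\mu^-\oplus\hat\h_\mu\oplus\hat\g_\mu^+$ with each summand presented by the expected subset of relations.  The paper proves Proposition~\ref{prop:lietri} separately, by specialising the \emph{proof} of Theorem~\ref{thm:tri-decomp} at $\hbar=0$.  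The ``main obstacle'' you flag (matching the symmetrised (Q10)$|_{\hbar=0}$ with the iterated-commutator form (L8) via the classical locality (L6)) is genuine and common to both approaches; the paper absorbs it into the assertion that the relations (Q0)--(Q10) specialise to (L0)--(L8).
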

\begin{proof}
Recall the generating set \eqref{eq:tqagenerators} of $\qtar$.
Let $\U^f$ denote  the free $\C$-algebra generated by \eqref{eq:tqagenerators}, and set  $\U^f_\hbar=\U^f[[\hbar]]$.
Then by definition $\qtar$ is the quotient algebra of $\U^f_\hbar$  modulo the closed ideal $\bar{I}$, where
$\bar{I}$ stands for  the closure (for the $\hbar$-adic topology) of
 the two-sided ideal $I$  of $\U^f_\hbar$ generated by the coefficients in (Q0)-(Q10).

Let $\psi$ denote  the  surjective $\C$-algebra homomorphism from the free $\C$-algebra  $\U^f$ to $\U(\hat\fg_\mu)$
determined by
\begin{align*}
  h\mapsto L_h,\ h_{i,m}\mapsto H_{i,m},\ x_{i,m}^{\pm}\mapsto X_{i,m}^\pm,\ c\mapsto C,\ d\mapsto D
\end{align*}
where $h\in\check\h,\,i\in I$ and $m\in\Z$.
We also view $\psi$ as a $\C$-algebra homomorphism from $\U^f_\hbar$ to $\U(\hat\fg_\mu)$ by letting $\psi(\hbar)=0$.
Note that, via the homomorphism $\psi$, the relations (Q0)-(Q10) in $\U^f_\hbar$ are exactly the relations (L1)-(L8) of $\hat\fg_\mu$ (see (Q0$'$)-(Q7$'$) in Section \ref{subsec:q-KM-alg}).
This implies that $\psi(I)=0$ and hence $\psi(\bar{I})=0$ (as $\psi(\hbar)=0$ and $I/\hbar I=\bar{I}/\hbar \bar{I}$).
Therefore, we obtain a surjective $\C$-algebra homomorphism  from $\U_\hbar(\hat\fg_\mu)$ to $\U(\hat\fg_\mu)$.
Furthermore, this homomorphism  induces a surjective $\C$-algebra homomorphism from $\U_\hbar(\hat\fg_\mu)/\hbar\U_\hbar(\hat\fg_\mu)$ to $\U(\hat\fg_\mu)$,
which we also denote as $\psi$.

On the other hand, we have a surjective $\C$-algebra homomorphism $\varphi$ from $\U(\hat\fg_\mu)$ to $\U_\hbar(\hat\fg_\mu)/\hbar\U_\hbar(\hat\fg_\mu)$  determined by
\begin{align*}
 &L_h\mapsto h+\hbar\U_\hbar(\hat\fg_\mu),\quad H_{i,m}\mapsto h_{i,m}+\hbar\U_\hbar(\hat\fg_\mu),\quad X_{i,n}^\pm\mapsto x_{i,n}^\pm+\hbar\U_\hbar(\hat\fg_\mu),\\
 &C\mapsto c+\hbar\U_\hbar(\hat\fg_\mu),\quad D\mapsto d+\hbar\U_\hbar(\hat\fg_\mu),\quad\te{for } i\in I,\,m\in\Z^\times,\,n\in\Z.
\end{align*}
It is obvious that $\varphi$ is the inverse of $\psi$, so the proposition is proved.
\end{proof}

Let $\hat{\g}_\mu^+$ (resp. $\hat\g_\mu^-$; resp. $\hat\h_\mu$) be the subalgebra of
$\hat\g_\mu$  generated by the elements $X_{i,m}^+$ (resp. $X_{i,m}^-$; resp. $H_{i,m}, L_h, C, D$).
By specializing $\hbar=0$  in the proof of Theorem \ref{thm:tri-decomp}, one also 
obtains the following result.

\begin{prop}\label{prop:lietri}$\hat\fg_\mu=\hat\g_\mu^+\oplus \hat\h_\mu\oplus \hat\g_\mu^-$ and
$\hat{\g}_\mu^+$ (resp. $\hat\g_\mu^-$; resp. $\hat\h_\mu$) is the subalgebra of
$\hat\g_\mu$ abstractly generated by $X_{i,m}^+$ (resp. $X_{i,m}^-$; resp. $H_{i,m}, L_h, C, D$)
subject to the relations \te{(L5)-(L7)} with ``$+$'' (resp. \te{(L5)-(L7)} with ``$-$''; resp. \te{(L0)-(L2)}).
\end{prop}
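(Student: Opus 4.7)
The plan is to deduce Proposition \ref{prop:lietri} from its quantum counterpart Theorem \ref{thm:tri-decomp} by passing to the classical limit $\hbar\mapsto 0$, using the identification $\qtar/\hbar\qtar\cong \U(\hat\fg_\mu)$ of Proposition \ref{prop:classical-limit}. Introduce abstract Lie algebras $\mathfrak{L}^{\pm}$ on generators $X^{\pm}_{i,m}$ modulo the single-sign Lie relations of Proposition \ref{prop:lietri}, and $\mathfrak{H}$ on generators $H_{i,m},L_h,C,D$ modulo (L0)--(L2). By universality there are canonical surjective Lie algebra homomorphisms $\pi_{\pm}\colon \mathfrak{L}^{\pm}\to \hat\fg^{\pm}_\mu$ and $\pi_H\colon \mathfrak{H}\to\hat\h_\mu$; the problem reduces to proving these are isomorphisms and then deducing the vector-space direct sum.

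First I would specialize Theorem \ref{thm:tri-decomp} at $\hbar=0$. Each of $\qtar^{\pm}$ and $\qhei$ is topologically free over $\C[[\hbar]]$, so reduction modulo $\hbar$ preserves the tensor-product isomorphism and gives a $\C$-linear isomorphism
\[
\bigl(\qtar^{-}/\hbar\qtar^{-}\bigr)\otimes\bigl(\qhei/\hbar\qhei\bigr)\otimes\bigl(\qtar^{+}/\hbar\qtar^{+}\bigr)\;\xrightarrow{\ \sim\ }\;\U(\hat\fg_\mu).
\]
In particular each classical-limit factor injects into $\U(\hat\fg_\mu)$, with image the associative subalgebra generated by the relevant classical generators, i.e.\ $\U(\hat\fg^{\pm}_\mu)$ or $\U(\hat\h_\mu)$.

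Next I would identify these classical limits with the enveloping algebras $\U(\mathfrak{L}^{\pm})$ and $\U(\mathfrak{H})$. By the abstract presentation in Theorem \ref{thm:tri-decomp}, $\qtar^{+}$ is the $\C[[\hbar]]$-algebra on generators $x^{+}_{i,n}$ modulo (Q0), (Q8)--(Q10). At $\hbar=0$ the coefficient polynomials degenerate: $F^{\pm}_{ij}|_{0}=G^{\pm}_{ij}|_{0}=F_{ij}$, $p^{\pm}_i|_{0}=p_i$, and $p^{\pm}_{ij,r}|_{0}=p_{ij,0}$ (independent of $r$). Hence (Q8) collapses to $F_{ij}(z,w)[X^{+}_i(z),X^{+}_j(w)]=0$, which is exactly (L6) with ``$+$''; and the alternating binomial sums in (Q9), (Q10) collapse via the standard identity $\sum_{r}(-1)^r\binom{m}{r}x^{m-r}yx^r=(\operatorname{ad} x)^m(y)$ — modulo the $S_{m}$-symmetrization afforded by the newly-obtained (L6) — into the iterated-bracket relations (L7), (L8) with ``$+$''. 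Thus $\qtar^{+}/\hbar\qtar^{+}\cong \U(\mathfrak{L}^{+})$, and symmetrically for $\mathfrak{L}^{-}$; for the Heisenberg part, (Q0)--(Q2) at $\hbar=0$ are already commutator identities, so $\qhei/\hbar\qhei\cong \U(\mathfrak{H})$.

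Combining these, the composite of $\tilde{\pi}_{-}\otimes\tilde{\pi}_H\otimes\tilde{\pi}_{+}$ (the enveloping-algebra maps induced by $\pi_{\pm},\pi_H$) with the multiplication $\U(\hat\fg^{-}_\mu)\otimes\U(\hat\h_\mu)\otimes\U(\hat\fg^{+}_\mu)\to\U(\hat\fg_\mu)$ equals the classical-limit isomorphism of the first step. Each $\tilde{\pi}_{\pm},\tilde{\pi}_H$ is therefore an isomorphism (being surjective with iso tensor product), whence by PBW each $\pi_{\pm},\pi_H$ is an isomorphism of Lie algebras, establishing the abstract presentations; simultaneously the multiplication map is an isomorphism, which via PBW on $\hat\fg_\mu$ forces the vector-space decomposition $\hat\fg_\mu=\hat\fg^{-}_\mu\oplus\hat\h_\mu\oplus \hat\fg^{+}_\mu$. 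The main obstacle is the middle step: translating the classical limits of the $S_{m}$-symmetric associative Serre relations (Q9), (Q10) into the single-ordering iterated-bracket form (L7), (L8), which requires the degenerate symmetries of $p_i,p_{ij,0}$ together with the quadratic commutation (L6) inherited from (Q8)$|_{\hbar=0}$ in a way parallel to Lemmas \ref{lem:tri-t-1}--\ref{lem:tri-t-2} specialized at $\hbar=0$.
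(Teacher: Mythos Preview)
Your approach is correct but takes a different route from the paper. The paper's one-line argument is to specialize $\hbar=0$ in the \emph{proof} (not the statement) of Theorem \ref{thm:tri-decomp}: one re-runs the compatibility checks of Propositions \ref{prop:tri-re1} and \ref{prop:tri-re2} directly at the classical level and then applies the same general triangular-decomposition criterion to the purely Lie-algebraic setup. You instead deduce the result from the \emph{statement} of Theorem \ref{thm:tri-decomp} together with Proposition \ref{prop:classical-limit}, by reducing the quantum triangular decomposition modulo $\hbar$ and identifying each factor.

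Two remarks on your execution. First, the assertion that $\qtar^{\pm}$ and $\qhei$ are topologically free is not established in the paper and is in fact unnecessary: for $\hbar$-adically complete modules, reduction modulo $\hbar$ of a completed tensor product always yields the ordinary tensor product of the reductions, regardless of freeness. Second, your description of the ``middle step'' is slightly imprecise for (Q9), which involves no binomial sums; the passage from the $S_3$-symmetrized associative relation to the $S_2$-symmetrized double bracket (L7) is precisely the computation around \eqref{eq:R_iii-rewrite1} specialized at $\hbar=0$. This is exactly where the paper's approach is cleaner: by specializing the calculations in Lemmas \ref{lem:tri-t-1} and \ref{lem:tri-t-2} and Propositions \ref{prop:tri-re1}--\ref{prop:tri-re2} at $\hbar=0$, those equivalences come for free inside the classical analogue of $\ul$, so one never has to separately run a Proposition \ref{prop:classical-limit}-style identification for each of the three pieces. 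Your route buys you the decomposition itself without redoing any compatibility checks, at the cost of this extra identification step.
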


Let $\hat\g_\mu'$ be the subalgebra of $\hat\g_\mu$ generated by the elements $X_{i,m}^\pm, H_{i,m}, C$.
Then it follows from Proposition \ref{prop:lietri} that $\hat\g_\mu'$ is abstractly generated
by these elements with relations (L0),(L2) and (L4)-(L8).
Furthermore, we have
\begin{align}\label{hatgmucd}
\hat\g_\mu=\hat\g_\mu'\oplus \sum_{h\in \h''\cap \check{\h}}\C L_h \oplus \C D.
\end{align}

Let
$
\mathrm{Aff}(\g)=\(\C[t,t^{-1}]\ot \g\)\oplus \C \bm{c} \oplus \C \bm{d}
$
be the affinization of $\g$ \cite{Kac-book} with the Lie bracket:
\begin{equation}\begin{split}\label{eq:affg}
&[t^m\ot x+a_1\bm{c}+b_1\bm{d}, t^n\ot y+a_2\bm{c}+b_2\bm{d}]\\
=\,&t^{m+n}\ot [x,y]+\<x\mid y\>\delta_{m+n,0} \bm{c}+b_1n(t^n\ot y)-b_2m(t^m\ot x),
\end{split}\end{equation}
for $x\in \g$, $m\in \Z$, and $a_1,b_1,a_2,b_2\in \C$.
We extend the bilinear form on $\g$ to 
$\mathrm{Aff}(\g)$ by decreeing that
\begin{align*}
\<t^m\ot x+a_1\bm{c}+b_1\bm{d}\mid t^n\ot y+a_2\bm{c}+b_2\bm{d}\>=\delta_{m+n,0}\<x\mid y\>+a_1b_2+b_1a_2.
\end{align*}

Recall from \cite{Kac-book} that any finite order automorphism  $\sigma$  of $\g$ can be extended to  an automorphism,
say $\hat{\sigma}$, of $\mathrm{Aff}(\g)$ with
\begin{align*}
\hat{\sigma}(t^m\ot x)=\xi_M^{-m}(t^m\ot \mu(x)),\quad\hat{\sigma}(\bm{c})=\bm{c}\quad \te{and}\quad \hat{\sigma}(\bm{d})=\bm{d},
\end{align*}
for $x\in \g$ and $m\in \Z$, where $M$ is the order of $\sigma$.
Denote by $\mathrm{Aff}(\g,\sigma)$ the subalgebra of $\mathrm{Aff}(\g)$ fixed by $\hat\sigma$.
Let $\mathfrak{b}$ be a $\sigma$-invariant subspace  of $\g$.
For  $x\in \mathfrak{b}$ and $m\in \Z$, set
\begin{align*}
x_{(m)}=\sum_{p\in \Z_M}\xi_M^{-mp}\mu^p(x)\quad\te{and}\quad
\mathfrak{b}_{(m)}=\te{Span}_\C\{x_{(m)}\mid x\in \mathfrak{b}\}.
\end{align*}
Then we have
\begin{align*}
\mathrm{Aff}(\g,\sigma)=\(\sum_{m\in \Z} t^m\ot \g_{(m)}\) \oplus \C \bm{c} \oplus \C \bm{d}.
\end{align*}

Recall that $\g'=[\g,\g]$ and $\h''$ is the complementary space for $\h'(=\g'\cap \h)$ in $\h$.
Set
\begin{align*}
\hat{\CL}(\g)=\(\C[t,t^{-1}]\ot \g'\)\oplus \h''\oplus \C \bm{c}\oplus \C \bm{d}\quad \te{and}\quad
\CL(\g)=\(\C[t,t^{-1}]\ot \g'\)\oplus \C \bm{c}.
\end{align*}
Note that both $\hat{\CL}(\g)$ and $\CL(\g)$ are $\hat\mu$-invariant. Denote their corresponding  $\hat\mu$-fixed point subalgebras by
\begin{align*}
\hat{\CL}(\g,\mu)&=\(\sum_{m\in \Z} t^m\ot \g'_{(m)}\)\oplus \h''_{(0)}\oplus \C \bm{c} \oplus \C \bm{d},\\
\CL(\g,\mu)&=\(\sum_{m\in \Z} t^m\ot \g'_{(m)}\)\oplus \C \bm{c}.
\end{align*}

We have:
\begin{prop} \label{prop:generalcl} The assignment ($h\in \h, i\in I$ and $m\in \Z$)
\begin{align}\label{defpsi}
L_h\mapsto h_{(0)},\ H_{i,m}\mapsto  t^m\ot r_i \al_{i(m)}^\vee,\
X_{i,m}^\pm\mapsto t^m\ot e^\pm_{i(m)},\ C\mapsto \frac{\bm{c}}{N},\ D\mapsto \bm{d},
\end{align}
defines a surjective Lie homomorphism from $\hat{\g}_\mu$ to $\hat{\CL}(\g,\mu)$.
\end{prop}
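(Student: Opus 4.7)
The strategy is to verify that the assignment \eqref{defpsi} respects each of the defining relations (L0)--(L8) of $\hat{\g}_\mu$ inside $\hat{\CL}(\g,\mu)$, and then to exhibit a Lie-algebra generating set of $\hat{\CL}(\g,\mu)$ contained in the image.

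The Cartan relations (L0), (L3), the derivation property in (L1), and the first equality of (L6) follow directly from the basic identity $\mu(x)_{(m)}=\xi^m x_{(m)}$ applied to the Chevalley generators, together with $[\bm{d}, t^m\otimes y]=m(t^m\otimes y)$ and $[\bm{d}, h]=0$ for $h\in\h''$. The Heisenberg relations (L2) and the $[H,X^{\pm}]$-relations (L4) reduce, after expanding the bracket in $\hat{\CL}(\g)$ via \eqref{eq:affg} and collapsing the resulting double sum $\sum_{p,q}\xi^{-mp-nq}$ with the character orthogonality $\sum_{p\in\Z_N}\xi^{-kp}=N\delta_{k\equiv 0}$, to the quantity $\sum_{k\in\Z_N}\xi^{mk}r_ia_{i\mu^k(j)}$ required by (L2) and (L4). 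The cross relation (L5) is handled the same way, using $[e_i^+,e_j^-]=\delta_{i,j}\al_i^{\vee}$ in $\g$ and the identity $r_i\langle\al_i^{\vee}|\al_j^{\vee}\rangle=a_{ij}$ to produce both the $H_{j,m+n}$ contribution and the central contribution after restriction to orbits.

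Under \eqref{defpsi}, the second equation of (L6) and the relations (L7), (L8) become polynomial identities in $\hat{\CL}(\g,\mu)$. Substituting $X_i^{\pm}(z)\mapsto \sum_{m\in\Z}(t^mz^{-m})\otimes e^{\pm}_{i(m)}$, the relation $F_{ij}(z,w)[X_i^{\pm}(z),X_j^{\pm}(w)]=0$ becomes the assertion that for each pair $(p,q)$ with $[e^{\pm}_{\mu^p(i)},e^{\pm}_{\mu^q(j)}]\ne 0$ the exponent $q-p\bmod N$ lies in $\Gamma_{ij}$, so the corresponding linear factor of $F_{ij}$ annihilates it. For (L7) and (L8) the same expansion reduces the left-hand side to a sum of iterated brackets $[e^{\pm}_{\mu^{p_1}(i)},\dots,[e^{\pm}_{\mu^{p_m}(i)},e^{\pm}_{\mu^{q}(j)}]]$ in $\g$; the polynomial prefactors $p_i$ and $p_{ij,0}$ combine with the classical Serre relations $\mathrm{ad}(e_i^{\pm})^{1-a_{ij}}e_j^{\pm}=0$ in $\g$ to force the total sum to vanish. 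This step, matching the $\mu$-orbit combinatorics encoded by the prefactors against the classical Serre relations, is the main technical obstacle.

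Finally, the image contains $\bm{d}$ (from $D$), $\bm{c}=NC$, all $L_h$ with $h\in\check{\h}$ (covering the Cartan summand $\h''_{(0)}$ and the $\mu$-invariant part of $t^0\otimes\h'$), and for every $m\in\Z, i\in I$ the elements $t^m\otimes r_i\al^{\vee}_{i(m)}$ and $t^m\otimes e^{\pm}_{i(m)}$. Since $\g'$ is generated as a Lie algebra by $\{e_i^{\pm}\}_{i\in I}$ and iterated brackets of the $(t^{m_r}\otimes e^{\pm}_{i_r(m_r)})$ project onto every $\mu$-eigenspace $\g'_{(n)}$, the Lie subalgebra of $\hat{\CL}(\g,\mu)$ generated by these elements coincides with $\sum_{m\in\Z} t^m\otimes\g'_{(m\bmod N)}\oplus\C\bm{c}$. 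Together with $\bm{d}$ and $\h''_{(0)}$, this exhausts $\hat{\CL}(\g,\mu)$, proving surjectivity.
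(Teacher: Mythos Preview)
Your overall architecture is right and matches the paper: verify (L0)--(L8) on the images, then note that the targets generate $\hat{\CL}(\g,\mu)$. Your treatment of (L0)--(L5) and of the first half of (L6) is essentially what the paper does (it too declares these ``straightforward and omitted''), and your surjectivity paragraph is fine.

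The genuine gap is in your handling of (L7) and (L8). You write that ``the polynomial prefactors $p_i$ and $p_{ij,0}$ combine with the classical Serre relations $\mathrm{ad}(e_i^{\pm})^{1-a_{ij}}e_j^{\pm}=0$ to force the total sum to vanish,'' and you flag this as the main technical obstacle --- but you do not actually carry it out, and invoking Serre relations alone is not the right mechanism. After expanding via
\[
[e_i^\pm(z_1),\dots,[e_i^\pm(z_m),e_j^\pm(w)]]=\sum_{\bm{k}\in(\Z_N)^m} e_{ij}^\pm(\bm{k})(w)\,\delta\!\Big(\tfrac{\xi^{-k_1}w}{z_1}\Big)\cdots\delta\!\Big(\tfrac{\xi^{-k_m}w}{z_m}\Big),
\]
the question is, for each $\bm{k}$ with $e_{ij}^\pm(\bm{k})\ne 0$ (equivalently $\al_{ij}(\bm{k})=\sum_r\al_{\mu^{k_r}(i)}+\al_j\in\Delta$), whether some linear factor of the prefactor kills the associated $\delta$-product. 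For (L7) the paper proves more: when $s_i=2$ one has $\al_{ii}(\bm{k})\notin\Delta$ for \emph{every} $\bm{k}\in(\Z_N)^2$, by a three-case analysis of whether $k_1,k_2$ are divisible by $N_i/2$ (using that the orbit of $i$ carries a type $A_2$ subsystem, Lemma~\ref{lem:linking}(ii)); hence the triple bracket already vanishes and no polynomial is needed. For (L8) the paper proves that if $\al_{ij}(\bm{k})\in\Delta$ then some factor $\xi^{k_s}z_s-\xi^{k_t}z_t$ divides $p'_{ij}(z_s,z_t)$ (which in turn divides $p_{ij}$ and hence $p_{ij,0}$); this requires (i) showing that not all $k_r$ lie in a single $\mu^{N_i}$-coset (else $m_{ij}\al_{i'}+\al_j\notin\Delta$ by (LC2)), (ii) treating the case where all $k_r\in\Gamma_{ij}$ via the set $\Omega_{ij}$, and (iii) treating the case where some $k_{s_0}\notin\Gamma_{ij}$, which forces $s_i=2$ and produces a factor of $z_s^{d_i}+z_t^{d_i}$. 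None of this is a direct application of a single Serre relation in $\g$; it is a combinatorial matching between the orbit geometry (governed by (LC1), (LC2)) and the linear factors of the prefactors. Your sketch does not supply this matching, so as written the argument for (L7)--(L8) is incomplete.
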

\begin{proof} Note that $\hat{\CL}(\g,\mu)$ is generated by the
 elements $h_{(0)}, t^m\ot r_i\al_{i(m)}^\vee, t^m\ot e^{\pm}_{i(m)}, \bm{c}, \bm{d}$,
where $h\in \h, i\in I, m\in \Z$.
Thus it suffices to verify that, under the correspondence \eqref{defpsi},
 these elements satisfy the relations (L0)-(L8).
The verification of  (L0)-(L5) is straightforward and omitted.
For the relations (L6)-(L8), define
the generating functions $x(z)=\sum_{n\in \Z}
(t^n\ot x_{(n)})z^{-n}$ for $x\in \g$.
And, for  $i,j\in I$, $m\in \Z_+$ and $\bm{k}=(k_1,\dots,k_m)\in (\Z_N)^m$, set
\begin{align*}
e_{ij}^\pm(\bm{k})&=[e^\pm_{\mu^{k_1}(i)},[e^\pm_{\mu^{k_2}(i)},\cdots,[e^\pm_{\mu^{k_{m}}(i)},e^\pm_j]]],\\
\al_{ij}(\bm{k})&=\al_{\mu^{k_1}(i)}+\al_{\mu^{k_2}(i)}+\cdots+\al_{\mu^{k_m}(i)}+\al_j.
\end{align*}
Note that $e_{ij}^\pm(\bm{k})\ne 0$ only if
$\al_{ij}(\bm{k})\in \Delta$, the root system  of $\g$.
Furthermore, it is straightforward to see that
\begin{equation}\begin{split}\label{serre-ex}
  &[e^\pm_i(z_1),[e^\pm_i(z_2),\cdots,[e^\pm_i(z_{m}),e^\pm_j(w)]]]\\
  =&\sum\limits_{\bm{k}=(k_1,\dots,k_m)\in(\Z_N)^m}
 e_{ij}^\pm(\bm{k})(w) \delta\(\frac{\xi^{-k_1}w}{z_1}\)
 \delta\(\frac{\xi^{-k_2}w}{z_2}\)\cdots \delta\(\frac{\xi^{-k_{m}}w}{z_{m}}\).
\end{split}\end{equation}

Note that the relations (L6) follow from \eqref{serre-ex}.
We now prove the relations (L7). So, assume that $i=j\in I$, $s_i=2$ and $m=2$.
In this case we claim that
\begin{align}\label{claim1}
 \al_{ii}(\bm{k})\notin \Delta,\quad \forall\ \bm{k}=(k_1,k_2)\in (\Z_N)^2.
 \end{align}
This claim together with \eqref{serre-ex} gives
 that \begin{align*}
 [e_i^\pm(z_1),[e_i^\pm(z_2),e_i^\pm(w)]]=0\end{align*} and in particular the relations (L7)
hold for $e_i^\pm(z)$.
We divide the proof of the claim \eqref{claim1} into three cases: (1) if both $k_1$ and $k_2$ can be divided by $N_i/2$,
then it follows from Lemma \ref{lem:linking} (ii) that
 $\{\al_{\mu^{k_1}(i)},\al_{\mu^{k_2}(i)},\al_i\}$ form a base for the root
system of type $A_1$ or $A_2$. This implies that $\al_{ii}(\bm{k})\notin \Delta$  as
any root in such a root system   has
height $<3$; (2) if  exactly one of $k_1,k_2$, say $k_1$, can be divided by $N_i/2$, then $a_{\mu^{k_2}(i)}$ is
orthogonal to  $\al_{\mu^{k_1}(i)}, \al_i$ and so $\al_{ii}(\bm{k})\notin \Delta$;
(3) if neither $k_1$ nor $k_2$ can be divided by $N_i/2$, then $\al_i$ is
orthogonal to  $\al_{\mu^{k_1}(i)}, \al_{\mu^{k_2}(i)}$ and so $\al_{ii}(\bm{k})\notin \Delta$, as desired.

Now we  prove the relations (L8).
Let $i,j\in I$ with $\check{a}_{ij}<0$, $m=m_{ij}=1-\bar{a}_{ij}$ and
$\bm{k}=(k_1,\dots,k_m)\in (\Z_N)^m$.
Recall the set $\Gamma_{ij}^{\mathrm{d}}$ defined in \eqref{eq:gammaij-}. Set
\begin{align}\label{eq:defpij'}
\Omega_{ij}=\Gamma_{ij}^{\mathrm{d}}\setminus\{lN_i\mid 0\le l\le d_i-1\}\ \te{and}\
p_{ij}'(z,w)=(z^{d_i}+w^{d_i})^{s_i-1}\prod_{k\in \Omega_{ij}}(z-\xi^kw).
\end{align}
Note that the polynomial $p_{ij}'(z,w)$ divides $p_{ij}(z,w)$ as
\[p_{ij}(z,w)=(z^{d_i}+w^{d_i})^{s_i-1}\prod_{k\in \<\Gamma_{ij}^{\mathrm{d}}\>\setminus\{lN_i\mid 0\le l\le d_i-1\}}(z-\xi^kw).\]
We claim that
\begin{enumerate}\item[(a)]
if
 $\al_{ij}(\bm{k})\in \Delta$, then $\xi^{k_s}z_s-\xi^{k_t}z_t$  divides $p'_{ij}(z_s,z_t)$ for some $1\le s<t\le m$.
\end{enumerate}
In view of
\eqref{serre-ex}, this claim implies that
\[\prod_{1\le a<b\le m} p'_{ij}(z_a,z_b)\cdot [e^\pm_i(z_1),[e^\pm_i(z_2),\cdots,[e^\pm_i(z_{m}),e^\pm_j(w)]]]=0,\]
and hence
\[\prod_{1\le a<b\le m} p_{ij}(z_a,z_b)\cdot [e^\pm_i(z_1),[e^\pm_i(z_2),\cdots,[e^\pm_i(z_{m}),e^\pm_j(w)]]]=0.\]
Recall that $\prod_{1\le a<b\le m} p_{ij}(z_a,z_b)$ divides $p_{ij,0}(z_1,\dots,z_m,w)$.
This particulary shows that (L8) hold in $\hat{\CL}(\g,\mu)$.

Now we are ready to prove the claim (a).
So let us assume that $\al_{ij}(\bm{k})\in \Delta$.
We first prove the following assertion:
\begin{enumerate}\item[(b)]
$\xi^{k_s}z_s-\xi^{k_t}z_t$ does not  divide $z_s^{d_i}-z_t^{d_i}$ for some $1\le s<t\le m$.
\end{enumerate}
Otherwise, it follows that  $N_i$ divides $k_s-k_t$ for all $s,t$.
Then $\mu^{k_1}(i)=\cdots=\mu^{k_m}(i)=i'$ for some
$i'\in \mathcal O(i)$.
In view of (LC2), we see  either $a_{i'j}=0$ or $a_{i'j}=\bar{a}_{ij}$ and in each case
$\al_{ij}(\bm{k})=m\al_{i'}+\al_j\notin \Delta$, as required.

Suppose now that $k_r\in \Gamma_{ij}$ for all $r=1,\dots,m$. It then follows from the assertion (b) that
there are $1\le s<t\le m$ such that $k_t-k_s\notin \{lN_i\mid 0\le l\le d_i-1\}$.
This means that  $k_t-k_s\in \Omega_{ij}$ and so we have
 the following assertion:
\begin{enumerate}\item[(c)] if $k_r\in \Gamma_{ij}$ for all $r=1,\dots,m$, then
$\xi^{k_s}z_s-\xi^{k_t}z_t$ divides $\prod_{k\in \Omega_{ij}}(z_s-\xi^kz_t)$ for some $1\le s<t\le m$.
\end{enumerate}
On the other hand, assume that $k_{s_0}\notin \Gamma_{ij}$ for some $s_0=1,\dots,m$.
Recall that
 $k_{s_0}\notin \Gamma_{ij}$ implies that $a_{\mu^{k_{s_0}}(i)j}=0$.
For the case $s_i=1$, it follows from Lemma \ref{lem:linking} (i) that
 $a_{\mu^{k_{s_0}}(i)i'}=0$ for any $i'\ne \mu^{k_{s_0}}(i)\in \mathcal O(i)$.
Thus, $\al_{\mu^{k_{s_0}}(i)}$ is orthogonal to all the  elements in the set
\begin{align}\label{orthoset}
\{\al_{\mu^{k_s}(i)}, \al_j\mid 1\le s\le m\}\setminus \{\al_{\mu^{k_{s_0}}(i)}\},\end{align}
which  gives  $\al_{ij}(\bm{k})\notin \Delta$, a contradiction.
And for the case  $s_i=2$,
 if $k_{s_0}-k_{t}\not\equiv \frac{N_i}{2} (\ \te{mod}\ N_i)$  for all $t=1,\dots,m$,
then by Lemma \ref{lem:linking} (ii), $\al_{\mu^{k_{s_0}}(i)}$ is  orthogonal to all the  elements in
\eqref{orthoset} and hence  $\al_{ij}(\bm{k})\notin \Delta$, a contradiction too.
In summary, we obtain that
 \begin{enumerate} \item[(d)] if $k_{s_0}\notin \Gamma_{ij}$ for some $s_0=1,\dots,m$, then
 $s_i=2$ and
there exists a $t_0=1,\dots,m$ such that $k_{s_0}-k_{t_0}\equiv \frac{N_i}{2} (\ \te{mod}\ N_i)$.
\end{enumerate}

Recall that $\prod_{k\in \Omega_{ij}}(z_s-\xi^kz_t)$ divides $p'_{ij}(z_s,z_t)$.
Thus, from the assertions (c) and (d), it follows that the claim (a) holds
except the case that $s_i=2$  and there exist $s_0,t_0$ such that
$k_{s_0}\notin \Gamma_{ij}$ and $k_{s_0}-k_{t_0}\equiv \frac{N_i}{2} (\ \te{mod}\ N_i)$.
However, in this case, we have that $\xi^{k_{s_0}}z_{s_0}-\xi^{k_{t_0}}z_{t_0}$ divides $z_{s_0}^{d_i}+z_{t_0}^{d_i}$ and
hence  divides $p'_{ij}(z_{s_0},z_{t_0})$ as well.
This completes the proof of claim (a), as required.
\end{proof}

We denote the resulting surjective homomorphism given in Proposition \ref{prop:generalcl} by $\psi_{\g,\mu}$.
The following result was proved in \cite{Da2} (see also \cite{CJKT-drin-pre}).

\begin{thm} Assume that the GCM $A$ is of finite type. Then the homomorphism $\psi_{\g,\mu}: \hat\g_\mu\rightarrow \hat{\CL}(\g,\mu)$ is
an isomorphism.
\end{thm}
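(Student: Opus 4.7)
The plan is to reduce injectivity of $\psi_{\g,\mu}$ to the Drinfeld-type presentation of the twisted loop Kac-Moody algebra $\hat\CL(\g,\mu)$ in the finite-type case, which is the main content of \cite{Da2,CJKT-drin-pre}. Since surjectivity has already been established in Proposition \ref{prop:generalcl}, the goal is to exhibit a Lie homomorphism $\hat\CL(\g,\mu)\to\hat\g_\mu$ sending the standard Drinfeld generators of $\hat\CL(\g,\mu)$ back to the abstract generators of $\hat\g_\mu$; uniqueness on generators together with surjectivity of $\psi_{\g,\mu}$ will force the two maps to be mutually inverse.

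I would proceed via the triangular decomposition $\hat\g_\mu=\hat\g_\mu^-\oplus\hat\h_\mu\oplus\hat\g_\mu^+$ of Proposition \ref{prop:lietri}, together with the parallel triangular decomposition of $\hat\CL(\g,\mu)$ into its negative loop, Cartan/Heisenberg, and positive loop subalgebras. Since $\psi_{\g,\mu}$ plainly respects this decomposition on generators, it suffices to verify that each of the three restrictions is an isomorphism. On the Cartan piece, the relations (L0)--(L2) directly cut out the twisted Heisenberg-type subalgebra $\bigl(\sum_{m\in\Z^\times}t^m\otimes\h'_{(m)}\bigr)\oplus\h'_{(0)}\oplus\h''_{(0)}\oplus\C\bm c\oplus\C\bm d$ of $\hat\CL(\g,\mu)$, and $\psi_{\g,\mu}|_{\hat\h_\mu}$ is an isomorphism by direct inspection of bases; the only check required is that the scalar $\bm c/N$ appearing in \eqref{defpsi} reproduces the correct commutator prefactor in (L2), which follows from the explicit form of the invariant bilinear form.

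For the nilpotent pieces $\hat\g_\mu^\pm$, I would invoke the Drinfeld presentation of $\CL(\g,\mu)^\pm=\sum_{m\in\Z}t^m\otimes(\n^\pm)_{(m)}$ from \cite{Da2, CJKT-drin-pre}: this asserts that $\CL(\g,\mu)^\pm$ is generated as a Lie algebra by $t^m\otimes e_{i(m)}^\pm$ modulo precisely the relations (L6)--(L8) with the appropriate sign. Granting this, Proposition \ref{prop:lietri} gives an inverse homomorphism on generators, completing the argument. A parallel classical-limit route is also available: Theorem \ref{thm:tqaa} identifies $\qtar$ with the twisted quantum affine algebra of type $X_\ell^{(N)}$, whose specialization at $\hbar=0$ is $\U(\hat\CL(\g,\mu))$; combining this with Proposition \ref{prop:classical-limit} yields $\U(\hat\g_\mu)\cong\U(\hat\CL(\g,\mu))$, and it remains only to track that the identification on Drinfeld generators coincides with $\psi_{\g,\mu}$.

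The main obstacle is the Drinfeld presentation of $\CL(\g,\mu)^\pm$ itself: one must prove that every Lie relation among the elements $t^m\otimes e_{i(m)}^\pm$ follows from (L6)--(L8). The subtle ingredients are the folded Serre relations (L7) and (L8), whose completeness is established in \cite{Da2,CJKT-drin-pre} via a delicate $\mu$-equivariant root-space analysis against the root system of $\mathrm{Aff}(\g,\mu)$, essentially reducing to a PBW-type dimension count on each weight space of the loop algebra. Once this presentation is in hand, the verification outlined above becomes almost tautological.
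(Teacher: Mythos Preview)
The paper does not prove this theorem; it merely records it as a result already established in \cite{Da2} (with \cite{CJKT-drin-pre} as an alternative reference). Your proposal correctly identifies these as the sources and sketches two legitimate routes---the direct Drinfeld-presentation argument via the triangular decomposition, and the classical-limit argument through Theorem~\ref{thm:tqaa}---both of which are consistent with how the cited works proceed, so there is nothing to correct.

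One small remark on your second route: invoking Theorem~\ref{thm:tqaa} is not circular, but it does import essentially the same hard input (Damiani's analysis of the Drinfeld relations in \cite{Da1,Da2}) that your first route uses directly, so the two approaches are closer than they might first appear. The genuine content in either case is exactly what you flag at the end: the completeness of the relations (L6)--(L8) for the twisted loop algebra, which is a nontrivial root-space/PBW argument carried out in the cited references and not reproduced in the present paper.
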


In general, $\psi_{\g,\mu}$ is not injective and when $\g$ is of affine type,  we will determine the kernel of
$\psi_{\g,\mu}$ in Section 7.3.

\subsection{Basics on EALAs}\label{sec:q-EALA}
We start with the definition of an extended affine Lie algebra (EALA for short).
Let $\E$ be a Lie algebra equipped with a nontrivial finite-dimensional self-centralizing
ad-diagonalizable subalgebra $\CH$ and a nondegenerate invariant symmetric bilinear form
$(\ \mid\ )$.
Let $\E=\oplus_{\al\in \CH^*} \E_\al$ be the root space decomposition of $\E$ with respect to
$\CH$, and let $\Phi=\{\al\in \CH^*\mid \E_\al \ne 0\}$ be the corresponding root system.
The form $(\ \mid \ )$ restricted to $\CH=\E_0$ is nondegenerate, and hence induces a
nondegenerate symmetric bilinear form on $\CH^*$.
Set
\[\Phi^\times=\{\al\in \Phi\mid (\al\mid\al)\ne 0\}\quad\te{and}\quad
\Phi^0=\{\al\in \Phi\mid(\al\mid\al)=0\}.\]
Let $\E_c$ be the subalgebra of $\E$ generated by the root spaces $\E_\al, \al\in \Phi^\times$,  called the core of
$\E$. Following \cite{AABGP}, we give:

\begin{de}\label{deeala} The triple $(\E,\CH,(\ \mid \ ))$  is called an extended affine Lie algebra if
\begin{enumerate}
\item $\te{ad}(x)$ is locally nilpotent for $x\in \E_\al, \al\in \Phi^\times$.
\item $\Phi^\times$ cannot be decomposed as a union of two orthogonal nonempty subsets.
\item The centralizer of $\E_c$ in $\E$ is contained in $\E_c$.
\item $\Phi$ is a discrete subset of $\CH^*$.
\end{enumerate}
\end{de}

The condition (4) in Definition \ref{deeala} implies that the subgroup $\<\Phi^0\>$ of $\CH^*$ generated by $\Phi^0$ is
a free abelian group of finite rank.
This rank is called the nullity of $\E$.
Nullity $0$ EALAs are exactly finite dimensional simple Lie algebras, while nullity $1$ EALAs are exactly
affine Kac-Moody algebras \cite{ABGP}.
 Set $\CH_\sigma=\h_{(0)}\oplus \C\bm c\oplus \C\bm d$,
an abelian subalgebra of $\mathrm{Aff}(\g,\sigma)$. Then we have

\begin{lem}\label{lem:ealaaff}
Assume that $\g$ is of finite type or  affine type. Then for any diagram automorphism $\sigma$ of $\g$,
 the triple $(\mathrm{Aff}(\g,\sigma),\CH_\sigma,\<\ \mid\ \>)$ is  an EALA if and only if $\sigma$ satisfies the linking
 conditions (LC1) and (LC2).
\end{lem}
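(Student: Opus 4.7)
The plan is to verify the axioms of Definition \ref{deeala} directly for the triple $(\mathrm{Aff}(\g,\sigma),\CH_\sigma,\<\cdot|\cdot\>)$, exploiting the simplification from Lemma \ref{lem:LC1aff}(2) that (LC2) is automatic for any diagram automorphism of a finite or affine type GCM. Thus the conjunction ``(LC1) and (LC2)'' reduces to (LC1) alone, and by Lemma \ref{lem:LC1aff}(1) the only failure of (LC1) in our setting is the case $\g = A_\ell^{(1)}$ with $\sigma$ a transitive rotation. The finite-type half is immediate: $\mathrm{Aff}(\g,\sigma)$ is by construction the $\sigma$-twisted affine Kac--Moody algebra of $\g$ with $\CH_\sigma$ its standard Cartan, every such algebra is a nullity-one EALA by \cite{ABGP}, and every diagram automorphism of a finite-type GCM trivially satisfies (LC1) and (LC2).

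For the affine case, assume (LC1). I would first record the structural facts: $\CH_\sigma$ is a finite-dimensional ad-diagonalizable subalgebra of $\mathrm{Aff}(\g,\sigma)$, and $\<\cdot|\cdot\>$ inherited from $\mathrm{Aff}(\g)$ restricts to an invariant nondegenerate symmetric form. Nondegeneracy of the restriction to $\h_{(0)}$ is exactly what (LC1) guarantees, since it makes the folded matrix $\check A$ a well-defined symmetrizable GCM with realization on $\h_{(0)}$. The four axioms are then verified in turn: (1) local ad-nilpotency on nonisotropic root spaces is inherited from the corresponding property in $\g$ via the twisted-loop presentation; (2) irreducibility of the nonisotropic root set follows from connectedness of the Dynkin diagram of $\check A$; (3) the core contains $\bigoplus_m t^m\otimes\g'_{(m)}\oplus\C\bm c$ (being generated by real-root vectors), and nondegeneracy of $\<\cdot|\cdot\>|_{\h_{(0)}}$ forces the centralizer of this subalgebra in $\mathrm{Aff}(\g,\sigma)$ to lie in $\CH_\sigma \subseteq \mathrm{Aff}(\g,\sigma)_c$; (4) discreteness of $\Phi$ is immediate, since $\Phi \subset \sum_{i\in\check I}\Z\check\alpha_i\oplus\Z\delta_1\oplus\Z\delta_2$, a finitely generated free abelian subgroup of $\CH_\sigma^*$.

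The hard part is the converse, i.e., ruling out the transitive rotation on $A_\ell^{(1)}$. In this case $\sum_{p\in\mathcal O(i)}a_{pi}\leq 0$ for every $i$, which reflects a genuine degeneracy: all simple roots $\alpha_i$ of $\g$ restrict to the same element of $\h_{(0)}^*$ (a multiple of the null root of $\g$), so the ``finite-type'' part of $\CH_\sigma^*$ collapses to a single isotropic direction. As a consequence either the nonisotropic root set $\Phi^\times$ becomes degenerate or the restriction of $\<\cdot|\cdot\>$ to $\h_{(0)}$ develops a radical visible to the core, producing centralizing elements of $\mathrm{Aff}(\g,\sigma)_c$ outside the core and breaking axiom~(3). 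The cleanest route to a complete argument is to invoke the Allison--Berman--Pianzola classification \cite{ABP} of nullity-two algebras in $\widehat{E}_2$, which explicitly excludes the transitive $A_\ell^{(1)}$ case: the corresponding Lie algebras turn out to be of quantum-torus type $\hat\fsl_{\ell+1}(\C_p)$ with generic $p$ rather than twisted toroidal algebras $\hat\ft(\g,\sigma)$, so they cannot arise as the extended core of a maximal EALA built from $\mathrm{Aff}(\g,\sigma)$. Pinpointing the most expedient of the axioms (1)--(4) to cite as the direct obstruction is the main subtlety in a fully self-contained write-up.
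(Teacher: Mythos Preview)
Your approach is considerably more elaborate than the paper's. The paper's proof is two lines: it cites \cite{ABP} for the equivalence ``$(\mathrm{Aff}(\g,\sigma),\CH_\sigma,\<\cdot\mid\cdot\>)$ is an EALA $\Leftrightarrow$ $\g$ is of finite type, or $\g$ is of affine type and $\sigma$ is nontransitive,'' and then invokes Lemma~\ref{lem:LC1aff} to translate ``nontransitive'' into ``(LC1) and (LC2).'' Both directions are thus handled by the single reference to \cite{ABP}, with no axiom-by-axiom verification.

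Your route attempts direct verification of axioms (1)--(4) for the forward direction and reserves \cite{ABP} only for the converse. This is a legitimate alternative in spirit, but it buys nothing here since you end up citing \cite{ABP} anyway, and the direct verification introduces a slip: in your argument for axiom~(3) you write $\CH_\sigma\subseteq\mathrm{Aff}(\g,\sigma)_c$, which is false. The core $\CL(\g,\sigma)$ does not contain $\bm d$ or $\h''_{(0)}$; indeed the paper later identifies the core explicitly as $\CL(\g,\mu)=(\sum_m t^m\otimes\g'_{(m)})\oplus\C\bm c$. The correct argument for (3) is that any element centralizing the core must in particular commute with all $t^m\otimes e^\pm_{i(m)}$, which forces it into $\C\bm c$ (plus possibly imaginary-root central elements), and these do lie in the core. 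This is fixable, but the repair plus the remaining axiom checks amount to reproving a special case of the result already available in \cite{ABP}. Given that you invoke \cite{ABP} for the hard converse direction regardless, the paper's wholesale citation is both shorter and avoids the pitfall.
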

\begin{proof} It was proved in \cite{ABP} that  $(\mathrm{Aff}(\g,\sigma),\CH_\sigma,\<\ \mid\ \>)$ is an EALA if and only if either $\g$ is of finite type, or $\g$
is of affine type and $\sigma$ is nontransitive.
Then the assertion follows by Lemma \ref{lem:LC1aff}.
\end{proof}

\begin{rem}{\em In the Definition \ref{deeala},  the choice of the invariant bilinear form is not important: let
$(\E,\CH,(\ \mid \ ))$ and $(\E,\CH,(\ \mid \ )')$ be two EALA structures on $\E$.
Then $\Phi=\Phi'$, $\Phi^{\times}=\Phi'^{\times}$ and $\Phi^0=\Phi'^{0}$,
where we distinguish the notations for $(\E,\CH,(\ \mid \ )')$ by $'$.
One may see  \cite[Corollary 3.3]{CNPY-conjugacy-EALA} for details.
Thus, as in \cite{Neher-EALA-survey,CNPY-conjugacy-EALA}, from now on we will denote EALAs as the couples $(\E,\CH)$.
}
\end{rem}

\begin{rem}\label{csact}{\em In a series of papers \cite{CGP-conjugacy,CNP-conjugacy-non-fgc-lie-tori,CNPY-conjugacy-EALA,CNP-conjugacy-non-fgc-centerless-core}, the conjugacy theorem of Cartan subalgebras for EALAs was proved.
Explicitly, let
$(\E,\CH)$ and $(\E,\CH')$ be two  EALA structures on a Lie algebra $\E$. Then
 there is an automorphism of $\E$ mapping $\CH$ onto $\CH'$. In particular, the root system is an invariant of $\E$.
}
\end{rem}

 The structure of EALAs is intimately connected to Lie torus introduced by Yoshii \cite{Y}.
 Let $S$ be a finite irreducible  root system, which is not necessarily reduced and contains $0$,
 and let $Q(S)$ be the corresponding root lattice.
 Let $\Lambda$ be a free abelian group of finite type, and
let $L$ be a Lie algebra graded by $(Q(S),\Lambda)$, that is, $L=\oplus_{\al\in Q(S),\lambda\in\Lambda}L_\al^\lambda$ such that
 $[L_\al^\lambda, L_\beta^\gamma]\subset L_{\al+\beta}^{\lambda+\gamma}$.
 For convenience, we set
 \begin{align*}
 L_\al=\oplus_{\lambda\in \Lambda}L_\al^\lambda,\ \al\in Q(S)\quad\te{and}\quad
 L^\lambda=\oplus_{\al\in Q(S)} L_\al^\lambda,\ \lambda\in \Lambda.
 \end{align*}
A Lie torus  of type $(S,\Lambda)$ is by definition a $(Q(S),\Lambda)$-graded Lie algebra satisfying certain conditions \cite{Y}, and
its nullity is defined as the rank of $\Lambda$.

\begin{rem}\label{uniLietorus}{\em Let $L$ be a Lie torus. Then $L$ is perfect and so it has
a universal central extension
$f:\mathfrak u(L)\rightarrow L$.
It is known that $\mathfrak u(L)$ is a Lie torus of the same type with $f(\mathfrak u(L)_\al^\lambda)=L_\al^\lambda$ for $\al\in Q(S)$, $\lambda\in \Lambda$ (cf.\cite{Neher-EALA-survey}).
}\end{rem}

\begin{rem}\label{centerless}{\em
A Lie torus $L$ is called centerless if its center $Z(L)=0$.
For example, the centerless core $\E_{cc}=\E_c/Z(\E_c)$ of an EALA $\E$ is a centerless Lie torus.
Let $L$ be a centerless Lie torus of type $(S,\Lambda)$. Then
 $\mathfrak u(L)$ is a centrally closed Lie torus.
Conversely, let $L$ be a  Lie torus of type $(S,\Lambda)$, which is centrally closed. Then  $L/Z(L)$ a centerless Lie torus
of type $(S,\Lambda)$ such that $L\cong \mathfrak u(L/Z(L))$.
}
\end{rem}

It is known that the core of an EALA is naturally a Lie torus \cite{AG-root-sys-core-EALA, Neher-EALA-survey}. More precisely,
let $(\E,\CH)$ be an EALA of nullity $n$ with the root system $\Phi$. Set $X=\te{span}_{\mathbb R}(\Phi)$ and $X^0=\te{span}_{\mathbb R}(\Phi^0)$.
Then the image $S$ of $\Phi$ in the quotient map $\pi:X\rightarrow Y=X/X^0$ is a finite irreducible root system, and
there is a linear map $f:Y\rightarrow X$ such that $f\circ \pi=\mathrm{id}_Y$ and $f(S_{ind})\subset \Phi$, where $S_{ind}=\{0\}\cup \{\beta\in S\mid \frac{1}{2}\al\notin S\}$.
Furthermore, the core $\E_c$ of $\E$ is a nullity $n$ Lie torus of type $(S,\Lambda)$ with
$(\E_c)_\al^\lambda=\E_c\cap \E_{f(\al)+\lambda}$, where  $\Lambda=\<\Phi^0\>$.
In particular, the $\Lambda$-grading on $\E_c$ is determined by
\begin{align*}
(\E_c)^\lambda=\oplus_{\beta\in \Phi, p(\beta)=\lambda}\E_c\cap \E_\beta,
\end{align*}
where $p$ is the projection map from $X=f(Y)\oplus \Lambda$ to $\Lambda$.

For example, let $\E=\mathrm{Aff}(\g,\mu)$ with $\g$ is of affine type (see Lemma \ref{lem:ealaaff}).
We view $(\check\h)^*$ as a subspace of $(\CH_\mu)^*$ in a natural way,  and
 define $\delta\in (\CH_\mu)^*$ such that $\delta(\check{\h})=0=\delta(\bm c)$ and
$\delta(\bm d)=1$.
Recall that $\check{\g}$ is the Kac-Moody algebra associated with $\check{A}$ and
$\check{\al}_i, i\in \check{I}$ are its simple roots (see Section \ref{subsec:link-cond}).
Then we have ($i\in \check{I},\ m\in \Z$):
\begin{align*}
t^m\ot e_{i(m)}^{\pm}\in \mathrm{Aff}(\g,\mu)_{\check{\al}_i+m\delta},\quad
t^m\ot \al_{i(m)}^\vee\in \mathrm{Aff}(\g,\mu)_{m\delta},\ \h''_{(0)}\oplus \C \bm d\subset \mathrm{Aff}(\g,\mu)_{0}.
\end{align*}
Since these elements generate the algebra $\mathrm{Aff}(\g,\mu)$, it follows that the root system $\Phi\subset \check{Q}\oplus \Z\delta$,
 where $\check{Q}=\oplus_{i\in \check{I}}\C\check{\al}_i$ is the root lattice of $\check{\g}$.
 On the other hand, 
we have $\Phi^0\subset \Z\delta_2\oplus \Z\delta$, where  $\delta_2$ denotes the null root in $\check\g$.
In fact, it follows from \cite[Corollary 2.31]{AABGP} that $\Phi^0=\Z\delta_2\oplus \Z \delta$.
Thus, the core $\CL(\g,\mu)$ of $\mathrm{Aff}(\g,\mu)$ is a Lie torus of type $(S,\Lambda)$, where $\Lambda=\Phi^0$ and the root system $S$ is determined in \cite[Theorem 12.2.1]{ABP}.
Note that the elements $\pi(\check{\al}_i), i\in \check{I}\setminus\{\omega\}$ form a basis of $Y$ and so we can define  the section
$f$ by $f(\pi(\check{\al}_i))=\check{\al}_i$
for $i\in \check{I}\setminus \{\omega\}$, where $\omega\in \check{I}$ is the additional node of $\check\g$ \cite{Kac-book}.
Then we have $p(\check{\al}_i)=\delta_{i,\omega}\delta_2$ for $i\in \check{I}$, which implies
$p(\al)=\al(\check{\rd_2})\delta_2+\al(\bm d)\delta$ for $\al\in \Phi$,
where $\check{\rd}_2$ is a scaling element in $\check\h$ such that $\check{\al}_i(\check{\rd}_2)=\delta_{i,\omega}$ for $i\in \check{I}$.
In particular, the $\Lambda$-grading on $\CL(\g,\mu)$ is given by
\begin{align}\label{lambdagrading}
\CL(\g,\mu)^{m\delta_2+n\delta}=\{x\in \CL(\g,\mu)\mid [\check{\rd}_2,x]=mx,\ [\bm d, x]=nx\}.
\end{align}

We define the \emph{extended core} $\bar{\E}$ of an EALA $(\E,\CH)$ to be the subalgebra $\E_c+\CH$ of $\E$.
It follows from Remark \ref{csact} that, up to the isomorphism, the extended core of an EALA is independent with
the choice of its Cartan subalgebra.
Following \cite{BGK}, we say that an EALA is maximal if its core is centrally closed.

\begin{de} For a nonnegative integer $n$, denote by $\wh{E}_n$ the class of Lie algebras which are isomorphic to
the extended core of a maximal EALAs with nullity $n$.
\end{de}

Note that   $\wh{E}_0$ is the class of finite dimensional simple Lie algebras and
$\wh{E}_1$ is the class of affine Kac-Moody algebras.
Let $L$ be a Lie torus of type $(S,\Lambda)$. Then any $\theta\in \mathrm{Hom}_\Z(\Lambda,\C)$ induces a so-called
degree derivation $\partial_\theta$ of $L$: $\partial_\theta(x)=\theta(\lambda)x$ for all $x\in L^\lambda$.
Form the semi-product Lie algebra $L_e=L\oplus \mathcal D_L$, where $\mathcal D_L=\{\partial_\theta\mid\theta\in \mathrm{Hom}_\Z(\Lambda,\C)\}$.
Note that
for any $\lambda\in \Lambda$, $L^\lambda=\{x\in L\mid [\partial_\theta,x]=\theta(\lambda) x\ \te{for all}\ \theta\in \mathrm{Hom}_\Z(\Lambda,\C)\}$
\cite[Section 8]{N2}. We have:

\begin{lem}\label{chawhen} Let $\CL$ be a nullity $n$ Lie torus that is centrally closed. Then $\CL_e\in \wh{E}_n$.
Conversely, any algebra in $\wh{E}_n$ is of this form.
\end{lem}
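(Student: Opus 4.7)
My plan is to invoke Neher's classification of maximal EALAs in terms of centerless Lie tori (as surveyed in \cite{Neher-EALA-survey}) and then trace through the extended-core construction in both directions.

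For the forward direction, I would start with a centrally closed nullity $n$ Lie torus $\CL$ of some type $(S,\Lambda)$. By Remark \ref{centerless}, the centerless quotient $\bar\CL = \CL/Z(\CL)$ is a centerless nullity $n$ Lie torus of type $(S,\Lambda)$ satisfying $\mathfrak u(\bar\CL) \cong \CL$. Applying Neher's existence construction to $\bar\CL$ with the full space $\mathcal D_\CL$ of degree derivations as the evaluation data yields a maximal EALA $(\E,\CH)$ of nullity $n$ with $\E_c \cong \CL$ and Cartan subalgebra $\CH = \CH_c \oplus \mathcal D_\CL$, where $\CH_c = \CH \cap \E_c$ is a Cartan of $\E_c$. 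The extended core is then $\bar\E = \E_c + \CH = \CL \oplus \mathcal D_\CL \cong \CL_e$, as required.

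For the converse, I would take $L \in \wh E_n$, realized as $L = \bar\E = \E_c + \CH$ for some maximal EALA $(\E,\CH)$ of nullity $n$. Maximality says that $\E_c$ is a centrally closed nullity $n$ Lie torus, which I rename $\CL$. Choosing a complement $\mathcal D$ to $\CH_c := \CH \cap \E_c$ inside $\CH$ gives a vector-space decomposition $L = \CL \oplus \mathcal D$ with $\mathcal D$ acting on $\CL$ by derivations. The key step is to identify $\mathcal D$ with $\mathcal D_\CL$ naturally. Each $d \in \mathcal D$ commutes with $\CH_c$ and hence preserves the $\CH_c$-root space decomposition of $\CL$; since $d \in \CH$, it acts on the root space $\CL_\alpha$ by the scalar $\alpha(d)$, and because $\CH_c$ already controls the nonisotropic component of $\alpha$, this scalar depends only on the image of $\alpha$ in $\langle\Phi^0\rangle = \Lambda$. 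Thus restriction gives a map $\mathcal D \to \mathcal D_\CL$. Injectivity follows from the self-centralizing property of $\CH$ (axiom (3) of Definition \ref{deeala}): any $d \in \mathcal D$ acting trivially on $\CL$ centralizes $\E_c$ and thus lies in $\E_c$, forcing $d = 0$. Surjectivity follows from a dimension count: isotropic roots vanish on $\CH_c$ and pair nondegenerately with $\mathcal D$, and axiom (4) ensures $\mathrm{rank}\langle\Phi^0\rangle = n$, so $\dim \mathcal D = n = \dim \mathcal D_\CL$. Consequently $L \cong \CL_e$.

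The main obstacle I anticipate is the careful verification, in the converse direction, that the pairing between isotropic roots and the complementary derivations $\mathcal D$ is nondegenerate of rank $n$; this is what forces $\mathcal D \cong \mathcal D_\CL$ rather than a proper subspace, and it relies on axioms (3) and (4) acting together with the maximality hypothesis. The forward direction is essentially a packaging step, bundling Neher's nontrivial existence construction with a direct identification of the extended core.
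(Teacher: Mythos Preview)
Your forward direction is essentially the paper's approach: pass to the centerless quotient, invoke Neher's existence construction, and read off the extended core. The paper packages this via the explicit model $\E(L,\tau)=L\oplus D^{gr*}\oplus D$, noting that the core is $L\oplus D^{gr*}\cong\mathfrak u(L)\cong\CL$ and that the cocycle condition $\tau(\mathcal D_L,D)=0$ forces the extended core $L\oplus D^{gr*}\oplus\mathcal D_L$ to have bracket independent of $\tau$, hence equal to $(L\oplus D^{gr*})_e=\CL_e$.

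Your converse, however, takes a genuinely different route from the paper and has a gap. The paper simply quotes the other half of Neher's theorem: every maximal EALA is isomorphic to some $\E(L,\tau)$, so its extended core is $(L\oplus D^{gr*})_e$ automatically. You instead attempt a direct argument, and the step ``because $\CH_c$ already controls the nonisotropic component of $\alpha$, this scalar depends only on the image of $\alpha$ in $\Lambda$'' does not follow. For an \emph{arbitrary} vector-space complement $\mathcal D$ to $\CH_c$ in $\CH$, an element $d\in\mathcal D$ acts on $\E_\beta$ by $\beta(d)$, and there is no reason this should vanish on the nonisotropic part $f(\pi(\beta))$ of $\beta$: the fact that $\CH_c$ detects that part does not prevent $\mathcal D$ from detecting it too. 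Concretely, if $d_0$ is a ``pure'' degree derivation and $h\in\CH_c$ is nonzero, then $d_0+h$ could lie in your chosen $\mathcal D$ but does not act by a scalar depending only on $\Lambda$. So the map $\mathcal D\to\mathcal D_\CL$ is not well-defined as you describe it.

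The fix is either to choose $\mathcal D$ more carefully (e.g.\ inside the annihilator of $f(S_{ind})$ in $\CH$, which one must then check has the right dimension), or---as the paper does---to bypass the issue entirely by invoking Neher's structure theorem, where the complement $D^0=\mathcal D_L$ is built into the model and the cocycle condition $\tau(\mathcal D_L,D)=0$ does the work.
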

\begin{proof} The proof follows Neher's construction of maximal EALAs, and a sketch is given here; see 
\cite{N2} or \cite{Neher-EALA-survey} for details.
Let $L$ be a centerless Lie torus of type $(S,\Lambda)$ and $\Gamma$ the central grading group of $L$, which is a free
subgroup of $\Gamma$.
Let $D=\te{SCDer}(L)$ be the space of skew centroidal derivations on $L$, which is $\Gamma$-graded with the degree zero space $D^0=\mathcal D_L$.
Let $D^{gr*}$ the $\Gamma$-graded dual of $D$ and let $\tau:D\times D\rightarrow D^{gr*}$ be an affine two-cocycle.
Then there is a maximal EALA structure on $\E(L,\tau)=L\oplus D^{gr*}\oplus D$ with the Cartan subalgebra
$\CH(L)=L_0^0\oplus D^{0*}\oplus D^0$ and the core $\E(L,\tau)_c=L\oplus D^{gr*}$. Conversely, any maximal EALA arises in this way.

Note that one of the conditions for $\tau$ requires that
 $\tau(\mathcal D_L, D)=0$.
 This gives that the Lie bracket on $\overline{\E(L,\tau)}=L\oplus D^{gr*}\oplus \mathcal D_L$ is independent from the choice of $\tau$.
 In particular, we have  $\overline{\E(L,\tau)}=(L\oplus D^{gr*})_e$ as a Lie algebra.
 Now, let $\CL$ be a centrally closed Lie torus of nullity $n$. Then by Remark \ref{centerless}, there is a centerless Lie torus $L$ such that
 $\CL\cong \mathfrak u(L)\cong L\oplus D^{gr*}$ and hence
 \[\CL_e\cong (L\oplus D^{gr*})_e=\overline{\E(L,\tau_0)}\in \wh{E}_n,\]where $\tau_0$ denotes
 the trivial two-cocycle on $D$.
 On the other hand, let $\bar{\E}\in \wh{E}_n$ and by Neher's construction we may assume $\E$ has the form $\E(L,\tau)$.
 Then we have $\bar{\E}=\overline{\E(L,\tau)}=(L\oplus D^{gr*})_e$, as required.
\end{proof}

\subsection{Quantization of nullity $2$ EALAs}
In this subsection, we first classify the algebras in $\wh{E}_2$ and then consider their  quantization.
Throughout this subsection we always assume that $\g$ is of affine type $X_\wp^{(r)}$, where $\wp\in\Z_+$ and $r=1,2,3$.
For convenience, we set $I=\{0,1,\dots,\ell\}$.

Let $\dot\g$ be a finite dimensional simple Lie algebra of type $X_\wp$ and
$\dot\nu$ a diagram automorphism of $\dot\g$ with order $r$.
For  $\dot{x}\in \dot\g$ and $m\in \Z$, we set
\begin{align*}
\dot{x}_{[m]}=\sum_{p\in \Z_r}\xi_r^{-mp}\dot{\nu}^p(\dot{x})\quad\te{and}\quad
\dot\g_{[m]}=\te{Span}_\C\{\dot{x}_{[m]}\mid \dot{x}\in \dot\g\}.
\end{align*}
It was shown in \cite[Chap.\,8]{Kac-book} that the affine Kac-Moody algebra $\g$ can be realized as the Lie algebra
\begin{align}\label{identifyg}\g=(\sum_{m\in \Z}t_2^m\ot \dot\g_{[m]})\oplus \C\rk_2\oplus \C\rd_2\end{align}
 with the Lie bracket given by
\begin{align*}
&[t_2^{m_1}\ot \dot{x}+a_1\rk_2+b_1\rd_2, t_2^{m_2}\ot \dot{y}+a_2\rk_2+b_2\rd_2]\\
=\,&t_2^{m_1+m_2}\ot [\dot{x},\dot{y}]+(\dot{x}|\dot{y})\delta_{m_1+m_2,0}m_1\rk_2
+b_1m_2t_2^{m_2}\ot \dot{y}-b_2m_1t_2^{m_1}\ot \dot{x},
\end{align*}
where $m_1,m_2\in \Z$, $\dot{x}\in \dot\g_{[m_1]},\dot{y}\in \dot\g_{[m_2]}$, $a_1,a_2,b_1,b_2\in \C$
and $(\cdot|\cdot)$ is a nondegenerate  invariant symmetric bilinear form on $\dot\g$.
Let $\dot{\h}$ be a Cartan subalgebra of $\dot\g$ and take
\[\h=(\dot{\h}\cap \dot{\g}_{[0]})\oplus \C\rk_2 \oplus \C\rd_2\]
as the Cartan subalgebra of $\g$.
We also realize the simple roots $\al_i$'s and simple coroots $\al_i^\vee$'s as in \cite{Kac-book}.
Note that $\rd_2\in \h$ is a scaling element such that $\al_i(\rd_2)=\delta_{i,0}$ for $i\in I$.

\begin{rem}\label{rem:invform}{\em Note that $(\cdot|\cdot)$ induces an invariant bilinear form on $\g'$ such that
\begin{align*}(t_2^{m_1}\ot \dot{x}+a_1\rk_2|t_2^{m_2}\ot \dot{y}+a_2\rk_2)
=\delta_{m_1+m_2,0}(\dot{x}|\dot{y}).
\end{align*}
Recall from \cite[Exercise 2.5]{Kac-book} that any two such forms on $\g'$ are proportional.
Thus, we may (and do) fix the choice of $(\cdot|\cdot)$ on $\dot{\g}$ such that $(\cdot|\cdot)=\<\cdot|\cdot\>|_{\g'\times \g'}$.
}
\end{rem}

Let $\mathcal K$ be a vector space over $\C$ spanned by the symbols
\begin{align*}
t_1^{m_1} t_2^{m_2} \rk_i,\quad i=1,2,\ m_1\in \Z,\ m_2\in r\Z
\end{align*}
and subject to the relations
$
m_1t_1^{m_1} t_2^{m_2} \rk_1+m_2t_1^{m_1} t_2^{m_2} \rk_2=0.
$
Let
\[\ft(\g)=\sum_{m_1,m_2\in \Z} t_1^{m_1}t_2^{m_2}\ot \dot\g_{[m_2]}  \oplus \mathcal K\]
be the toroidal Lie algebra associated with $\g$, where $\mathcal K$ is the center space and
\begin{align}\label{toroidalre}
[t_1^{m_1}t_2^{m_2}\ot \dot{x}, t_1^{n_1}t_2^{n_2}\ot \dot{y}]
=t_1^{m_1+n_1}t_2^{m_2+n_2}\ot [\dot{x},\dot{y}] + (\dot{x}|\dot{y})\sum_{i=1,2}m_i t_1^{m_1+n_1}t_2^{m_2+n_2}\rk_i
\end{align}
for $\dot{x}\in \dot{\g}_{[m_2]}$, $\dot{y}\in \dot{\g}_{[n_2]}$ and $m_1,n_1,m_2,n_2\in \Z$.
With the identification \eqref{identifyg}, for $x=t_2^{m_2}\ot \dot{x}+a\rk_2\in \g'$ ($m_2\in \Z, \dot{x}\in \dot{\g}, a\in \C$) and $m_1\in \Z$, let us set
\begin{align}\label{t1m1x}
t_1^{m_1}\ot x:=t_1^{m_1}t_2^{m_2}\ot \dot{x}+a t_1^{m_1}\rk_2\in \ft(\g).
\end{align}
Note that these elements together with the central elements
$t_1^{m_1}t_2^{m_2}\rk_1$ $(m_1\in \Z, m_2\in r\Z\setminus\{0\})$, $\rk_1$ span the algebra $\ft(\g)$.
Denote by $\Delta$, $\Delta^{re}$ and $\Delta^{im}$
the sets of roots, real roots and imaginary roots in $\g$, respectively.
In view of Remark \ref{rem:invform}, the commutator \eqref{toroidalre} can be rewritten as follows:

\begin{lem}\label{lem:commutator} Let $\al, \beta\in \Delta$, $x\in \fg_\al, y\in \fg_\beta$ and $m_1,n_1\in \Z$.
If $\al+\beta\in \Delta^{re}\cup \{0\}$, then we have
\begin{align}\label{commutator1}
[t_1^{m_1}\ot x, t_1^{n_1}\ot y]=t_1^{m_1+n_1}\ot [x,y]+m_1\delta_{m_1,n_1}\<x \mid y\>\rk_1.\end{align}
If $x=t_2^{m_2}\ot \dot{x}$, $y=t_2^{n_2}\ot \dot{y}$ and $\al+\beta\in \Delta^{im}\setminus\{0\}$, then
\begin{align}\label{commutator2}
[t_1^{m_1}\ot x, t_1^{n_1}\ot y]=t_1^{m_1+n_1}\ot [x,y]
+(\dot{x}\mid \dot{y})\frac{m_1n_2-m_2n_1}{m_2+n_2}t_1^{m_1+n_1}t_2^{m_2+n_2}\rk_1.
\end{align}
\end{lem}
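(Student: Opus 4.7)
The plan is to verify both identities by a direct computation in $\ft(\g)$, invoking only the defining commutator \eqref{toroidalre}, the centre relation $m_1t_1^{m_1}t_2^{m_2}\rk_1+m_2t_1^{m_1}t_2^{m_2}\rk_2=0$ in $\mathcal K$, and Remark \ref{rem:invform} identifying $(\,\cdot\mid\cdot\,)$ on $\dot\g$ with $\<\,\cdot\mid\cdot\,\>|_{\g'\times\g'}$. First I would use the realisation \eqref{identifyg} to write $x=t_2^{m_2}\ot\dot x$, $y=t_2^{n_2}\ot\dot y$ with $\dot x\in\dot\g_{[m_2]}$, $\dot y\in\dot\g_{[n_2]}$ sitting in suitable $\dot\h$-weight spaces; since $\al,\be$ are nonzero roots of $\g$, the weight vectors $x,y$ admit no $\rk_2$- or $\rd_2$-component, so under \eqref{t1m1x} we simply have $t_1^{m_1}\ot x=t_1^{m_1}t_2^{m_2}\ot\dot x$ and similarly for $y$. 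Expanding the left-hand side by \eqref{toroidalre} and the term $t_1^{m_1+n_1}\ot[x,y]$ by the affine bracket on $\g'$, the difference of the two sides collapses to
\begin{equation*}
(\dot x\mid\dot y)\Bigl(m_1\,t_1^{m_1+n_1}t_2^{m_2+n_2}\rk_1+m_2\bigl(1-\delta_{m_2+n_2,0}\bigr)\,t_1^{m_1+n_1}t_2^{m_2+n_2}\rk_2\Bigr),
\end{equation*}
the sole discrepancy arising from the $\rk_2$-term already built into $[x,y]\in\g'$.

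The two cases then split cleanly according to the sign of $m_2+n_2$. In case \eqref{commutator1}, either $\al+\be\in\Delta^{re}$, in which event $\dot\al+\dot\be\ne 0$ forces $(\dot x\mid\dot y)=0$, or $\al+\be=0$, in which event $m_2+n_2=0$, the $\rk_2$-contribution drops out automatically, and only $(\dot x\mid\dot y)m_1\,t_1^{m_1+n_1}\rk_1$ survives; the centre relation at index $(m_1+n_1,0)$ reads $(m_1+n_1)t_1^{m_1+n_1}\rk_1=0$, so this term vanishes unless $m_1+n_1=0$, and combined with $\<x\mid y\>=\delta_{m_2+n_2,0}(\dot x\mid\dot y)$ it reproduces the asserted $m_1\delta_{m_1+n_1,0}\<x\mid y\>\rk_1$ (I read the ``$\delta_{m_1,n_1}$'' in the statement as a misprint for $\delta_{m_1+n_1,0}$, the only version consistent with the central charge being supported on $\al+\be=0$). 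In case \eqref{commutator2}, $m_2+n_2\ne 0$, so the centre relation may be solved to give $t_1^{m_1+n_1}t_2^{m_2+n_2}\rk_2=-\tfrac{m_1+n_1}{m_2+n_2}\,t_1^{m_1+n_1}t_2^{m_2+n_2}\rk_1$; substituting into the displayed discrepancy produces the combined coefficient $\tfrac{m_1(m_2+n_2)-m_2(m_1+n_1)}{m_2+n_2}=\tfrac{m_1n_2-m_2n_1}{m_2+n_2}$, exactly as required. The computation is entirely mechanical and I foresee no real obstacle; the only point that needs care is the correct application of the defining relation of $\mathcal K$ at the index $(m_1+n_1,m_2+n_2)$, together with the clean split between the $m_2+n_2=0$ and $m_2+n_2\ne 0$ subcases.
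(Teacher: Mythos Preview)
Your argument is correct and is precisely the direct verification the paper has in mind: the lemma is stated in the paper immediately after the sentence ``In view of Remark \ref{rem:invform}, the commutator \eqref{toroidalre} can be rewritten as follows,'' with no further proof given. Your observation that $\delta_{m_1,n_1}$ in \eqref{commutator1} should read $\delta_{m_1+n_1,0}$ is also correct; the one point worth making explicit is your claim that $(\dot x\mid\dot y)=0$ when $\al+\be\in\Delta^{re}$, which follows because $\dot x$ and $\dot y$ are supported on $\dot\nu$-orbits of $\dot\h$-weights whose restrictions to $\dot\h_{[0]}$ are $\bar\al$ and $\bar\be$, and $\bar\al+\bar\be\neq 0$ forces these orbits to be non-opposite.
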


Note that the elements
\begin{align}\label{geneoftg}
t_1^{m}\ot e_i^\pm,\quad t_1^m\ot \al_i^\vee,\quad \rk_1,\ m\in \Z,\ i\in I
\end{align}
generate the algebra $\ft(\g)$.
From \cite{CJKT-uce},
there is an automorphism $\hat\mu$ of $\ft(\g)$ with
\begin{equation}\label{defhatmu}
t_1^m\ot e_i^\pm\mapsto \xi^{-m} t_1^m\ot e_{\mu(i)}^\pm,\quad t_1^m\ot \al_i^\vee\mapsto \xi^{-m}t_1^m\ot \al_{\mu(i)}^\vee,\quad
\rk_1\mapsto \rk_1
\end{equation}
for $i\in I,\ m\in \Z$.
Denote by $\ft(\g,\mu)$ the subalgebra of $\ft(\g)$ fixed by $\hat\mu$.
Using \eqref{geneoftg} and \eqref{defhatmu}, we see that $\ft(\g,\mu)$ is generated by  the elements
$t_1^m\ot e_{i(m)}^\pm, t_1^m\ot \al_{i(m)}^\vee, \rk_1$ for $i\in I, m\in \Z$.
We have

\begin{lem}\label{phigmu}  The assignment $(i\in I, m\in \Z)$
\begin{align}\label{eq:phigmu}
t_1^m\ot e_{i(m)}^\pm\mapsto t^m\ot e_{i(m)}^\pm,\ t_1^m\ot \al_{i(m)}^\vee\mapsto t^m\ot \al^\vee_{i(m)},\ \rk_1\mapsto \bm c.
\end{align}
determines a  surjective
homomorphism
$\phi_{\g,\mu}:\ft(\g,\mu)\rightarrow \CL(\g,\mu)$, which
is a universal central extension.
\end{lem}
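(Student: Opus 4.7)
The plan is to prove the lemma in three stages: verify that $\phi_{\g,\mu}$ is a well-defined Lie homomorphism, establish surjectivity, and then argue the universal property using known structural results on twisted toroidal algebras.

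For well-definedness, I would check the map on brackets of generic elements $t_1^{m_1} \ot x$ and $t_1^{n_1} \ot y$ of $\ft(\g,\mu)$, using Lemma \ref{lem:commutator}. When $\al+\be \in \Delta^{re} \cup \{0\}$, formula \eqref{commutator1} produces a central term $m_1\delta_{m_1,n_1}\<x\mid y\>\rk_1$ which, under $\rk_1 \mapsto \bm c$, matches exactly the $\bm c$-contribution in the affine bracket of $\CL(\g,\mu)$. When $\al+\be \in \Delta^{im}\setminus\{0\}$, the extra central term $(\dot x\mid\dot y)\tfrac{m_1n_2-m_2n_1}{m_2+n_2}t_1^{m_1+n_1}t_2^{m_2+n_2}\rk_1$ from \eqref{commutator2} has nonzero $t_2$-bidegree, so it is not in the image of $\rk_1 \mapsto \bm c$ and must go to zero; this is consistent because $m_2+n_2\ne 0$ forces the loop-bracket central contribution in $\CL(\g,\mu)$ to vanish (the invariant form $\<x\mid y\>$ on $\g'$ contains the factor $\delta_{m_2+n_2,0}$). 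Surjectivity is then immediate, since $\CL(\g,\mu)=\bigl(\sum_m t^m\ot\g'_{(m)}\bigr)\oplus \C\bm c$ is generated by the images $t^m\ot e_{i(m)}^\pm$, $t^m\ot\al_{i(m)}^\vee$ and $\bm c$.

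For the universal central extension property, I would first show that $\ft(\g,\mu)$ is perfect: the generating elements $t_1^m\ot e_{i(m)}^\pm$, $t_1^m\ot\al_{i(m)}^\vee$, and $\rk_1$ all arise as brackets of root-vector generators (e.g.\ brackets of the form $[t_1^{m}\ot e_{i(m)}^+, t_1^0\ot e_{i(m)}^-]$ produce Cartan-like elements together with multiples of $\rk_1$ via \eqref{commutator1}), hence $[\ft(\g,\mu),\ft(\g,\mu)]=\ft(\g,\mu)$. Next, I would identify the kernel of $\phi_{\g,\mu}$ explicitly as the $\hat\mu$-fixed subspace of $\mathcal K$ modulo $\C\rk_1$, and show this kernel lies in the center of $\ft(\g,\mu)$ (this being inherited from the centrality of $\mathcal K\subset\ft(\g)$). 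The universality then amounts to matching this kernel with $H_2(\CL(\g,\mu),\C)$, which I would extract from the structural classification of universal central extensions of twisted toroidal algebras in \cite{CJKT-uce}, combined with the classical argument that the $\hat\mu$-fixed points of the universal central extension of $\CL(\g)$ furnish the universal central extension of $\CL(\g,\mu)$, valid since $\hat\mu$ has finite order and hence acts semisimply on the center.

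The main obstacle is this last step: the clean proof of the universal property requires a careful bookkeeping of which $\hat\mu$-fixed central classes survive in the second homology of the twisted loop algebra $\CL(\g,\mu)$, and a matching count of $\hat\mu$-fixed central elements of $\ft(\g)$ modulo $\C\rk_1$. Routine case-by-case verification is possible but tedious; the cleanest route is to reduce to the untwisted result via the $\hat\mu$-averaging trick and cite \cite{CJKT-uce} for the compatible central extension, so that the commutative diagram furnished by $\phi_{\g,\mu}$ inherits universality automatically.
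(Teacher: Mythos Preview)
Your proposal is correct and reaches the same conclusion, but the organization differs from the paper's proof in two places worth noting. First, for well-definedness the paper does not work directly inside $\ft(\g,\mu)$: it instead defines the untwisted map $\phi:\ft(\g)\to\CL(\g)$ by $t_1^m\ot x\mapsto t^m\ot x$, $\rk_1\mapsto\bm c$, $t_1^{m_1}t_2^{m_2}\rk_1\mapsto 0$ on the explicit spanning set of $\ft(\g)$, checks via \eqref{eq:affg} and Lemma~\ref{lem:commutator} that this is a central extension, and then observes $\phi\circ\hat\mu=\hat\mu\circ\phi$ on the generators in \eqref{geneoftg}, so $\phi$ restricts to the desired $\phi_{\g,\mu}$. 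This bypasses your case analysis inside the fixed-point algebra and makes the ``extra central terms go to zero'' step automatic rather than something to be argued. Second, for universality the paper simply invokes \cite[Theorem~3.3]{CJKT-uce} directly, without the intermediate perfectness and $H_2$-matching you outline; your $\hat\mu$-averaging idea is in fact the content of that cited result, so your route is a sketch of what that reference already packages. In short: your argument is sound, but the paper's untwist-then-restrict maneuver is the cleaner way to handle well-definedness, and the universality step collapses to a single citation.
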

\begin{proof} From \eqref{eq:affg} and Lemma \ref{lem:commutator}, it follows that the map
\begin{align}\label{eq:centralextun}
\phi:\ft(\g)\rightarrow \CL(\g),\quad t_1^m\ot x\rightarrow t^m\ot x,\ \rk_1\mapsto \bm c,\ t_1^{m_1}t_2^{m_2}\rk_1\mapsto 0
\end{align}
is a central extension, where $m\in \Z, x\in \g, m_1\in \Z$ and $m_2\in r\Z\setminus\{0\}$. Recall that $\CL(\g,\mu)$ is the $\hat\mu$-fixed
point subalgebra of $\CL(\g)$.
As  $\phi\circ\hat\mu=\hat\mu\circ \phi$ on the generators  in \eqref{geneoftg},
$\phi$ induces a central extension $\phi_{\g,\mu}$ from $\ft(\g,\mu)$ to $\CL(\g,\mu)$, which is determined by \eqref{eq:phigmu}.
The fact that $\phi_{\g,\mu}$ is universal follows from
\cite[Theorem 3.3]{CJKT-uce}.
\end{proof}

By adding two canonical derivations $\rd_1,\rd_2$ to $\ft(\g)$, we obtain the  Lie algebra
\begin{align*}
\hat{\ft}(\g)=\ft(\g)\oplus \C\rd_1\oplus \C\rd_2,
\end{align*}
where ($i,j=1,2$, $\dot{x}\in \dot{\g}_{[m_2]}$ and $m_1,m_2\in \Z$)
\begin{align}\label{d1d2}
[\rd_i, t_1^{m_1}t_2^{m_2}\ot \dot{x}]=m_it_1^{m_1}t_2^{m_2}\ot \dot{x}\quad\te{and}\quad
[\rd_i,t_1^{m_1}t_2^{m_2}\rk_j]=m_i t_1^{m_1}t_2^{m_2}\rk_j.
\end{align}
With the notation given in \eqref{t1m1x}, we in particular have
\begin{align}\label{bracketdi}
[\rd_2, t_1^{m_1}\ot x]=t_1^{m_1}\ot [\rd_2,x_{(m)}],\quad
[\rd_1, t_1^{m_1}\ot x_{(m)}]=m_1 t_1^{m_1}\ot x.
\end{align}

One notices that  $\mu(\rd_2)\in\rd_2+\h'$ (see the proof of \cite[Proposition 7.2.1]{ABP}), $\h=\h'\oplus \C \rd_2=\h'\oplus \h''$ and $\h''$ is $\mu$-invariant.
Set
\begin{align}
\check\rd_2=1/N\sum_{p\in \Z_N}\mu^p(\rd_2)\in \check{\h}.
\end{align}
Then we have $\check{\h}=\h'_{(0)}\oplus \C\check\rd_2$ and $\check{\rd}_2$ is a scaling element in $\check\h$ such that
$\check{\al}_i(\check{\rd}_2)=\delta_{i,0}$ for $i\in \check{I}$ (see Section 2.2).
 By  \eqref{commutator1} and
\eqref{bracketdi}, one immediately has

\begin{lem}\label{lem:actiondi} For $i\in I$ and $m\in \Z$, one has
\begin{align*}
[\check{\rd}_2, t_1^m\ot \al_{i(m)}^\vee]&=0,\ [\check\rd_2, t_1^m\ot e_{i(m)}^\pm]=
\pm \al_i(\check\rd_2) t_1^m\ot e_{i(m)}^\pm,\ [\check\rd_2,\rk_1]=0,\\
[\rd_1, t_1^m\ot \al_{i(m)}^\vee]&=m\,t_1^m\ot \al_{i(m)}^\vee,\ [\rd_1, t_1^m\ot e_{i(m)}^\pm]=
m\,t_1^m\ot e_{i(m)}^\pm,\ [\rd_1,\rk_1]=0.
\end{align*}
\end{lem}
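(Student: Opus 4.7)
The six identities can all be verified by direct computation using \eqref{commutator1} (brackets in $\ft(\g)$), \eqref{d1d2} (the defining derivation rules for $\rd_1, \rd_2$ on the central and loop pieces of $\ft(\g)$), and the specialization \eqref{bracketdi} (brackets of $\rd_1, \rd_2$ with elements of the form $t_1^m\ot x$, $x\in\g'$). The three identities involving $\rd_1$ follow at once: by \eqref{bracketdi}, the elements $t_1^m\ot\al_{i(m)}^\vee$ and $t_1^m\ot e_{i(m)}^\pm$, carrying $t_1$-degree $m$, are mapped to $m$ times themselves by $\ad(\rd_1)$; while $[\rd_1, \rk_1]=0$ follows from \eqref{d1d2} since $\rk_1 = t_1^0 t_2^0 \rk_1$ has $t_1$-degree zero.

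For the identities involving $\check\rd_2$, I would use the observation $\mu(\rd_2)\in \rd_2+\h'$ (noted just before the lemma) to write $\check\rd_2 = \rd_2 + h^*$ with $h^*\in\h'$. Since $\h'\subset\g'$, the element $h^*$ is identified with $1\ot h^*\in\ft(\g)$, so inside $\hat\ft(\g)$ we have $[\check\rd_2,\cdot] = [\rd_2,\cdot]+[1\ot h^*,\cdot]$, with the two summands computed via \eqref{bracketdi}/\eqref{d1d2} and \eqref{commutator1} respectively. The bracket with $\rk_1$ vanishes because $\rk_1$ is central in $\ft(\g)$ (killing the $h^*$-summand) and has $t_2$-degree zero (killing the $\rd_2$-summand). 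For $y=\al_{i(m)}^\vee$ or $y=e_{i(m)}^\pm$, the two summands combine to $t_1^m\ot [\check\rd_2, y]_\g$ plus the central correction from \eqref{commutator1}, which takes the form $m_1\delta_{m_1,n_1}\<h^*\mid y\>\rk_1=0$ since $1\ot h^*$ carries $t_1$-degree $m_1=0$.

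It then remains to evaluate $[\check\rd_2, y]_\g$ inside $\g$. For $y=\al_{i(m)}^\vee\in\h$ this is zero by abelianness of $\h$; for $y=e_{i(m)}^\pm=\sum_{p}\xi^{-mp}e_{\mu^p(i)}^\pm$ the $\mu$-invariance of $\check\rd_2$ gives $\al_{\mu^p(i)}(\check\rd_2)=\al_i(\mu^{-p}(\check\rd_2))=\al_i(\check\rd_2)$ for all $p$, hence $[\check\rd_2, e_{i(m)}^\pm]_\g = \pm\al_i(\check\rd_2)\, e_{i(m)}^\pm$, as asserted. The only subtle point is keeping consistent the two roles of $\rd_2$ — as an element of $\h\subset\g$ versus as the abstract degree derivation on $\ft(\g)$ in $\hat\ft(\g) = \ft(\g)\oplus\C\rd_1\oplus\C\rd_2$ — which the decomposition $\check\rd_2=\rd_2+h^*$ handles cleanly.
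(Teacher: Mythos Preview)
Your proof is correct and follows essentially the same approach as the paper, which simply cites \eqref{commutator1} and \eqref{bracketdi} and declares the result immediate. You have spelled out in detail what the paper leaves implicit---in particular the decomposition $\check\rd_2=\rd_2+h^*$ with $h^*\in\h'$, which is precisely how $\check\rd_2$ is to be interpreted as an element of $\hat\ft(\g)=\ft(\g)\oplus\C\rd_1\oplus\C\rd_2$ and why both \eqref{commutator1} (for the $t_1^0\ot h^*$ piece) and \eqref{bracketdi} (for the derivation $\rd_2$ piece) are needed.
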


In view of Lemma \ref{lem:actiondi}, we have the following subalgebra of $\hat{\ft}(\g)$:
\begin{align}\label{eq:hatftgmu}
\hat{\ft}(\g,\mu)=\ft(\g,\mu)\oplus \C\rd_1\oplus \C\check\rd_2.
\end{align}

\begin{rem}\label{rem:liftder}{\em
 Recall that $\hat\CL(\g,\mu)=\CL(\g,\mu)\oplus \C\check{\rd}_2\oplus \C\bm d$.
View $\check{\rd}_2,\bm d$ (resp.\,$\check{\rd}_2, \rd_1$) as derivations on $\CL(\g,\mu)$
(resp.\,$\ft(\g,\mu)$ by the adjoint actions.
Via the universal central extension $\phi_{\g,\mu}$, the derivations  $\check{\rd}_2,\bm d$ on $\CL(\g,\mu)$ can be extended
uniquely to $\ft(\g,\mu)$, which are exactly $\check{\rd}_2$, $\rd_1$ (see Lemma \ref{lem:actiondi}).}
\end{rem}

Recall that $\CL(\g,\mu)$ is a Lie torus with the $\Lambda$-grading given by \eqref{lambdagrading}.
By Remark \ref{uniLietorus}, $\ft(\g,\mu)$ is also a Lie torus with the $\Lambda$-grading
determined by
$\phi_{\g,\mu}(\ft(\g,\mu)^\lambda)=\CL(\g,\mu)^{\lambda}$ for $\lambda\in \Lambda$.
Using  Remark \ref{rem:liftder} and \eqref{lambdagrading}, we have
\begin{align}
\ft(\g,\mu)^{m\delta_2+n\delta}=\{x\in \ft(\g,\mu)\mid [\check{\rd}_2,x]=mx,\ [\rd_1, x]=nx\}.
\end{align}
Namely, via the adjoint action, $\C\check{\rd}_2\oplus \C\rd_1$ is the space of degree derivations on $\ft(\g,\mu)$.
From Lemma \ref{chawhen}, it follows that $\hat\ft(\g,\mu)=\ft(\g,\mu)_e\in \wh{E}_2$.

On the other hand, let $p$ be a generic complex number. Set $\C_p$ to be the quantum $2$-torus $\C_p$ associated to $p$, which is
 the associative algebra with underlying space $\C[t_1^{\pm 1},t_2^{\pm 1}]$ subject to
the basic commutation relation $t_2t_1=pt_1t_2$.
Let $\ell$ be a positive integer as before. Denote by $\fgl_{\ell+1}(\C_q)$ the matrix Lie algebra over $\C_p$ and
 set $\fsl_{\ell+1}(\C_p)=[\fgl_{\ell+1}(\C_p),\fgl_{\ell+1}(\C_p)]$, the derived subalgebra.
 Furthermore, let $\tilde{\fsl}_{\ell+1}(\C_p)$ denote the universal central extension of $\fsl_{\ell+1}(\C_p)$.
By adding two derivation $\rd_1,\rd_2$ to  $\tilde{\fsl}_{\ell+1}(\C_p)$, one obtains a nullity $2$ EALA
 $\hat{\fsl}_{\ell+1}(\C_p)$ with Lie brackets as in \eqref{d1d2}  \cite{BGK}.
 Note that the extended core of $\hat{\fsl}_{\ell+1}(\C_p)$ is itself and hence $\hat{\fsl}_{\ell+1}(\C_p)\in \wh{E}_2$.
Following \cite{ABP}, we have the following classification result.

\begin{prop}\label{prop:e2} Every algebra in $\wh{E}_2$ is either isomorphic to $\hat{\ft}(\g,\mu)$ with $\g$ affine, or
isomorphic to $\hat{\fsl}_{\ell+1}(\C_p)$ with $p$ generic.
\end{prop}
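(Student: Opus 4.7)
The plan is to combine Lemma \ref{chawhen} with the classification of centerless nullity $2$ Lie tori due to Allison--Berman--Pianzola \cite{ABP}, and then identify the resulting algebras with $\hat\ft(\g,\mu)$ and $\hat\fsl_{\ell+1}(\C_p)$ on the nose.

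First, by Lemma \ref{chawhen}, any $\E\in\wh{E}_2$ is of the form $\CL_e=\CL\oplus\mathcal D_\CL$ for some centrally closed nullity $2$ Lie torus $\CL$; and by Remark \ref{centerless}, writing $L=\CL/Z(\CL)$, one has $\CL\cong \mathfrak u(L)$ with $L$ a centerless nullity $2$ Lie torus. Hence it is enough to classify the centerless nullity $2$ Lie tori $L$ up to isomorphism, compute their universal central extensions $\mathfrak u(L)$, and then add the two-dimensional space $\mathcal D_L$ of degree derivations. The classification step is exactly the main theorem of \cite{ABP}: every centerless nullity $2$ Lie torus is isomorphic either to $\CL(\g,\mu)$ for some affine Kac-Moody $\g$ and some diagram automorphism $\mu$ (which we may moreover take to be nontransitive, the transitive case on $A_\ell^{(1)}$ being subsumed by the quantum torus case), or to $\fsl_{\ell+1}(\C_p)$ for some generic $p\in\C^\times$.

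Second, I would identify the universal central extensions in each case. For the loop type, Lemma \ref{phigmu} shows that $\phi_{\g,\mu}:\ft(\g,\mu)\to\CL(\g,\mu)$ is a universal central extension, so $\mathfrak u(\CL(\g,\mu))\cong\ft(\g,\mu)$. For the quantum torus type, $\mathfrak u(\fsl_{\ell+1}(\C_p))\cong\tilde\fsl_{\ell+1}(\C_p)$ is classical \cite{BGK}. Thus every algebra in $\wh{E}_2$ is isomorphic, as a Lie algebra, to $\ft(\g,\mu)_e$ or to $\tilde\fsl_{\ell+1}(\C_p)_e$.

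Third, I would verify that $\ft(\g,\mu)_e\cong\hat\ft(\g,\mu)$ and $\tilde\fsl_{\ell+1}(\C_p)_e\cong\hat\fsl_{\ell+1}(\C_p)$. For the latter this is the definition. For the former, recall that the $\Lambda$-grading on $\CL(\g,\mu)$ is given by \eqref{lambdagrading} and that the universal central extension $\phi_{\g,\mu}$ pulls this back to a $\Lambda$-grading on $\ft(\g,\mu)$; by Remark \ref{rem:liftder} the degree derivations $\check\rd_2$ and $\bm d$ lift uniquely to derivations of $\ft(\g,\mu)$, which, by Lemma \ref{lem:actiondi}, act precisely as $\check\rd_2$ and $\rd_1$ on the generators. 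Since $\mathcal D_{\ft(\g,\mu)}=\Hom_\Z(\Lambda,\C)$ is two-dimensional and both $\check\rd_2,\rd_1$ are linearly independent degree derivations, we get $\mathcal D_{\ft(\g,\mu)}=\C\check\rd_2\oplus\C\rd_1$, whence by the definition \eqref{eq:hatftgmu} we obtain $\ft(\g,\mu)_e=\hat\ft(\g,\mu)$.

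The main obstacle in this plan is not any of the bookkeeping steps above, but rather the invocation of the ABP classification itself: one must appeal to \cite{ABP} to know that the two families $\CL(\g,\mu)$ and $\fsl_{\ell+1}(\C_p)$ exhaust all centerless nullity $2$ Lie tori. Assuming that deep input, what remains is the identification of degree derivations and central extensions, which is straightforward from Lemmas \ref{phigmu}, \ref{lem:actiondi} and Remark \ref{rem:liftder}.
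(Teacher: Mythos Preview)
Your approach is essentially the same as the paper's: reduce to centerless nullity $2$ Lie tori via Lemma \ref{chawhen} and Remark \ref{centerless}, invoke the ABP classification, take universal central extensions, and add degree derivations. There is one inaccuracy worth fixing: $\CL(\g,\mu)$ is \emph{not} centerless (it contains the central element $\bm c$), so the centerless Lie torus appearing in the ABP classification is $\mathrm{Aff}(\g,\mu)_{cc}=\CL(\g,\mu)/Z(\CL(\g,\mu))$, not $\CL(\g,\mu)$ itself. This does not damage your argument, since $\mathfrak u\bigl(\CL(\g,\mu)/Z(\CL(\g,\mu))\bigr)\cong\mathfrak u(\CL(\g,\mu))\cong\ft(\g,\mu)$ by Lemma \ref{phigmu}, but you should state the intermediate object correctly. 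The paper also notes the identification $\ft(\g,\mu)_e=\hat\ft(\g,\mu)$ in the paragraph preceding the proposition rather than inside the proof; your more explicit justification via Remark \ref{rem:liftder} and Lemma \ref{lem:actiondi} is fine.
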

\begin{proof} From Remark \ref{centerless} and Lemma \ref{chawhen}, it follows that any algebra in $\wh{E}_2$ has the form
$\mathfrak u(L)_e$ for some centerless Lie torus $L$ of nullity $2$.
It was proved in \cite{ABP} that every centerless Lie torus of nullity $2$  is either isomorphic to
$\mathrm{Aff}(\g,\mu)_{cc}=\CL(\g,\mu)/Z(\CL(\g,\mu))$ or isomorphic to $\fsl_{\ell+1}(\C_p)$.
Note that  $\hat{\fsl}_{\ell+1}(\C_p)=\hat{\fsl}_{\ell+1}(\C_p)_e\cong\mathfrak u(\fsl_{\ell+1}(\C_p))_e$.
Furthermore, we have $\mathfrak u(\mathrm{Aff}(\g,\mu)_{cc})\cong \mathfrak u(\CL(\g,\mu))\cong  \ft(\g,\mu)$ and hence
$\mathfrak u(\mathrm{Aff}(\g,\mu)_{cc})_e\cong \hat{\ft}(\g,\mu)$, as desired.
\end{proof}

Now we are ready to give the quantization of the algebras classified in Proposition \ref{prop:e2}.
Note that when  $\g$ is of type $A_1^{(1)}$, it follows from Lemma \ref{lem:LC1aff} that
$\mu=\mathrm{id}$. In this case, the quantum affinization algebra $\U_\hbar^{JN}$ defined in \cite{J-KM,Naka-quiver}  is slightly different from that
of $\qtar$: the relations (Q8) with $i\ne j$ is now replaced by the relations
\begin{align*}
F_{ij}^\pm(z,w)(z-w)x_i^\pm(z)x_j^\pm(w)=G^\pm_{ij}(z,w)(z-w)x_j^\pm(w)x_i^\pm(z).
\end{align*}
Define $\U_\hbar^{new}$ to be the quotient  $\C[[\hbar]]$-algebra of $\U_\hbar^{JN}$
by modulo the relations
\begin{align*}
 [x_i^\pm(z_1),\(F_{ij}^\pm(z_2,w)x_i^\pm(z_2)x_j^\pm(w)
    -G_{ij}^\pm(z_2,w)x_j^\pm(w)x_i^\pm(z_2)\)]=0,\quad \te{for }i\ne j.
\end{align*}
Just as $\qtar$, it was proved in  \cite{CJKT-quantum-A111} that  $\U_\hbar^{new}$ has a triangular decomposition and a topological Hopf algebra structure.
We also denote by $\mathcal U_{\hbar,p}({\dwh{\frak sl}}_{\ell+1})$  the two parameters quantum toroidal algebra defined in \cite[Definition 2.2.1]{Sa}.
As the main result of this section, we have:

\begin{thm}\label{thm:quaneala} Let $\g$ be of affine type and $p$ a generic number. Then we have
\begin{enumerate}
\item $\mathcal U_{\hbar,p}({\dwh{\frak sl}}_{\ell+1})|_{\hbar\mapsto 0}\cong \U(\hat\fsl_{\ell+1}(\C_p))$;\\
\item $\U_\hbar^{new}|_{\hbar\mapsto 0}\cong \U(\hat\ft(\g))$ if $\g$ is of type $A_1^{(1)}$;\\
\item $\qtar|_{\hbar\mapsto 0}\cong \U(\hat\ft(\g,\mu))$ if $\g$ is of non-$A_1^{(1)}$ type.
\end{enumerate}
\end{thm}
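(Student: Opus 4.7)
The plan is to treat the three parts separately, noting that (1) and (2) follow essentially from the literature, so the real work is concentrated in (3). For (1), I would just cite the classical limit computation in \cite{VV-double-loop}, which identifies $\mathcal U_{\hbar,p}(\dwh{\fsl}_{\ell+1})|_{\hbar\mapsto 0}$ with $\U(\hat\fsl_{\ell+1}(\C_p))$ via the obvious specialization of the Drinfeld-style generators. For (2), I would appeal to \cite{CJKT-quantum-A111}, where the authors have already established that $\U^{new}_\hbar|_{\hbar\mapsto 0}\cong \U(\hat\ft(\hat\fsl_2,\mathrm{id}))=\U(\hat\ft(\g))$ by a generator-relation argument analogous to Proposition \ref{prop:classical-limit}.

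For (3), the starting point is Proposition \ref{prop:classical-limit}, which reduces the statement to exhibiting a Lie algebra isomorphism $\hat\g_\mu\cong\hat\ft(\g,\mu)$ when $\g$ is of affine type other than $A_1^{(1)}$. I would first construct a surjective Lie algebra homomorphism
\begin{equation*}
\Psi_{\g,\mu}\colon\hat\g_\mu\longrightarrow \hat\ft(\g,\mu),
\end{equation*}
defined on generators by $L_h\mapsto h_{(0)}$, $H_{i,m}\mapsto t_1^m\otimes r_i\al^\vee_{i(m)}$, $X^\pm_{i,m}\mapsto t_1^m\otimes e^\pm_{i(m)}$, $C\mapsto\rk_1/N$, $D\mapsto\rd_1$, together with the extra derivations in $\h''\cap\chh$ mapping to the appropriate elements of $\C\check\rd_2$. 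The content is checking that the images satisfy (L0)-(L8); for (L0)-(L5) this is a direct computation with Lemma \ref{lem:commutator} and Lemma \ref{lem:actiondi}, while for (L6)-(L8) one can run essentially the same vanishing argument used in the proof of Proposition \ref{prop:generalcl} (examining which root combinations $\al_{ij}(\bm k)$ can lie in the root system of $\g$), but now inside $\ft(\g,\mu)$ rather than $\CL(\g,\mu)$. The composition $\phi_{\g,\mu}\circ\Psi_{\g,\mu}$ then recovers $\psi_{\g,\mu}$, so $\Psi_{\g,\mu}$ is automatically surjective onto the generating set of $\ft(\g,\mu)$, and together with the derivations it hits all of $\hat\ft(\g,\mu)$.

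The decisive step is injectivity of $\Psi_{\g,\mu}$, and this is where I would invoke the Drinfeld-type presentation of $\hat\ft(\g,\mu)$ established in \cite{CJKT-drin-pre}. That presentation produces $\hat\ft(\g,\mu)$ abstractly as a Lie algebra on generators $\{\tilde L_h,\tilde H_{i,m},\tilde X^\pm_{i,m},\tilde C,\tilde D\}$ subject to exactly the relations (L0)-(L8). Hence the assignment
\begin{equation*}
\tilde L_h\mapsto L_h,\quad \tilde H_{i,m}\mapsto H_{i,m},\quad \tilde X^\pm_{i,m}\mapsto X^\pm_{i,m},\quad \tilde C\mapsto C,\quad \tilde D\mapsto D
\end{equation*}
defines a Lie algebra map $\hat\ft(\g,\mu)\to\hat\g_\mu$ inverting $\Psi_{\g,\mu}$ on the generators. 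Combined with Proposition \ref{prop:classical-limit} and the universality/derivation decomposition \eqref{hatgmucd}, this yields the required isomorphism $\qtar|_{\hbar\mapsto 0}\cong\U(\hat\ft(\g,\mu))$.

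The main obstacle I anticipate is verifying the Serre-type relations (L7) and (L8) on the images in $\ft(\g,\mu)$, especially (L8) when $s_i=2$ or when the orbit $\mathcal O(i)$ is entangled with $\mathcal O(j)$, since the Drinfeld polynomial $p_{ij,0}$ must be shown to annihilate the relevant nested bracket of currents in the toroidal (not just loop) setting. The imaginary-root commutator formula \eqref{commutator2} introduces extra central terms compared to Proposition \ref{prop:generalcl}, and these need to be shown to drop out; I expect that a careful case analysis based on Lemma \ref{lem:linking} together with the linking conditions (LC1)-(LC3), plus the divisibility structure of $p_{ij,0}$ under the factor $p'_{ij}$ of \eqref{eq:defpij'}, will handle this. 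Excluding the $A_1^{(1)}$ case is what makes this tractable, since there the ambient GCM is non-simply-laced in the symmetric sense and the extra central terms do not cancel, which is precisely why (2) requires the separate algebra $\U^{new}_\hbar$ of \cite{CJKT-quantum-A111}.
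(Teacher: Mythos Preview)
Your proposal follows essentially the same route as the paper: cite \cite{VV-double-loop} for (1), \cite{CJKT-quantum-A111} for (2), and for (3) build the map on generators and appeal to the Drinfeld-type presentation of \cite{CJKT-drin-pre} for the inverse. Two refinements are worth noting, however.

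First, your statement that \cite{CJKT-drin-pre} presents $\hat\ft(\g,\mu)$ ``subject to exactly the relations (L0)--(L8)'' is an oversimplification. The main theorem there is formulated for an algebra $\mathcal D_{\bm P}(\g,\mu)$ attached to a family of polynomials $\bm P$ satisfying two conditions $(\bm P1)$ and $(\bm P2)$. The paper therefore has to (i) assemble a specific $\bm P$ out of the $p_i$ and $p_{ij,0}$, (ii) rewrite (L7) (when $i\in\mathcal O(j)$) and (L8) uniformly as the single relation \eqref{eq:newrelation} so that $\hat\g_\mu'$ literally becomes $\mathcal D_{\bm P}(\g,\mu)$, and (iii) verify $(\bm P1)$ (the relations hold in $\ft(\g,\mu)$) and the nondegeneracy $(\bm P2)$. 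This matching is genuine content you should not skip.

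Second, on the central-term obstacle you flag: the paper's argument is simpler than the case analysis you anticipate. For $(i,j)\in\mathbb I$ the relevant root combinations $\al_{ij}(\bm k)$ that can lie in $\Delta$ are never null roots (a null root of an affine $\g$ has the form $m\sum_{i\in I}\al_i$, which cannot arise from $1-a_{ij}$ copies of $\al_i$'s orbit plus one $\al_j$). Hence only the commutator formula \eqref{commutator1} is needed and the extra central terms from \eqref{commutator2} never enter; this is exactly the point of the Remark following the proof, which also explains the one exceptional affine case \eqref{eq:excecase} where $p_{ij}$ and $p'_{ij}$ differ.
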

 \begin{proof}
The first assertion was proved in \cite{VV-double-loop}, and the second assertion was proved in \cite{CJKT-quantum-A111}.
Now we prove the last isomorphism by using the Drinfeld type presentations of $\ft(\fg,\mu)$ established in \cite[Theorem 1.4]{CJKT-drin-pre} ($\ft(\fg,\mu)$ is denoted as
$\hat\fg[\mu]$ therein). Recall that the subalgebra $\hat\fg_\mu'$ of $\hat\fg_\mu$  is abstractly generated by the elements $X_{i,m}^\pm, H_{i,m}, C$
with relations (L0), (L2) and (L4-L8).
We claim that the assignment ($i\in I, m\in \Z$)
\begin{align}\label{eq:isogt}
H_{i,m}\mapsto  t_1^m\ot r_i \al_{i(m)}^\vee,\quad
X_{i,m}^\pm\mapsto t_1^m\ot e^\pm_{i(m)},\quad C\mapsto \frac{\rk_1}{N}
\end{align}
determines a Lie algebra isomorphism from $\hat\fg_\mu'$ to $\ft(\fg,\mu)$.

We first introduce a family of polynomials as follows
\begin{align*}
\bm{P}=\{P_{ij,\sigma}(z_1,\dots,z_{1-a_{ij}},w)\mid i,j\in I\ \te{with}\ a_{ij}<0\ \te{and}\ \sigma\in S_{1-a_{ij}}\},
\end{align*}
where
\begin{equation}
P_{ij,\sigma}(z_1,\dots,z_{1-a_{ij}},w)=\begin{cases}
p_{i}(z_{\sigma(1)},z_{\sigma(2)},-w),\quad&\text{if}\ i\in\mathcal{O}(j);\\
p_{ij,0}(z_1,\dots,z_{1-a_{ij}},w),\quad&\text{if}\ i\notin\mathcal{O}(j)\ \te{and}\ \sigma=1;\\
0,\quad&\text{if}\ i\notin\mathcal{O}(j)\ \te{and}\ \sigma\ne 1.
\end{cases}
\end{equation}
Note that if  $i\in\mathcal{O}(j)$ and $a_{ij}<0$, then
it follows from Lemma \ref{lem:linking} (ii) and (L0) that $X_i^\pm(w)=X_j^\pm(-w)$.
Thus, the relation (L6) can be rewritten as follows
\[\sum_{\sigma\in S_2} p_i(z_{\sigma(1)},z_{\sigma(2)},-w)
 [X_i^\pm(z_{\sigma(1)}),[X_i^\pm(z_{\sigma(2)}),X_j^\pm(w)]]=0.\]
This implies that relations (L6) and (L7) are equivalent to the following relations
\begin{align}\label{eq:newrelation}
\sum_{\sigma\in S_{1-a_{ij}}}P_{ij,\sigma}(z_1,\dots,z_{1-a_{ij}},w)[X_i^{\pm}(z_{\sigma(1)}),\cdots, [X_i^{\pm}(z_{\sigma(1-a_{ij})}),X_j^\pm(w)]]=0\end{align}
for all $i,j\in I$ with $a_{ij}<0$.
We will show in  the remark below that, under the correspondence \eqref{eq:isogt}, the relations \eqref{eq:newrelation}
(or equivalently, (L7) and (L8)) hold in $\ft(\fg,\mu)$.
This implies that the family $\bm{P}$ satisfies the condition $(\bm{P}1)$ stated in \cite[\S 1.1]{CJKT-drin-pre}, noting that $\mathbb I$
denotes the set $\{(i,j)\in I\times I\mid a_{ij}<0\}$ therein.
Then the algebra $\wh\fg_\mu'$ is exactly the algebra $\mathcal{D}_{\bm{P}}(\g,\mu)$ defined in \cite[Definition 1.2]{CJKT-drin-pre},
by identifying the generators $X_{i,m}^\pm, H_{i,m}, C$ of $\wh\fg_\mu'$
with the generators  $x_{i,m}^\pm, h_{i,m}, c$  of $\mathcal{D}_{\bm{P}}(\g,\mu)$.

One can easily check that the family $\bm{P}$ satisfies the condition $(\bm{P}2)$ stated in \cite[\S 1.1]{CJKT-drin-pre}, which says that
\[\sum_{\sigma\in S_{1-a_{ij}}}P_{ij,\sigma}(w,\dots,w,w)\ne 0\quad\text{for all}\ i,j\in I\ \te{with}\ a_{ij}<0.\]
Then the claim follows immediately from  \cite[Theorem 1.4]{CJKT-drin-pre}.
We denote by $\psi$ the resulting isomorphism induced by  \eqref{eq:isogt}.
In view of \eqref{hatgmucd} and \eqref{eq:hatftgmu}, $\psi$ can be extended to an isomorphism from $\wh\fg_\mu$ to $\hat{\ft}(\fg,\mu)$ such that
\[L_{\check\rd_2}\mapsto \check\rd_2,\qquad  D\mapsto \rd_1.\]
This together with  Proposition \ref{prop:classical-limit} proves the assertion (3) and hence the theorem.
\end{proof}

\begin{rem}
{\em Here we show that the relations \eqref{eq:newrelation} hold in $\ft(\fg,\mu)$, namely,
\begin{align}\label{eq:newrelation2}
\sum_{\sigma\in S_{1-a_{ij}}}P_{ij,\sigma}(z_1,\dots,z_{1-a_{ij}},w)[e_i^{\pm}(z_{\sigma(1)}),\cdots, [e_i^{\pm}(z_{\sigma(1-a_{ij})}),e_j^\pm(w)]]=0\end{align}
for all $i,j\in I$ with $a_{ij}<0$, where $e_i^\pm(z)=\sum_{n\in \Z} (t_1^n\ot e_{i(n)}^\pm) z^{-n}$. In fact, as in the proof of Proposition \ref{prop:classical-limit},
we have the following stronger result: if $i\in \mathcal{O}(j)$, then
$[e_i^{\pm}(z_1),[e_i^{\pm}(z_2),e_j^\pm(w)]]=0$,
 and if
$i\notin \mathcal{O}(j)$, then
\begin{align}\label{eq:newrelation3}
\prod_{1\le a<b\le 1-a_{ij}} p'_{ij}(z_a,z_b)\cdot [e^\pm_i(z_1),[e^\pm_i(z_2),\cdots,[e^\pm_i(z_{1-a_{ij}}),e^\pm_j(w)]]]=0.\end{align}
Recall from Lemma \ref{lem:LC1aff} (3) and \eqref{eq:defpij'} that $p_{ij}'(z,w)=p_{ij}(z,w)$ except for the case that
\begin{eqnarray}\label{eq:excecase}
\te{$\g$ is of type $A_{2l+1}^{(1)}$ $(l\ge 2)$ and
$\mu$ has order $l+1$.}\end{eqnarray}
We remark that the above stronger relations have been proved in \cite[Proposition 3.14]{CJKT-drin-pre} except the case \eqref{eq:excecase}.
For the case \eqref{eq:excecase}, note that in this case
$\alpha_{\mu^{k_1}(i)}+\alpha_{\mu^{k_2}(i)}+\alpha_j\in \Delta$ for some $k_1,k_2\in \Z_N$ if and only if
$\{\mu^{k_1}(i),\mu^{k_2}(i),j\}=\{i,i',j\}$, where $i'$ denotes another vertex (different from $i$) linking to $j$.
This means that $\alpha_{\mu^{k_1}(i)}+\alpha_{\mu^{k_2}(i)}+\alpha_j\in \Delta$ if and only if $k_1-k_2\in \Omega_{ij}$ (see \eqref{eq:defpij'}).
In particular, these roots cannot be null roots, which must have the form $m\sum_{i\in I}\alpha_i$, $m\in \Z$.
So by Lemma \ref{lem:commutator} it follows that
 the relations \eqref{serre-ex} still hold in $\ft(\fg,\mu)$ (see also \cite[Lemma 3.15]{CJKT-drin-pre}).
This gives that the relations \eqref{eq:newrelation3} hold for this  exceptional case.

Here we also correct two mistakes in \cite{CJKT-drin-pre}.
Firstly, the sentence ``In particular, in this case $\Gamma_{ij}^-$ is a subgroup of $\Z_N$ with order $d_{ij} = N/N_{ij}$''
stated in \cite[Lemma 3.9]{CJKT-drin-pre}
 is incorrect for the case \eqref{eq:excecase}. But this statement was not used in the remaining part of \cite{CJKT-drin-pre}.
Secondly, when $j\notin \mathcal{O}(i)$, the polynomial $p_{ij}(z,w)$ computed in \cite[Lemma 3.13]{CJKT-drin-pre} should be replaced by
$p_{ij}'(z,w)$ for the case \eqref{eq:excecase}.}
\end{rem}

\begin{rem}{\em Recall that the center $\mathcal{K}$ of $\ft(\g)$ has a basis
\[\{\rk_1,\, t_1^{m_1}\rk_2,\,t_1^{m_1}t_2^{m_2}\rk_1\mid m_1\in \Z,\ m_2\in r\Z\setminus\{0\}\}.\]
And, it follows from (the proof of) \cite[Lemma 3.2]{CJKT-uce} that the space
\[\text{Span}_\C\{\rk_1,\, t_1^{m_1}\rk_2,\,t_1^{m_1}t_2^{m_2}\rk_1\mid m_1\in N\Z,\ m_2\in r\Z\setminus\{0\}\}\]
is the center of $\ft(\g,\mu)$.
When $\g$ is of non-$A_1^{(1)}$ type, by identifying $\hat\g_\mu$ with $\hat\ft(\g,\mu)$ (see Theorem \ref{thm:quaneala}),
it follows from  \eqref{eq:centralextun} that the kernel of
$\psi_{\g,\mu}:\hat\g_\mu=\hat\ft(\g,\mu)\rightarrow \hat{\CL}(\g,\mu)$ is
\[\hat{\mathcal{K}}=\text{Span}_\C\{t_1^{m_1}t_2^{m_2}\rk_1\mid m_1\in N\Z,\ m_2\in r\Z\setminus \{0\}\}.\]
When $\g$ is of $A_1^{(1)}$ type  (and so $\mu=\mathrm{id}$),  $\hat\fg_\mu$ is isomorphic to
the quotient algebra
$\hat{\ft}(\g)/\bar{\mathcal K}$ (cf.\,\cite{E-PBW-qaff}), where $\bar{\mathcal K}=\sum_{m_1\in \Z}(\C t_1^{m_1}t_2\rk_1+\C t_1^{m_1}t_2^{-1}\rk_1)$.
Thus the kernel of $\psi_{\g,\mu}:\hat\g_\mu=\hat\ft(\g)/\bar{\mathcal K}\rightarrow \hat{\CL}(\g,\mu)$ is $\hat{\mathcal{K}}/\bar{\mathcal{K}}$.
}
\end{rem}

\section{Proof of Theorem \ref{prop:Dr-to-normal-ordering}}\label{sec:pf-prop-aff-q-serre}

This section is devoted to the proof of Theorem \ref{prop:Dr-to-normal-ordering}.
Throughout this section, let $(i,j)\in\mathbb I$ and  $W\in \R^s_\pm$ be fixed.
For a finite set $J$ and a formal variable $z$, we denote by $\underline z_J$ the map from $J$
to the set $\set{z_a}{a\in J}$ of formal variables defined by $a\mapsto z_a$.
If there is no ambiguity, we simply denote $\underline z_J$ by $\underline z$.
 For simplicity,
we also write $\frac{1}{z-w}=\iota_{z,w}\frac{1}{z-w}$ as usual.

\subsection{On $g$-commutators}
In this subsection we calculate the $g$-commutators
$[x_i^\pm(z_{1}),x_i^\pm(z_{2}),$ $\dots,x_i^\pm(z_{r}),x_j^\pm(w)]_g$ with $r\in \N$.

We start with some notations. Denote by $K=\<{-1},\xi_{d_{ij}/d_i}\>$ the subgroup of $\C^\times$ generated by $-1$ and $\xi_{d_{ij}/d_i}$,
and denote by $K^\nu=(-1)^{\nu}\<\xi_{d_{ij}/d_i}\>$, where $\nu\in \Z_2=\{0,1\}$.
For $c\in K^0$ and $r\in \N$, set $A_{c,r}^\nu=\emptyset$ if $r=0$ and set
\begin{align*}
A_{c,r}^\nu=\{((-1)^\nu c,a_{ij}-\nu+2p)\in K\times \Z\mid 0\le p\le r-1\}\quad\te{if}\ r>0.
\end{align*}
Let ${\mathscr M}'$ be the set consisting of  subsets of $K\times \Z$ which are a finite union of the $A_{c,r}^\nu$'s with $c\in K^0, r\in \N$ and $\nu\in \Z_2$.
For $M\in {\mathscr M}'$, $c\in K^0$ and $\nu\in \Z_2$, define
\begin{align*}
p_c^\nu=\max\{r-1\mid r\in\N\ \te{with}\ A_{c,r}^\nu\subset M\}.
\end{align*}
Form the sets
\begin{align*}
{\mathscr M}&=\{M\in {\mathscr M}'\mid p_c^1=-1\ \te{if}\ s_i=1\ \te{and}\ p_c^1\le p_c^0\ \te{if}\ s_i=2\ \te{for all}\ c\in K^0\},\\
\mathscr M_r&=\{M\in \mathscr M\mid |M|=r\}\quad\te{for}\quad r\in \N.
\end{align*}
For $M\in\mathscr M$ and $\nu\in \Z_2$, we further introduce the following sets:
\begin{align*}
&M^\nu=\{((-1)^\nu c,a_{ij}-\nu+2p)\in M\mid c\in K^0,\ 0\le p\le p_c^\nu\},\\
  &\partial M=\set{a\in (K\times \Z)\setminus M}{M\cup \{a\}\in\mathscr M},\\
&\partial^\ast M=\set{(-c,a_{ij}+2p_c^1+1)\in\partial M}{c\in K^0, 0\le p_c^1<p_c^0}.
\end{align*}
We also define a map as follows:
\begin{align}\label{eq:def-tau-map-valuation}
\tau^\pm:K\times \Z\to \C[[\hbar]],\quad  (c,n)\mapsto c^{1/d_i}\,q_i^{\pm n},
\end{align} where $c^{1/d_i}= \xi_{d_i|K|}^s$ if $c=\xi_{|K|}^s\in K$.
For the case $s_i=2$, we say that  $(a_1,a_2,a_3)\in (K\times \Z)^3$ is an $A_2^{(2)}$-triple if
there is a $\sigma\in S_3$ such that
\[\tau^\pm(a_{\sigma(1)})=\xi_{2d_i}q_i^{\pm 1}\tau^\pm(a_{\sigma(2)})=\xi_{2d_i}q_i^{\mp1}\tau^\pm(a_{\sigma(3)}).\]
In this case it follows from Proposition \ref{prop:iii-normal-ordering}, \eqref{normalordermu} and \eqref{normalordersigma} that for any $p\in \Z_{d_i}$,
\begin{align}\label{a22triple}
\:x_i^\pm(\xi_{d_i}^p\tau^\pm(a_1)w)x_i^\pm(\tau^\pm(a_2)w)x_i^\pm(\tau^\pm(a_3)w)\;=0.
\end{align}

\begin{rem}\label{rem:divides}{\rm Let $J$ be a finite set with $|J|=m$ and let $\psi(\underline{w}_J,w)\in \E_\hbar^{(m+1)}(W)$.
Assume that there is an $a\in J$ such that $\lim_{w_a\rightarrow \xi_{d_i}^pw}\psi(\underline{w}_J,w)=0$ for all $p\in \Z_{d_i}$.
Then for any $v\in W$, we have
\begin{align*}
&(w_a^{d_i}-w^{d_i})^{-1}\psi(\underline{w}_J,w)v+(w^{d_i}-w_a^{d_i})^{-1}\psi(\underline{w}_J,w)v\\
=&w_a^{-d_i}\delta(w_a^{d_i}/w^{d_i})\psi(\underline{w}_J,w)v=0.
\end{align*}
This gives  $(w_a^{d_i}-w^{d_i})^{-1}\psi(\underline{w}_J,w)v\in W((\underline{w}_J,w))+\hbar^nW[[w^{\pm 1},w_b^{\pm 1}\,|\,b\in J]]$ for all $n\in \Z_+$.
Thus one obtains
\begin{align}\label{wawew}(w_a^{d_i}-w^{d_i})^{-1}\psi(\underline{w}_J,w)\in \E_\hbar^{(m+1)}(W).
\end{align}
Assume further that there is another $b\in J$ such that $\lim_{w_b\rightarrow \xi_{d_i}^pw}\psi(\underline{w}_J,w)=0$ for all $p\in \Z_{d_i}$.
It follows from \eqref{wawew} that $\lim_{w_b\rightarrow \xi_{d_i}^pw}(w_a^{d_i}-w^{d_i})^{-1}\psi(\underline{w}_J,w)$ exists and is zero.
Then we have
\begin{align*}(w_b^{d_i}-w^{d_i})^{-1}(w_a^{d_i}-w^{d_i})^{-1}\psi(\underline{w}_J,w)\in \E_\hbar^{(m+1)}(W).
\end{align*}
In general, if there is a subset $J'$ of $J$ such that $\lim_{w_a\rightarrow \xi_{d_i}^pw}\psi(\underline{w}_J,w)=0$ for all $p\in \Z_{d_i}$ and $a\in J'$,
then we have
\begin{align*}\prod_{a\in J'}(w_a^{d_i}-w^{d_i})^{-1}\psi(\underline{w}_J,w)\in \E_\hbar^{(m+1)}(W).
\end{align*}
}
\end{rem}

Let $M\in\mathscr M_r$ with $r\in \Z_+$ and $M'\subset M$. In view of Proposition \ref{prop:normal-ordering-rational} and the fact that $C_{ii}=1$, we can define the following
currents on $W$:
\begin{align*}
  \wt X_{ij,r}^\pm(M,M',w,\underline w)&=\lim_{\substack{ w_a\to w, a\in M'}}
  \:\prod_{a\in M}x_i^\pm(\tau^\pm(a)w_a)x_j^\pm(w)\;\\
  &=\:\prod_{a\in M\setminus M'}x_i^\pm(\tau^\pm(a)w_a)\prod_{b\in M'}x_i^\pm(\tau^\pm(b)w)x_j^\pm(w)\;,\\
  \:x_i^\pm(z)\wt X_{ij,r}^\pm(M,M',w,\underline w)\;&=
  \lim_{\substack{ w_a\to w, a\in M'}}
  \:x_i^\pm(z)\prod_{a\in M}x_i^\pm(\tau^\pm(a)w_a)x_j^\pm(w)\;.
\end{align*}
As a convention, when $r=0$, we set $\wt X_{ij,r}^\pm(M,M',w,\underline w)=x_j^\pm(w)$.

Let $\bm{t}=(t_c\,|\,c\in K^0)\in \N^{d_{ij}/d_i}$ be a tuple satisfying the condition:
\begin{align}\label{conditiont}
 0\le t_c\le p_c^1\ \te{if}\ p_c^1\ge 0\quad\te{and}\quad t_c=0\ \te{if}\ p_c^1=-1.
 \end{align}
We define a subset of $M$ associated to $\bm{t}$ as follows:
\begin{align*}
  M_{\bm{t}}=&\set{(c,a_{ij}+2n)}{c\in K^0\ \te{with}\ p_c^0\ge 0,\,t_c\le n\le p_c^0, }\\
  &\cup\set{(-c,a_{ij}-1+2m)}{c\in K^0\ \te{with}\ p_c^1\ge 0,\, 0\le m\le t_c}.
\end{align*}
Note that $M=M_{\bm{t}}$ if $s_i=1$.
Furthermore, if  $M\setminus M_{\bm{t}}\ne \emptyset$ (and so $s_i=2$), then
for any $a\in M\setminus M_{\bm{t}}$ there exist $b,b'\in M_{\bm{t}}$ such that $(a,b,b')$ is an $A_2^{(2)}$-triple.
Indeed, if $a=(c,a_{ij}+2n)$ with $c\in K^0$ and $0\le n\le t_c-1$, then we may take $b=(-c,a_{ij}-1+2n)$ and $b'=(-c,a_{ij}+1+2n)$.
And, if $a=(-c,a_{ij}-1+2m)$ with $c\in K^0$ and $t_c<m\le p_c^1$, then we may take $b=(c,a_{ij}+2m-2)$ and $b'=(c,a_{ij}+2m)$.
In view of \eqref{a22triple} and Remark \ref{rem:divides}, we can continue to define
the following  currents on $W$:
\begin{align*}
  &X_{ij,r}^\pm(M,M_{\bm{t}}, w)=
  \lim_{\substack{w_a\to w\\ a\in M\setminus M_{\bm{t}}}}
  \prod_{a\in M\setminus M_{\bm{t}}}
  \frac{1}{\tau^\pm(a)^{d_i}\(w_a^{d_i}-w^{d_i}\)}\wt X_{ij,r}^\pm(M,M_{\bm{t}}, w,\underline w),\\
  &\:x_i^\pm(z)X_{ij,r}^\pm(M,M_{\bm{t}},w)\;=\lim_{\substack{w_a\to w\\ a\in M\setminus M_{\bm{t}}}}\prod_{a\in M\setminus M_{\bm{t}}}\frac{1}{\tau^\pm(a)^{d_i}\(w_a^{d_i}-w^{d_i}\)}\:x_i^\pm(z)\wt X_{ij,r}^\pm(M,M_{\bm{t}},w,\underline w)\;.
\end{align*}

\begin{lem}\label{lem:defXijr-2}
Let $\bm{t}=(t_c|c\in K^0)$ and $\bm{t}'=(t_c'|c\in K^0)$ be two tuples satisfying the condition  \eqref{conditiont}.
Then
\begin{align*}
  X_{ij,r}^\pm(M,M_{\bm{t}},w)=\(\prod_{c\in K^0}(-1)^{|t_c-t_c'|}\)
  \(\frac{\prod_{a\in M_{\bm{t}} } \tau^\pm(a)^{d_i} }
    {\prod_{a\in M_{\bm{t}'}} \tau^\pm(a)^{d_i} }\)
  X_{ij,r}^\pm(M,M_{\bm{t}'},w).
\end{align*}
\end{lem}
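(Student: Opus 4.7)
The plan is to reduce the general case to a sequence of unit changes and then verify the identity for a single unit change using the explicit structure of the normal ordered product $f_{ii}^\pm$.

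First, I would argue by induction on $\sum_{c \in K^0} |t_c - t_c'|$ that it suffices to handle the case where $\bm t$ and $\bm t'$ differ in exactly one coordinate $c_0 \in K^0$ by $\pm 1$, say $t_{c_0}' = t_{c_0}+1 \le p_{c_0}^1$; composition of unit changes multiplies the signs and the $\tau^\pm$-ratios in the obvious way, matching the asserted formula. For such a unit change, set $a=(c_0, a_{ij}+2t_{c_0})$, $b=(-c_0, a_{ij}-1+2t_{c_0})$, $a'=(-c_0, a_{ij}+1+2t_{c_0})$. Then $M_{\bm t}\setminus M_{\bm t'}=\{a\}$, $M_{\bm t'}\setminus M_{\bm t}=\{a'\}$, and $(a,b,a')$ is an $A_2^{(2)}$-triple. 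In particular, all other $w_e$-limits for $e \in (M\setminus M_{\bm t})\cap (M\setminus M_{\bm t'})$ and all the spectator factors agree between $X^\pm_{ij,r}(M,M_{\bm t},w)$ and $X^\pm_{ij,r}(M,M_{\bm t'},w)$, so the proof reduces to comparing the two limits in the one coordinate being exchanged.

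The key calculation is the following: using the definition of $\:\ \;$ and Remark~\ref{rem:norord}, pull out the single ``critical'' factor $f_{ii}^\pm(\tau^\pm(a)w,\tau^\pm(a')w_{a'})$ from $\wt X^\pm_{ij,r}(M,M_{\bm t},w,\underline w)$. Using $\tau^\pm(a)^{d_i}=c_0 q_i^{\pm d_i(a_{ij}+2t_{c_0})}$ and $\tau^\pm(a')^{d_i} = -c_0 q_i^{\pm d_i(a_{ij}+1+2t_{c_0})} = -q_i^{\pm d_i}\tau^\pm(a)^{d_i}$, the factor in $F_{ii}^\pm$ responsible for the vanishing at the triple configuration becomes
\begin{align*}
(\tau^\pm(a)w)^{d_i}+q_i^{\mp d_i}(\tau^\pm(a')w_{a'})^{d_i}
= -\tau^\pm(a)^{d_i}(w_{a'}^{d_i}-w^{d_i}),
\end{align*}
while the analogous computation for the $\bm t'$ side yields $+\tau^\pm(a)^{d_i}(w_a^{d_i}-w^{d_i})$. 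Dividing by $\tau^\pm(a')^{d_i}(w_{a'}^{d_i}-w^{d_i})$ (resp.\ $\tau^\pm(a)^{d_i}(w_a^{d_i}-w^{d_i})$) and passing to the limit $w_{a'}\to w$ (resp.\ $w_a\to w$), the remaining regular factors $f_{ii}^\pm(\tau^\pm(a)w,\tau^\pm(b)w)$, $f_{ii}^\pm(\tau^\pm(b)w,\tau^\pm(a')w)$, and all currents coincide; the difference between the two sides is precisely the ratio of the critical factors, namely
\begin{align*}
\frac{-\tau^\pm(a)^{d_i}/\tau^\pm(a')^{d_i}}{+\tau^\pm(a)^{d_i}/\tau^\pm(a)^{d_i}}
=-\frac{\tau^\pm(a)^{d_i}}{\tau^\pm(a')^{d_i}},
\end{align*}
which is exactly the claimed unit factor $(-1)^{|t_{c_0}-t_{c_0}'|}\prod_{a\in M_{\bm t}}\tau^\pm(a)^{d_i}/\prod_{a\in M_{\bm t'}}\tau^\pm(a)^{d_i}$.

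The main obstacle will be verifying rigorously that all other limits $w_e\to w$ for $e\in(M\setminus M_{\bm t})\cap(M\setminus M_{\bm t'})$ (which contain further $A_2^{(2)}$-triples involving $b$ or other elements of $M_{\bm t}\cap M_{\bm t'}$) really do commute with the $w_a,w_{a'}$ limit under consideration, so that the regular part $[\text{rest}]|_{w_{a'}=w}$ coincides with its counterpart for $\bm t'$. This is legitimized by Proposition~\ref{prop:normal-ordering-rational} and Remark~\ref{rem:divides}, which ensure the full normal ordered product lies in $\E_\hbar^{(|M|+1)}(W)$ after dividing by the full product of $(w_e^{d_i}-w^{d_i})$ factors; once this is in hand, the limits can be taken in any order, and the computation above isolates the single responsible factor and produces the stated scalar.
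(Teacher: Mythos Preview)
Your reduction to a unit change and your identification of the scalar $-\tau^\pm(a)^{d_i}/\tau^\pm(a')^{d_i}$ coming from the single critical factor $(\tau^\pm(a)w)^{d_i}+q_i^{\mp d_i}(\tau^\pm(a')w_{a'})^{d_i}$ are exactly right, and this is the same strategy the paper uses.  The gap is in your last paragraph.  You want to compare two \emph{different} regularized limits---one of $\wt X(M,M_{\bm t},\ldots)$ in $w_{a'}$, one of $\wt X(M,M_{\bm t'},\ldots)$ in $w_a$---and you try to do this by passing to a common ancestor $\wt X(M,M_1,\ldots)$ with $M_1=M_{\bm t}\cap M_{\bm t'}$ and then invoking Remark~\ref{rem:divides} to say ``divide by the full product and take limits in any order.''  But Remark~\ref{rem:divides} requires each \emph{single-variable} limit $\lim_{w_{b}\to w}$ to vanish, and for the two ``swap'' elements $a,a'$ there is in general \emph{no} $A_2^{(2)}$-triple containing one of them together with two elements of $M_1$.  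The only available triples, namely $(a,b,a')$ in your notation (and a second one through another neighbour in $M_1$), involve \emph{both} $a$ and $a'$ simultaneously.  So you cannot simply divide by $(w_a^{d_i}-w^{d_i})(w_{a'}^{d_i}-w^{d_i})$ and land in $\E_\hbar$.

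The paper supplies exactly this missing step.  Writing $\psi$ for $\wt X(M,M_1,\ldots)$ divided by the $M\setminus M_2$ factors, it uses \emph{two} triples $(b_1,b_2,b_3)$ and $(b_1,b_2,b_4)$ with $b_3,b_4\in M_1$ to show that $\lim_{w_{b_2}\to w_{b_1}}\psi$ has a \emph{double} zero at $w_{b_1}=w$; this yields an auxiliary $H_3\in\E_\hbar$, and then $H_4=(w_{b_2}^{d_i}-w_{b_1}^{d_i})^{-1}\bigl(\psi-(w_{b_1}^{d_i}-w^{d_i})(w_{b_2}^{d_i}-w^{d_i})H_3\bigr)\in\E_\hbar$ is the genuine ``common blow-up'' through which the two one-sided limits $H_1$ and $H_2$ can be compared, giving $\lim_{w_{b_2}\to w}H_1=-\lim_{w_{b_1}\to w}H_2$.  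This is precisely the identity your factorization predicts, but it needs this extra work; your appeal to Remark~\ref{rem:divides} alone does not establish it.
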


\begin{proof}
We first consider the case that there is a $c\in K^0$ such that  $t_c=t_c'+1$ and $t_d=t_d'$ for any $d\ne c\in K^0$.
Set $M_1=M_{\bm{t}}\cap M_{\bm{t}'}$ and $M_2=M_{\bm{t}}\cup M_{\bm{t}'}$.
One can verify that $M_{\bm{t}}=M_1\uplus \{b_1\}$ with $b_1=(-c,a_{ij}-1+2t_c)$ and $M_{\bm{t}'}=M_1\uplus \{b_2\}$ with
$b_2=(c,a_{ij}-2+2t_c)$.
As indicated before,  for any
$a\in M\setminus M_2$, there exist $a_1,a_2\in M_1$ such that $(a,a_1,a_2)$ is an $A_2^{(2)}$-triple.
From \eqref{a22triple} and Remark \ref{rem:divides}, it follows that
\begin{align*}
\psi(\underline{w},w):=\prod_{a\in M\setminus M_2}(w_a^{d_i}-w^{d_i})^{-1}\wt X_{ij,r}^\pm&(M,M_1,w,\underline w) \in \E_\hbar^{(m+1)}(W).
\end{align*}
where $m=|M\setminus M_1|$.
One notices from \eqref{normalordermu} that
\[\lim_{w_a\mapsto \xi^p_{d_i}w_a}\psi(\underline{w},w)=\psi(\underline{w},w)=\lim_{w\rightarrow \xi^p_{d_i}w}\psi(\underline{w},w)\quad \te{for}\ p\in \Z_{d_i},\ a\in M\setminus M_2.\]

Set $b_3=(c,a_{ij}+2t_{c})$ and $b_4=(-c,a_{ij}-3+2t_{c})$.
Then $b_3,b_4\in M_1$ and $(b_1,b_2,b_3)$, $(b_1,b_2,b_4)$ are both $A_2^{(2)}$-triples.
Again by \eqref{a22triple} and Remark \ref{rem:divides}, we see that the current
\begin{align*}
(w_{b_1}^{d_i}-w_{b_3}^{d_i})^{-1}
(w_{b_1}^{d_i}-w_{b_4}^{d_i})^{-1}\lim_{w_{b_2}\rightarrow w_{b_1}}\:x_i^\pm(\tau^\pm(b_1)w_{b_1})x_i^\pm(\tau^\pm(b_2)w_{b_2})x_i^\pm(\tau^\pm(b_3)w_{b_3})x_i^\pm(\tau^\pm(b_4)w_{b_4})\;
\end{align*}
lies in $\E_\hbar^{(3)}(W)$. By taking limits $w_{b_3}\rightarrow w$ and $w_{b_4}\rightarrow w$, we have
\[(w_{b_1}^{d_i}-w^{d_i})^{-2}
\lim_{w_{b_2}\rightarrow w_{b_1}}\:x_i^\pm(\tau^\pm(b_1)w_{b_1})x_i^\pm(\tau^\pm(b_2)w_{b_2})x_i^\pm(\tau^\pm(b_3)w)x_i^\pm(\tau^\pm(b_4)w)\;
\in \E_\hbar^{(2)}(W).\]
Thus, one gets
\[H_3(\underline w,w):=(w_{b_1}^{d_i}-w^{d_i})^{-2}\lim_{w_{b_2}\rightarrow  w_{b_1}}\psi(\underline{w},w) \in \E_\hbar^{(m)}(W).\]
By definition we have
\begin{align*}
\lim_{w_{b_2}\rightarrow  w_{b_1}}(\psi(\underline{w},w)-(w_{b_1}^{d_i}-w^{d_i})(w_{b_2}^{d_i}-w^{d_i})H_3(\underline w,w))=0.
\end{align*}
Then we can continue to define the current
\begin{align*}
H_4(\underline w,w)=(w_{b_2}^{d_i}-w_{b_1}^{d_i})^{-1}(\psi(\underline{w},w)-(w_{b_1}^{d_i}-w^{d_i})(w_{b_2}^{d_i}-w^{d_i})H_3(\underline w,w))\in \E_\hbar^{(m+1)}(W).
\end{align*}

Again by the fact that $(b_1,b_2,b_3)$ is an $A_2^{(2)}$-triple one obtains
\begin{align*}
H_1(\underline w,w)&=(w_{b_2}^{d_i}-w^{d_i})^{-1}\lim_{w_{b_1}\rightarrow  w}\psi(\underline{w},w)\in \E_\hbar^{(m)}(W),\\
H_2(\underline w,w)&=(w_{b_1}^{d_i}-w^{d_i})^{-1}\lim_{w_{b_2}\rightarrow  w}\psi(\underline{w},w)\in \E_\hbar^{(m)}(W).
\end{align*}
Then  we have
$
\lim_{w_{b_1}\rightarrow w}\psi(\underline{w},w)=(w_{b_2}^{d_i}-w^{d_i})H_1(\underline w,w),
$ while we also have
\begin{align*}
\lim_{w_{b_1}\rightarrow w}\psi(\underline{w},w)=\lim_{w_{b_1}\rightarrow w}(w_{b_2}^{d_i}-w_{b_1}^{d_i})H_4(\underline w,w)
=(w_{b_2}^{d_i}-w^{d_i})\lim_{w_{b_1}\rightarrow w}H_4(\underline w,w).
\end{align*}
Thus we get $H_1(\underline w,w)=\lim_{w_{b_1}\rightarrow w}H_4(\underline w,w)$.
Similarly, one has
\begin{align*}
(w_{b_1}^{d_i}-w^{d_i})H_2(\underline w,w)=\lim_{w_{b_2}\rightarrow w}\psi(\underline{w},w)=(w^{d_i}-w_{b_1}^{d_i})\lim_{w_{b_2}\rightarrow w}H_4(\underline w,w),
\end{align*}
which implies $H_2(\underline w,w)=-\lim_{w_{b_2}\rightarrow w}H_4(\underline w,w)$.
In summary, we obtain that
\begin{align}\label{H1=-H2}
\lim_{w_{b_2}\rightarrow w}H_1(\underline w,w)=\lim_{w_{b_2}\rightarrow w}\lim_{w_{b_1}\rightarrow w}H_4(\underline w,w)
=-\lim_{w_{b_1}\rightarrow w}H_2(\underline w,w).
\end{align}

Note that  $\wt X_{ij,r}^\pm(M,M_{\bm{t}}, w,\underline w)=\lim_{w_{b_1}\rightarrow  w}\wt X_{ij,r}^\pm(M,M_1, w,\underline w)$ and so we have
\begin{align*}
\prod_{a\in M\setminus M_{\bm{t}}}
  \frac{1}{w_a^{d_i}-w^{d_i}}\wt X_{ij,r}^\pm(M,M_{\bm{t}}, w,\underline w)
  =(w_{b_2}^{d_i}-w^{d_i})^{-1}\lim_{w_{b_1}\rightarrow  w}\psi(\underline{w},w)=
  H_1(\underline w,w).
  \end{align*}
This gives
\begin{align*}X_{ij,r}^\pm(M,M_{\bm{t}},w)=
\lim_{\substack{w_a\to w\\ a\in M\setminus M_{\bm{t}}}}
  \prod_{a\in M\setminus M_{\bm{t}}}
  \frac{1}{\tau^\pm(a)^{d_i}}\cdot H_1(\underline w,w)\\
  =\lim_{\substack{w_a\to w\\ a\in M\setminus M_{2}}}
  \prod_{a\in M\setminus M_{2}}\frac{1}{\tau^\pm(a)^{d_i}}\lim_{w_{b_2}\rightarrow w}\frac{1}{\tau^\pm(b_2)^{d_i}} H_1(\underline w,w).
\end{align*}
Similarly, one can get that
\begin{align*}X_{ij,r}^\pm(M,M_{\bm{t}'},w)=
\lim_{\substack{w_a\to w\\ a\in M\setminus M_{\bm{t}'}}}
  \prod_{a\in M\setminus M_{\bm{t}'}}
  \frac{1}{\tau^\pm(a)^{d_i}}\cdot H_2(\underline w,w)\\
  =\lim_{\substack{w_a\to w\\ a\in M\setminus M_{2}}}
  \prod_{a\in M\setminus M_{2}}\frac{1}{\tau^\pm(a)^{d_i}}\lim_{w_{b_1}\rightarrow w}\frac{1}{\tau^\pm(b_1)^{d_i}} H_2(\underline w,w).
\end{align*}
In view of \eqref{H1=-H2}, we obtain
\begin{align*}
X_{ij,r}^\pm(M,M_{\bm{t}},w)=-\frac{\tau^\pm(b_1)^{d_i} }
   {\tau^\pm(b_2)^{d_i} }
X_{ij,r}^\pm(M,M_{\bm{t}'},w),
\end{align*}
as required.
Finally, the assertion follows from an
 induction argument on the nonnegative integer $\sum_{c\in K^0}|t_c-t_c'|$.
\end{proof}

Let $\check{\bm{t}}=(t_c\mid c\in K_0)$ be the  tuple such that
 $t_c=0$ for all $c\in K^0$.
Set
\[\check M=M_{\check{\bm{t}}}=M^0\cup \set{(-c,a_{ij}-1)}{c\in K^0\ \te{with}\ p_c^1\ge 0}.\]
For convenience, we also write
\begin{align*}
  X_{ij,r}^\pm(M,w)=X_{ij,r}^\pm(M,\check M,w)\quad\te{and}\quad
  \:x_i^\pm(z)X_{ij,r}^\pm(M,w)\;=\:x_i^\pm(z)X_{ij,r}^\pm(M,\check M, w)\;.
\end{align*}
\begin{lem}\label{lem:X-add-on} If $s_i=2$, then  $\:x_i^\pm(\tau^\pm(a)w)X_{ij,r}^\pm(M,w)\;=0$ for  $a\in \bar M_1\cup \bar M_2$,
where
\begin{align*}
  \bar M_1&=\set{(-c,a_{ij}-1+2n)}{c\in K^0,\,1\le n\le p_c^0}\\
  &\cup\set{(c,a_{ij}-2)}{c\in K^0\,\te{with }p_c^1\ge 0},\\
  \bar M_2&=\set{(c,a_{ij}+2n)}{c\in K^0,\,1\le n\le p_c^1-1}\\
  &\cup\set{(-c,a_{ij}+2p_c^0+1)}{c\in K^0\,\te{with }p_c^0=p_c^1}.
\end{align*}
%
%
\end{lem}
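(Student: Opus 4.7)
The plan is to show, for each $a \in \bar M_1 \cup \bar M_2$, that the normal ordered product in question contains three currents whose arguments form an $A_2^{(2)}$-triple, and to deduce the vanishing from \eqref{a22triple} combined with the divisibility mechanism that already underlies the proof of Lemma \ref{lem:defXijr-2}.

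\textbf{Combinatorial identification of $A_2^{(2)}$-triples.} Under the convention $c^{1/d_i} = \xi_{d_i|K|}^s$ for $c = \xi_{|K|}^s \in K$, one checks $(-c)^{1/d_i} = \xi_{2d_i}\,c^{1/d_i}$, so the defining condition $\tau^\pm(a_{\sigma(1)}) = \xi_{2d_i} q_i^{\pm 1} \tau^\pm(a_{\sigma(2)}) = \xi_{2d_i} q_i^{\mp 1} \tau^\pm(a_{\sigma(3)})$ translates into $\{a_1,a_2,a_3\} = \{(c',n'), (-c',n'-1), (-c',n'+1)\}$ for some $c' \in K$, $n' \in \Z$. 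For each $a$ I must therefore locate $b, b' \in M$ whose pair together with $a$ has this form.

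\textbf{Case analysis.} If $a = (-c, a_{ij}-1+2n)$ with $1 \le n \le p_c^0$, take $b = (c, a_{ij}+2(n-1))$, $b' = (c, a_{ij}+2n)$, both in $M^0 \subseteq \check M$. If $a = (c, a_{ij}-2)$ with $p_c^1 \ge 0$, take $b = (-c, a_{ij}-1) \in \check M$ and $b' = (c, a_{ij}) \in M^0 \subseteq \check M$ (valid since $p_c^0 \ge p_c^1 \ge 0$). In these two cases $\bm t = \check{\bm t}$ already exhibits $b, b'$ at the base $w$. If $a = (c, a_{ij}+2n)$ with $1 \le n \le p_c^1 - 1$, take $b = (-c, a_{ij}+2n-1)$, $b' = (-c, a_{ij}+2n+1) \in M^1$; then the tuple $\bm t$ with $t_c = n+1$ (legal as $n+1 \le p_c^1$) puts both $b, b'$ into the $M^1$-portion of $M_{\bm t}$. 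Finally, if $a = (-c, a_{ij}+2p_c^0+1)$ with $p_c^0 = p_c^1$, take $b = (c, a_{ij}+2p_c^0) \in M^0$, $b' = (-c, a_{ij}+2p_c^0-1) \in M^1$; the tuple $\bm t$ with $t_c = p_c^1$ then places $b$ in the $M^0$-portion of $M_{\bm t}$ (using $p_c^0 = p_c^1$) and $b'$ in its $M^1$-portion. In each case Lemma \ref{lem:defXijr-2} lets me replace $X_{ij,r}^\pm(M,w) = X_{ij,r}^\pm(M,\check M, w)$ by a scalar multiple of $X_{ij,r}^\pm(M, M_{\bm t}, w)$.

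\textbf{Closing argument.} By construction, within $\:x_i^\pm(\tau^\pm(a)w)\, X_{ij,r}^\pm(M, M_{\bm t}, w)\;$ the three currents $x_i^\pm(\tau^\pm(a)w)$, $x_i^\pm(\tau^\pm(b)w)$, $x_i^\pm(\tau^\pm(b')w)$ all appear with arguments specialized to $w$, and the identity \eqref{a22triple} with $p = 0$ kills their normal ordered sub-product. As in the proof of Lemma \ref{lem:defXijr-2}, this triple-vanishing allows one to divide by each outer factor $w_{a'}^{d_i} - w^{d_i}$ for $a' \in M \setminus M_{\bm t}$ (Remark \ref{rem:divides}) without introducing spurious poles, so the vanishing persists through all the limits $w_{a'} \to w$ and gives $\:x_i^\pm(\tau^\pm(a)w)\, X_{ij,r}^\pm(M, w)\; = 0$ on $W$. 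The main obstacle is the tuple bookkeeping in the fourth case: the hypothesis $p_c^0 = p_c^1$ built into the definition of $\bar M_2$ is precisely what makes $t_c = p_c^1$ a legal value in \eqref{conditiont} while simultaneously ensuring that the $M^0$-portion $\{(c, a_{ij}+2n) : t_c \le n \le p_c^0\}$ of $M_{\bm t}$ is nonempty and contains $b$; without this equality the natural triple partner $b'$ could not be brought into $M_{\bm t}$, and the $A_2^{(2)}$-annihilation mechanism would not apply.
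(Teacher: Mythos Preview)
Your proof is correct and follows essentially the same strategy as the paper's: for $a\in\bar M_1$ you locate the two $A_2^{(2)}$-partners already in $\check M$, and for $a\in\bar M_2$ you pass via Lemma~\ref{lem:defXijr-2} to a suitable $M_{\bm t}$ in which the partners sit at the base value $w$. The only cosmetic difference is that the paper handles all of $\bar M_2$ with the single tuple $\hat{\bm t}$ defined by $t_c=p_c^1$ (when $p_c^1\ge 0$), whereas you choose $t_c=n+1$ in the first sub-case; both choices work, and your explicit identification of the triples and the partner elements fills in details the paper leaves as ``easy to see''.
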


\begin{proof}
Assume first that $a\in \bar M_1$. Then it is easy to see that there exist $b_1,b_2\in \check{M}$ such that $(a,b_1,b_2)$ is an $A_2^{(2)}$-triple.
 This together with \eqref{a22triple} gives
$\:x_i^\pm(\tau^\pm(a)w)\wt X_{ij,r}^\pm(M,\check M, w,\underline w)\;=0$ and hence $\:x_i^\pm(\tau^\pm(a)w)X_{ij,r}^\pm(M,w)\;=0$.
For the case that $a\in \bar M_2$,
we need to introduce another tuple
 $\hat{\bm{t}}=(t_c\mid c\in K_0)$, which is defined by $t_c= p_c^1$ if $p_c^1\ge 0$ and $t_c=0$ if $p_c^1=-1$.
Then it is obvious that there exist $b_1,b_2\in M_{\hat{\bm{t}}}$ such that $(a,b_1,b_2)$ is an $A_2^{(2)}$-triple.
This implies that
$\:x_i^\pm(\tau^\pm(a)w)\wt X_{ij,r}^\pm(M,M_{\hat{\bm{t}}}, w,\underline w)\;=0$.
In view of Lemma \ref{lem:defXijr-2}, we also have $\:x_i^\pm(\tau^\pm(a)w)\wt X_{ij,r}^\pm(M,\check M, w,\underline w)\;=0$ and hence $\:x_i^\pm(\tau^\pm(a)w)X_{ij,r}^\pm(M,w)\;=0$,
as required.
\end{proof}

As in Definition \ref{de:quan-comm}, we introduce the following $g$-commutator:
\begin{align*}
&[x_i^\pm(z),X_{ij,r}^\pm(M,w)]_g\\
=&x_i^\pm(z)X_{ij,r}^\pm(M,w)-g_{ji}(z/w)^{\mp 1}\prod_{a\in M}g_{ii}(z/\tau^\pm(a)w)^{\mp 1}X_{ij,r}^\pm(M,w)x_i^\pm(z),
\end{align*}
and for $a\in\partial M$, we define
\begin{align*}
&\al^\pm(a,M,w)=\begin{cases}
    \dis\lim_{z\to\tau^\pm(a)w}\frac{z^{d_i}-\tau^\pm(a)^{d_i}w^{d_i}}{f_{ij}^\pm(z,w)\prod_{a\in M}f_{ii}^\pm(z,\tau^\pm(a)w)},
& \mbox{if } a\in\partial M\setminus \partial^* M \\
    \dis\lim_{z\to\tau^\pm(a)w}\frac{(z^{d_i}-\tau^\pm(a)^{d_i}w^{d_i})^2}{f_{ij}^\pm(z,w)\prod_{a\in M}f_{ii}^\pm(z,\tau^\pm(a)w)}, &\mbox{if } a\in\partial^* M.
                     \end{cases}
\end{align*}

\begin{prop}\label{prop:g-commutator-cal-next}
Let $M\in\mathscr M_r$ with $r\in \N$. Then for any $a\in \partial M$, $\al^\pm(a,M,w)$ lies in $\C((w))[[\hbar]]$ and is nonzero.
Furthermore,
 we have
\begin{align*}
&[x_i^\pm(z),X_{ij,r}^\pm(M,w)]_g=\sum_{a\in\partial M}\al^\pm(a,M,w)X_{ij,r+1}^\pm(M\cup \{a\},w)z^{-d_i}\delta\(\frac{\tau^\pm(a)^{d_i}w^{d_i}}{z^{d_i}}\).
\end{align*}
\end{prop}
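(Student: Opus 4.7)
The plan is to reduce the $g$-commutator to a single iota-difference applied to a rational function times the common normal ordered product, then to apply the delta decomposition \eqref{deltadec} after using Lemma \ref{lem:X-add-on} to absorb the ``extraneous'' and ``double-pole'' factors into the normal ordered product. Specifically, I would first establish the identity $[x_i^\pm(z),X_{ij,r}^\pm(M,w)]_g = (\iota_{z,w}-\iota_{w,z})\bigl(f_{ij}^\pm(z,w)\prod_{a\in M}f_{ii}^\pm(z,\tau^\pm(a)w)\bigr)^{-1}\:x_i^\pm(z)X_{ij,r}^\pm(M,w)\;.$ By the definition of the normal ordered product together with Remark \ref{rem:norord}, the reordering identity \eqref{normalordersigma}, and the relations (Q8), the two orderings $x_i^\pm(z)X_{ij,r}^\pm(M,w)$ and $X_{ij,r}^\pm(M,w)x_i^\pm(z)$ differ only by the $\iota_{z,w,\underline w}$- versus $\iota_{w,\underline w,z}$-expansion of this rational function, with the constants $C_{ij},C_{ii}$ produced by the swap matching, via \eqref{gijex}, the factor $g_{ji}(z/w)^{\mp1}\prod_{a\in M} g_{ii}(z/\tau^\pm(a)w)^{\mp1}$ appearing in the $g$-commutator.

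Next I would analyse the zeros in $z^{d_i}$ of the denominator. Under (LC3) each factor is an explicit polynomial, and a combinatorial bookkeeping based on the definitions of $M^\nu$, $\partial M$ and $\partial^* M$ shows that the locus $z^{d_i}=\tau^\pm(a)^{d_i}w^{d_i}$ is a simple zero for $a\in\partial M\setminus\partial^* M$, a double zero for $a\in\partial^* M$, and contributes further simple or double zeros lying inside $\bar M_1\cup\bar M_2$. The key observation is that $\partial^* M\subseteq\bar M_1$: writing $a=(-c,a_{ij}+2p_c^1+1)=(-c,a_{ij}-1+2(p_c^1+1))\in\partial^* M$ places $a$ in the first subset of $\bar M_1$, since $1\le p_c^1+1\le p_c^0$. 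Consequently Lemma \ref{lem:X-add-on} yields $\:x_i^\pm(\tau^\pm(a)w)X_{ij,r}^\pm(M,w)\;=0$ for each such $a$, and Remark \ref{rem:divides} allows one factor of $(z^{d_i}-\tau^\pm(a)^{d_i}w^{d_i})^{-1}$ to be absorbed into the normal ordered product; this collapses each double pole to an effective simple pole and kills the extraneous contributions from $\bar M_1\cup\bar M_2$ outright.

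The hardest step will be this combinatorial enumeration of the poles and their matching with $\partial M$ and $\bar M_1\cup\bar M_2$; once it is in hand, \eqref{deltadec} applied to the reduced rational function produces exactly the required sum of delta terms indexed by $\partial M$. The coefficient at each $a\in\partial M$ is the first-order residue $\alpha^\pm(a,M,w)$ (with the extra factor $(z^{d_i}-\tau^\pm(a)^{d_i}w^{d_i})$ absorbed at $\partial^* M$ accounting for the squared numerator in its definition there) times the specialisation $\:x_i^\pm(\tau^\pm(a)w)X_{ij,r}^\pm(M,w)\;$, and by Lemma \ref{lem:defXijr-2} the latter coincides with $X_{ij,r+1}^\pm(M\cup\{a\},w)$. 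The claim that $\alpha^\pm(a,M,w)\in\C((w))[[\hbar]]$ is nonzero follows at once from the exact orders of pole established above.
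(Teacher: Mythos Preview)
Your outline is essentially the paper's proof: the same reduction via \eqref{gijex} to $(\iota_{z,w}-\iota_{w,z})D(z,w)^{-1}$ against the normal ordered product, the same pole enumeration, the same use of Lemma~\ref{lem:X-add-on} to kill the $\bar M_1\cup\bar M_2$ contributions, and the same identification $\partial^*M\subset\bar M_1$. One small presentational difference: at $a\in\partial^*M$ you pre-absorb one factor $(z^{d_i}-\tau^\pm(a)^{d_i}w^{d_i})^{-1}$ into the normal ordered product and then run the simple-pole case of \eqref{deltadec}, whereas the paper keeps the double pole, applies the derivative-of-delta term in \eqref{deltadec}, and then simplifies using the vanishing $\:x_i^\pm(\tau^\pm(a)w)X_{ij,r}^\pm(M,w)\;=0$; both routes land on the same residue.

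One correction: the final identification of the specialization with $X_{ij,r+1}^\pm(M\cup\{a\},w)$ does \emph{not} come from Lemma~\ref{lem:defXijr-2}. For $a\in\partial M\setminus\partial^*M$ one checks directly that $\check{M\cup\{a\}}=\check M\cup\{a\}$, so $X_{ij,r+1}^\pm(M\cup\{a\},w)=\:x_i^\pm(\tau^\pm(a)w)X_{ij,r}^\pm(M,w)\;$ by definition; for $a\in\partial^*M$ one has $\check{M\cup\{a\}}=\check M$, and the definition of $X_{ij,r+1}^\pm(M\cup\{a\},w)$ already contains the extra division by $\tau^\pm(a)^{d_i}(w_a^{d_i}-w^{d_i})$ before the limit, which matches your absorbed factor. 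Lemma~\ref{lem:defXijr-2} concerns the independence of $X_{ij,r}^\pm(M,M_{\bm t},w)$ on the choice of $\bm t$ and is not what is being invoked here.
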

\begin{proof}
Set $D(z,w)=f_{ij}^\pm(z,w)\prod_{a\in M}f_{ii}^\pm(z,\tau^\pm(a)w)$. Then it follows from \eqref{gijex}
that
\begin{align}\label{eq:xixijrg}
[x_i^\pm(z),X_{ij,r}^\pm(M,w)]_g=\:x_i^\pm(z)X_{ij,r}^\pm(M,w)\;(\iota_{z,w}-\iota_{w,z})(D(z,w)^{-1}).
\end{align}
Assume first that $s_i=1$. Note that in this case
\begin{align}\label{eq:dzwsi1}
 D(z,w)
  =\prod_{c\in K^0}(z^{d_i}-c q_i^{\pm a_{ij}d_i\pm 2p_c^0d_i\pm 2d_i}w^{d_i})
\end{align}
and so for $a=(c,a_{ij}+2p_c^0+2)\in \partial M$ we have
\begin{align}\label{eq:alamw1}
\al^\pm(a,M,w)
=\prod_{c\ne c'\in K^0}( cq_i^{\pm a_{ij}d_i\pm 2p_c^0d_i\pm 2d_i}w^{d_i} -c' q_i^{\pm a_{ij}d_i\pm 2p_{c'}^0d_i\pm 2d_i}w^{d_i})\inverse,
\end{align}
which is nonzero.
Furthermore, by \eqref{deltadec}, \eqref{eq:xixijrg}, \eqref{eq:dzwsi1} and \eqref{eq:alamw1} we have
\begin{align*}
&[x_i^\pm(z),X_{ij,r}^\pm(M,w)]_g\\
=&\sum_{c\in K^0}
\prod_{c\ne c'\in K^0}( cq_i^{\pm a_{ij}d_i\pm 2p_c^0d_i\pm 2d_i}w^{d_i} -c' q_i^{\pm a_{ij}d_i\pm 2p_{c'}^0d_i\pm 2d_i}w^{d_i})\inverse\\
&\times
    \:x_i^\pm(c^{\frac{1}{d_i}}q_i^{\pm a_{ij}\pm 2p_c^0\pm 2}w  )X_{ij,r}^\pm(M,w)\;z^{-d_i}\delta\(\frac{ cq_i^{\pm a_{ij}d_i\pm 2p_c^0d_i\pm 2d_i}w^{d_i} }{ z^{d_i} }\)\\
=&\sum_{a\in\partial M}\al^\pm(a,M,w)X_{ij,r+1}^\pm(M\cup \{a\},w)z^{-d_i}\delta\(\frac{\tau^\pm(a)^{d_i}w^{d_i}}{z^{d_i}}\).
\end{align*}

Now we turn to the case of $s_i=2$, then we have
\begin{align*}
&D(z,w)=\prod_{c\in K^0}(z^{d_i}-cq_i^{\pm a_{ij}d_{i}\pm 2p_c^0d_i\pm 2d_i}w^{d_i})
\prod_{c\in K^0}(z^{d_i}+cq_i^{\pm a_{ij}d_{i}\pm 2p_c^1d_i\pm d_i}w^{d_i})
\\
&\times
\prod_{c\in K^0}\prod_{n=1}^{p_c^0}(z^{d_i}+cq_i^{\pm a_{ij}d_{i}\pm 2nd_i\mp d_i}w^{d_i})
\prod_{c\in K^0}\prod_{n=0}^{p_c^1}(z^{d_i}-cq_i^{\pm a_{ij}d_{i}\pm 2nd_i\mp 2d_i}w^{d_i})\\
&=\prod_{a\in \partial M\setminus \partial^\ast M}(z^{d_i}-\tau^\pm(a)^{d_i}w_a^{d_i})
\prod_{a\in  \partial^\ast M}(z^{d_i}-\tau^\pm(a)^{d_i}w_a^{d_i})^2\\
&\quad\prod_{a\in \bar{M}_1\setminus \partial^\ast M}(z^{d_i}-\tau^\pm(a)^{d_i}w_a^{d_i})
\prod_{a\in \bar{M}_2}(z^{d_i}-\tau^\pm(a)^{d_i}w_a^{d_i}).
\end{align*}
This shows that $\al^\pm(a,M,w)$ is nonzero.
Furthermore, one can conclude from  \eqref{deltadec}, \eqref{eq:xixijrg} and  Lemma \ref{lem:X-add-on} that

\begin{align*}
&[x_i^\pm(z),X_{ij,r}^\pm(M,w)]_g\\
=&\sum_{a\in\partial M\setminus\partial^\ast M}
\frac{z^{d_i}-\tau^\pm(a)^{d_i}w^{d_i}}{D(z,w)}\:x_i^\pm(z)X_{ij,r}^\pm(M,w)\;
    z^{-d_i}\delta\(\frac{\tau^\pm(a)^{d_i}w^{d_i}}{z^{d_i}}\)\\
&+\sum_{a\in \partial^\ast M}
\frac{\(z^{d_i}-\tau^\pm(a)^{d_i}w^{d_i}\)^2}{D(z,w)}\:x_i^\pm(z)X_{ij,r}^\pm(M,w)\;\frac{\partial}{\partial \tau^\pm(a)^{d_i}w^{d_i}}z^{-d_i}
\delta\(\frac{\tau^\pm(a)^{d_i}w^{d_i}}{z^{d_i}}\).
\end{align*}
Note that for $a\in \partial M\setminus \partial^*M$, we have
\begin{align*}\lim_{z\to\tau^\pm(a)w}\frac{z^{d_i}-\tau^\pm(a)^{d_i}w^{d_i}}{D(z,w)}\:x_i^\pm(z)X_{ij,r}^\pm(M,w)\;=
\al^\pm(a,M,w)X_{ij,r+1}^\pm(M\cup\{a\},w).
\end{align*}
On the other hand, for $a\in \partial^* M$, set $E_a(z^{d_i},w^{d_i})=(z^{d_i}-\tau^\pm(a)^{d_i}w^{d_i})^2\cdot D(z,w)^{-1}$.
Recall from Lemma \ref{lem:X-add-on} that $\:x_i^\pm(\tau^\pm(a)w)X_{ij,r}^\pm(M,w)\;=0$ and so $z^{d_i}-\tau^\pm(a)^{d_i}w^{d_i}$ divides
$\:x_i^\pm(z)X_{ij,r}^\pm(M,w)\;=0$. Furthermore, by definition we have
\begin{align*}
X_{ij,r+1}^\pm(M\cup\{a\},w)=\lim_{z\mapsto \tau^\pm(a)w}\frac{1}{z^{d_i}-\tau^\pm(a)^{d_i}w^{d_i}}\:x_i^\pm(z)X_{ij,r}^\pm(M,w)\;.
\end{align*}Now  we have
\begin{align*}
  &E_a(z^{d_i},w^{d_i})\:x_i^\pm(z)X_{ij,r}^\pm(M,w)\;\frac{\partial}{\partial \tau^\pm(a)^{d_i}w^{d_i}}z^{-d_i}\delta\(\frac{\tau^\pm(a)^{d_i}w^{d_i}}{z^{d_i}}\)\\
  =
  &-E_a(z^{d_i},w^{d_i})\:x_i^\pm(z)X_{ij,r}^\pm(M,w)\;\frac{\partial}{\partial z^{d_i}}z^{-d_i}\delta\(\frac{\tau^\pm(a)^{d_i}w^{d_i}}{z^{d_i}}\)\\
  =&-\frac{\partial}{\partial z^{d_i}}\(E_a(z^{d_i},w^{d_i})\:x_i^\pm(z)X_{ij,r}^\pm(M,w)\;
    z^{-d_i}\delta\(\frac{\tau^\pm(a)^{d_i}w^{d_i}}{z^{d_i}}\)\)\\
  &+\lim_{z\to\tau^\pm(a)w}\(\frac{\partial}{\partial z^{d_i}}\(E_a(z^{d_i},w^{d_i})
    \:x_i^\pm(z)X_{ij,r}^\pm(M,w)\;\)\)
    z^{-d_i}\delta\(\frac{\tau^\pm(a)^{d_i}w^{d_i}}{z^{d_i}}\)\\
    =&\lim_{z\to\tau^\pm(a)w}\(E_a(z^{d_i},w^{d_i})\frac{1}{z^{d_i}-\tau^\pm(a)^{d_i}w^{d_i}}
    \:x_i^\pm(z)X_{ij,r}^\pm(M,w)\;\)
    z^{-d_i}\delta\(\frac{\tau^\pm(a)^{d_i}w^{d_i}}{z^{d_i}}\)\\
  =&\al^\pm(a,M,w)X_{ij,r+1}^\pm(M\cup\{a\},w)
    z^{-d_i}\delta\(\frac{\tau^\pm(a)^{d_i}w^{d_i}}{z^{d_i}}\).
\end{align*}
This completes the proof of the proposition.
\end{proof}

Let $r$ be a nonnegative  integer. Set
\begin{align*}
\bar{r}=\emptyset\ \, \te{if}\ \, r=0\quad\te{and}\quad \bar{r}=\{1,2,\dots,r\}\ \, \te{if}\ \, r>0.
\end{align*}
For $M\in \mathscr M_r$, a bijective map $\theta:\bar r\rightarrow M$ is called a {\em flag} on $M$ if $r=0$ or $r>0$ and
 for each $1\le k\le r$, $\theta(\bar r\setminus\bar k)\in \mathscr M_{r-k}$
and $\theta(k)\in \partial \theta(\bar r\setminus\bar k)$.
Denote by $\mathcal F_{M}$ the set of all flags on $M$.
For $\theta\in \mathcal F_M$, set $\al_{ij,r}^\pm(M,\theta,w)=1$ if $r=0$ and
\begin{align*}
\al_{ij,r}^\pm(M,\theta,w)=\prod_{k=1}^{r}\al^\pm(\theta(k),\theta(\bar r\setminus\bar k),w)\quad\te{if}\quad r>0.
\end{align*}

For $r\in \N$, $M\in\mathscr M_r$ and $\theta\in \mathcal F_{M}$,   set $\delta^\pm(M,\theta,\underline z,w)=1$ if $r=0$ and set
\begin{align*}
&\delta^\pm(M,\theta,\underline z,w)=\prod_{a=1}^{r}z_a^{-d_i}\delta
    \(\frac{\bar\theta^\pm(a)^{d_i}w^{d_i}}{z_a^{d_i}}\)\quad\te{if}\quad r>0,
\end{align*}
where $\bar\theta^\pm$ denotes the composition map $\tau^\pm\circ\theta$. Note that for $\sigma\in S_r$, we have
\begin{align}\label{lem:delta-theta-z-invariance}
  \delta^\pm(M,\theta,\sigma \underline z,w)=\delta^\pm(M,\theta\circ\sigma\inv,\underline z,w),
\end{align}
where $(\sigma \underline z)(a)=z_{\sigma(a)}$ for $a=1,\dots,r$.

By using Proposition \ref{prop:g-commutator-cal-next} and an inductive argument, one can obtain the following main result of this subsection.

\begin{prop}\label{prop:g-commutator-cal-final}
For a nonnegative integer $r$, one has
\begin{align*}
&[x_i^\pm(z_{1}),x_i^\pm(z_{2}),\dots,x_i^\pm(z_{r}),x_j^\pm(w)]_g\\
=&\sum_{M\in\mathscr M_r}\sum_{\theta\in\mathcal F_M}\alpha_{ij,r}^\pm(M,\theta,w)
  X_{ij,r}^\pm(M,w)\delta^\pm(M,\theta,\underline z,w).
\end{align*}
\end{prop}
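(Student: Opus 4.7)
The plan is to proceed by induction on $r$, with Proposition \ref{prop:g-commutator-cal-next} supplying the one-step expansion needed at each stage. The base case $r=0$ is immediate: $\mathscr M_0=\{\emptyset\}$, the unique flag is the empty map $\bar 0\to\emptyset$, and $\alpha_{ij,0}^\pm$ and $\delta^\pm$ both evaluate to $1$, so each side reduces to $x_j^\pm(w)=X_{ij,0}^\pm(\emptyset,w)$.

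For the inductive step, write $Y_r^\pm(\underline z,w):=[x_i^\pm(z_1),\dots,x_i^\pm(z_r),x_j^\pm(w)]_g$ and unwind the leftmost factor via Definition \ref{de:quan-comm}:
\begin{align*}
Y_{r+1}^\pm(\underline z,w)&=x_i^\pm(z_1)\,Y_r^\pm(z_2,\dots,z_{r+1},w)\\
&\quad-g_{ji}(z_1/w)^{\mp 1}\prod_{a=2}^{r+1}g_{ii}(z_1/z_a)^{\mp 1}\,Y_r^\pm(z_2,\dots,z_{r+1},w)\,x_i^\pm(z_1).
\end{align*}
Substituting the induction hypothesis for $Y_r^\pm(z_2,\dots,z_{r+1},w)$ produces, for each $(M,\theta)$ with $M\in\mathscr M_r$ and $\theta\in\mathcal F_M$, a term carrying the scalar factor $\delta^\pm(M,\theta,\underline z',w)$, where $\underline z'(a)=z_{a+1}$.

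The key observation is that, by \eqref{giiwz}, $g_{ii}(z_1/z_a)^{\mp 1}$ depends on $z_a$ only through $z_a^{d_i}$, while $\delta^\pm(M,\theta,\underline z',w)$ is a product of $d_i$-th-power deltas pinning $z_{a+1}^{d_i}$ to $\bar\theta^\pm(a)^{d_i}w^{d_i}$. Hence against this delta
\[
\prod_{a=2}^{r+1}g_{ii}(z_1/z_a)^{\mp 1}\,\delta^\pm(M,\theta,\underline z',w)
=\prod_{b\in M}g_{ii}(z_1/\tau^\pm(b)w)^{\mp 1}\,\delta^\pm(M,\theta,\underline z',w),
\]
and the full right-hand side of the recursion collapses, by the definition of $[x_i^\pm(z),X_{ij,r}^\pm(M,w)]_g$ given just before Proposition \ref{prop:g-commutator-cal-next}, to
\[
Y_{r+1}^\pm(\underline z,w)=\sum_{M\in\mathscr M_r,\theta\in\mathcal F_M}\alpha_{ij,r}^\pm(M,\theta,w)\,\delta^\pm(M,\theta,\underline z',w)\,[x_i^\pm(z_1),X_{ij,r}^\pm(M,w)]_g.
\]
Applying Proposition \ref{prop:g-commutator-cal-next} to each $g$-commutator on the right expands the inner factor as $\sum_{a\in\partial M}\alpha^\pm(a,M,w)X_{ij,r+1}^\pm(M\cup\{a\},w)\,z_1^{-d_i}\delta(\tau^\pm(a)^{d_i}w^{d_i}/z_1^{d_i})$.

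Finally, I would reorganize the resulting triple sum via the natural bijection
\[
\bigl\{(M,\theta,a):M\in\mathscr M_r,\;\theta\in\mathcal F_M,\;a\in\partial M\bigr\}\;\longleftrightarrow\;\bigl\{(M',\theta'):M'\in\mathscr M_{r+1},\;\theta'\in\mathcal F_{M'}\bigr\},
\]
sending $(M,\theta,a)$ to $(M\cup\{a\},\theta')$ with $\theta'(1)=a$ and $\theta'(k)=\theta(k-1)$ for $2\le k\le r+1$. Under this correspondence, the multiplicativity $\alpha_{ij,r+1}^\pm(M',\theta',w)=\alpha^\pm(a,M,w)\,\alpha_{ij,r}^\pm(M,\theta,w)$ is immediate from the definition of the $\alpha$-weights, and the product of the new delta $z_1^{-d_i}\delta(\tau^\pm(a)^{d_i}w^{d_i}/z_1^{d_i})$ with $\delta^\pm(M,\theta,\underline z',w)$ is precisely $\delta^\pm(M',\theta',\underline z,w)$, closing the induction. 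The only nontrivial step is the reduction of $\prod_{a=2}^{r+1}g_{ii}(z_1/z_a)^{\mp 1}$ to $\prod_{b\in M}g_{ii}(z_1/\tau^\pm(b)w)^{\mp 1}$ against the delta factor; everything else is a bookkeeping exercise with flags and boundaries.
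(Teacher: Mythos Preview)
Your proof is correct and follows exactly the approach the paper indicates: induction on $r$ with Proposition \ref{prop:g-commutator-cal-next} providing the step. The paper merely states ``By using Proposition \ref{prop:g-commutator-cal-next} and an inductive argument, one can obtain the following main result of this subsection,'' and your argument supplies precisely those omitted details---the delta-substitution that converts $\prod_{a=2}^{r+1}g_{ii}(z_1/z_a)^{\mp 1}$ into $\prod_{b\in M}g_{ii}(z_1/\tau^\pm(b)w)^{\mp 1}$, the bijection between $(M,\theta,a)$ and $(M',\theta')$, and the multiplicativity of the $\alpha$- and $\delta$-factors.
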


\subsection{Relations among $\al_{ij,r}^\pm(M,\theta,w)$}
Throughout this subsection, we fix  an $M\in\mathscr M_r$ with $r\in \Z_+$.
Here the main goal
is to  establish  some relations among $\al_{ij,r}^\pm(M,\theta,w)$ with $\theta\in\mathcal F_M$.

We first  define a partial order  $``\prec "$ on $M$ as follows: $a\prec b$ if and only if there exist a sequence $a=a_1,a_2,\dots,a_k=b$ in $M$ such that
$a_p\prec'a_{p+1}$ for $p=0,\dots,k-1$, where for
 $(c_1,n_1),(c_2,n_2)\in M$, $(c_1,n_1)\prec'(c_2,n_2)$ means that  either
$c_1=c_2$ and $n_1=n_2+2$ or $c_2\in K^0$,  $c_1=-c_2$ and $n_1=n_2-1$.
It is straightforward to see that
 a bijective map $\theta:\bar{r}\rightarrow M$ is a flag
if and only if
\begin{align}\label{lem:flags}
  &\theta(a)\prec \theta(b)\quad\te{implies}\quad a<b\quad\te{for }a,b\in\bar r.
\end{align}
Furthermore, let $(c_1,n_2),(c_2,n_2)\in M$. Then $F_{ii}^\pm(\tau^\pm((c_1,n_1))w,\tau^\pm((c_2,n_2))w)=0$ if and only if
\begin{align}\label{fii=0}
\te{either}\ (c_1,n_1)\prec'(c_2,n_2)\quad\te{or}\quad c_2\in K^1,\ c_1=-c_2\ \te{and}\ n_1=n_2-1.\end{align}
Note  that in the latter case one has
\begin{align}\label{fii=0other}
(c_2,n_2)\prec'(-c_2,n_2+1),\ (c_2,n_2-2)\prec'(-c_2,n_2-1)=(c_1,n_1).
\end{align}

For $\theta\in\mathcal F_M$, $\sigma\in S_r$ and $k=1,\dots,r$, set
\begin{align*}
F_{\theta,\sigma,k}^\pm(w)&= \prod_{\substack{k\le a<b\le r,\ \sigma\inv(a)>\sigma\inv(b)}}
  F_{ii}^\pm(\bar\theta^\pm(a)w,\bar\theta^\pm(b)w),\\
 G_{\theta,\sigma,k}^\pm(w)&= \prod_{\substack{k\le a<b\le r,\ \sigma\inv(a)>\sigma\inv(b)}}
  G_{ii}^\pm(\bar\theta^\pm(a)w,\bar\theta^\pm(b)w).
\end{align*}

\begin{lem}\label{lem:theta-sigma-Fii-nonzero}
For $\theta\in\mathcal F_M$ and $\sigma\in S_r$, $\theta\circ\sigma\in\mathcal F_M$ if and only if $F_{\theta,\sigma,1}^\pm(w)\ne 0$.
\end{lem}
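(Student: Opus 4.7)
My plan is to translate both the flag condition and the vanishing of $F^\pm_{\theta,\sigma,1}(w)$ into purely combinatorial statements about the partial order $\prec$ on $M$, then reduce everything to a single observation: among the two alternatives in \eqref{fii=0}, only the first is compatible with $\theta$ being a flag when the two indices satisfy $a<b$. The second alternative will in fact force $\theta(b)\prec\theta(a)$.

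First, I would reformulate the flag condition using \eqref{lem:flags}: $\theta\circ\sigma\in\mathcal F_M$ is equivalent to requiring that, for all $u<v$ in $\bar r$, $\theta(\sigma(v))\not\prec\theta(\sigma(u))$; equivalently, whenever $\theta(a')\prec\theta(b')$ with $a',b'\in M$ (so in particular $a'<b'$ since $\theta$ is a flag), one must have $\sigma\inv(a')<\sigma\inv(b')$. Next, from the formula for $F_{ii}^\pm$ one has that $F_{ii}^\pm(\bar\theta^\pm(a)w,\bar\theta^\pm(b)w)=0$ precisely in the two cases listed in \eqref{fii=0}, applied to $(c_1,n_1)=\theta(a)$ and $(c_2,n_2)=\theta(b)$.

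The main combinatorial step is the following assertion: \emph{if $a<b$ and the second alternative in \eqref{fii=0} occurs for $(\theta(a),\theta(b))$, then $\theta$ is not a flag.} Writing $\theta(b)=(-c',a_{ij}-1+2p)\in M^1$ with $c'\in K^0$ (so $p_{c'}^1\ge 0$, forcing $s_i=2$) and $\theta(a)=(c',a_{ij}-2+2p)$, the fact that $\theta(a)\in M$ forces $\theta(a)\in M^0$ with $p\ge 1$; thus both $(-c',a_{ij}-1+2(p-1))\in M^1$ and $(c',a_{ij}+2(p-1))=\theta(a)\in M^0$ lie in $M$. Then the chain
\[
\theta(b)=(-c',a_{ij}-1+2p)\prec'(-c',a_{ij}-1+2(p-1))\prec'(c',a_{ij}+2(p-1))=\theta(a)
\]
lies entirely in $M$ (the first step by the same-first-coordinate rule, the second by the $c_2\in K^0$ rule applied with $c_2=c'$), so $\theta(b)\prec\theta(a)$. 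Since $\theta$ is a flag, \eqref{lem:flags} gives $b<a$, contradicting $a<b$. Consequently, whenever $a<b$ and $F_{ii}^\pm(\bar\theta^\pm(a)w,\bar\theta^\pm(b)w)=0$, we must be in the first case of \eqref{fii=0}, and in particular $\theta(a)\prec'\theta(b)$.

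With this established, both directions of the lemma become short. For the ``$\Leftarrow$'' direction I argue by contrapositive: assume $\theta\circ\sigma\notin\mathcal F_M$, so there exist $a',b'\in M$ with $\theta(a')\prec\theta(b')$ and $\sigma\inv(a')>\sigma\inv(b')$. Expanding this as a $\prec'$-chain $\theta(a')=e_0\prec'e_1\prec'\cdots\prec'e_k=\theta(b')$ in $M$ and setting $a_p=\theta\inv(e_p)$, the descent $\sigma\inv(a_0)>\sigma\inv(a_k)$ in the integer sequence $(\sigma\inv(a_p))_{p=0}^{k}$ produces some index $p$ with $\sigma\inv(a_p)>\sigma\inv(a_{p+1})$; since $\theta(a_p)\prec'\theta(a_{p+1})$ we also have $a_p<a_{p+1}$ (by \eqref{lem:flags}) and $F_{ii}^\pm(\bar\theta^\pm(a_p)w,\bar\theta^\pm(a_{p+1})w)=0$, so this factor appears in $F^\pm_{\theta,\sigma,1}(w)$ and makes it vanish. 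Conversely, if $F^\pm_{\theta,\sigma,1}(w)=0$ then some factor indexed by $a<b$ with $\sigma\inv(a)>\sigma\inv(b)$ vanishes; by the boxed combinatorial step, this forces $\theta(a)\prec'\theta(b)\prec\theta(b)$, so taking $u=\sigma\inv(a)$, $v=\sigma\inv(b)$ gives $u>v$ with $(\theta\circ\sigma)(u)\prec(\theta\circ\sigma)(v)$, i.e.\ $\theta\circ\sigma\notin\mathcal F_M$.

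The main obstacle is the combinatorial step handling the ``exotic'' second case of \eqref{fii=0}; everything else is essentially bookkeeping against \eqref{lem:flags}. The construction of the explicit $\prec'$-chain exploits \eqref{fii=0other} together with the inequality $p_c^1\le p_c^0$ (part of the definition of $\mathscr M$), so verifying membership of the intermediate elements in $M$ is the technical heart of the argument.
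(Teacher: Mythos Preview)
Your proof is correct and follows essentially the same strategy as the paper: both arguments reduce to the key claim that for $a<b$ in $\bar r$ one has $F_{ii}^\pm(\bar\theta^\pm(a)w,\bar\theta^\pm(b)w)=0$ if and only if $\theta(a)\prec'\theta(b)$, which requires ruling out the second alternative of \eqref{fii=0} by exhibiting a $\prec$-chain from $\theta(b)$ to $\theta(a)$ contradicting \eqref{lem:flags}. Your explicit chain through $(-c',a_{ij}-1+2(p-1))$ is a valid variant of what the paper sketches via \eqref{fii=0other}; aside from two harmless slips (you write $a',b'\in M$ where you mean $a',b'\in\bar r$, and ``$\theta(a)\prec'\theta(b)\prec\theta(b)$'' should read ``$\theta(a)\prec'\theta(b)$, hence $\theta(a)\prec\theta(b)$''), the argument is complete.
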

\begin{proof} In view of \eqref{lem:flags}, it suffices to prove that $F_{\theta,\sigma,1}^\pm(w)=0$ if and only if
 there exist $1\le a<b\le r$ such that $\sigma\inv(a)>\sigma\inv(b)$ and $\theta(a)\prec\theta(b)$.
For  $1\le a<b\le r$, it follows from \eqref{fii=0}, \eqref{fii=0other} and \eqref{lem:flags} that $F_{ii}^\pm(\bar\theta^\pm(a)w,\bar\theta^\pm(b)w)=0$ if and only if $\theta(a)\prec'\theta(b)$.
This in particular implies that, if $F_{\theta,\sigma,1}^\pm(w)=0$, then
there exist $a<b$ such that $\sigma\inv(a)>\sigma\inv(b)$ and $\theta(a)\prec' \theta(b)$.
Conversely, let  $a<b$ such that $\sigma\inv(a)>\sigma\inv(b)$ and $\theta(a)\prec\theta(b)$.
Then there exists a sequence
$a=a_1<a_2<\cdots<a_k=b$ such that $\theta(a_p)\prec'\theta(a_{p+1})$ for $p=1,\dots,k-1$.
Take a $p$ such that $\sigma\inv(a_p)>\sigma\inv(a_{p+1})$. Then we have
$F_{ii}^\pm(\bar\theta^\pm(a_p)w,\bar\theta^\pm(a_{p+1})w)=0$ and hence $F_{\theta,\sigma,1}^\pm(w)=0$.
\end{proof}

For each $f(\underline z)\in\C(z_1,z_2,\dots,z_r)[[\hbar]]$
and each non-zero elements $c_1,c_2,\dots,c_r\in\C((w))[[\hbar]]$,
we denote by
\begin{align*}
&\varprodright_{l=1}^r\lim_{z_l\to c_l w}f(\underline z)=
\lim_{z_1\to c_1w}
    \(\lim_{z_2\to c_2w}
        \(\cdots \(\lim_{z_r\to c_rw}f(\underline z)\)\cdots\)
    \).
\end{align*}

\begin{lem}\label{lem:theta-sigma-Fii-Gii}
Let $\theta\in \mathcal F_M$, $\sigma\in S_r$ and $k=1,\dots,r$. Then the limit
\begin{align}\label{limfg}
  L_k^\pm(\underline z,w)=\varprodright_{l=k}^r\lim_{z_l\to\bar\theta^\pm(l)w}\prod_{\substack{1\le a<b\le r\\ \sigma\inv(a)>\sigma\inv(b)}}\frac{F_{ii}^\pm(z_a,z_b)}{G_{ii}^\pm(z_a,z_b)}
\end{align}
exists in $\C((z_1))((z_2))\cdots((z_{k-1}))((w))[[\hbar]]$. Furthermore, the limit $L_k^\pm(\underline z,w)=0$ if and only if
 $F_{\theta,\sigma,k}^\pm(w)=0$.
\end{lem}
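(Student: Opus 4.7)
The plan is to establish the existence of the iterated limit and the vanishing criterion simultaneously, by peeling off the limits from the inside out and tracking which factors $F_{ii}^\pm(z_a,z_b)/G_{ii}^\pm(z_a,z_b)$ can develop zeros or poles under successive specializations. First I would rewrite the product, for the purposes of the induction, as a product over three classes of pairs $(a,b)$ with $a<b$ and $\sigma^{-1}(a)>\sigma^{-1}(b)$: those with $b<k$ (both variables remain free), those with $a<k\le b$ (only $z_b$ gets specialized, so the factor becomes rational in $z_a$), and those with $k\le a<b$ (both get specialized, yielding a scalar in $w$). The first two classes are manifestly well-defined in $\C((z_1))\cdots((z_{k-1}))((w))[[\hbar]]$ once we verify the second-class denominators $G_{ii}^\pm(z_a,\bar\theta^\pm(b)w)$ are nonzero polynomials in $z_a$, which follows from the explicit formula for $G_{ii}^\pm$.

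Second, I would carry out the induction on the innermost variables $l=r,r-1,\ldots,k$. At step $l$, the factors newly involving $z_l$ either pair $z_l$ with a specialized $z_b$ (for $b>l$, $\sigma^{-1}(l)>\sigma^{-1}(b)$) or with an unspecialized $z_a$ (for $a<l$, $\sigma^{-1}(a)>\sigma^{-1}(l)$). The latter pose no difficulty: the specialization $z_l\to\bar\theta^\pm(l)w$ leaves $G_{ii}^\pm(z_a,\bar\theta^\pm(l)w)$ as a nonvanishing polynomial in the free variable $z_a$. The former give rise to scalar denominators $G_{ii}^\pm(\bar\theta^\pm(l)w,\bar\theta^\pm(b)w)$. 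Using $G_{ii}^\pm(z,w)=-F_{ii}^\pm(w,z)$ together with \eqref{fii=0} and the flag property \eqref{lem:flags}, I would show that the ``first-case'' vanishing of this denominator never occurs since it would force $\theta(b)\prec'\theta(l)$ contradicting $l<b$.

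Third, and this is the main obstacle, I would handle the ``second-case'' denominator vanishing that can arise when $s_i=2$: namely, when $\theta(l)=(c,n)$ with $c\in K^1$ and $\theta(b)=(-c,n-1)$, at which point $G_{ii}^\pm(\bar\theta^\pm(l)w,\bar\theta^\pm(b)w)=0$ while $F_{ii}^\pm$ at the same point is nonzero. Here I would invoke \eqref{fii=0other} to produce an intermediate element $\theta(a')=(-c,n+1)\in M$ (whose existence follows from the constraint $p_{-c}^1\le p_{-c}^0$ defining $\mathscr M$), satisfying $\theta(l)\prec'\theta(a')\prec'\theta(b)$. The flag property and the assumption $l<b$ then force $l<a'<b$, and I would argue that whenever $\sigma^{-1}(l)>\sigma^{-1}(b)$, necessarily $\sigma^{-1}(l)>\sigma^{-1}(a')$ as well (or else the pair $(a',b)$ restores the order and produces an equal-magnitude zero elsewhere). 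In either sub-case, the numerator $F_{ii}^\pm(z_l,z_{a'})$, which vanishes to first order at the joint specialization since $\theta(l)\prec'\theta(a')$, produces a cancelling zero that combines with the offending denominator before the limit $z_l\to\bar\theta^\pm(l)w$ is taken. This yields a well-defined limit in the claimed ring.

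Finally, once existence is established, the vanishing criterion is a bookkeeping statement: $L_k^\pm(\underline z,w)$ is the product of the (nonvanishing) free-variable factors and the substituted factors, with the scalar numerators being exactly $F_{ii}^\pm(\bar\theta^\pm(a)w,\bar\theta^\pm(b)w)$ for $k\le a<b\le r$ with $\sigma^{-1}(a)>\sigma^{-1}(b)$ (up to the cancellations identified in the third step, which simplify the quotient but do not introduce new zeros). Hence $L_k^\pm(\underline z,w)=0$ if and only if at least one such numerator vanishes, which is precisely $F_{\theta,\sigma,k}^\pm(w)=0$.
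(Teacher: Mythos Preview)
Your overall strategy---downward induction on $k$, isolating the new factors at each step, and handling the $s_i=2$ anomaly via an intermediate element---matches the paper's. The existence argument in your third step is correct: the single intermediate element $\theta(a')=(-c,n+1)$ does lie in $M$ and in the index set, and its $F$-factor cancels the simple pole coming from $G_{ii}^\pm(z_l,\bar\theta^\pm(b)w)$.

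The gap is in the passage from existence to the vanishing criterion. In the anomalous case ($s_i=2$, $\theta(l)=(c,n)$ with $c\in K^1$, $\theta(b)=(-c,n-1)$), you have exhibited exactly \emph{one} numerator zero and \emph{one} denominator zero at the specialization; after they cancel, the resulting quotient is regular but \emph{a priori} nonzero there. Yet $F_{\theta,\sigma,l}^\pm(w)=0$ in this situation (since $\bar F_l(w)=0$), so your step~4 ``bookkeeping'' would require $L_l^\pm=0$, which your single cancellation does not deliver. The sentence ``which simplify the quotient but do not introduce new zeros'' does not address whether the cancellation might \emph{remove} the only zero, which is precisely the danger here.

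The paper closes this gap by producing a \emph{second} intermediate element $\theta(b_1)=(c,n-2)\in M$, with the chain $\theta(l)\prec'\theta(b_1)\prec'\theta(b)$. By the same argument you gave for $a'$ (flag property plus the inductive hypothesis $F_{\theta,\sigma,l+1}^\pm(w)\ne 0$), one gets $l<b_1<b$ and $\sigma^{-1}(l)>\sigma^{-1}(b_1)$, so $F_{ii}^\pm(z_l,\bar\theta^\pm(b_1)w)$ is also a numerator factor vanishing simply at the specialization. With two numerator zeros against one denominator zero, the limit exists \emph{and equals zero}, matching $F_{\theta,\sigma,l}^\pm(w)=0$. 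Adding this second element to your third step repairs the argument completely.
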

\begin{proof}We will
 prove the assertion by induction on $k$. The case $k=r$ is trivial and we assume that the assertion hold for all $s>k$.
Set
 \begin{align*}
 \bar{F}_k(w)&=\prod_{\substack{k<a\le r,\ \sigma\inv(k)>\sigma\inv(a)}}F_{ii}^\pm(\bar\theta^\pm(k)w,\bar\theta^\pm(a)w),\\
 \bar{G}_k(w)&=\prod_{\substack{k<a\le r,\ \sigma\inv(k)>\sigma\inv(a)}}G_{ii}^\pm(\bar\theta^\pm(k)w,\bar\theta^\pm(a)w).
 \end{align*}
 Note that if $L^\pm_{k+1}(\underline z,w)=0$, then $L^\pm_k(\underline z,w)=0$ and
$F_{\sigma,\theta,k}^\pm(w)=\bar{F}_k^\pm(w) F_{\sigma,\theta,k+1}^\pm(w)=0$.
Thus it suffices to consider the case that $L^\pm_{k+1}(\underline z,w)\ne 0$ (and so  $F_{\sigma,\theta,k+1}^\pm(w)\ne 0)$).
Assume first that  $\bar{G}_k(w)\ne 0$.
Then we have
\[L_k^\pm(\underline z,w)=0 \Leftrightarrow \frac{\bar{F}_k^\pm(w)}{\bar{G}_k^\pm(w)}=0\Leftrightarrow
\bar{F}_k^\pm(w)=0\Leftrightarrow F_{\sigma,\theta,k}^\pm(w)=0.\]

Assume next that $\bar{G}_k(w)=0$. Then there exists $b\in \bar{r}$ such that
$k<b$, $\sigma^{-1}(k)>\sigma^{-1}(b)$ and $G_{ii}^\pm(\bar\theta^\pm(k)w,\bar\theta^\pm(b)w)=0$.
Recall that $G^\pm_{ii}(z,w)=-F_{ii}^\pm(w,z)$.
Then it follows from \eqref{fii=0}  that either $\theta(b)\prec \theta(k)$ or
$\theta(k)=(c,n),\theta(b)=(-c,n-1)$ with $c\in K^1$.
Since $k<b$, it follows from \eqref{lem:flags} that the former case is impossible.
Recall from  \eqref{fii=0other} that $(c,n-2), (-c,n+1)\in M$ with $\theta(k)\prec'(c,n-2), (-c,n+1)\prec' \theta(b)$.
Take $b_1,b_2\in \bar{r}$ such that $\theta(b_1)=(c,n-2)$ and $\theta(b_2)=(-c,n+1)$.
In view of \eqref{lem:flags} and \eqref{fii=0}, we obtain $k<b_l<b$ and
$F_{ii}^\pm(\bar\theta^\pm(k)w,\bar\theta^\pm(b_l)w)=0=F_{ii}^\pm(\bar\theta^\pm(b_l)w,\bar\theta^\pm(b)w)$ for $l=1,2$.
From the induction assumption $F_{\theta,\sigma,k+1}(w)\ne 0$, it follows that  $\sigma\inv(b_l)<\sigma\inv(b)$ and hence $\sigma\inv(k)>\sigma\inv(b_l)$.
Thus we find that $\bar{F}_k^\pm(w)=0$ and hence $F_{\theta,\sigma,k}^\pm(w)=0$.
Finally, notice that
\begin{align*}
  &F_{ii}^\pm(z_k,\bar\theta^\pm(b_1)w)=(z_k^{d_i}+q^{\pm d_i}\bar\theta^\pm(b)^{d_i}w^{d_i})(z_k^{d_i}-q^{\mp 2 d_i}\bar\theta^\pm(b)^{d_i}w^{d_i}),\\
  &F_{ii}^\pm(z_k,\bar\theta^\pm(b_2)w)=(z_k^{d_i}-q^{\pm 4d_i}\bar\theta^\pm(b)^{d_i}w^{d_i})(z_k^{d_i}+q^{\pm d_i}\bar\theta^\pm(b)^{d_i}w^{d_i}),\\
  &G_{ii}^\pm(z_k,\bar\theta^\pm(b)w)=(q^{\mp d_i}z_k^{d_i}+\bar\theta^\pm(b)^{d_i}w^{d_i})(q^{\pm 2d_i}z_k^{d_i}-\bar\theta^\pm(b)^{d_i}w^{d_i}).
\end{align*}
Thus the limit $L_k^\pm(\underline z,w)$ exists and is zero, as required.
\end{proof}

For convenience, we set $\al_{ij,r}^\pm(M,\theta,w)=0$ when $\theta:\bar r\to M$ is a bijective map such that $\theta\not\in\mathcal F_M$.
The following result is the main result of this subsection.
\begin{prop}\label{prop:al-coeff-rel}
Let $r\in\Z_+$, $\sigma\in S_r$, $M\in\mathscr M_r$ and $\theta\in \mathcal F_M$.
Then we have
\begin{align*}
  \al_{ij,r}^\pm(M,\theta\circ\sigma,w)=\al_{ij,r}^\pm(M,\theta,w)
  \varprodright_{l=1}^r
  \lim_{z_l\to\bar\theta^\pm(l)w}\prod_{\substack{1\le a<b\le r\\ \sigma\inv(a)>\sigma\inv(b)}}
  \frac{F_{ii}^\pm(z_a,z_b)}{G_{ii}^\pm(z_a,z_b)}.
\end{align*}
\end{prop}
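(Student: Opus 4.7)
The proof splits according to whether $\theta\circ\sigma\in\mathcal F_M$. If $\theta\circ\sigma\notin\mathcal F_M$, the left-hand side is zero by the convention stated just before the proposition; on the right-hand side, Lemma~\ref{lem:theta-sigma-Fii-nonzero} gives $F_{\theta,\sigma,1}^\pm(w)=0$, and Lemma~\ref{lem:theta-sigma-Fii-Gii} (applied with $k=1$) then forces the iterated limit $L_1^\pm(\underline z,w)$ to vanish, so both sides agree. Throughout the rest of the proof I assume $\theta\circ\sigma\in\mathcal F_M$.

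For the base case $\sigma=s_k$ of an adjacent transposition, since $(\theta\circ s_k)(l)=\theta(l)$ and $(\theta\circ s_k)(\bar r\setminus\bar l)=\theta(\bar r\setminus\bar l)$ for $l\neq k,k+1$, all factors with $l<k$ or $l>k+1$ cancel pairwise in the ratio $\alpha_{ij,r}^\pm(M,\theta\circ s_k,w)/\alpha_{ij,r}^\pm(M,\theta,w)$, leaving only
\[
\frac{\alpha^\pm(\theta(k+1),\{\theta(k)\}\cup M_{k+1},w)\,\alpha^\pm(\theta(k),M_{k+1},w)}{\alpha^\pm(\theta(k),\{\theta(k+1)\}\cup M_{k+1},w)\,\alpha^\pm(\theta(k+1),M_{k+1},w)},
\]
where $M_l:=\theta(\bar r\setminus\bar l)$. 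From the defining formula of $\alpha^\pm(a,M',w)$ (in both the non-star and star cases), adding a single point $b$ to $M'$ multiplies the denominator inside the limit by $f_{ii}^\pm(\tau^\pm(a)w,\tau^\pm(b)w)$. Setting $\tau_l:=\bar\theta^\pm(l)w$, the displayed ratio therefore collapses to $f_{ii}^\pm(\tau_k,\tau_{k+1})/f_{ii}^\pm(\tau_{k+1},\tau_k)$; using $f_{ii}^\pm(z,w)=F_{ii}^\pm(z,w)/(z^{d_i}-w^{d_i})$ together with $G_{ii}^\pm(z,w)=-F_{ii}^\pm(w,z)$, this simplifies to $F_{ii}^\pm(\tau_k,\tau_{k+1})/G_{ii}^\pm(\tau_k,\tau_{k+1})$. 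Since the only pair $(a,b)$ with $a<b$ and $s_k\inv(a)>s_k\inv(b)$ is $(k,k+1)$, the right-hand side of the proposition evaluates to the same quantity after the ordered limit.

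For general $\sigma$, I use that $\mathcal F_M$ coincides (via the characterization~\eqref{lem:flags}) with the set of linear extensions of the poset $(M,\prec)$, together with the well-known result that any two linear extensions of a finite poset are joined by a path of length equal to their distance in the graph whose edges are adjacent swaps of incomparable elements. This furnishes a reduced expression $\sigma=s_{k_1}s_{k_2}\cdots s_{k_\ell}$ such that every partial product $\theta_p:=\theta\circ s_{k_1}\cdots s_{k_p}$ is again in $\mathcal F_M$. Iterating the base case along this path yields
\[
\frac{\alpha_{ij,r}^\pm(M,\theta\circ\sigma,w)}{\alpha_{ij,r}^\pm(M,\theta,w)}=\prod_{p=1}^{\ell}\frac{F_{ii}^\pm(\tau_{a_p},\tau_{b_p})}{G_{ii}^\pm(\tau_{a_p},\tau_{b_p})},
\]
where $a_p:=\sigma_{p-1}(k_p)$, $b_p:=\sigma_{p-1}(k_p+1)$ and $\sigma_{p-1}:=s_{k_1}\cdots s_{k_{p-1}}$; this uses the identity $\bar\theta_{p-1}^\pm(k)=\bar\theta^\pm(\sigma_{p-1}(k))$ to trace every intermediate evaluation point back to the $\tau_l$'s of the original $\theta$. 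The classical Coxeter-theoretic description of the inversion set of $\sigma\inv$ via any reduced expression identifies $\{(a_p,b_p)\}_{p=1}^{\ell}$ with $\{(a,b):a<b,\ \sigma\inv(a)>\sigma\inv(b)\}$, and Lemma~\ref{lem:theta-sigma-Fii-Gii} confirms that the ordered limit on the right-hand side of the proposition is well-defined and realizes precisely this product.

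The main obstacle lies in the bookkeeping of the iteration: verifying (a) that after each adjacent swap the two evaluation points in the next factor are $\tau_{a_p},\tau_{b_p}$ indexed by positions in the original $\theta$ (rather than in the current intermediate flag), and (b) that, along a reduced decomposition chosen to keep all intermediate permutations in $\mathcal F_M$, the collection $\{(a_p,b_p)\}_{p=1}^{\ell}$ enumerates exactly the inversion set of $\sigma\inv$ appearing on the right-hand side. Claim (a) follows directly from $\bar\theta_{p-1}^\pm(k)=\bar\theta^\pm(\sigma_{p-1}(k))$; claim (b) is the classical Coxeter-theoretic identification of inversions with the positive roots $\sigma_{p-1}(\alpha_{k_p})$ arising from the reduced expression. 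Combined with Lemma~\ref{lem:theta-sigma-Fii-Gii}, these two observations complete the proof.
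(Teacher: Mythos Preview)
Your argument is correct and takes a genuinely different route from the paper. The paper never manipulates the $\alpha^\pm$'s directly: instead it uses the relation (Q8) to obtain a functional equation for the $g$-commutator, expands both sides via Proposition~\ref{prop:g-commutator-cal-final}, and then reads off the identity between the $\alpha$-coefficients by invoking the linear independence of the products of $\delta$-functions (Lemma~\ref{lem:delta-function-independent}). This indirect method is insensitive to whether a given boundary point lies in $\partial M$ or $\partial^\ast M$: all of that bookkeeping is already absorbed into Proposition~\ref{prop:g-commutator-cal-next}. Your approach, by contrast, works straight from the defining limit for $\alpha^\pm(a,M',w)$, establishes the adjacent-transposition case by a direct ratio computation, and then propagates along a reduced word using the standard Coxeter identification of inversion sets with the sequence of positive roots $\sigma_{p-1}(\alpha_{k_p})$, together with the connectivity of the linear-extension graph of $(M,\prec)$.

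One point you pass over deserves a sentence of justification: when you write that adding a single point $b$ to $M'$ multiplies the denominator by $f_{ii}^\pm(\tau^\pm(a)w,\tau^\pm(b)w)$, you implicitly use that the exponent (1 or 2) in the numerator of $\alpha^\pm(a,M',w)$ does not change, and that the extra factor $f_{ii}^\pm(\tau^\pm(a)w,\tau^\pm(b)w)$ is neither zero nor a pole. Both hold: the $\partial^\ast$-status of $a$ depends only on $p_c^1(M')$ for the relevant $c$, which is unaffected by adjoining $b\neq a$; and since $\theta$ and $\theta\circ s_k$ are both flags, $\theta(k)$ and $\theta(k+1)$ are $\prec$-incomparable, which together with both lying in $\partial M_{k+1}$ rules out the vanishing of $F_{ii}^\pm$ at $(\tau_k,\tau_{k+1})$. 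With that check made explicit, your proof is complete. The trade-off is that your method is more elementary and self-contained but requires this small case analysis, whereas the paper's operator-theoretic argument hides that analysis inside Proposition~\ref{prop:g-commutator-cal-next}.
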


\begin{proof}
In view of Lemmas \ref{lem:theta-sigma-Fii-nonzero} and \ref{lem:theta-sigma-Fii-Gii}, it suffices to prove the assertion for the case that $\theta\circ\sigma\in\mathcal F_M$.
For each $1\le a<r$, write $\sigma_a=(a,a+1)\in S_r$.
From the definition of $g$-commutators (see \eqref{eq:def-g-commutator}) and the relations (Q8), it follows that
\begin{align*}
  &F_{ii}^\pm(z_a,z_{a+1})[x_i^\pm(z_1),x_i^\pm(z_2),\dots,x_i^\pm(z_r),x_j^\pm(w)]_g\\
  =&G_{ii}^\pm(z_a,z_{a+1})
  [x_i^\pm(z_{\sigma_a(1)}),x_i^\pm(z_{\sigma_a(2)}),
    \dots,x_i^\pm(z_{\sigma_a(r)}),x_j^\pm(w)]_g.
\end{align*}
By an induction argument, we also have
\begin{align*}
&\prod_{\substack{1\le a<b\le r\\ \sigma\inv(a)>\sigma\inv(b)}}F_{ii}^\pm(z_a,z_b)[x_i^\pm(z_1),x_i^\pm(z_2),\dots,x_i^\pm(z_r),x_j^\pm(w)]_g\\
&\quad=\prod_{\substack{1\le a<b\le r\\ \sigma\inv(a)>\sigma\inv(b)}}G_{ii}^\pm(z_a,z_b)[x_i^\pm(z_{\sigma(1)}),x_i^\pm(z_{\sigma(2)}),\dots,x_i^\pm(z_{\sigma(r)}),x_j^\pm(w)]_g.
\end{align*}
Combining this with Proposition \ref{prop:g-commutator-cal-final} and \eqref{lem:delta-theta-z-invariance}, we obtain:
\begin{align*}
  &\sum_{M\in\mathscr M_r}\sum_{\theta\in\mathcal F_M}
    F_{\theta,\sigma,1}^\pm(w)
    \al_{ij,r}^\pm(M,\theta,w)X_{ij,r}^\pm(M,w)\delta^\pm(M,\theta,\underline z,w)\\
= &\sum_{M\in\mathscr M_r}\sum_{\theta\in\mathcal F_M}
    G_{\theta,\sigma,1}^\pm(w)
    \al_{ij,r}^\pm(M,\theta,w)X_{ij,r}^\pm(M,w)\delta^\pm(M,\theta,\sigma\underline z,w)\\
=&\sum_{M\in\mathscr M_r}\sum_{\theta\in\mathcal F_M}
    G_{\theta,\sigma,1}^\pm(w)
    \al_{ij,r}^\pm(M,\theta,w)X_{ij,r}^\pm(M,w)\delta^\pm(M,\theta\circ\sigma\inv,\underline z,w)\\
=&\sum_{M\in\mathscr M_r}\sum_{\theta\circ\sigma\in\mathcal F_M}
   G_{\theta,\sigma,1}^\pm(w)
    \al_{ij,r}^\pm(M,\theta\circ\sigma,w)X_{ij,r}^\pm(M,w)\delta^\pm(M,\theta,\underline z,w).
\end{align*}
Now the proposition follows from  Lemma  \ref{lem:delta-function-independent} and Lemma \ref{lem:theta-sigma-Fii-Gii}.
\end{proof}

\subsection{Proof of Theorem \ref{prop:Dr-to-normal-ordering}}\label{subsec:qDr-to-normal-ordering}
In this subsection, we complete the proof of Theorem \ref{prop:Dr-to-normal-ordering}.
Throughout this subsection, let $W\in \R_\pm^s, (i,j)\in \mathbb I$, $m\in \Z_+$,
$f^\pm(z,w)\in \C[[\hbar]][z^{d_i},w^{d_i}]$ and $B=(B_0,\dots,B_m)\in (\C[[\hbar]])^{m+1}$ be as in Theorem  \ref{prop:Dr-to-normal-ordering}.

For $0\le r\le m$ and $M\in \mathscr M_r$, we  set
\begin{align*}
  &D_{r,M}^\pm=\sum_{\sigma\in S_m}\sum_{s= r}^m
  \sum_{\substack{J=\{j_1<\cdots<j_r\}\subset\bar s\\
    \{j_{r+1}<\cdots<j_s\}=\bar s\setminus J}}
  \sum_{\theta\in\mathcal F_M}
  (-1)^sB_{s}
  \prod_{1\le a<b\le m}f^\pm(z_{\sigma(a)},z_{\sigma(b)})\\
&\times
  \prod_{\substack{1\le a\le r<b\le s\\ j_a>j_b}}
    g_{ii}(z_{\sigma(j_b)}/z_{\sigma(j_b)})^{\mp 1}
  \prod_{r<b\le s}g_{ji}(z_{\sigma(j_b)}/w)^{\mp 1}
  \al_{ij,r}^\pm(M,\theta,w)X_{ij,r}^\pm(M,w)\\
&\times
  \delta^\pm(M,\theta,\sigma\underline z_J,w)
  x_i^\pm(z_{\sigma(j_{r+1})})\cdots x_i^\pm(z_{\sigma(j_s)})
  x_i^\pm(z_{\sigma(s+1)})\cdots x_i^\pm(z_{\sigma(m_{ij})}).
\end{align*}
Recall the current $D_{ij}^\pm(m,f^\pm,B)$ defined in Section \ref{subsec:aff-qSerre-rel}. We have:

\begin{lem}\label{lem:Dr-to-g-commutator-pre000} As operators on $W$, one has
\[D_{ij}^\pm(m,f^\pm,B)=\sum_{r=0}^{m}\sum_{M\in\mathscr M_r}D_{r,M}^\pm.\]
\end{lem}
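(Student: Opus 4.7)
The plan is to expand each summand of $D_{ij}^\pm(m,f^\pm,B)$ into the $g$-commutator basis supplied by Proposition \ref{prop:g-commutator-cal-final}. Fix $\sigma\in S_m$ and $0\le s\le m$, so the corresponding term of $D_{ij}^\pm(m,f^\pm,B)$ contains the monomial
\[
P_{\sigma,s} := x_i^\pm(z_{\sigma(1)})\cdots x_i^\pm(z_{\sigma(s)})\,x_j^\pm(w)\,x_i^\pm(z_{\sigma(s+1)})\cdots x_i^\pm(z_{\sigma(m)})
\]
with prefactor $(-1)^s B_s\prod_{1\le a<b\le m} f^\pm(z_{\sigma(a)},z_{\sigma(b)})$. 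The idea is to reorganise the left block $x_i^\pm(z_{\sigma(1)})\cdots x_i^\pm(z_{\sigma(s)})\,x_j^\pm(w)$ so that $x_j^\pm(w)$ stands on the right of a nested $g$-commutator of some chosen subset of the $x_i^\pm(z_{\sigma(k)})$'s, the remaining ones having been commuted past $x_j^\pm(w)$ to merge with the tail already to its right.

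To this end I invert the recursion \eqref{eq:def-g-commutator}, which gives, for each $1\le k\le s$,
\[
x_i^\pm(z_{\sigma(k)})\,[x_i^\pm(z_{\sigma(k+1)}),\ldots,x_i^\pm(z_{\sigma(s)}),x_j^\pm(w)]_g = [x_i^\pm(z_{\sigma(k)}),\ldots,x_j^\pm(w)]_g + \Phi_k\cdot [x_i^\pm(z_{\sigma(k+1)}),\ldots,x_j^\pm(w)]_g\,x_i^\pm(z_{\sigma(k)}),
\]
where $\Phi_k = g_{ji}(z_{\sigma(k)}/w)^{\mp 1}\prod_{k<a\le s}g_{ii}(z_{\sigma(k)}/z_{\sigma(a)})^{\mp 1}$ and both summands carry coefficient $+1$. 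Applying this identity from $k=s$ downward to $k=1$ and expanding by an induction on $s$ yields
\[
P_{\sigma,s} = \sum_{r=0}^{s}\sum_{\substack{J=\{j_1<\cdots<j_r\}\subset\bar s\\ \bar s\setminus J=\{j_{r+1}<\cdots<j_s\}}}\Psi_{\sigma,s,J}\cdot[x_i^\pm(z_{\sigma(j_1)}),\ldots,x_i^\pm(z_{\sigma(j_r)}),x_j^\pm(w)]_g\cdot T_{\sigma,s,J},
\]
where $\Psi_{\sigma,s,J} = \prod_{\substack{1\le a\le r<b\le s\\ j_a>j_b}}g_{ii}(z_{\sigma(j_b)}/z_{\sigma(j_a)})^{\mp 1}\prod_{r<b\le s}g_{ji}(z_{\sigma(j_b)}/w)^{\mp 1}$ records the commutations of the pushed-through currents past the kept currents with larger original index and past $x_j^\pm(w)$, while the tail $T_{\sigma,s,J} = x_i^\pm(z_{\sigma(j_{r+1})})\cdots x_i^\pm(z_{\sigma(j_s)})\cdot x_i^\pm(z_{\sigma(s+1)})\cdots x_i^\pm(z_{\sigma(m)})$ concatenates the pushed-through currents with those already lying to the right of $x_j^\pm(w)$ in $P_{\sigma,s}$.

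Substituting Proposition \ref{prop:g-commutator-cal-final} for each $r$-fold bracket converts it into $\sum_{M\in\mathscr M_r}\sum_{\theta\in\mathcal F_M}\alpha_{ij,r}^\pm(M,\theta,w)\,X_{ij,r}^\pm(M,w)\,\delta^\pm(M,\theta,\sigma\underline z_J,w)$, with $\sigma\underline z_J := (z_{\sigma(j_1)},\ldots,z_{\sigma(j_r)})$. Assembling the triple sum over $(\sigma,s,J)$, keeping the external prefactor $(-1)^s B_s\prod f^\pm$ untouched, swapping the order of summation so that $r=|J|$ and $M\in\mathscr M_r$ become the primary outer variables, and matching the two factors in $\Psi_{\sigma,s,J}$ term by term with the two products of $g$-functions in the definition of $D_{r,M}^\pm$ yields $D_{ij}^\pm(m,f^\pm,B) = \sum_{r=0}^m\sum_{M\in\mathscr M_r}D_{r,M}^\pm$, as claimed.

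The only obstacle is purely combinatorial bookkeeping in the unfolding identity for $P_{\sigma,s}$. One must verify that: (i) the $g_{ii}$-phases accumulate over exactly the pairs with $1\le a\le r<b\le s$ and $j_a>j_b$, so that each pushed-through current with original position $j_b$ crosses precisely the kept currents at strictly larger original positions; (ii) each pushed-through current contributes precisely one factor $g_{ji}(z_{\sigma(j_b)}/w)^{\mp 1}$ coming from its single crossing of $x_j^\pm(w)$; and (iii) the tail $T_{\sigma,s,J}$ preserves the left-to-right order of both the pushed-through currents and the currents already to the right of $x_j^\pm(w)$. All three assertions follow by direct induction from the recursive structure of the inverted \eqref{eq:def-g-commutator}, and no analytic input beyond Proposition \ref{prop:g-commutator-cal-final} is required.
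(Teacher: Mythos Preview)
Your proof is correct and follows the same approach as the paper's: expand $x_i^\pm(z_{\sigma(1)})\cdots x_i^\pm(z_{\sigma(s)})x_j^\pm(w)$ by inverting the $g$-commutator recursion \eqref{eq:def-g-commutator} into a sum over subsets $J\subset\bar s$ of indices kept in the bracket, then substitute Proposition \ref{prop:g-commutator-cal-final} and regroup. The paper states the single-step identity with the product ranging over the \emph{current} kept set rather than all $k<a\le s$ (which is what the iterated application actually requires), but your final $\Psi_{\sigma,s,J}$ and the bookkeeping (i)--(iii) match the paper's expansion formula exactly.
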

\begin{proof} Take a subset $J=\{j_1<\cdots<j_s\}$ of $\bar{r}$ and set $\bar r\setminus J=\{j_1'<\cdots<j_t'\}$.
By definition, it is straightforward to check that
\begin{align*}
&x_i^\pm(z_0)[x_i^\pm(z_{j_1}),x_i^\pm(z_{j_2}),\cdots,x_i^\pm(z_{j_s}),x_j^\pm(w)]_g
    x_i^\pm(z_{j_1'})x_i^\pm(z_{j_2'})\cdots x_i^\pm(z_{j_t'})\\
=&[x_i^\pm(z_0),x_i^\pm(z_{j_1}),x_i^\pm(z_{j_2}),\cdots
    x_i^\pm(z_{j_s}),x_j^\pm(w)]_g
    x_i^\pm(z_{j_1'})x_i^\pm(z_{j_2'})\cdots x_i^\pm(z_{j_t'})\\
 &+\prod_{a\in J}g_{ii}(z_0/z_a)^{\mp 1}g_{ji}(z_0/w)^{\mp 1}
 [x_i^\pm(z_{j_1}),x_i^\pm(z_{j_2}),\cdots,x_i^\pm(z_{j_s}),x_j^\pm(w)]_g\\
 &\quad\quad\times
 x_i^\pm(z_0)x_i^\pm(z_{j_1'})x_i^\pm(z_{j_2'})\cdots x_i^\pm(z_{j_t'}).
\end{align*}
Using this and an induction argument on $r$, we have
\begin{align*}
  x_i^\pm&(z_1)x_i^\pm(z_2)\cdots x_i^\pm(z_r)x_j^\pm(w)
= \sum_{J\subset \bar{r}}\(\prod_{\substack {a>b,\ a\in J,\ b\not\in J}}g_{ii}(z_b/z_a)^{\mp 1}
    \prod_{b\not\in J}g_{ji}(z_b/w)^{\mp 1}\)\\
&\times
    [x_i^\pm(z_{j_1}),x_i^\pm(z_{j_2}),\cdots,x_i^\pm(z_{j_s}),x_j^\pm(w)]_g
    x_i^\pm(z_{j_1'})x_i^\pm(z_{j_2'})\cdots x_i^\pm(z_{j_t'}).
\end{align*}
Then the assertion follows from the above formula and Proposition
\ref{prop:g-commutator-cal-final}.
\end{proof}

For $r\in\N$ with $r\le m$, we set
\begin{align}
\mathscr N_{r}=\{M\in \mathscr M_r\mid p_c^0< -a_{ij}\ \te{for all}\ c\in K^0\}
\quad\te{and}\quad \mathscr N_{r}^c=\mathscr M_r\setminus \mathscr N_{r}.
\end{align}
\begin{lem}\label{lem:G-ij-mult-non-zero}
For each $r\in\N$, $M\in\mathscr N_{r}$ and $\theta\in \mathcal F_M$, the following holds
\begin{align*}
  \lim_{\substack{z_s\to\bar\theta^\pm(s)w\\ 1\le s\le r}}
   \prod_{1\le a\le r}G_{ij}^\pm(z_a,w)\ne 0.
\end{align*}
\end{lem}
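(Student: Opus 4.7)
The plan is to compute $G_{ij}^\pm(\tau^\pm(a)w,w)$ explicitly for every $a\in M$ and observe that under the hypothesis $M\in\mathscr N_r$ no factor can vanish. First, by conditions (LC2) and (LC3), $\Gamma_{ij}$ is a subgroup of $\Z_N$ and $a_{i\mu^k(j)}=a_{ij}$ for every $k\in\Gamma_{ij}$, hence
\[
G_{ij}^\pm(z,w)=\prod_{k\in\Gamma_{ij}}\bigl(q_i^{\pm a_{ij}}z-\xi^k w\bigr).
\]
Thus, for any $a=(c,n)\in M$ with $c\in K$,
\[
G_{ij}^\pm(\tau^\pm(a)w,w)=w^{d_{ij}}\prod_{k\in\Gamma_{ij}}\Bigl(q_i^{\pm(a_{ij}+n)}\,c^{1/d_i}-\xi^k\Bigr),
\]
so it suffices to show that each factor $q_i^{\pm(a_{ij}+n)}c^{1/d_i}-\xi^k$ is a nonzero element of $\C[[\hbar]]$.

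Next I would exploit the $\hbar$-adic structure. Since $q_i=e^{r_i\hbar}$ with $r_i\in\Z_+$, the element $q_i^s-1$ is zero in $\C[[\hbar]]$ iff $s=0$. Consequently, for $\lambda\in\C^\times$ and $s\in\Z$, the element $q_i^s\lambda-\xi^k$ vanishes in $\C[[\hbar]]$ iff simultaneously $s=0$ and $\lambda=\xi^k$. Applying this observation, the factor indexed by $k$ vanishes precisely when $a_{ij}+n=0$ \emph{and} $c^{1/d_i}=\xi^k$.

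Now I would split into the two types of elements of $M$ allowed by the definition of $\mathscr M_r$. If $a=(c,a_{ij}+2p)\in M^0$ with $c\in K^0$ and $0\le p\le p_c^0$, then $a_{ij}+n=2a_{ij}+2p$; the hypothesis $M\in\mathscr N_r$ gives $p\le p_c^0<-a_{ij}$, hence $2a_{ij}+2p\ne 0$, and the factor is nonzero. If $a=(-c,a_{ij}-1+2m)\in M^1$ with $c\in K^0$, then $a_{ij}+n=2a_{ij}-1+2m$ is an odd integer and thus nonzero for free, regardless of the constraint defining $\mathscr N_r$.

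Combining these two cases, every factor of $G_{ij}^\pm(\tau^\pm(a)w,w)$ is a nonzero element of $\C[[\hbar]]$ for each $a\in M$, so the total product $\prod_{a=1}^{r}G_{ij}^\pm(\bar\theta^\pm(a)w,w)$ is nonzero. The main (in fact only) subtlety is the bookkeeping for $c^{1/d_i}$ when $c\in K^1$: the $(-1)$-twist shifts the parity of $n$ to odd, which is exactly what makes the second case automatic; the first case is where the restrictive hypothesis $p_c^0<-a_{ij}$ of $\mathscr N_r$ is actually used.
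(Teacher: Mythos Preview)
Your proof is correct and follows essentially the same line as the paper's. The paper uses the already-simplified form $G_{ij}^\pm(z,w)=q_i^{\pm d_{ij}a_{ij}}z^{d_{ij}}-w^{d_{ij}}$ (recorded at the start of Section~\ref{sec:qptar} under (LC3)) and argues that a vanishing factor forces $\theta(a)=(c,-a_{ij})$ with $c\in K^0$, hence $p_c^0\ge -a_{ij}$, contradicting $M\in\mathscr N_r$; you instead keep the factored product and derive the same conclusion by checking that $a_{ij}+n\ne 0$ via the parity split between $M^0$ and $M^1$. These are the same argument in slightly different dress.
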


\begin{proof}
Notice that
\begin{align*}
 \lim_{\substack{z_s\to\bar\theta^\pm(s)w\\ 1\le s\le r}}
   \prod_{1\le a\le r}G_{ij}^\pm(z_a,w)
  =\prod_{1\le a\le r}
  \(\bar\theta^\pm(a)^{d_{ij}}q_i^{\pm d_{ij}a_{ij}}w^{d_{ij}}-w^{d_{ij}}\).
\end{align*}
Assume that there exist some $a$ such that $\bar\theta^\pm(a)^{d_{ij}}q_i^{\pm d_{ij}a_{ij}}=1$. Then
$\theta(a)=(c,-a_{ij})$ for some $c\in K^0$.
From the definition of $\mathscr M_r$, one has
\begin{align*}
&(c,a_{ij}+2p)\in M\quad\te{for } 0\le p\le -a_{ij}.
\end{align*}
This implies $p_c^0\ge -a_{ij}$, a contradiction with $M\in\mathscr N_{r}$.
\end{proof}

Let $r\in \N$ with $r\le m$.
As usual, we define an $(m,r)$-shuffle to be an element $\sigma\in S_m$ such that
\begin{align}
  \sigma(a)<\sigma(b),\ \sigma(a')<\sigma(b')\quad \mbox{for}\ 1\le a<b\le r< a'<b'\le m.
\end{align}
Denote by $S_{m,r}$ the set of all $(m,r)$-shuffles.
And, for a subset $J$ of $\bar{m}=\{1,\dots,m\}$, denote by
\begin{align*}
S_J=\{\sigma\in S_m\mid \sigma(j)\in J\ \te{and}\ \sigma(j')=j'\ \te{for}\ j\in J, j'\notin J\}.
\end{align*}
Note that for $\sigma\in S_{m,r}$, we have
\begin{align}\label{propertysmr}
\sigma(a)\ge a\ \te{if}\ a\le r,\quad \sigma(b)\le b\ \te{if}\ b>r\quad \te{and}\quad \sigma(c)=c\ \te{if}\ c>\sigma(r).
\end{align}
In particular, we have
\begin{align}\label{lem:S(r,m-r)->sigma(r)-fixed}
  \sigma(\bar s)=\bar s\quad\te{and}\quad
  \sigma(\bar m\setminus\bar s)=\bar m\setminus\bar s
  \quad\te{for }\sigma\in S_{m,r}\ \te{and}\ s\ge\sigma(r).
\end{align}

It is straightforward to see that for $\sigma\in S_m$,
\begin{align}\label{lem:Dr-to-g-commutator-lem0}
&\prod_{1\le a<b\le m}
    \frac{G_{ii}^\pm(z_{\sigma(a)},z_{\sigma(b)})}{G_{ii}^\pm(z_a,z_b)}
=(-1)^{|\sigma|}\prod_{\substack{1\le a<b\le m,\ \sigma\inv(a)>\sigma\inv(b) }}
    \frac{F_{ii}^\pm(z_a,z_b)}{G_{ii}^\pm(z_a,z_b)}.
\end{align}
Using \eqref{propertysmr} and \eqref{lem:Dr-to-g-commutator-lem0}, one can easily check that for
$\sigma\in S_{m,r}$
and $s\ge\sigma(r)$,
\begin{align}
\label{lem:Dr-to-g-commutator-lem1}  &\prod_{\substack {1\le a\le r<b\le s,\ \sigma(a)>\sigma(b)}}
    \frac{G_{ii}^\pm(z_b,z_a)}{F_{ii}^\pm(z_b,z_a)}
=(-1)^{|\sigma|}\prod_{1\le a<b\le m}
    \frac{G_{ii}^\pm(z_{\sigma\inv(a)},z_{\sigma\inv(b)})}
        {G_{ii}^\pm(z_a,z_b)},\\
&\prod_{r<a\le s}\frac{G_{ij}^\pm(z_a,w)}{F_{ij}^\pm(z_a,w)}
= \prod_{1\le a\le r}\frac{1}{G_{ij}^\pm(z_a,w)}
  \prod_{r<b\le m}\frac{1}{F_{ij}^\pm(z_b,w)}\nonumber\\
\label{lem:Dr-to-g-commutator-lem2}&\times
  \prod_{1\le a\le s}G_{ij}^\pm(z_{\sigma\inv(a)},w)
  \prod_{s<b\le m}F_{ij}^\pm(z_{\sigma\inv(a)},w).
\end{align}

For a subset $H$ of $S_m$, we define the polynomial
\begin{align*}
  P^\pm(H,\underline z,w)
  =\sum_{\sigma\in H}(-1)^{|\sigma|}Q^\pm(\underline z\sigma\inv,w),
\end{align*}
where
\begin{align*}
  Q^\pm(\underline z,w)
  =\sum_{r=0}^m (-1)^r B_{r}
  \prod_{1\le a<b\le m}f^\pm(z_a,z_b)G_{ii}^\pm(z_a,z_b)
  \prod_{a=0}^r G_{ij}^\pm(z_a,w)
  \prod_{b=r+1}^m F_{ij}^\pm(z_b,w).
\end{align*}
Note that when $H=S_m$, the polynomials $P^\pm(S_m,\underline z,w)$ coincide with $P^\pm_{ij}(m,f^\pm,B)$ (see Section \ref{subsec:aff-qSerre-rel}). It is straightforward to see that
\begin{align}\label{eq:P-sigma-invariance}
  P^\pm(H,\sigma\underline z,w)=(-1)^{|\sigma|}P^\pm(H\sigma\inv,\underline z,w)\quad
  \te{for }\sigma\in S_m.
\end{align}

We have:

\begin{lem}\label{lem:Dr-to-g-commutator}
Let $r\in \N$ with $r\le m$ and $M\in\mathscr N_r$.
Then
\begin{align*}
D_{r,M}^\pm=&\sum_{\theta\in \mathcal F_M}\sum_{\sigma\in S_{m}}
C_{r}^\pm(\sigma\underline z,w)
P^\pm(S_{m,r},
    \sigma\underline z,w)
\al_{ij,r}^\pm(M,\theta,w)X_{ij,r}^\pm(w)\\
&\times
\delta^\pm(M,\theta,\sigma\underline z,w)
x_i^\pm(z_{\sigma(r+1)})x_i^\pm(z_{\sigma(r+2)})\cdots x_i^\pm(z_{\sigma(m)}),
\end{align*}
where
\begin{align*}
  C_{r}^\pm&(\underline z,w)=
  \prod_{b=r+1}^{m}\frac{1}{F_{ij}^\pm(z_b,w)}
  \prod_{1\le b\le r}\frac{1}{G_{ij}^\pm(z_b,w)}
  \prod_{1\le a<b\le {m}}
    \frac{ 1 }
    {G_{ii}^\pm(z_a,z_b)}.
\end{align*}
\end{lem}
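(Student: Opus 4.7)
The plan is to reorganize the nested sum defining $D_{r,M}^\pm$ by a change of summation index that exposes the $(m,r)$-shuffle structure encoded in $P^\pm(S_{m,r},\,\cdot\,)$. First, using \eqref{gijex}, I would rewrite each factor $g_{ii}(z_{\sigma(j_a)}/z_{\sigma(j_b)})^{\mp1}$ and $g_{ji}(z_{\sigma(j_b)}/w)^{\mp1}$ appearing in $D_{r,M}^\pm$ as a ratio $G^\pm/F^\pm$. Multiplying and dividing by the missing factors so as to produce the full denominator $C_r^\pm(\sigma\underline z,w)$ in front, the remaining numerator will take a form resembling $Q^\pm(\sigma\underline z_\pi,w)$ for suitable $\pi\in S_m$, with $f^\pm(z_{\sigma(a)},z_{\sigma(b)})G_{ii}^\pm(z_{\sigma(a)},z_{\sigma(b)})$ now appearing symmetrically.

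Second, I would reparametrize the triple sum $\sum_{\sigma\in S_m}\sum_{s=r}^m\sum_{J\subset \bar s}$ as a single sum over $S_m$. Given $(\sigma,s,J)$ with $J=\{j_1<\cdots<j_r\}\subset\bar s$ and $\bar s\setminus J=\{j_{r+1}<\cdots<j_s\}$, the data $(s,J)$ assembles naturally into a shuffle $\rho\in S_{m,r}$ via $\rho(a)=j_a$ for $a\le r$, $\rho(b)=j_b$ for $r<b\le s$, and $\rho(c)=c$ for $c>s$; then $\sigma\cdot\rho$ ranges over all of $S_m$ as $(\sigma,s,J)$ varies. Using \eqref{lem:Dr-to-g-commutator-lem0}--\eqref{lem:Dr-to-g-commutator-lem2} and \eqref{eq:P-sigma-invariance}, the alternating sum over $\rho$ (with its sign $(-1)^{|\rho|}$ coming from the $F/G$ swaps together with $(-1)^s B_s$) assembles precisely into $P^\pm(S_{m,r},\sigma\underline z,w)$.

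The most delicate step concerns the flag. Reordering the first $r$ specialized variables by a permutation $\pi\in S_r$ sends $\delta^\pm(M,\theta,\sigma\underline z_J,w)$ to $\delta^\pm(M,\theta\circ\pi\inv,\sigma\underline z_{\pi(J)},w)$ by \eqref{lem:delta-theta-z-invariance}, so one must convert $\al_{ij,r}^\pm(M,\theta,w)$ into $\al_{ij,r}^\pm(M,\theta\circ\pi\inv,w)$. Proposition \ref{prop:al-coeff-rel} provides exactly this conversion, producing a factor $\varprodright_{l=1}^r\lim_{z_l\to\bar\theta^\pm(l)w}\prod_{a<b,\,\pi(a)>\pi(b)}F_{ii}^\pm(z_a,z_b)/G_{ii}^\pm(z_a,z_b)$. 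After the specialization imposed by the delta functions, these residues cancel exactly against the $F_{ii}^\pm$ factors arising when transporting the $S_r$-subfactors of the shuffle decomposition through the first $r$ slots, and supply the missing $G_{ii}^\pm$ contributions inside $C_r^\pm$. Summing over $\pi\in S_r$ thus converts the sum over $\theta$ into a sum over the full $\mathcal F_M$ on the right-hand side.

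The main obstacle will be a careful bookkeeping of signs, residues, and cancellations across these several reparametrizations. The hypothesis $M\in\mathscr N_r$ is essential here: by Lemma \ref{lem:G-ij-mult-non-zero}, it ensures that $\prod_{a\le r}G_{ij}^\pm(z_a,w)$ does not vanish at the specialization $z_a\mapsto\bar\theta^\pm(a)w$, so that the denominator $C_r^\pm(\sigma\underline z,w)$ makes sense as a factor on the support of $\delta^\pm(M,\theta,\sigma\underline z,w)$. Once all combinatorial matching is verified, substituting back and collecting terms indexed by $(\sigma,\theta)\in S_m\times\mathcal F_M$ yields the claimed formula.
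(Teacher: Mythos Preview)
Your first paragraph is on track, but the reparametrization in the second paragraph is miscounted, and the third paragraph imports machinery that belongs to the next lemma.

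The shuffle $\rho$ you build from $(s,J)$ depends only on $J$: once $J$ is fixed, $\rho(c)=c$ automatically for all $c>j_r$, so varying $s\ge j_r$ leaves $\rho$ unchanged. Thus $(s,J)\leftrightarrow(\rho,s)$ with $\rho\in S_{m,r}$ and $s\ge\rho(r)$; the parameter $s$ survives and cannot be absorbed into $\rho$, so ``$\sigma\cdot\rho$ ranges over all of $S_m$'' is not the correct bookkeeping. The paper keeps the outer $\sum_{\sigma\in S_m}$ intact and rewrites the inner $\sum_{s=r}^m\sum_{J\subset\bar s,\,|J|=r}$ as $\sum_{\rho\in S_{m,r}}\sum_{s=\rho(r)}^m$. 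The key step you omit is then to extend the range to $\sum_{s=0}^m$: the added terms vanish because $F_{ij}^\pm(z_r,w)\,\delta^\pm(M,\theta,\underline z,w)=0$ (the delta specializes $z_r$ so that $F_{ij}^\pm(z_r,w)=0$, since $\theta(r)$ always has second coordinate $a_{ij}$), while $F_{ij}^\pm(z_r,w)$ divides $\prod_{b>s}F_{ij}^\pm(z_{\rho^{-1}(b)},w)$ whenever $s<\rho(r)$. Only after this extension do the sums over $\rho$ and $s$ assemble into $P^\pm(S_{m,r},\sigma\underline z,w)$.

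Your third paragraph is not needed for this lemma: the sum $\sum_{\theta\in\mathcal F_M}$ is already present in the definition of $D_{r,M}^\pm$ and passes straight through to the target without any reshuffling. The conversion of the $\theta$-sum into an $S_r$-sum via Proposition~\ref{prop:al-coeff-rel}, collapsing onto a single fixed $\theta_M$, is precisely the content of Lemma~\ref{lem:Dr-pre001}; you have merged the two arguments.
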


\begin{proof}
We denote by  $T_{r,M}^\pm(\underline z,w)$ the currents:
\begin{align*}
 &\sum_{s= r}^{m}
  \sum_{\substack{J=\{j_1<\cdots<j_r\}\subset\bar s\\
    \{j_{r+1}<\cdots<j_s\}=\bar s\setminus J}}
  \sum_{\theta\in\mathcal F_M}
  (-1)^s B_{s}
  \prod_{1\le a<b\le m}f^\pm(z_{a},z_{b})\\
&\times
  \prod_{\substack{1\le a\le r<b\le s\\ j_a>j_b}}
    g_{ii}(z_{j_b}/z_{j_a})^{\mp 1}
  \prod_{r<b\le s}g_{ji}(z_{j_b}/w)^{\mp 1}
  \al_{ij,r}^\pm(M,\theta,w)X_{ij,r}^\pm(M,w)\\
&\times
  \delta^\pm(M,\theta,\underline z_J,w)
  x_i^\pm(z_{j_{r+1}})\cdots x_i^\pm(z_{j_s})
  x_i^\pm(z_{s+1})\cdots x_i^\pm(z_{m})\\
=&\sum_{s= r}^{m}
  \sum_{\sigma\in S_{s,r}}
  \sum_{\theta\in\mathcal F_M}
  (-1)^s B_{s}
  \prod_{1\le a<b\le m}f^\pm(z_{a},z_{b})\\
&\times
  \prod_{\substack{1\le a\le r<b\le s\\ \sigma(a)>\sigma(b)}}
    g_{ii}(z_{\sigma(b)}/z_{\sigma(a)})^{\mp 1}
  \prod_{r<b\le s}g_{ji}(z_{\sigma(b)}/w)^{\mp 1}
  \al_{ij,r}^\pm(M,\theta,w)X_{ij,r}^\pm(M,w)\\
&\times
  \delta^\pm(M,\theta,\underline z_{\sigma(\bar r)},w)
  x_i^\pm(z_{\sigma({r+1})})\cdots x_i^\pm(z_{\sigma(s)})
  x_i^\pm(z_{s+1})\cdots x_i^\pm(z_{m}).
\end{align*}
By using  \eqref{lem:S(r,m-r)->sigma(r)-fixed},  $T_{r,M}^\pm(\underline z,w)$ can be rewritten as:
\begin{align*}
&\sum_{\sigma\in S_{m,r}}\sum_{s=\sigma(r)}^{m}
\sum_{\theta\in\mathcal F_M}
  (-1)^s B_{s}
  \prod_{1\le a<b\le m}f^\pm(z_{a},z_{b})\\
&\times
  \prod_{\substack{1\le a\le r<b\le s\\ \sigma(a)>\sigma(b)}}
    g_{ii}(z_{\sigma(b)}/z_{\sigma(a)})^{\mp 1}
  \prod_{r<b\le s}g_{ji}(z_{\sigma(b)}/w)^{\mp 1}
  \al_{ij,r}^\pm(M,\theta,w)X_{ij,r}^\pm(M,w)\\
&\times
  \delta^\pm(M,\theta,\underline z_{\sigma(\bar r)},w)
  x_i^\pm(z_{\sigma({r+1})})\cdots x_i^\pm(z_{\sigma(m)}).
\end{align*}
This together with the fact
$g_{ji}(z/w)^{\mp 1}
  ={G_{ij}^\pm(z,w)}/{F_{ij}^\pm(z,w)}$ gives
\begin{align*}
T_{r,M}^\pm(\underline z,w)=&\sum_{\sigma\in S_{m,r}}\sum_{s=\sigma(r)}^{m}
\sum_{\theta\in\mathcal F_M}
  (-1)^s B_{s}
  \prod_{1\le a<b\le m}f^\pm(z_{a},z_{b})\\
&\times
  \prod_{\substack{1\le a\le r<b\le s\\ \sigma(a)>\sigma(b)}}
    \frac{G_{ii}^\pm(z_{\sigma(b)},z_{\sigma(a)})}
        {F_{ii}^\pm(z_{\sigma(b)},z_{\sigma(a)})}
  \prod_{r<b\le s}
    \frac{G_{ij}^\pm(z_{\sigma(b)},w)}
        {F_{ij}^\pm(z_{\sigma(b)},w)}
  \al_{ij,r}^\pm(M,\theta,w)X_{ij,r}^\pm(M,w)\\
&\times
  \delta^\pm(M,\theta,\underline z_{\sigma(\bar r)},w)
  x_i^\pm(z_{\sigma({r+1})})\cdots x_i^\pm(z_{\sigma(m)}).
\end{align*}
Therefore, it follows from Lemma \ref{lem:G-ij-mult-non-zero}, \eqref{lem:Dr-to-g-commutator-lem1}
and \eqref{lem:Dr-to-g-commutator-lem2} that
\begin{align*}
  D_{r,M}^\pm=&\sum_{\sigma_1\in S_{m}}
    T_{r,M}^\pm(\sigma_1\underline z,w)
  =\sum_{\sigma_1\in S_{m}}\sum_{\sigma\in S_{m,r}}\sum_{s=\sigma(r)}^{m}
\sum_{\theta\in\mathcal F_M}
  (-1)^s B_{s} \\
&\times
  \prod_{1\le a<b\le m}
  f^\pm(\sigma_1(z_{\sigma^{-1}(a)}),\sigma_1(z_{\sigma^{-1}(b)}))
  \prod_{\substack{1\le a\le r<b\le s\\ \sigma(a)>\sigma(b)}}
    \frac{G_{ii}^\pm(\sigma_1(z_{b}),\sigma_1(z_{a}))}
        {F_{ii}^\pm(\sigma_1(z_{b}),\sigma_1(z_{a}))}
    \\
&\times
  \prod_{r<b\le s}
    \frac{G_{ij}^\pm(\sigma_1(z_{b}),w)}
        {F_{ij}^\pm(\sigma_1(z_{b}),w)}
  \al_{ij,r}^\pm(M,\theta,w)X_{ij,r}^\pm(M,w)
  \delta^\pm(M,\theta,\sigma_1\underline z_{\bar r},w)\\
&\times
  x_i^\pm(\sigma_1(z_{r+1}))\cdots x_i^\pm(\sigma_1(z_{m}))\\
=&\sum_{\sigma_1\in S_{m}}\sum_{\sigma\in S_{m,r}}\sum_{s=\sigma(r)}^{m}
\sum_{\theta\in\mathcal F_M}
  (-1)^s B_{s}(-1)^{|\sigma|}\\
&\times
  \prod_{1\le a<b\le m}
  f^\pm(\sigma_1(z_{\sigma^{-1}(a)}),\sigma_1(z_{\sigma^{-1}(b)}))
  \prod_{1\le a<b\le m}
    \frac{G_{ii}^\pm(\sigma_1(z_{\sigma^{-1}(a)}),\sigma_1(z_{\sigma^{-1}(b)}))}
        {G_{ii}^\pm(\sigma_1(z_{a}),\sigma_1(z_{b}))}
    \\
&\times
  \prod_{1\le a\le r}\frac{1}{G_{ij}^\pm(\sigma_1(z_a),w)}
  \prod_{r<a\le m}\frac{1}{F_{ij}^\pm(\sigma_1(z_a),w)}
  \prod_{1\le a\le s}G_{ij}^\pm(\sigma_1(z_{\sigma^{-1}(a)}),w)\\
&\times
  \prod_{s<b\le m}F_{ij}^\pm(\sigma_1(z_{\sigma^{-1}(b)}),w)
  \al_{ij,r}^\pm(M,\theta,w)X_{ij,r}^\pm(M,w)
  \delta^\pm(M,\theta,\sigma_1\underline z_{\bar r},w)\\
&\times
  x_i^\pm(\sigma_1(z_{r+1}))\cdots x_i^\pm(\sigma_1(z_{m})).
\end{align*}
Notice that
\begin{align*}
  F_{ij}^\pm(z_{r},w)
  \delta^\pm(M,\theta,\underline z,w)=0\quad\te{for  }
  M\in\mathscr M_r,\,
  \theta\in\mathcal F_M,
\end{align*}
and that
\[\te{$F_{ij}^\pm(z_r,w)$ divides $\displaystyle\prod_{s<b\le m}F_{ij}^\pm(z_{\sigma\inv(b)},w)$
for $s<\sigma(r)$}.\]
So we obtain
\begin{align*}
D_{r,M}^\pm=&\sum_{\sigma_1\in S_{m}}\sum_{\sigma\in S_{m,r}}\sum_{s=0}^{m}
\sum_{\theta\in\mathcal F_M}
  (-1)^s B_{s} (-1)^{|\sigma|}\\
&\times
  \prod_{1\le a<b\le m}
f^\pm(\sigma_1(z_{\sigma^{-1}(a)}),\sigma_1(z_{\sigma^{-1}(b)}))
  \prod_{1\le a<b\le m}
    \frac{G_{ii}^\pm(\sigma_1(z_{\sigma^{-1}(a)}),\sigma_1(z_{\sigma^{-1}(b)}))}
        {G_{ii}^\pm(\sigma_1(z_{a}),\sigma_1(z_{b}))}
    \\
&\times
  \prod_{1\le a\le r}\frac{1}{G_{ij}^\pm(\sigma_1(z_a),w)}
  \prod_{r<a\le m}\frac{1}{F_{ij}^\pm(\sigma_1(z_a),w)}
  \prod_{1\le a\le s}G_{ij}^\pm(\sigma_1(z_{\sigma^{-1}(a)}),w)\\
&\times
  \prod_{s<b\le m}F_{ij}^\pm(\sigma_1(z_{\sigma^{-1}(b)}),w)
  \al_{ij,r}^\pm(M,\theta,w)X_{ij,r}^\pm(M,w)
  \delta^\pm(M,\theta,\sigma_1\underline z_{\bar r},w)\\
&\times
  x_i^\pm(\sigma_1(z_{r+1}))\cdots x_i^\pm(\sigma_1(z_{m}))\\
=&\sum_{\theta\in \mathcal F_M}\sum_{\sigma_1\in S_{m}}
C_{r}^\pm(\sigma_1\underline z,w)
P^\pm(S_{m,r},
    \sigma_1\underline z,w)
\al_{ij,r}^\pm(M,\theta,w)X_{ij,r}^\pm(w)\\
&\times
\delta^\pm(M,\theta,\sigma_1\underline z,w)
x_i^\pm(\sigma_1(z_{r+1}))x_i^\pm(\sigma_1(z_{r+2}))\cdots x_i^\pm(\sigma_1(z_{m})),
\end{align*}
as desired.
\end{proof}

It is straightforward  to see that $\prod_{1\le a<b\le r} G_{ii}^\pm(z_a,z_b)$ divides
$P^\pm(S_{m,r},
    \underline z,w)$.
Combining this with Lemma \ref{lem:delta-theta-z-invariance},
one gets the well-defined current
\begin{align}\label{lem:CP-exists}
  C_r^\pm(\underline z,w)P^\pm(S_{m,r},
    \underline z,w)\delta^\pm(M,\theta,\sigma\underline z,w),
\end{align}
where $0\le r\le m$, $M\in\mathscr M_r$, $\theta\in\mathcal F_M$ and $\sigma\in S_r$.
Then we have

\begin{lem}\label{lem:Dr-pre001}
Let $0\le r\le m$, $M\in\mathscr N_r$ and
 $\theta_M\in\mathcal F_M$.
Then
\begin{align*}
  {D}_{r,M}^\pm=&
  \sum_{\sigma\in S_{m}}
  C_r^\pm(\sigma\underline z,w)
  P^\pm(S_{m,r}S_r,
    \sigma\underline z,w)
        \al_{ij,r}^\pm(M,\theta_M,w)X_{ij,r}^\pm(M,w)\\
&\times
    x_i^\pm(\sigma(z_{r+1}))\cdots x_i^\pm(\sigma(z_{m}))
        \delta^\pm(M,\theta_M, \sigma\underline z,w).
\end{align*}
\end{lem}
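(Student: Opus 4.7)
The plan is to rewrite the double sum over $\mathcal F_M \times S_m$ appearing in Lemma \ref{lem:Dr-to-g-commutator} as a single sum over $S_m$, at the cost of enlarging $S_{m,r}$ to $S_{m,r} S_r$ inside $P^\pm$. First I will fix the reference flag $\theta_M \in \mathcal F_M$ and parameterize each $\theta \in \mathcal F_M$ by a unique $\pi \in S_r$ via $\theta = \theta_M \circ \pi^{-1}$. By Lemma \ref{lem:theta-sigma-Fii-nonzero}, $\pi$ corresponds to an honest flag exactly when $F_{\theta_M, \pi^{-1}, 1}^\pm(w) \ne 0$; and by Lemma \ref{lem:theta-sigma-Fii-Gii} combined with Proposition \ref{prop:al-coeff-rel}, the coefficient $\alpha_{ij,r}^\pm(M, \theta_M \circ \pi^{-1}, w)$ vanishes for all remaining $\pi$. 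Hence the sum over $\mathcal F_M$ may be freely extended to a sum over all of $S_r$, and Proposition \ref{prop:al-coeff-rel} factors out $\alpha_{ij,r}^\pm(M, \theta_M, w)$, leaving a correction $\Phi^\pm(\pi, w)$ equal to the iterated limit of $\prod_{a<b,\,\pi(a) > \pi(b)} F_{ii}^\pm(z_a,z_b)/G_{ii}^\pm(z_a,z_b)$ at $z_l = \bar\theta_M^\pm(l) w$.

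Next I will apply the invariance \eqref{lem:delta-theta-z-invariance} to rewrite $\delta^\pm(M, \theta_M \circ \pi^{-1}, \sigma \underline z, w) = \delta^\pm(M, \theta_M, \sigma\pi \underline z, w)$, where $\pi \in S_r$ is extended to $S_m$ by the identity on $\bar m \setminus \bar r$. I then perform the substitution $\sigma \mapsto \sigma \pi^{-1}$ in the outer sum: since $\pi$ fixes $\bar m \setminus \bar r$, the trailing factor $x_i^\pm(z_{\sigma(r+1)}) \cdots x_i^\pm(z_{\sigma(m)})$ and the delta function (now simply $\delta^\pm(M, \theta_M, \sigma \underline z, w)$) are unaffected, while $C_r^\pm(\sigma \underline z, w)$ and $P^\pm(S_{m,r}, \sigma \underline z, w)$ acquire an extra $\pi^{-1}$ in their argument.

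The decisive technical step, which I expect to be the principal obstacle, will be to show that on the support of $\delta^\pm(M, \theta_M, \sigma \underline z, w)$ the product $\Phi^\pm(\pi, w) \cdot C_r^\pm(\sigma \pi^{-1} \underline z, w)$ reduces to $(-1)^{|\pi|} C_r^\pm(\sigma \underline z, w)$. This will rely on the identity $G_{ii}^\pm(z, w) = -F_{ii}^\pm(w, z)$ coming from \eqref{Fiiep} and \eqref{Giiep}, on the observation that only the factor $\prod_{1 \le a < b \le m} G_{ii}^\pm(z_a, z_b)^{-1}$ inside $C_r^\pm$ transforms nontrivially under $\pi \in S_r$ acting on the first $r$ slots, and on the hypothesis $M \in \mathscr N_r$ which via Lemma \ref{lem:G-ij-mult-non-zero} ensures that every specialization appearing is well-defined and nonvanishing. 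Once this identification is in place, property \eqref{eq:P-sigma-invariance} together with the unique factorization of every element of $S_{m,r} S_r$ as $\tau \rho$ with $\tau \in S_{m,r}$ and $\rho \in S_r$ will yield
$\sum_{\pi \in S_r} (-1)^{|\pi|} P^\pm(S_{m,r}, \sigma \pi^{-1} \underline z, w) = P^\pm(S_{m,r} S_r, \sigma \underline z, w).$

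Assembling these pieces, the double sum collapses into the single sum over $\sigma \in S_m$ featuring the enlarged polynomial $P^\pm(S_{m,r} S_r, \sigma \underline z, w)$, which is exactly the expression claimed in Lemma \ref{lem:Dr-pre001}. The delicate point in the argument is purely combinatorial: verifying that the interaction between the reordering factor $\Phi^\pm(\pi, w)$ from Proposition \ref{prop:al-coeff-rel} and the noncommutative products inside $C_r^\pm$ is governed on the delta-function locus by the single sign $(-1)^{|\pi|}$, so that the $\pi$-sum exactly symmetrizes $S_{m,r}$ to $S_{m,r} S_r$.
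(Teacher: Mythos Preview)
Your proposal is correct and follows essentially the same route as the paper's proof: start from Lemma \ref{lem:Dr-to-g-commutator}, parameterize flags by $S_r$ via a fixed $\theta_M$ (the paper uses $\sigma_1=\theta_M^{-1}\theta$, which is your $\pi^{-1}$), extend the sum to all of $S_r$ using the vanishing convention, invoke Proposition \ref{prop:al-coeff-rel} together with \eqref{lem:Dr-to-g-commutator-lem0} to trade the correction factor for the sign $(-1)^{|\pi|}$ against the $G_{ii}^\pm$-part of $C_r^\pm$, and then apply \eqref{eq:P-sigma-invariance} to collapse the $S_r$-sum into $P^\pm(S_{m,r}S_r,\cdot)$. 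The only cosmetic difference is that the paper states the cancellation as a generic rational identity $C_r^\pm(\sigma_1\underline z,w)\prod_{a<b}G_{ii}^\pm(z_{\sigma_1(a)},z_{\sigma_1(b)})/G_{ii}^\pm(z_a,z_b)=C_r^\pm(\underline z,w)$ rather than restricting to the delta locus, but this is exactly your ``decisive technical step'' and the argument is the same.
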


\begin{proof}
From  Lemma \ref{lem:Dr-to-g-commutator}, it follows that
\begin{align*}
  {D}_{r,M}^\pm=&\sum_{\sigma\in S_{m}}
  \sum_{\theta\in\mathcal F_M}
  C_r^\pm(\sigma\underline z,w)
  P^\pm(S_{m,r},
    \sigma\underline z,w)
        \al_{ij,r}^\pm(M,\theta,w)X_{ij,r}^\pm(M,w)\\
&\times
    x_i^\pm(\sigma(z_{r+1}))\cdots x_i^\pm(\sigma(z_{m})))
        \delta^\pm(M,\theta, \sigma\underline z,w)\\
=&\sum_{\sigma\in S_{m}}
  \sum_{\theta\in\mathcal F_M}
  C_r^\pm(\sigma\underline z,w)
  P^\pm(S_{m,r},
    \sigma\underline z,w)
        \al_{ij,r}^\pm(M,\theta,w)X_{ij,r}^\pm(M,w)\\
&\times
    x_i^\pm(\sigma(z_{r+1}))\cdots x_i^\pm(\sigma(z_{m}))
        \delta^\pm(M,\theta_M, \sigma\theta\inv\theta_M\underline z,w)\\
=&\sum_{\sigma\in S_{m}}
  \sum_{\theta\in\mathcal F_M}
  C_r^\pm(\sigma\theta_M^{-1}\theta \underline z,w)
  P^\pm(S_{m,r},
    \sigma\theta_M\inv\theta\underline z,w)
        \al_{ij,r}^\pm(M,\theta_M\theta_M\inv\theta,w)\\
&\times
    X_{ij,r}^\pm(M,w)x_i^\pm(\sigma(z_{r+1}))\cdots x_i^\pm(\sigma(z_{m}))
        \delta^\pm(M,\theta_M, \sigma\underline z,w)\\
=&\sum_{\sigma\in S_{m}}
  \sum_{\sigma_1\in S_r}
  C_r^\pm(\sigma\sigma_1 \underline z,w)
  P^\pm(S_{m,r},
    \sigma\sigma_1\underline z,w)
        \al_{ij,r}^\pm(M,\theta_M\circ\sigma_1,w)\\
&\times
    X_{ij,r}^\pm(M,w)x_i^\pm(\sigma(z_{r+1}))\cdots x_i^\pm(\sigma(z_{m}))
        \delta^\pm(M,\theta_M, \sigma\underline z,w)
\end{align*}
where the second equation follows from Lemma \ref{lem:delta-theta-z-invariance} and \eqref{lem:CP-exists}, and the last one follows from \eqref{lem:CP-exists}
and the convention that
\begin{align*}
  \al_{ij,r}^\pm(M,\theta_M\circ\sigma_1,w)=0\quad\te{if }\theta_M\circ\sigma_1\not\in \mathcal F_M.
\end{align*}
From \eqref{lem:Dr-to-g-commutator-lem0} and Proposition \ref{prop:al-coeff-rel}, we have
\begin{align}\label{eq:Dr-pre001-temp1}
  \al_{ij,r}^\pm(M,&\theta_M\circ\sigma_1,w)
  \delta^\pm(M,\theta_M, \underline z,w)
  =(-1)^{|\sigma_1|}
  \prod_{1\le a<b\le m_{ij}}\frac{G_{ii}^\pm(z_{\sigma_1(a)},z_{\sigma_1(b)})}
    {G_{ii}^\pm(z_a,z_b)}\nonumber\\
&\times
    \al_{ij,r}^\pm(M,\theta_M,w)
    \delta^\pm(M,\theta_M, \underline z,w).
\end{align}
Notice that
\begin{align*}
  C_r^\pm(\sigma_1\underline z,w)
  \prod_{1\le a<b\le m_{ij}}\frac{G_{ii}^\pm(z_{\sigma_1(a)},z_{\sigma_1(b)})}
    {G_{ii}^\pm(z_a,z_b)}
  =C_r^\pm(\underline z,w).
\end{align*}
Then one can conclude from \eqref{eq:Dr-pre001-temp1} that
\begin{align*}
  &C_r^\pm(\sigma_1\underline z,w)
  \al_{ij,r}^\pm(M,\theta_M\circ \sigma_1,w)
   P^\pm(S_{m,r},
    \sigma_1\underline z,w)
  \delta^\pm(M,\theta_M, \underline z,w)\\
=&(-1)^{|\sigma_1|}C_r^\pm(\underline z,w)
    \al_{ij,r}^\pm(M,\theta_M,w)
    P^\pm(S_{m,r},
    \sigma_1\underline z,w)
    \delta^\pm(M,\theta_M, \underline z,w).
\end{align*}
Combining these with the equation \eqref{eq:P-sigma-invariance}, we have that
\begin{align*}
  D_{r,M}^\pm
=&\sum_{\sigma\in S_{m}}
  \sum_{\sigma_1\in S_r}
  C_r^\pm(\sigma\underline z,w)
  (-1)^{|\sigma_1|}
  P^\pm(S_{m,r},
    \sigma\sigma_1\underline z,w)
        \al_{ij,r}^\pm(M,\theta_M,w)\\
&\times
    X_{ij,r}^\pm(M,w)x_i^\pm(\sigma(z_{r+1}))\cdots x_i^\pm(\sigma(z_{m}))
        \delta^\pm(M,\theta_M, \sigma\underline z,w)\\
=&\sum_{\sigma\in S_{m}}
  C_r^\pm(\sigma\underline z,w)
  P^\pm(S_{m,r}S_r,
    \sigma\underline z,w)
        \al_{ij,r}^\pm(M,\theta_M,w)\\
&\times
    X_{ij,r}^\pm(M,w)x_i^\pm(\sigma(z_{r+1}))\cdots x_i^\pm(\sigma(z_{m}))
        \delta^\pm(M,\theta_M, \sigma\underline z,w).
\end{align*}
Therefore, we complete the proof of the lemma.
\end{proof}

\begin{prop}\label{prop:general-D-r-M=0}
If
$P^\pm(S_m,\underline z,w)=0$,
then $D_{r,M}^\pm=0$ on $W$ for all  $0\le r\le m$ and $M\in\mathscr N_{r}$.
\end{prop}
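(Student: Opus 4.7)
\bigskip

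\noindent\textbf{Proof proposal.} The plan is to start from the explicit expression for $D_{r,M}^\pm$ given in Lemma \ref{lem:Dr-pre001} and exploit the decomposition $S_m=(S_{m,r}S_r)\cdot S_{m-r}$, where $S_{m-r}\subset S_m$ denotes the subgroup permuting the indices $\{r+1,\dots,m\}$. As observed in Section \ref{subsec:qDr-to-normal-ordering}, the multiplication map $S_{m,r}\times S_r\times S_{m-r}\to S_m$ is a bijection, so every $\sigma\in S_m$ admits a unique factorization $\sigma=\sigma_0\tau$ with $\sigma_0\in S_{m,r}S_r$ and $\tau\in S_{m-r}$. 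Accordingly, I would rewrite the sum in Lemma \ref{lem:Dr-pre001} as $D_{r,M}^\pm=\sum_{\sigma_0\in S_{m,r}S_r}\sum_{\tau\in S_{m-r}}T(\sigma_0\tau)$ and aim to show that for each fixed $\sigma_0$ the inner sum over $\tau$ vanishes.

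The first key observation is that $\delta^\pm(M,\theta_M,\sigma_0\tau\underline z,w)=\delta^\pm(M,\theta_M,\sigma_0\underline z,w)$, since $\tau$ fixes $\{1,\dots,r\}$ pointwise and $\delta^\pm(M,\theta_M,\cdot,w)$ depends only on the first $r$ entries of its argument. The heart of the argument is then the computation, in the style of the reordering arguments already used in Section \ref{sec:tri} and Section \ref{subsec:qptar}, that the remaining $\tau$-dependent factor
\[
C_r^\pm(\sigma_0\tau\underline z,w)\,\prod_{k=r+1}^{m}x_i^\pm(z_{\sigma_0\tau(k)})
\]
equals exactly $(-1)^{|\tau|}\,C_r^\pm(\sigma_0\underline z,w)\prod_{k=r+1}^m x_i^\pm(z_{\sigma_0(k)})$. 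I would verify this by setting $u_k=z_{\sigma_0(k)}$ and checking that:
\begin{itemize}
\item the $\tau$-dependence of $C_r^\pm(\sigma_0\tau\underline z,w)/C_r^\pm(\sigma_0\underline z,w)$ reduces to $\prod_{\substack{r<p<q\le m\\ \tau^{-1}(p)>\tau^{-1}(q)}}G_{ii}^\pm(u_p,u_q)/G_{ii}^\pm(u_q,u_p)$, as the other factors in $C_r^\pm$ are manifestly invariant under permutation of $z_{r+1},\dots,z_m$;
\item reordering the currents via repeated application of (Q8) for the $i=j$ case yields $\prod_{k=r+1}^m x_i^\pm(u_{\tau(k)})=\prod_{\substack{r<a<b\le m\\ \tau^{-1}(a)>\tau^{-1}(b)}}F_{ii}^\pm(u_a,u_b)/G_{ii}^\pm(u_a,u_b)\,\prod_{k=r+1}^m x_i^\pm(u_k)$;
\item multiplying these two expressions and applying $G_{ii}^\pm(z,w)=-F_{ii}^\pm(w,z)$ (which follows from \eqref{Fiiep}--\eqref{Giiep}) reduces each inversion factor to $-1$, producing $(-1)^{|\tau|}$ overall.
\end{itemize}

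Once the $(-1)^{|\tau|}$ has been extracted, the inner sum over $\tau$ reduces to
\[
C_r^\pm(\sigma_0\underline z,w)\,\alpha_{ij,r}^\pm(M,\theta_M,w)\,X_{ij,r}^\pm(M,w)\,Y(\sigma_0)\,\delta^\pm(M,\theta_M,\sigma_0\underline z,w)\,\sum_{\tau\in S_{m-r}}(-1)^{|\tau|}P^\pm(S_{m,r}S_r,\sigma_0\tau\underline z,w),
\]
so the proof concludes by showing that the scalar sum in the last factor vanishes. For this I would apply the identity $P^\pm(H,\eta\underline z,w)=(-1)^{|\eta|}P^\pm(H\eta^{-1},\underline z,w)$ from \eqref{eq:P-sigma-invariance} to obtain
\[
\sum_{\tau\in S_{m-r}}(-1)^{|\tau|}P^\pm(S_{m,r}S_r,\sigma_0\tau\underline z,w)=(-1)^{|\sigma_0|}\sum_{\tau\in S_{m-r}}P^\pm(S_{m,r}S_r\tau^{-1}\sigma_0^{-1},\underline z,w),
\]
and observe that since $S_m=\bigsqcup_{\tau\in S_{m-r}}S_{m,r}S_r\,\tau^{-1}$ disjointly, the additivity of $P^\pm$ over disjoint unions collapses the right-hand side to $(-1)^{|\sigma_0|}P^\pm(S_m\sigma_0^{-1},\underline z,w)=(-1)^{|\sigma_0|}P^\pm(S_m,\underline z,w)=0$ by the standing hypothesis. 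The main obstacle is thus concentrated in the sign computation: ensuring that the $G_{ii}^\pm/G_{ii}^\pm$ factors produced by $C_r^\pm$ combine with the $F_{ii}^\pm/G_{ii}^\pm$ factors produced by the (Q8)-reordering of the $x_i^\pm$-currents so that each inversion of $\tau$ contributes exactly one sign, with no extraneous rational factor left over.
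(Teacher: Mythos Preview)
Your overall strategy coincides with the paper's: both arguments use the coset decomposition $S_m=(S_{m,r}S_r)\cdot S_{\bar m\setminus\bar r}$ and reduce the vanishing of $D_{r,M}^\pm$ to the hypothesis $P^\pm(S_m,\underline z,w)=0$. Your final scalar computation---collapsing $\sum_{\tau}(-1)^{|\tau|}P^\pm(S_{m,r}S_r,\sigma_0\tau\underline z,w)$ to $(-1)^{|\sigma_0|}P^\pm(S_m,\underline z,w)$ via \eqref{eq:P-sigma-invariance} and the disjoint union $S_m=\bigsqcup_\tau S_{m,r}S_r\,\tau^{-1}$---is correct, and your observation that the $a\le r<b$ cross terms in $C_r^\pm$ are $\tau$-invariant as a product is also correct.

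There is, however, a genuine gap in your second bullet point. You assert that repeated application of (Q8) gives
\[
\prod_{k=r+1}^m x_i^\pm(u_{\tau(k)})=\Bigl(\prod_{\substack{r<a<b\le m\\ \tau^{-1}(a)>\tau^{-1}(b)}}\frac{F_{ii}^\pm(u_a,u_b)}{G_{ii}^\pm(u_a,u_b)}\Bigr)\prod_{k=r+1}^m x_i^\pm(u_k),
\]
but (Q8) is the \emph{polynomial} identity $F_{ii}^\pm(z,w)x_i^\pm(z)x_i^\pm(w)=G_{ii}^\pm(z,w)x_i^\pm(w)x_i^\pm(z)$; it does not allow you to divide out $G_{ii}^\pm$ and obtain a rational-function relation between the two orderings. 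The resulting formal series do not lie in compatible completions, so the displayed equality is not well defined as stated. This is precisely the obstacle the paper addresses: it keeps $P^\pm$ in the expression and invokes the divisibility
\[
\prod_{\substack{r<a<b\le m\\ \sigma_1^{-1}(a)>\sigma_1^{-1}(b)}}F_{ii}^\pm(z_a,z_b)\ \Big|\ P^\pm(S_{m,r}S_r,\underline z\sigma_1,w),
\]
so that the requisite $F_{ii}^\pm$-factors are available and (Q8) can be applied legitimately to the product $C_r^\pm P^\pm\,x_i^\pm\cdots x_i^\pm$. Because you separate $P^\pm$ out for the final summation and attempt to reorder $C_r^\pm\prod x_i^\pm$ on its own, your argument is missing exactly this ingredient. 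The fix is to carry $P^\pm(S_{m,r}S_r,\sigma_0\tau\underline z,w)$ through the reordering and use the divisibility above (with $\sigma_1=\tau$ after the substitution $u_k=z_{\sigma_0(k)}$); once you do this, your computation and the paper's become essentially the same, and in particular the sign bookkeeping you outline then goes through.
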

\begin{proof}
From the assumption $P^\pm(S_m,\underline z,w)=0$, we get
\begin{align*}
  P^\pm(S_{m,r}S_r,
    \underline z,w)=\sum_{1\ne\sigma_1\in S_{\bar m\setminus\bar r}}
    (-1)^{|\sigma_1|+1}P^\pm(S_{m,r}S_r,
    \underline z \sigma_1,w).
\end{align*}
Then it follows from Lemma \ref{lem:Dr-pre001} that
\begin{align}
  {D}_{r,M}^\pm=&
  \sum_{\sigma\in S_{m}}
  C_r^\pm(\sigma\underline z,w)
  P^\pm(S_{m,r}S_r,
    \sigma\underline z,w)
        \al_{ij,r}^\pm(M,\theta_M,w)X_{ij,r}^\pm(M,w)\nonumber\\
&\times
    x_i^\pm(z_{\sigma(r+1)})\cdots x_i^\pm(z_{\sigma(m)})
        \delta^\pm(M,\theta_M, \sigma\underline z,w)\nonumber\\
=&\sum_{\sigma\in S_{m}}
  \sum_{1\ne\sigma_1\in S_{\bar m\setminus\bar r}}
  (-1)^{|\sigma_1|+1}
  C_r^\pm(\sigma\underline z,w)
  P^\pm(S_{m,r}S_r,
    \sigma\underline z\sigma_1,w)\nonumber\\
 &\times
    \al_{ij,r}^\pm(M,\theta_M,w)X_{ij,r}^\pm(M,w)
    x_i^\pm(\sigma(z_{r+1}))\cdots x_i^\pm(\sigma(z_{m}))
        \delta^\pm(M,\theta_M, \sigma\underline z,w).\nonumber
\end{align}
Notice that
$\displaystyle\prod_{\substack{r<a<b\le m\\ \sigma_1\inv(a)>\sigma_1\inv(b)}}F_{ii}^\pm(z_a,z_b)$
divides $P_m^\pm(S_{m,r}S_r,
    \underline z\sigma_1,w)$.
We deduce from (Q8) that
\begin{align*}
&C_r^\pm(\underline z,w)
  P^\pm(S_{m,r}S_r,
    \underline z\sigma_1,w)
    x_i^\pm(z_{r+1})\cdots x_i^\pm(z_{m})\\
=&(-1)^{|\sigma_1|}C_r^\pm(\sigma_1\underline z,w)
    P^\pm(S_{m,r}S_r,
    \underline z\sigma_1,w)\\
&\quad\times
    x_i^\pm(\sigma_1(z_{r+1}))\cdots x_i^\pm(\sigma_1(z_{m})).
\end{align*}
Combining these with the fact $\underline z\sigma_1=\sigma_1\underline z$, we get that
\begin{align}
{D}_{r,M}^\pm=&-
  \sum_{\sigma\in S_{m}}
  \sum_{1\ne\sigma_1\in S_{\bar m\setminus\bar r}}
  C_r^\pm(\sigma\sigma_1\underline z,w) P^\pm(S_{m,r}S_r,
    \sigma\sigma_1\underline z,w)\nonumber\\
&\times
        \al_{ij,r}^\pm(M,\theta_M,w)X_{ij,r}^\pm(M,w)
        \delta^\pm(M,\theta_M, \sigma\underline z,w)\nonumber\\
&\times x_i^\pm(\sigma\sigma_1(z_{r+1}))\cdots x_i^\pm(\sigma\sigma_1(z_{m}))\nonumber\\
=&
  (1-(m-r)!)
  \sum_{\sigma\in S_{m}}
  C_r^\pm(\sigma\underline z,w) P^\pm(S_{m,r}S_r,
    \sigma\underline z,w)\nonumber\\
&\times
        \al_{ij,r}^\pm(M,\theta_M,w)X_{ij,r}^\pm(M,w)
        \delta^\pm(M,\theta_M, \sigma\underline z,w)\nonumber\\
&\times x_i^\pm(\sigma(z_{r+1}))\cdots x_i^\pm(\sigma(z_{m}))\nonumber\\
=&(1-(m-r)!)D_{r,M}^\pm.\nonumber
\end{align}
Therefore, we have that
\begin{align*}
  {D}_{r,M}^\pm=0\quad\te{for }0\le r<m.
\end{align*}
For the case $r=m$, it follows from the assumption of the proposition that ${D}_{m,M}^\pm=0$.
This finishes  the proof of proposition.
\end{proof}

\textbf{Proof of Theorem \ref{prop:Dr-to-normal-ordering}:}
First, in the setting of Theorem \ref{prop:Dr-to-normal-ordering}, one can conclude from Lemma \ref{lem:Dr-to-g-commutator-pre000} and
Proposition \ref{prop:general-D-r-M=0} that
\begin{align}\label{dijpfb=drm}
 D_{ij}^\pm(m,f^\pm,B)=\sum_{r=m_{ij}}^m \sum_{M\in \mathscr N_r^c} D^\pm_{r,M}.
\end{align}
Assume now that  the relations \eqref{reforq10} hold.
Let  $M\in \mathscr N_r^c$ with $r=m_{ij},\dots,m$.
Then there exists $l=0,\dots,d_{ij}/d_i-1$ such that $[l]$ is a subset of $M$,
where
\begin{align*}
  [l]=A_{\xi_{d_{ij}/d_i}^l,-a_{ij}}^0=\set{(\xi_{d_{ij}/d_i}^l,a_{ij}+2p)}{0\le p\le -a_{ij}}.
\end{align*}
One notices that
\begin{align*}
\:\prod_{b\in [l]}x_i^\pm(\tau^\pm(b)w)x_j^\pm(w)\;
=\:x_i^\pm(q_i^{a_{ij}}\xi_{d_{ij}}^l w)
    x_i^\pm(q_i^{a_{ij}+2}\xi_{d_{ij}}^l w)
    \cdots x_i^\pm(q_i^{-a_{ij}}\xi_{d_{ij}}^l w)x_j^\pm(w)\;.
\end{align*}
This
implies that $\wt{X}_{ij,r}(M,\check M,w,\underline w)=0$ and hence $X_{ij,r}^\pm(M,w)=0$.
Then by definition  we have  $D_{r,M}^\pm=0$.
Thus it follows from \eqref{dijpfb=drm} that  $D_{ij}^\pm(m,f^\pm,B)=0$,
which proves the first assertion in Theorem \ref{prop:Dr-to-normal-ordering}.

 Assume next that   $m=m_{ij}$, $f^\pm(w,q_i^{\pm 2n}w)\ne 0$ for $n=1,\dots,-a_{ij}$  and $D_{ij}^\pm(m,f^\pm,B)=0$.
Note that with $m=m_{ij}$ we have
 \begin{align*}
 \mathscr N_{m}^c&=\{[l]\mid 0\le l\le d_{ij}/d_i-1\},\\
 \mathcal F_{[l]}&=\{\theta_l: p\mapsto (\xi_{d_{ij}/d_i}^l,-a_{ij}+2-2p)\}.
 \end{align*}
Then  by applying \eqref{dijpfb=drm}   we have
\begin{align*}
  D_{ij}^\pm(m,f^\pm,B)=&(-1)^{m_{ij}}\sum_{\sigma\in S_{m_{ij}}}
  \sum_{l=0}^{d_{ij}/d_i-1}\prod_{1\le a<b\le m_{ij}}
    f_{ij}^\pm(z_{\sigma(a)},z_{\sigma(b)})
    \\
  &\times\al_{ij,m_{ij}}^\pm([l],\theta_l,w)
  X_{ij,m_{ij}}^\pm([l],w)
\delta^\pm([l],\theta_l,\sigma\underline z,w).
\end{align*}
From Lemma \ref{lem:delta-function-independent} we get that
\begin{align*}
   \prod_{1\le a<b\le m_{ij}}
    f_{ij}^\pm(z_{\sigma(a)},z_{\sigma(b)})
    \al_{ij,m_{ij}}^\pm([l],\theta_l,w)
  X_{ij,m_{ij}}^\pm([l],w)
    \delta^\pm([l],\theta_l,\underline z,w)= 0.
\end{align*}
Thus from the condition $f^\pm(w,q_i^{\pm 2n}w)\ne 0$ for $n=1,\dots,-a_{ij}$, it follows that
\begin{align*}
  X_{ij,m_{ij}}^\pm([l],w)=0\quad\te{for } 0\le l<d_{ij}/d_i.
\end{align*}
Since
\begin{align*}
  X_{ij,m_{ij}}^\pm([l],w)=\:x_i^\pm(q_i^{a_{ij}}\xi_{d_{ij}}^l w)
    x_i^\pm(q_i^{a_{ij}+a_{ii}}\xi_{d_{ij}}^l w)
    \cdots x_i^\pm(q_i^{-a_{ij}}\xi_{d_{ij}}^l w)x_j^\pm(w)\;,
\end{align*}
 we complete the proof of Theorem \ref{prop:Dr-to-normal-ordering}.

\section*{Acknowledgment}

The authors would like to thank the  referee for his/her valuable comments that greatly improved the exposition of the paper.
In particular, we are grateful to the referee for pointing out an error in Lemma \ref{lem:LC1aff} in the original version.
\bigskip


\begin{thebibliography}{CJKT20b}

\bibitem{AABGP}
B.~Allison, S.~Azam, S.~Berman, Y.~Gao, and A.~Pianzola.
\newblock {\em {E}xtended affine {L}ie algebras and their root systems},
  {\bf 126}.
\newblock Mem. Amer. Math. Soc., 1997.

\bibitem{ABGP}
B.~Allison, S.~Berman, Y.~Gao, and A.~Pianzola.
\newblock A characterization of affine {K}ac-{M}oody {L}ie algebras.
\newblock {\em Comm. Math. Phys.}, {\bf 185}, 671--688, 1997.

\bibitem{ABP}
B.~Allison, S.~Berman, and A.~Pianzola.
\newblock {Multiloop algebras, iterated loop algebras and extended affine {L}ie
  algebras of nullity 2}.
\newblock {\em J. Eur. Math. Soc.}, {\bf 16}, 327--385, 2014.

\bibitem{AG-root-sys-core-EALA}
B.~Allison and Y.~Gao.
\newblock The root system and the core of an extended affine Lie algebra.
\newblock {\em Selecta Math. (N.S.)}, {\bf 7}, 149, 2001.

\bibitem{beck}
J.~Beck.
\newblock Braid group action and quantum affine algebras.
\newblock {\em Comm. Math. Phys.}, {\bf 165}, 555--568, 1994.

\bibitem{BGK}
S.~Berman, Y.~Gao, and Y.~Krylyuk.
\newblock {Quantum tori and the structure of elliptic quasi-simple {L}ie
  algebras}.
\newblock {\em J. Funct. Anal.}, {\bf 135}, 339--389, 1996.

\bibitem{CTW}
F.~Chen, S.~Tan,  and Q.~Wang.
\newblock Twisted $\Gamma$-algebras and their vertex  operator representations.
\newblock {\em J. Alg.}, {\bf 442}, 202--232, 2015.

\bibitem{CJKT-twisted-quantum-aff-vr}
F.~Chen, N.~Jing, F.~Kong, and S.~Tan.
\newblock {T}wisted quantum affinizations and their vertex representations.
\newblock {\em J. Math. Phys.}, {\bf 59}, 081701, 2018.

\bibitem{CJKT-uce}
F.~Chen, N.~Jing, F.~Kong, and S.~Tan.
\newblock {T}wisted toroidal {L}ie algebras and {M}oody-{R}ao-{Y}okonuma
  presentation.
\newblock {\em Sci. China Math.},  {\bf 64}, 1181--1200, 2021.

\bibitem{CJKT-drin-pre}
F.~Chen, N.~Jing, F.~Kong, and S.~Tan.
\newblock {{D}rinfeld type presentations of loop algebras}.
\newblock {\em Trans. Amer. Math. Soc.}, {\bf 373},  7713--7753, 2020.

\bibitem{CJKT-quantum-A111}
F.~Chen, N.~Jing, F.~Kong, and S.~Tan.
\newblock On quantum toroidal algebra of type $A_1$.
\newblock {\em J. Pure Applied Alg.}, {\bf 226},  106814, 2022.

\bibitem{CGP-conjugacy}
V.~Chernousov, P.~Gille, and A.~Pianzola.
\newblock Conjugacy theorems for loop reductive group schemes and Lie algebras.
\newblock {\em Bull. Math. Sci.}, {\bf 4}, 281--324, 2014.

\bibitem{CNP-conjugacy-non-fgc-lie-tori}
V.~Chernousov, E.~Neher, and A.~Pianzola.
\newblock On conjugacy of {C}artan subalgebras in non-{F}{G}{C} {L}ie tori.
\newblock {\em Transform. Groups}, {\bf 21}, 1003--1037, 2016.

\bibitem{CNP-conjugacy-non-fgc-centerless-core}
V.~Chernousov, E.~Neher, and A.~Pianzola.
\newblock Conjugacy of cartan subalgebras in ealas with a non-fgc centreless
  core.
\newblock {\em Trans. Moscow Math. Soc.}, {\bf 78}, 235--256, 2017.

\bibitem{CNPY-conjugacy-EALA}
V.~Chernousov, E.~Neher, A.~Pianzola, and U.~Yahorau.
\newblock On conjugacy of cartan subalgebras in extended affine lie algebras.
\newblock {\em Adv. Math.}, {\bf 290}, 260--292, 2016.

\bibitem{CP3-twisted-quantum}
V.~Chari and A.~Pressley.
\newblock Twisted quantum affine algebras.
\newblock {\em Comm. Math. Phys.}, {\bf 196}, 461--476, 1998.

\bibitem{Da1}
I.~Damiani.
\newblock {D}rinfeld realization of affine quantum algebras: the relations.
\newblock {\em Publ. Res. Inst. Math. Sci.}, {\bf 48}, 661--733, 2012.

\bibitem{Da2}
I.~Damiani.
\newblock From the {D}rinfeld realization to the {D}rinfeld-{J}imbo
  presentation of affine quantum algebras: injectivity.
\newblock {\em Publ. Res. Inst. Math. Sci.}, {\bf 51}, 131--171, 2015.

\bibitem{DI-generalization-qaff}
J.~Ding and K.~Iohara.
\newblock Generalization of {D}rinfeld quantum affine algebras.
\newblock {\em Lett. Math. Phys.}, {\bf 41}, 181--193, 1997.


\bibitem{Dr-hopf-alg}
V.~Drinfeld.
\newblock Hopf algebras and quantum Yang-Baxter equation.
\newblock {\em Soviet Math. Dokl.}, {\bf 283}, 1060--1064, 1985.

\bibitem{Dr-new}
V.~{D}rinfeld.
\newblock A new realization of {Y}angians and quantized affine algebras.
\newblock  {\em Soviet Math. Dokl}, {\bf 36}, 212--216, 1988.

\bibitem{EK-I}
P.~Etingof and D.~Kazhdan.
\newblock Quantization of {L}ie bialgebras, I.
\newblock {\em Selecta Math. (N.S.)}, {\bf 2}, 1--41, 1996.

\bibitem{EK-VI}
P.~Etingof and D.~Kazhdan.
\newblock
Quantization of Lie bialgebras, Part VI: quantization of generalized Kac--Moody algebras
\newblock {\em Transform. Groups}, {\bf 13}, 527--539, 2008.

\bibitem{E-coprod}  
B~Enriquez.
\newblock On correlation functions of {D}rinfeld currents and shuffle algebras.
\newblock {\em Transform. Groups}, {\bf 5}, 111--120, 2000.

\bibitem{E-PBW-qaff}
B.~Enriquez.
\newblock {P}{B}{W} and duality theorems for quantum groups and quantum current
  algebras.
\newblock {\em J. Lie Theory}, {\bf 13}, 21--64, 2003.

\bibitem{FJM1}
B.~Feigin, M.~Jimbo, T.~Miwa and E.~Mukhin, Representations of quantum toroidal $\mathfrak{gl}_n$, {\em J. Alg.}, {\bf 380},
78--108, 2013.

\bibitem{FJM2}
B.~Feigin, M.~Jimbo, T.~Miwa and E.~Mukhin, Branching rules for quantum toroidal $\mathfrak{gl}(n)$, {\em Adv. Math.}, {\bf 300}, 229--274, 2016.


\bibitem{FJMM1}
B.~Feigin, E.~Feigin, M.~{J}imbo, T.~Miwa, E.~Mukhin.
\newblock Quantum continuous $\mathfrak {gl}_{\infty}$: {T}ensor products of
  {F}ock modules and $\mathcal{W}_{n}$-characters.
\newblock {\em Kyoto J. Math.}, {\bf 51}, 365--392, 2011.

\bibitem{FJMM2}
B.~Feigin, M.~{J}imbo, T.~Miwa, E.~Mukhin.
\newblock Quantum toroidal $\mathfrak{gl}_{1}$-algebra: {P}lane partitions.
\newblock {\em Kyoto J. Math.}, {\bf 52}, 621--659, 2012.


\bibitem{FJW}
I.~Frenkel, N.~Jing, and W.~Wang.
\newblock Quantum vertex representations via finite groups and the {M}cKay
  correspondence.
\newblock {\em Comm. Math. Phys.}, {\bf 211}, 365--393, 2000.

\bibitem{FSS}
J.~Fuchs, B.~Schellekens, and C.~Schweigert.
\newblock From {D}ynkin diagram symmetries to fixed point structures.
\newblock {\em Comm. Math. Phys.}, {\bf 180}, 39--97, 1996.

\bibitem{GJ}
Y.~Gao and N.~Jing.
\newblock ${U}_q(\widehat{\mathfrak{gl}}_{N})$ action on
  $\widehat{\mathfrak{gl}}_{N}$-modules and quantum toroidal algebras.
\newblock {\em J. Alg.}, {\bf 273}, 320--343, 2004.

\bibitem{GKV}
V.~Ginzburg, M.~Kapranov, and E.~Vasserot.
\newblock Langlands reciprocity for algebraic surfaces.
\newblock {\em Math. Res. Lett.}, {\bf 2}, 147--160, 1995.

\bibitem{GP-torsors}
 P. Gille and A. Pianzola.
\newblock Torsors, reductive group schemes and extended affine {L}ie algebras.
\newblock {\em Mem. Amer. Math. Soc.}, {\bf 226}, vi+112, 2013.

\bibitem{Gr-qshuff-qaff}
P.~Gross{\'e}.
\newblock On quantum shuffle and quantum affine algebras.
\newblock {\em J. Alg.}, {\bf 318}, 495--519, 2007.

\bibitem{He-representation-coprod-proof}
D.~Hernandez.
\newblock Representations of quantum affinizations and fusion product.
\newblock {\em Transform. Groups}, {\bf 10}, 163--200, 2005.

\bibitem{He-drinfeld-coproduct}
D.~Hernandez.
\newblock {D}rinfeld coproduct, quantum fusion tensor category and
  applications.
\newblock {\em Proc. Lond. Math. Soc.}, {\bf 95}, 567--608, 2007.

\bibitem{H-tw}
D.~Hernandez.
\newblock Kirillov--Reshetikhin conjecture: the general case.
\newblock {\em Int. Mat. Res. Not.}, {\bf 2010}, 149--193,
  2010.

\bibitem{He-total}
D.~Hernandez.
\newblock Quantum toroidal algebras and their representations.
\newblock {\em Selecta Math. (N. S.)}, {\bf 14}, 701--725, 2009.

\bibitem{H-KT}
R.~H${\o}$egh-Krohn and B.~Torresani.
\newblock {Classification and construction of quasisimple {L}ie algebras}.
\newblock {\em J. Funct. Anal.}, {\bf 89}, 106--136, 1990.

\bibitem{jim-start}
M.~Jimbo.
\newblock A q-difference analogue of $\U_q(\fg)$ and the Yang--Baxter equation.
\newblock In {\em Yang-Baxter Equation In Integrable Systems}, 292--298.
  World Scientific, 1990.

\bibitem{J-inv}
N.~Jing.
\newblock Twisted vertex representations of quantum affine algebras.
\newblock {\em Invent. Math.}, {\bf 102}, 663--690, 1990.


\bibitem{J-KM}
N.~Jing.
\newblock Quantum {K}ac-{M}oody algebras and vertex representations.
\newblock {\em Lett. Math. Phys.}, {\bf 44}, 261--271, 1998.

\bibitem{ZJ}
N.~Jing and H.~Zhang.
\newblock Addendum to ``{D}rinfeld realization of twisted quantum affine
  algebras''.
\newblock {\em Commun. Algebra}, {\bf 38}, 3484--3488, 2010.

\bibitem{JZ1}
N.~Jing and H.~Zhang.
\newblock Drinfeld realization of quantum twisted affine algebras
via braid group. \newblock {\em Adv. Math. Phys.}, 1--15, 2016.

\bibitem{Kac-book}
V.~Kac.
\newblock {\em {Infinite dimensional {L}ie algebras}}.
\newblock Cambridge University Press, 1994.

\bibitem{Kassel-topologically-free}
C.~Kassel.
\newblock {\em Quantum groups}. {\bf 155}, Graduate Texts in Mathematics,
Springer-Verlag, New York, 1995.

\bibitem{KL-tensor-struct-qaff-4}
D.~Kazhdan and G.~Lusztig.
\newblock Tensor structures arising from affine Lie algebras. IV.
\newblock {\em J.  Amer. Math. Soc.}, {\bf 7}, 383--453, 1994.

\bibitem{KW}
V.~Kac and S.~Wang.
\newblock On automorphisms of {K}ac-{M}oody algebras and groups.
\newblock {\em Adv. Math.}, {\bf 92}, 129--195, 1992.

\bibitem{Li-new-construction}
H.~Li.
\newblock A new construction of vertex algebras and quasi-modules for vertex
  algebras.
\newblock {\em Adv. Math.}, {\bf 202}, 232--286, 2006.

\bibitem{Li-h-adic}
H.~Li.
\newblock {$\hbar$-adic quantum vertex algebras and their modules}.
\newblock {\em Comm. Math. Phys.}, {\bf 296}, 475--523, 2010.

\bibitem{LL}
J.~Lepowsky and H.~Li.
\newblock {\em {Introduction to vertex operator algebras and their
  representations}}, {\bf 227},
\newblock Birkh\"{a}user Boston Incoporation, 2004.

\bibitem{Lusztig-qdeform-simple-mod}
G.~Lusztig.
\newblock Quantum deformations of certain simple modules over enveloping
  algebras.
\newblock {\em Adv. Math.}, {\bf 70}, 237--249, 1988.

\bibitem{Luztig-quantum-book}
G.~Lusztig.
\newblock {\em Introduction to quantum groups}.
\newblock Springer Science \& Business Media, 2010.

\bibitem{MRY}
R.~Moody, S.~E.~Rao, and T.~Yokonuma.
\newblock {Toroidal {L}ie algebras and vertex representations}.
\newblock {\em Geom. Dedicata}, {\bf 35}, 283--307, 1990.

\bibitem{miki-1999-1}
K.~Miki,
\newblock Toroidal and level 0 $U_q'(\widehat{sl}_{n+1})$ actions on
  $U_q(\widehat{gl}_{n+1})$ modules,
\newblock {\em J. Math. Phys.}, {\bf 40}, 3191--3210, 1999.


\bibitem{miki-2000}
K.~Miki,
\newblock Representations of quantum toroidal algebra $U_q(sl_{n+1}, tor)$ $(n>
  2)$,
\newblock {\em J. Math. Phys.}, {\bf 41}, 7079--7098, 2000.



\bibitem{miki-2006}
K.~Miki,
\newblock Some quotient algebras arising from the quantum toroidal algebra
  $U_{q}(\mathit{sl}_{n+1}(\mathcal{C}_{\gamma}))$ ($n\geq 2$),
\newblock {\em Osaka J. Math.}, {\bf 43}, 895--922, 2006.

\bibitem{Naka-quiver}
H.~Nakajima.
\newblock Quiver varieties and finite dimensional representations of quantum
  affine algebras.
\newblock {\em J. Amer. Math. Soc.}, {\bf 14}, 145--238, 2001.

\bibitem{N2}
E.~Neher.
\newblock {Extended affine {L}ie algebras}.
\newblock {\em C.R. Math. Acad. Sci. Soc. R. Can.}, {\bf 26}, 90--96, 2004.

\bibitem{Neher-EALA-survey}
E.~Neher.
\newblock Extended affine lie algebras and other generalizations of affine Lie
  algebras--a survey.
\newblock In {\em Developments and trends in infinite-dimensional Lie theory},
  53--126. Springer, 2011.

\bibitem{Sa}
Y.~Saito.
\newblock Quantum toroidal algebras and their vertex representations.
\newblock {\em Publ. Res. Inst. Math. Sci.}, {\bf 34}, 155--177, 1998.

\bibitem{STU}
Y.~Saito, K.~Takemura, and D.~Uglov.
\newblock Toroidal actions on level 1 modules of
  ${U}_q\left(\widehat{\mathfrak{sl}}_n\right)$.
\newblock {\em Transform. Groups}, {\bf 3}, 75--102, 1998.

\bibitem{TU-level-zero}
K.~Takemura and D.~Uglov.
\newblock {L}evel-$0$ action of $\mathcal{{U}}_q(\widehat{\mathfrak{sl}}_n)$ on
  the $q$-deformed Fock spaces.
\newblock {\em Comm. Math. Phys.}, {\bf 190}, 549--583, 1998.

\bibitem{Ts} A. Tsymbaliuk.
\newblock Quantum affine Gelfand-Tsetlin bases and quantum toroidal algebra via K-theory of affine Laumon spaces.
\newblock {\em Selecta Math. (N.S.)}, {\bf 16}, 173--200, 2010.


\bibitem{VV-schur}
M.~Varagnolo and E.~Vasserot.
\newblock Schur duality in the toroidal setting.
\newblock {\em Comm. Math. Phys.}, {\bf 182}, 469--483, 1996.

\bibitem{VV-double-loop}
M.~Varagnolo and E.~Vasserot.
\newblock Double-loop algebras and the {F}ock space.
\newblock {\em Invent. Math.}, {\bf 133}, 133--159, 1998.

\bibitem{Y}
Y.~Yoshii.
\newblock {Lie tori -- a simple characterization of extended affine Lie algebras}.
\newblock {\em Publ. Res. Inst. Math. Sci.}, {\bf 42}, 739--762, 2006.



\end{thebibliography}
\end{document}